%
%
%

\documentclass[AIF,Unicode,manuscript]{cedram}

%

\include{macros}

\title[No bi-infinite geodesics in Geometric LPP]{Non-existence of non-trivial bi-infinite geodesics\\ in Geometric Last Passage Percolation}

\author{\firstname{Sean} \lastname{Groathouse}}
\address{University of Utah\\  Mathematics Department\\ 155 S 1400 E\\   Salt Lake City, UT 84112\\ USA}
\email{sean.groathouse@utah.edu}

\author{\firstname{Christopher} \lastname{Janjigian}}
\address{Purdue University\\  Mathematics Department\\  150 N. University Street\\   West Lafayette, IN 47907\\ USA}
\email{cjanjigi@purdue.edu}
\thanks{C.\ Janjigian was partially supported by National Science Foundation grant DMS-2125961}

\author{\firstname{Firas} \lastname{Rassoul-Agha}}
\address{University of Utah\\  Mathematics Department\\ 155 S 1400 E\\   Salt Lake City, UT 84112\\ USA}
\email{firas@math.utah.edu}
\thanks{F.\ Rassoul-Agha and S.\ Groathouse were partially supported by NSF grants DMS-1407574 and DMS-1811090}

\thanks{Last revised: November 30, 2021}   




%

\keywords{Bi-infinite geodesic, Geometric, Last Passage Percolation}

\subjclass{60K35,60K37}

\begin{abstract} 
We show non-existence of non-trivial bi-infinite geodesics in the solvable last-passage percolation model with i.i.d.\ geometric weights. This gives the first example of a model with discrete weights where non-existence of non-trivial bi-infinite geodesics has been proven. Our proofs rely on the structure of the increment-stationary versions of the model, following the approach recently introduced by Bal\'azs, Busani, and Sepp\"al\"ainen. 
Most of our results work for a general weights distribution and we identify the two properties of the stationary distributions which would need to be shown in order to generalize the main result to a non-solvable setting.
\end{abstract}


    
\begin{document}
\maketitle

\section{Introduction}
This paper considers directed last-passage percolation (LPP), which is a prototypical example of a lattice interface growth model in 1+1 dimensions. Such lattice growth models have played a central role in the development of modern probability over the last fifty years, with 1+1 dimensional LPP rising in importance over recent decades as a member of the Kardar-Parisi-Zhang (KPZ) universality class. See the recent surveys \cite{Auf-Dam-Han-17,Cor-12,Cor-16,Hal-Tak-15,Qua-Spo-15,Qua-12}.

Last-passage percolation, along with closely related models like first-passage percolation, directed polymers, and certain stochastic Hamilton-Jacobi equations, have interpretations as a kind of directed analogue of a metric. For this point of view, see for example the discussion in \cite{Dau-Ort-Vir-19-,Dau-Vir-21-} and also \cite{Bak-Kha-18}. This connection is exact in the case of first-passage percolation, which genuinely describes a random metric on the lattice. In these interpretations, it is often possible to interpret the solution in terms of random paths, which are variously called geodesics, random polymers, or characteristics, among others. The structure of these random paths has been a major focus of research in the field. 

This project considers a particular subset of questions related to bi-infinite geodesics, which are bi-infinite paths with the property that the restriction of the path to any finite subpath is a geodesic between its endpoints. The study of such paths traces back at least to a question Furstenburg posed to Kesten on first-passage percolation \cite[p.~258]{kes-86-stflour}, where the existence of such paths is equivalent to the existence of non-trivial ground states in the ferromagnetic Ising model with random impurities \cite[p.~105]{Auf-Dam-Han-17}. It is generally believed that in models of the type we consider, non-trivial bi-infinite geodesics should not exist, for reasons we will discuss shortly. Much of the mathematical progress toward proving this conjecture traces back to the seminal ICM note of Newman \cite{New-95}, which instigated a fruitful line of research on the structure of semi-infinite and bi-infinite geodesics in first-passage percolation \cite{How-New-97,How-New-01,Lic-New-96}. These ideas motivated subsequent work on first \cite{Ahl-Hof-16-,Hof-05,Hof-08,Dam-Han-14,Dam-Han-17} and last passage percolation \cite{Cat-Pim-12,Cat-Pim-13,Geo-Ras-Sep-17-ptrf-1,Geo-Ras-Sep-17-ptrf-2,Jan-Ras-Sep-21-}, as well as related models \cite{Bak-Cat-Kha-14,Bak-Li-18,Bak-Li-19, Jan-Ras-20-aop}.

One of the main predictions of the KPZ class concerns the structure of fluctuations of analogues of geodesics, and in particular their characteristic $2/3$ scaling exponent in 1+1 dimensions. At an ACM workshop in 2015 \cite[Section 4.5.1]{Auf-Dam-Han-17}, Newman gave a heuristic argument that in dimensions for which this transversal fluctuation exponent is greater than $1/2$, non-trivial bi-infinite geodesics should not exist. Two different implementations of this heuristic were recently carried out in the last-passage percolation model with i.i.d.~exponential weights by Basu, Hoffman, and Sly \cite{Bas-Hof-Sly-21-} and Bal\'azs, Busani, and Sepp\"al\"ainen \cite{Bal-Bus-Sep-20}. The former implementation uses integrable methods heavily, while the latter relies on the structure of the increment-stationary distributions for the model. Both approaches rely in essential ways on the exact solvability of the exponential last-passage percolation model. A general version of Newman's argument under strong conditions on the passage time was recently implemented by Alexander in \cite{Ale-20-}. Perhaps the strongest unconditional result in this direction is the recent \cite{Bri-Dam-Han-20-}.

The present paper abstracts the approach of \cite{Bal-Bus-Sep-20}. We consider a novel implementation of the argument to the last-passage percolation model with geometric weights, giving an example of a model with discrete weights for which non-trivial bi-infinite geodesics do not exist. More broadly, we re-cast the approach of \cite{Bal-Bus-Sep-20} without reference to particular weight distributions and identify two properties of increment-stationary distributions, recorded as Assumptions \ref{exitPointTailBound} and \ref{ass:RW} below, which would need to be proven in order to realize this program for non-integrable models. After introducing each of these assumptions, we discuss the types of hypotheses on the last-passage percolation model which would need to be proven in order to verify these conditions. It is noteworthy that it is known from \cite{Geo-Ras-Sep-17-ptrf-1, Jan-Ras-20-aop} that the increment stationary models we discuss in these assumptions have been shown to exist generally.

Our main result, Theorem \ref{main-thm}, shows that under our abstract hypotheses, non-trivial bi-infinite geodesics do not exist.  We verify our conditions in the geometric model using exact solvability. Along the way, we also prove some novel results about geometric last-passage percolation in order to verify our hypotheses. In particular, we prove a new, sharp bound for exit times in increment-stationary geometric last-passage percolation following a strategy recently introduced in \cite{Emr-Jan-Sep-20-,Emr-Jan-Sep-21-}. This is recorded as Theorem \ref{geometricExitPointTailBound} below.

\section{Setting and the main result}
Let $\Omega = \bbR^{\bbZ^2}$ and equip it with the product topology and its Borel $\sigma$-algebra $\sF$. A generic element in $\Omega$ is denoted by $\w$ and is sometimes referred to as an \textit{environment}. 
Let $(\w_x)_{x\in\ZZ^2}$ be the coordinates of $\w$. $\w_x$ is referred to as the \textit{weight} at $x$. 
We assume the following throughout the paper: we are given a probability measure $\P$ on $(\Omega,\sF)$ such that
\begin{align}\label{stand:assumption}
(\w_x)_{x\in\ZZ^2}\text{ are i.i.d.\ under }\P,\quad
\exists \epsilon>0:\ \E[|\w_0|^{2+\epsilon}]<\infty,\quad\text{and}\quad
{\mathbb V}\mathrm{ar}(\w_0)>0.
\end{align}

Denote by $T=\{T_x : x \in \bbZ^2\}$ the natural group of shift operators on $\Omega$, which satisfy 
$(T_y\w)_x = \w_{x+y}$ for $x,y \in \bbZ^2$. 
Given sites $x,y \in \bbZ^2$ with  $x\leq y$ (coordinatewise), an \textit{up-right path} from $x$ to $y$ is a sequence of lattice vertices with increments in the set $\{\ek_1,\ek_2\}$, the canonical basis of $\bbR^2$.
The collection of up-right paths from $x$ to $y$ is denoted by $\Pi_x^y$. The \textit{passage time} (or the weight) of an up-right path $\pi\in\Pi_x^y$ is the sum of the weights of the vertices of the path: 
$\sum_{v\in\pi}\w_v$. 
For $x\le y$ in $\ZZ^2$, the (bulk) \textit{last-passage time} from $x$ to $y$ is defined to be
\begin{align}
    G_{x,y}(\w) &= \max_{\pi \in \Pi_x^y} \sum_{v \in \pi} \w_{v}.\label{eq:LPPdef}
\end{align}
In particular, $G_{x,x}(\w)=\w_x$.
As is customary in probability theory we often omit the $\w$ from the argument of $G_{x,y}$.

A path $\pi\in\Pi_x^y$ 
which realizes the maximum in \eqref{eq:LPPdef} is called a \textit{geodesic}. This terminology is by analogy with the related model of first-passage percolation where $G_{x,y}$ defines a random pseudo-metric on $\bbZ^2$.
Geodesics are unique when $\P(\w_0\le t)$ is continuous in $t$, but when this distribution function is not continuous, multiple geodesics can exit. 

Our main interest in the present paper is in the structure of \textit{bi-infinite geodesics}, which we now define.
\begin{definition}
A bi-infinite up-right path $\pi_{-\infty:\infty} = (\pi_n)_{n\in\bbZ}$ is said to be a \textit{bi-infinite geodesic} if for every $m<n$ in $\bbZ$, the segment $\pi_{m:n}$ is a geodesic between $\pi_m$ and $\pi_n$.
\end{definition}

For each $x \in \bbZ^2$ and $k \in \{1,2\}$, the path $x+\bbZ \ek_k=(x + j \ek_{k} : j \in \bbZ)$ is a \textit{trivial} bi-infinite geodesic. This is because there is only one up-right path between any two sites on such a path. Bi-infinite geodesics which are not of this form are said to be \textit{non-trivial}. Our main theorem says that non-trivial bi-infinite geodesics do not exist 
when the weights are geometric random variables.

\begin{theorem}\label{thm:noBiInfinites}  Assume $\w_0$ is a $\Geom(r)$ random variable for some $r\in(0,1)$. Then with $\P$-probability one there are no non-trivial bi-infinite geodesics.
\end{theorem}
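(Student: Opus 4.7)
The plan is to realize Theorem~\ref{thm:noBiInfinites} as a specialization of the abstract Theorem~\ref{main-thm}, in the manner suggested by the introduction. The program has two parts: an abstract no-bi-infinite-geodesics theorem valid under two hypotheses on the increment-stationary versions of the model (Assumptions~\ref{exitPointTailBound} and~\ref{ass:RW}), adapted from the Bal\'azs--Busani--Sepp\"al\"ainen argument for exponential LPP \cite{Bal-Bus-Sep-20}, followed by verification of these hypotheses for i.i.d.\ $\Geom(r)$ weights.

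For the abstract part, I would work with the standard family of increment-stationary versions indexed by a parameter $\rho$: one attaches i.i.d.\ boundary weights on the coordinate axes whose distribution depends on $\rho$, in such a way that the resulting process has shift-invariant $\ek_1$- and $\ek_2$-increments, and the characteristic (Busemann) direction $\xi(\rho)$ is determined by $\rho$ through the slope of the shape function. Assume for contradiction that a non-trivial bi-infinite geodesic exists with positive $\P$-probability. By shift-invariance and ergodicity of $\P$ under $(T_x)$, such geodesics occur with positive density, so that in a box $[-N,N]^2\subset\bbZ^2$ one finds order-$N$ of them, each with some asymptotic direction on each end. Coupling the i.i.d.\ environment with the stationary model whose characteristic parameter $\rho$ matches that direction, each bi-infinite geodesic is sandwiched between stationary semi-infinite geodesics emanating from the boundary. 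The exit-point tail bound of Assumption~\ref{exitPointTailBound}, roughly $\P(Z_N \ge t)\le CN/t^3$ at the characteristic direction, is exactly the ingredient needed to confine these stationary geodesics to scale $N^{2/3}\ll N$ from the boundary, ruling out the positive-density configuration above. The random-walk hypothesis of Assumption~\ref{ass:RW} on the stationary boundary process is used to quantify the local coalescence/separation of these geodesics, controlling the sandwiching step.

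For the geometric case, Assumption~\ref{ass:RW} should follow directly from the explicit description of the stationary boundary as an i.i.d.\ sequence of shifted geometric increments, to which classical random-walk concentration (Gaussian-type fluctuations on scale $\sqrt{N}$) applies. The genuine difficulty is Assumption~\ref{exitPointTailBound}, the sharp cubic tail bound on $Z_N$ at the characteristic direction, which is the content of Theorem~\ref{geometricExitPointTailBound}. My plan here is to follow the strategy of Emrah--Janjigian--Sepp\"al\"ainen \cite{Emr-Jan-Sep-20-,Emr-Jan-Sep-21-}: write an identity relating moments of $Z_N$ to derivatives in $\rho$ of $\E[G_{0,(m,n)}]$ under the stationary model; use solvability of geometric LPP to control these derivatives at characteristic parameters by explicit combinatorial/analytic formulas; and extract the polynomial tail by combining this with concentration of the bulk passage time.

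I expect the cubic exit-point estimate to be the main obstacle. In the exponential model several clean proofs are available, while the discreteness of the geometric weights and the less symmetric form of the geometric free energy and its derivatives make the matching between derivatives in $\rho$ and the distribution of $Z_N$ more delicate; in particular, obtaining the correct exponent $3$, as opposed to a weaker tail that would be insufficient to drive the density contradiction, is the crux. Once Theorem~\ref{geometricExitPointTailBound} is in place, plugging it together with the random-walk structure of the geometric stationary boundary into the abstract Theorem~\ref{main-thm} yields Theorem~\ref{thm:noBiInfinites}.
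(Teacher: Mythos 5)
Your route is the one the paper takes: Theorem~\ref{thm:noBiInfinites} is obtained from the abstract Theorem~\ref{main-thm} once Assumptions~\ref{exitPointTailBound} and~\ref{ass:RW} are verified for geometric weights, which is precisely what Theorem~\ref{geometricExitPointTailBound} and Lemma~\ref{lm:assRWholds} do. Two small points worth flagging. First, you overstate the role of the cubic tail: the abstract theorem only needs a polynomial bound $\P\{|Z(m,n)|\ge s(m+n)^{2/3}\}\le C_0 s^{-\nu}$ for some $\nu>2$ (Assumption~\ref{exitPointTailBound}), with the constraint \eqref{a-nu} linking $\nu$ to the random-walk exponent $\expnt$; the cubic exponential estimate in Theorem~\ref{geometricExitPointTailBound} is a bonus of the solvable change-of-measure proof, not a prerequisite. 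Second, your description of the abstract mechanism (an ergodic positive-density contradiction with stationary geodesics confined to scale $N^{2/3}$) is not what Theorem~\ref{main-thm} actually does: after Lemma~\ref{lem:noAxisDirectBiInfinites} removes axis-directed geodesics, shift-invariance reduces the problem to showing that no geodesic between the boundaries $\partial^{N,\delta}$ and $\hat\partial^{N,\delta}$ passes through the fixed edge $(\zero,\ek_1)$, and Theorem~\ref{thm:noBiInfinitesAwayFromAxes} gives a quantitative $N^{-(\expntt-1/3)}$ bound for this event via coarse-graining the boundary into $\sim N^{1/3}$ pieces of size $N^{2/3}$ and bounding each by a persistence probability of the two-sided random walk $S^{\xi,\eta}$ (not by a $\sqrt N$-scale Gaussian fluctuation bound). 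Assumption~\ref{ass:RW} is exactly that persistence estimate, and its geometric verification (Lemma~\ref{lm:assRWholds}) needs both the i.i.d.\ structure of the Busemann increments and the half-line independence of Theorem~\ref{thm:coupledGeomLPP}\eqref{coupledGeomLPPb}, fed through the Sparre-Andersen-based Lemma~\ref{lem:rwBound}.
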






Our main result, Theorem \ref{thm:noBiInfinites}, follows from Theorem \ref{main-thm}, which applies to a general weight distribution. 
 It requires two assumptions, which are then verified (in the appendix) to hold when $\w_0$ is geometrically distributed. 
  These are the only two places where solvability is used. We include the following comments on our use of solvability:
  
a) The independence property in Theorem \ref{thm:coupledGeomLPP}\eqref{coupledGeomLPPc} and the explicit knowledge of marginal distributions in Theorem \ref{thm:coupledGeomLPP}\eqref{coupledGeomLPPd} are used in the proof of the tail bound in Theorem \ref{geometricExitPointTailBound}, which verifies our Assumption \ref{exitPointTailBound}. These methods seem unlikely to generalize, as they rely on a certain structure of Radon-Nikodym derivatives of the marginal distributions which is satisfied for solvable poylmer and percolation models, but not general distributions. See \cite{Emr-Jan-Sep-20-,Emr-Jan-Sep-21-,Emr-Jan-Xie-21-}. Using these methods, the bound we prove is sharp, with cubic exponential decay. This is stronger than is necessary for the rest of our arguments: Assumption \ref{exitPointTailBound} only asks for a polynomial bound with exponent strictly greater than two.

b) The independence property in  Theorem \ref{thm:coupledGeomLPP}\eqref{coupledGeomLPPb} is used when verifying Assumption \ref{ass:RW}. This is an assumption concerning certain random walks which, in a general setting, would be built out of using the Busemann process constructed in \cite{Geo-Ras-Sep-17-ptrf-1,Jan-Ras-20-aop}. In the setting with geometric weights, this independence allows us to turn the probability of an intersection in \eqref{bd:RW} to a product of probabilities. Moreover, it is used to deduce that the random walks in \eqref{Sdef} have independent increments. Our key random walk estimate, Lemma \ref{lem:rwBound}, assumes that the random walk increments are independent for this reason. For a general weight distribution, we expect that the increments of the associated random walks in question are mixing, but not independent. A version of Lemma \ref{lem:rwBound} can be expected to hold for such random variables, subject to some extra moment hypotheses.  

\medskip

{\bf Organization of the paper:} Section \ref{sec:bdry} introduces boundary models. In Section \ref{sec:geo_fluct} 
we derive geodesic fluctuation bounds under Assumption \ref{exitPointTailBound}.
Section \ref{sec:nonexist} has the proof of the nonexistence of bi-infinte geodesics, under Assumptions \ref{exitPointTailBound} and \ref{ass:RW}. Appendix \ref{Busemann general} recalls results that provide the boundary weights for the boundary models needed for the proofs. Sections \ref{sec:bdry}-\ref{sec:nonexist} and Appendix \ref{Busemann general} are for general weights and can be read independently. The rest of the appendixes deal with the case of geometric weights and can each be read independently.
Theorem \ref{geometricExitPointTailBound} in Section \ref{sec:pfExit}
verifies that Assumption \ref{exitPointTailBound} holds in the case of geometric weights.
 Lemma \ref{lm:assRWholds} uses the extra independence structure in Theorem \ref{thm:coupledGeomLPP} and the random walk estimates in Lemma \ref{lem:rwBound} to verify that Assumption \ref{ass:RW} holds in the case of geometric weights.

\subsection{Notation}  
$\bbN$ denotes the natural numbers $\{1,2,\dotsc\}$, $\bbZ$ is the set of integers $\{0,\pm1,\pm2,\dotsc,\}$, and $\bbR$ is the set of real numbers. For $a\in\bbR$, $\bbR_{\ge a}=[a,\infty)$,  $\bbR_{>a}=(a,\infty)$, $\bbZ_{\ge a}=[a,\infty)\cap\bbZ$, and $\bbZ_{>a}=(a,\infty)\cap\bbZ$.
$\bbR_{\le a}$, $\bbR_{<a}$, $\bbZ_{\le a}$, and $\bbZ_{<a}$ are defined analogously.
For $a,b \in \bbR$ with $a\le b$ we write $\flint{a,b}$ to denote the integers that are in $[a,b]$ and we abbreviate $\flint{n}=\flint{1,n}$.
For points $u, v \in \bbR^2$, $u \leq v$ and $v\geq u$ mean $u_1 \leq v_1$ and $u_2 \leq v_2$. For such $u$ and $v$, let $[u,v]=\{x\in\RR^2:u\le x\le v\}$ 
and $\flint{u,v}=\{x\in\bbZ^2:u\le x\le v\}$. 

We denote the canonical basis vectors of $\bbR^2$ by $\ek_1=(1,0)$ and $\ek_2=(0,1)$. Set $\zero=(0,0)$. An up-right path $\geod{m:n}= (\geod{m},\geod{m+1},\dots,\geod{n})$ is a collection of vertices $\geod{i} \in \bbZ^2$ which satisfies $\geod{i} - \geod{i-1} \in \{\ek_1,\ek_2\}$ for $i \in \flint{m+1,n}$.
For $x\le y$, the set of up-right paths which start at $x$ and end at $y$ is denoted by $\Pi_x^y$.

Let $\sU = [\ek_2,\ek_1] = \{t \ek_1 + (1-t) \ek_2 : 0 \leq t \leq 1\}$. Its relative interior is denoted by $\ri\sU=(\ek_2,\ek_1)=\{t \ek_1 + (1-t) \ek_2 : 0<t<1\}$. We will use the notation
$a\vee b=\max(a,b)$ and $a\wedge b=\min(a,b)$.
	
For $r \in (0,1),$ a $\Geom(r)$ random variable $X$ satisfies $\P(X = n) = r^n (1-r)$ for $n\in\bbZ_{\ge0}$.  For $p\in[0,1]$, a $\Ber(p)$ random variable $X$ satisfies $\P(X=1)=1-\P(X=0)=p$.
	
\section{Models with boundary}\label{sec:bdry}
The main player in the proof of Theorem \ref{thm:noBiInfinites} is a coupling of the bulk passage times and a collection of passage times in models with boundary conditions. Given weights $\w\in\Omega$ and numbers $\{I_x,J_x:x\in\bbZ^2\}$, referred to as \textit{boundary weights},  
the \textit{boundary passage time} $G^{\rm{SW}}_{x,y}(\w,I,J)$ from $x$ to $y$, with $x\le y$, 
is the maximum weight of up-right paths from $x$ to $y$, where each path collects $0$ weight at the site $x$, $I$ weights at each vertex on the horizontal boundary $x+\bbZ_{\ge0} \ek_1$, $J$ weights at each vertex on the vertical boundary $x+\bbZ_{\ge0} \ek_2$, and bulk weights $\w$ at each vertex in the bulk $x + \NN^2$. 
See Figure \ref{fig:statmodel}. 
Rigorously, for $x=(x_1,x_2)\in\ZZ^2$ and $k\in\NN$
we set $G^{\rm{SW}}_{x,x}=0$,
\begin{align}\label{GSW0}
G^{\rm{SW}}_{x,x+k\ek_1}=\sum_{i=1}^k I_{x+i\ek_1},
\quad\text{and}\quad
G^{\rm{SW}}_{x,x+k\ek_2}=\sum_{i=1}^k J_{x+i\ek_2}.
\end{align}
Then for $y\in x+\N^2$ we let
\begin{align}\label{GSW}
	G^{\rm{SW}}_{x, y} =\max _{1 \leq k \leq y_1-x_1}\Bigl\{ \sum_{i=1}^{k} I_{x+i \ek_1} + G_{x+k \ek_1 + \ek_2, y} \Bigr\} \bigvee \max_{1 \leq \ell \leq y_2-x_2} \Bigl\{ \sum_{j=1}^{\ell} J_{x+j \ek_2}+G_{x+\ek_1+\ell \ek_2, y}\Bigr\}.
\end{align}
Note that $G^{\rm{SW}}_{x,y}$ is a function of $\{I_{x+i\ek_1},I_{x+j\ek_2},\w_z:i,j\in\bbN,z\in x+\bbN^2\}$.
Hence the superscript SW which stands for southwest as this is where the $\w$-weights are switched to $I$ and $J$. 

\begin{figure}[ht]
\begin{center}
\begin{tikzpicture}[scale = 1, baseline=(current bounding box.north)]
\foreach \x in {-1,0,1,2,3,4}
	\foreach \y in {-1,0,1,2,3,4}
		\filldraw (\x,\y) circle (1.5pt);
\draw[opacity = .75] (-1,-1) -- (2,-1) -- (2,1) -- (2,3) -- (4,3) -- (4,4);
\draw (2,-1) circle [radius=.25];
\node[] at (-1.5,-1.5) {$x$};
\node[] at (0,-1.5) {$I_{x+\ek_1}$};
\node[] at (2,-1.5) {$\dots$};
\node[] at (4,-1.5) {$I_{x+5\ek_1}$};
\node[] at (-2,0) {$J_{x+\ek_2}$};
\node[] at (-2,2) {$\vdots$};
\node[] at (-2,4) {$J_{x+5\ek_2}$};
\draw[thin, dashed] (-1.25,4.25) -- (-1.25,-1.25) -- (4.25,-1.25) -- (4.25, -.75) -- (-.75,-.75) -- (-.75,4.25) -- (-1.25,4.25);
\end{tikzpicture}
\end{center}
\caption{An illustration of paths in the model with boundary conditions. The boundary is contained between the dashed lines, the geodesic is solid, and the exit point of the geodesic from the boundary is circled.}\label{fig:statmodel}
\end{figure}
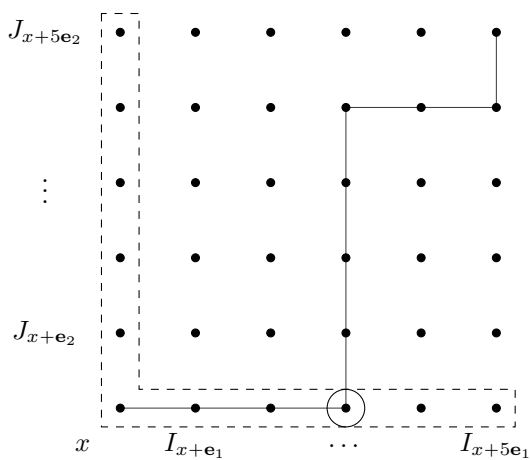

As in the bulk model, a geodesic in the model with boundary conditions is an up-right path that achieves the maximum in (\ref{GSW0}-\ref{GSW}). Recall that geodesics are not necessarily unique if the weights do not have a continuous distribution.

 Each geodesic path must exit the boundary at some point. See the circled vertex in Figure \ref{fig:statmodel} for an illustration of such an exit point. 
We denote by $\Exit_{x,y}^{\rm{SW}}(\w,I,J)$ the set of locations of exit points of the geodesics from $x$ to $y$, relative to the starting point $x$. That is, with the convention that we index exit points from the vertical boundary with negative numbers,
	\begin{align*}
	\Exit_{x,y}^{\rm{SW}} 
	&= \Bigl\{k\in\flint{1,y_1-x_1}:\sum_{i=1}^{k} I_{x+i \ek_1} + G_{x+k \ek_1 + \ek_2, y} = G_{x,y}^{\rm{SW}} \Bigr\}\\
	&\qquad\qquad\qquad\qquad\bigcup 
	\Bigl\{ -\ell:\ell\in\flint{1,y_2-x_2} \text{ and } \sum_{j=1}^{\ell} J_{x+j\ek_2} + G_{x+\ell \ek_2+\ek_1,y} = G_{x,y}^{\rm{SW}} \Bigr\}.
	\end{align*}
The furthest exit point in the $\ek_1$ direction is then given by 
    \[Z_{x,y}^{\rm{SW}, \ek_1} = \max \Exit_{x,y}^{\rm{SW}}\] 
and the furthest exit point in the $\ek_2$ direction is given by 
    \[Z_{x,y}^{{\rm{SW}}, \ek_2} = \min \Exit_{x,y}^{\rm{SW}}.\] 
Note that if $I_{x+k\ek_1}\le \bar I_{x+k\ek_2}$ for integers $1\le k\le(y-x)\cdot \ek_1$, then $Z_{x,y}^{{\rm{SW}},\ek_1}(\w,I,J)\le Z_{x,y}^{{\rm{SW}},\ek_1}(\w,\bar I,J)$, with a similar statement if the $J$-weights are increased.
 
The boundary weights that we will use in this paper are the random variables
$\{I_x^\xi,J_y^\xi:x,y\in\bbZ^2\}$, supplied by Theorem \ref{thm:coupledGenericLPP} for each fixed $\xi\in\ri\sU$. 
We then use the notation
    \begin{align}\label{Gxi}
    G_{x,y}^{\xi}(\w)=G_{x,y}^{\rm{SW}}(\w,I^\xi(\w),J^\xi(\w)).
    \end{align}
$\Exit_{x,y}^\xi$ and $Z_{x,y}^{\xi,\ek_k}$ are defined similarly.

When the starting point is the origin $\zero$ we will omit it from the index and abbreviate quantities of the form $A_{\zero,x}$ by writing $A_x$ or $A(x)$.
We will also sometimes write $A(m,n)=A_{m\ek_1+n\ek_2}$.

%
The significance of the particular choice of boundary weights is that while the bulk passage times are subadditive:
    \begin{align}\label{Gsubadd}
    G_{x,y}-\w_y+G_{y,z}-\w_z\le G_{x,z}-\w_z,\quad\forall x\le y\le z,
    \end{align}
the boundary passage times are additive:     \begin{align}\label{Gadd}
    G^\xi_{x,y}+G^\xi_{y,z}=G^\xi_{x,z}.
    \end{align}
In particular, for any $x=(m,n)\in\bbZ_{\ge0}^2$
    \[\E[G_{\zero,x}^\xi]=\E[G_{\zero,\ek_1}^\xi]m+\E[G_{\zero,\ek_2}^\xi]n=
    \E[I_{\ek_1}^\xi]m+\E[J_{\ek_2}^\xi]n.\]
The above leads to a variational characterization of the limiting shape of the bulk model. Indeed, \cite[Theorem 2.3]{Mar-04} and a standard coarse-graining argument (see, for example, \cite{Jan-Nur-Ras-20-}) imply that if $\E[\abs{\w_0}^{2+\e}]<\infty$ for some $\e>0$, 
then for $\P$-almost every $\w$, 
    \begin{align}\label{shapeThm}
    \varlimsup_{n\to\infty}n^{-1}\max_{\substack{x\in\bbZ_{\ge0}^2\\\abs{x}_1=n}}
    \abs{G_{\zero,x}-\gpp(x)}=0,
    \end{align}
where 
    \begin{align}\label{shape}
    \gpp(x)=\gpp(x_1,x_2)=\inf_{\xi\in\ri\sU}\bigl\{\E[I_{\ek_1}^\xi]x_1+\E[J_{\ek_2}^{\xi}]x_2\bigr\}\quad\text{for }x\in\bbR_{\ge0}^2.
    \end{align}
This expression for $\gpp$ is an immediate consequence of the construction in \cite[Theorem 4.7]{Jan-Ras-20-aop}, which defines our $I^\xi$ and $J^\xi$ (see (4.3) and Lemma 4.12 there), and the variational characterization of a homogeneous concave function in terms of its superdifferential. 

Another property of the $(I^\xi,J^\xi)$ process is that it is stationary and, as a consequence, 
\begin{align}\label{Gxi-stat}
\{G_{z+x,z+y}^{\xi}:y\ge x\text{ in }\bbZ^2\}\quad\text{has the same distribution as}
\quad\{G_{x,y}^\xi:y\ge x\text{ in }\bbZ^2\}\quad\forall z\in\bbZ^2.
\end{align}
This explains why the last-passage percolation model  with these boundary weights is called the \textit{stationary model} or the \textit{stationary LPP}.

The significance of specializing to the case where the weights are geometric random variables, i.e.\  $\w_0\sim\Geom(r)$ for some $r\in(0,1)$, is that because of the memoryless property of the geometric distribution, many explicit computations are possible. 
For this reason, this case is said to be \textit{solvable}. 
For example, for any $x\le y$ in $\bbZ^2$ and any $\xi\in\ri\sU$, 
\begin{align}\label{IJw-indep}
\bigl\{\w_z,G_{x,y+(n+1)\ek_i}^\xi - G_{x,y+n\ek_i}^\xi :n\in\bbZ_{\ge0},i\in\{1,2\},z-x\in\NN^2\bigr\}\quad
\text{are independent}
\end{align}
and, marginally, $G_{x,y+\ek_1}^\xi - G_{x,y}^\xi \sim \Geom(p)$ and $G_{x,y+\ek_2}^\xi- G_{x,y}^\xi \sim \Geom(r/p)$, with $p=\pxi(\xi)$ given by
        \begin{align}\label{xi->p}
        \pxi(\xi)=\pxi(\xi_1,\xi_2) = \frac{r(\xi_1+\xi_2) + (r+1)\sqrt{r \xi_1\xi_2}}{\xi_1 + r\xi_2 + 2\sqrt{r\xi_1\xi_2}}\,\in(r,1)\quad\text{for $\xi\in\bbR_{>0}^2$.}
        \end{align}
These are some of the properties contained in Theorem \ref{thm:coupledGeomLPP}.

For a fixed $r\in(0,1)$, \eqref{xi->p} gives a bijection from $\ri\sU$ to $(r,1)$ with the inverse function given  by 
    \begin{align}\label{p->xi}
    \xip(p)= \left(\frac{r(1-p)^2}{p^2(r+1)-4pr+r(r+1)}\,,\,\frac{(p-r)^2}{p^2(r+1)-4pr+r(r+1)}\right)\in\ri\sU.
    \end{align}
Switching from $\xi$ to $p$ in the variational formula \eqref{shape} and then using the explicit distributions allows to solve \eqref{shape} explictly and get
\begin{align}
\gpp(x) = \gpp(x_1,x_2)=\inf_{p \in (r, 1)} M^p(x) = \frac{r}{1-r}(x_1+x_2) + \frac{2\sqrt{r}}{1-r} \sqrt{x_1 x_2}\,, \label{eq:shape}
\end{align}
where
\begin{align}
M^p(x)=M^p(x_1,x_2) 
=  \frac{px_1}{1-p} + \frac{\frac{r}{p} x_2}{1-\frac{r}{p}} = \frac{px_1}{1-p} + \frac{rx_2}{p-r}\,. \label{eq:statshape}
\end{align}


\section{Geodesic Fluctuation Bounds}\label{sec:geo_fluct}
Theorem \ref{thm:coupledGenericLPP} produces random variables
$\{I_x^\xi,J_y^\xi:x,y\in\bbZ^2, \xi\in\ri\sU\}$ and the passage times that use these variables as boundary weights, which we denote by $G_{x,y}^{\xi}(\w)=G_{x,y}^{\rm{SW}}(\w,I^\xi(\w),J^\xi(\w))$.

In this section, we give bounds on the size of the fluctuations of the point-to-point geodesics under the following assumption on the tails of 
$Z^{\xi,\ek_k}_{0,x}$, when $\abs{x}_1$ is large and $x/\abs{x}_1$ is close to $\xi$.
For $\delta\in(0,1)$, define the cone 
    \[S_\delta = \{x \in \RR_{>0}^2 : x\cdot \ek_1 \geq \delta x \cdot \ek_2 \text{ and } x \cdot \ek_2 \geq \delta x\cdot \ek_1  \}.\] 
\begin{assumption}\label{exitPointTailBound}

	There exist a $\fluctexp>2$ and a $\delta_0\in(0,1)$ such that for any $\delta \in (0,\delta_0)$ and $\kappa \geq 0$, there exist positive finite constants $C_0(\delta)$, $N_0(\delta,\kappa)$, and $s_0(\delta,\kappa)$ such that 
    \begin{align}\label{poly-tail}
    \P\bigl\{\abs{Z^{\xi, \ek_1}(m,n)}\vee\abs{Z^{\xi, \ek_2}(m,n)} \geq s (m + n)^{2/3}\bigr\} 
    \leq C_0 s^{-\nu},
    \end{align}
		for all $(m, n) \in S_\delta \cap \ZZ_{\geq N_0}^2$, $s \geq s_0$, and $\xi\in\ri\sU$ such that $\xi_1 \in (\delta, 1-\delta)$ and $\abs{\xi_1 - \frac{m}{m+n}} \leq \kappa(m+n)^{-1/3}$.
\end{assumption}\smallskip

	By Theorem \ref{geometricExitPointTailBound}, this assumption is satisfied for any $\nu>2$ when $\w_0$ is geometrically distributed. This assumption is verified in the case of exponential weights in \cite[Corollary 4.3]{Bal-Bus-Sep-20}, with a sharp bound appearing in \cite[Corollary 3.2]{Emr-Jan-Sep-21-}. 
	

We begin with some preliminary observations about the structure of last-passage percolation. Given points $x\le y$ in $\bbZ^2$ and weights $\w$, define the boundary weights 
	\begin{align}\label{G-IJ}
	\begin{split}
	&I_y^{[x]}(\w)=G_{x, y}(\w)-G_{x,y-\ek_1}(\w),\quad\text{when $x\le y-\ek_1$, and}\\ 
	&J_y^{[x]}(\w)=G_{x,y}(\w)-G_{x, y-\ek_2}(\w),\quad\text{when $x\le y-\ek_2$}.
	\end{split}
	\end{align}
Then for $z\in y+\bbZ^2_{\ge0}$, let
    \[G_{y,z}^{[x]}(\w)=G^{\rm{SW}}_{y,z}(\w,I^{[x]}(\w),J^{[x]}(\w))
\quad\text{and}\quad 
    Z^{[x],\ek_2}_{y,z}(\w)=Z^{\rm{SW},\ek_2}_{y,z}(\w,I^{[x]}(\w),J^{[x]}(\w)).\]

The following is immediate from the definitions.  See, for example, \cite[Lemma A.1]{Sep-18}. 

\begin{lemma}\label{lem:pushForwardLPP} Let $x \leq y \leq z$ in $\bbZ^2$. Fix a configuration of weights $\w\in\Omega$. Then $G_{x, z}(\w) = G_{x, y}(\w) + G_{y, z}^{[x]}(\w)$.  Furthermore, if an upright path is a geodesic of $G_{x, z}(\w)$, then its restriction to $y + \ZZ_{\geq 0}^2$ is part of a geodesic of $G_{y, z}^{[x]}(\w)$. 
Likewise, if an upright path is part of a geodesic of $G_{y,z}^{[x]}(\w)$, then it can be extended to a geodesic of $G_{x, z}(\w)$. 
\end{lemma}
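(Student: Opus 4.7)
The plan is to verify the lemma directly from the definitions \eqref{GSW0}--\eqref{GSW} of the boundary passage times and \eqref{G-IJ} of the increment weights $I^{[x]}, J^{[x]}$. The key observation is that the maxima in the definition of $G^{[x]}_{y,z}$ are indexed by the last point at which a path from $y$ to $z$ lies on the south or west boundary of the cone $y+\bbZ_{\ge 0}^2$, and the same decomposition applies to any bulk path from $x$ to $z$ once it enters that cone. No probabilistic input is needed; the argument is purely combinatorial.

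To establish the additive identity $G_{x,z}=G_{x,y}+G^{[x]}_{y,z}$ I would prove two inequalities. For $\ge$, take any term in the max defining $G^{[x]}_{y,z}$, say the horizontal $m$-th term $\sum_{i=1}^m I^{[x]}_{y+i\ek_1}+G_{y+m\ek_1+\ek_2,z}$; telescoping via \eqref{G-IJ} rewrites $\sum_{i=1}^m I^{[x]}_{y+i\ek_1}$ as $G_{x,y+m\ek_1}-G_{x,y}$, so adding $G_{x,y}$ turns this term into $G_{x,y+m\ek_1}+G_{y+m\ek_1+\ek_2,z}$. The latter is the weight of the up-right path obtained by concatenating a bulk geodesic from $x$ to $y+m\ek_1$, the step up to $y+m\ek_1+\ek_2$, and a bulk geodesic on to $z$, hence bounded above by $G_{x,z}$. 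Maximizing over $m$ and the symmetric vertical terms gives the $\ge$ inequality. For the reverse, take any up-right path $\pi$ from $x$ to $z$: once $\pi$ enters the cone $y+\bbZ_{\ge 0}^2$ it never leaves, and outside the trivial cases where $z$ lies on an axis through $y$, there is a well-defined last point of $\pi$ on the south or west boundary where $\pi$ takes its final step into the strict interior. Writing this point as $y+m\ek_1$ (the vertical case being symmetric) and splitting $\pi$ there, the weights of the two segments are at most $G_{x,y+m\ek_1}$ and $G_{y+m\ek_1+\ek_2,z}$; rewriting the first via \eqref{G-IJ} shows the weight of $\pi$ is at most $G_{x,y}+G^{[x]}_{y,z}$.

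The geodesic correspondence then follows by tracking equality in the above inequalities. If $\pi$ is a geodesic of $G_{x,z}$ with last boundary point $y+m\ek_1$, the upper-bound computation is forced to be tight at every step, so $m$ realizes the max in $G^{[x]}_{y,z}$, the segment of $\pi$ from $x$ to $y+m\ek_1$ is a bulk geodesic, and the segment from $y+m\ek_1+\ek_2$ to $z$ is a bulk geodesic; prepending the canonical boundary traversal $y\to y+\ek_1\to\cdots\to y+m\ek_1$ produces a geodesic of $G^{[x]}_{y,z}$, of which the restriction of $\pi$ to $y+\bbZ_{\ge 0}^2$ is a final sub-path. Conversely, a sub-path of any geodesic of $G^{[x]}_{y,z}$ may be extended on the left by the requisite boundary traversal from $y$ and a bulk geodesic from $x$, and on the right analogously; the same telescoping computation shows the extended up-right path has weight $G_{x,y}+G^{[x]}_{y,z}=G_{x,z}$ and is therefore a geodesic of $G_{x,z}$.

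I do not expect any real difficulty here: the lemma is essentially a bookkeeping exercise using only the definitions \eqref{GSW0}--\eqref{G-IJ}, consistent with the fact that the authors treat it as immediate and refer to \cite[Lemma A.1]{Sep-18}.
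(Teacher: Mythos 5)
Your verification of the additivity $G_{x,z}=G_{x,y}+G^{[x]}_{y,z}$ and of the forward geodesic correspondence is correct; the paper offers no proof of its own and simply cites \cite[Lemma A.1]{Sep-18}. Your converse direction, however, has a gap. The extension you describe -- ``on the left by the requisite boundary traversal from $y$ and a bulk geodesic from $x$'' -- does not in general have total weight $G_{x,y}+G^{[x]}_{y,z}$. A concatenated up-right path that passes through $y$ and then travels along the axis to the exit vertex $y+m\ek_1$ collects the \emph{bulk} weights $\w_{y+i\ek_1}$ there, whereas the boundary model charges $I^{[x]}_{y+i\ek_1}=G_{x,y+i\ek_1}-G_{x,y+(i-1)\ek_1}\ge\w_{y+i\ek_1}$, which is strictly larger unless a bulk geodesic from $x$ to $y+i\ek_1$ happens to pass through $y+(i-1)\ek_1$. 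The correct extension takes a bulk geodesic from $x$ \emph{directly to the exit vertex} $y+m\ek_1$, bypassing $y$, and then follows $\sigma$ from $y+m\ek_1+\ek_2$ to $z$; then $G_{x,y+m\ek_1}+G_{y+m\ek_1+\ek_2,z}=G_{x,y}+\sum_{i=1}^m I^{[x]}_{y+i\ek_1}+G_{y+m\ek_1+\ek_2,z}=G_{x,z}$, and the path is a genuine $G_{x,z}$-geodesic.

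But this correct construction only extends sub-paths of $\sigma$ that lie on or after the exit point $y+m\ek_1$; a sub-path visiting an earlier boundary vertex $y+i\ek_1$ with $0\le i<m$ may extend to no $G_{x,z}$-geodesic at all. For a concrete example, take $x=(0,0)$, $y=(1,1)$, $z=(2,2)$ with $\w_{(1,0)}>\w_{(0,1)}$ and $\w_{(2,0)}$ very large: then the unique geodesic of $G_{x,z}$ is $(0,0),(1,0),(2,0),(2,1),(2,2)$, which misses $y=(1,1)$ entirely, while the unique geodesic of $G^{[x]}_{y,z}$ is $(1,1),(2,1),(2,2)$, whose sub-path $(1,1)\to(2,1)$ therefore cannot be extended. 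The lemma's wording is loose here, and you reproduced its looseness rather than resolving it; what the downstream arguments (in particular Lemma \ref{lem:exitPointShift} and the exit-point computations) actually need, and all that is true, is the correspondence of the portions inside $y+\ZZ_{\geq 0}^2$ and of the exit points. Your proof should state and prove the converse in that restricted form.
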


The next lemma is a direct consequence of the one above.

\begin{lemma}\label{lem:exitPointShift}
	Let $\ell,m$ be positive integers. 
	Let $x\ge y$ be in $\ZZ^2$. 
	Take $i \in \{1, 2\}$.
	Fix a configuration of weights $\w\in\Omega$.
	Then $Z_{x, y}^{{\rm{SW}},\ek_i}(\w) = \ell + m$ if and only if $Z_{x+ \ell \ek_1, y}^{[x],\ek_i}(\w) = m$. Similarly, $Z_{x, y}^{{\rm{SW}},\ek_i}(\w) = -\ell - m$ if and only if $Z_{x+ \ell \ek_2, y}^{[x],\ek_i}(\w) = -m$.
\end{lemma}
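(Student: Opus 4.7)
The plan is to deduce this lemma directly from Lemma \ref{lem:pushForwardLPP}. Reading the boundary model on the right-hand side as specifying the convention for the left (both are the boundary LPP $G^{[x]}$), the lemma amounts to the bookkeeping statement that translating the starting point by $\ell\ek_1$ translates the exit-point label by $\ell$.

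For the forward direction, if $Z^{\mathrm{SW},\ek_i}_{x,y}(\w)=\ell+m$, then some geodesic of $G^{[x]}_{x,y}$ begins with exactly $\ell+m$ consecutive $\ek_1$-steps along the horizontal boundary; since $\ell\le\ell+m$, this geodesic passes through $x+\ell\ek_1$. By Lemma \ref{lem:pushForwardLPP} applied with intermediate vertex $x+\ell\ek_1$, one has $G^{[x]}_{x,y}=G^{[x]}_{x,x+\ell\ek_1}+G^{[x]}_{x+\ell\ek_1,y}$, and the telescoping identity $G^{[x]}_{x,x+\ell\ek_1}=\sum_{i=1}^{\ell}I^{[x]}_{x+i\ek_1}$ combined with the definition \eqref{GSW} forces the tail of the geodesic from $x+\ell\ek_1$ onward to realize $G^{[x]}_{x+\ell\ek_1,y}$ by exiting the new horizontal boundary at position $m$. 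This yields $Z^{[x],\ek_i}_{x+\ell\ek_1,y}(\w)=m$. The converse is simply concatenation: given a geodesic of $G^{[x]}_{x+\ell\ek_1,y}$ with horizontal exit at $m$, prepend the unique $\ell$-step horizontal segment from $x$ to $x+\ell\ek_1$ and apply the same additive identity to verify the concatenation is a geodesic of $G^{[x]}_{x,y}$ with exit at $\ell+m$. The second claim of the lemma (negative exit indices and the shift $x\mapsto x+\ell\ek_2$) is identical up to swapping the roles of $\ek_1,\ek_2$ and of $I^{[x]},J^{[x]}$.

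There is no substantive obstacle here; this is a pure bookkeeping shift identity whose content is entirely extracted from Lemma \ref{lem:pushForwardLPP}. The only care required is in observing that the $[x]$-boundary weights read along the horizontal axis emanating from $x+\ell\ek_1$ are simply the tail of the same sequence read from $x$, which is immediate from \eqref{G-IJ}.
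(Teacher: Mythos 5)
Your approach — extracting the exit-point shift from the geodesic restriction/extension correspondence of Lemma \ref{lem:pushForwardLPP} — matches the paper, which offers no further argument and simply records the lemma as a direct consequence of Lemma \ref{lem:pushForwardLPP}. Two points of precision are worth noting. First, the forward argument as you state it only exhibits $m\in\Exit^{[x]}_{x+\ell\ek_1,y}$; the conclusion $Z^{[x],\ek_i}_{x+\ell\ek_1,y}=m$ (that $m$ is the maximum, resp.\ minimum, of that exit set) requires also invoking the converse concatenation step to rule out a larger (resp.\ smaller positive) element, and symmetrically for the backward implication. You do prove both directions, so this is a matter of combining them, not a missing idea. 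Second, and more substantively, for $i=2$ you must additionally exclude \emph{negative} elements of $\Exit^{[x]}_{x+\ell\ek_1,y}$: a geodesic of $G^{[x]}_{x+\ell\ek_1,y}$ whose first step is $\ek_2$ extends, by the very same restriction/extension mechanism of Lemma \ref{lem:pushForwardLPP}, to a geodesic of $G^{\rm{SW}}_{x,y}$ that exits the boundary at a position $\le\ell$, contradicting $Z^{\rm{SW},\ek_2}_{x,y}=\ell+m>\ell$; and in the converse direction one must likewise exclude an exit of $G^{\rm{SW}}_{x,y}$ at a position $\le\ell$ (including negative ones). These are additional applications of the same concatenation argument you already deploy, but they are needed for the $\ek_2$ case of the equivalence and are not addressed in your write-up. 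With those added, the proof is complete and is indeed the intended bookkeeping consequence of Lemma \ref{lem:pushForwardLPP}.
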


The above definitions and lemmas are deterministic statements and work for every fixed choice of the environment $\w$. Therefore, by considering passage times with boundary weights $I^\xi$ and $J^\xi$ and recalling \eqref{Gadd}, we see that both lemmas hold if we replace $G_{x,z}$, $G_{x,y}$, $G_{y,z}^{[x]}$, $Z_{x,y}^{{\rm{SW}},\ek_i}$, and $Z^{[x],\ek_i}_{x+\ell \ek_i,y}$, $i\in\{1,2\}$, with, respectively, $G^{\xi}_{x,z}$,  $G^{\xi}_{x,y}$, $G^{\xi}_{y,z}$, $Z_{x,y}^{\xi,\ek_i}$, and $Z^{\xi,\ek_i}_{x+\ell \ek_i,y}$, $i\in\{1,2\}$. 


	\begin{cor}\label{cor:exitPointsOffCharDirection} Suppose Assumption \ref{exitPointTailBound} holds. Then for any $\delta \in (0,\delta_0)$, $A>0$, and $\kappa \geq 0$ there exist positive finite constants $C_1(\delta,\delta_0,\fluctexp,A)$, $N_1(\delta,\delta_0,\kappa)\ge1$, and $s_1(\delta,\delta_0,\kappa)$ such that 
	\begin{equation}
	\P\{Z^{\xi, \ek_2}(m,n- \fl{ s(m+n)^{2/3} }) < 0\} \leq C_1 s^{-\fluctexp} \label{Cor4-3-L}
	\end{equation}
	and 
	\begin{equation}
	\P\{Z^{\xi, \ek_1}(m,n+ \fl{ s(m+n)^{2/3} }) > 0\} \leq C_1 s^{-\fluctexp} \label{Cor4-3-U}
	\end{equation}
	for all $(m,n) \in S_{\delta} \cap \ZZ^2_{\geq N_1}$, $s \geq s_1$,  $\xi\in\ri\sU$ such that $\xi_1 \in (\delta, 1-\delta)$ and $\abs{\xi_1 - \frac{m}{m+n}} \leq \kappa(m+n)^{-1/3}$, and with $n - \fl{ s(m+n)^{2/3} } \geq 1$, in the case of \eqref{Cor4-3-L}, and $s\le A(m+n)^{1/3}$, in the case of \eqref{Cor4-3-U}.
	\end{cor}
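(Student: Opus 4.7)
By the obvious symmetry between the two claims I sketch only \eqref{Cor4-3-U}; \eqref{Cor4-3-L} is analogous (and in fact simpler, since there the realigning shift produces $(m,n)$ itself). The strategy is to combine Lemma \ref{lem:exitPointShift} (in its $\xi$-LPP form, as justified in the paragraph following that lemma) with the stationarity \eqref{Gxi-stat} to convert the off-characteristic exit-point event into an on-characteristic large-deviation event, to which Assumption \ref{exitPointTailBound} applies directly.

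The key is to shift the target so that $\xi$ becomes approximately characteristic for it. With $T=\fl{s(m+n)^{2/3}}$, set $L=\fl{mT/n}$ and $y'=(m+L,\,n+T)$. A direct computation gives $(m+L)/(m+L+n+T)=m/(m+n)+O((m+n)^{-1})$, so $y'\in S_\delta$ and $|\xi_1-(m+L)/(m+L+n+T)|\le (\kappa+1)(m+L+n+T)^{-1/3}$ for $m+n$ large. Moreover the hypothesis $s\le A(m+n)^{1/3}$ forces $T/n\le C_{\delta,A}$, so the new total length $M:=m+L+n+T$ is comparable to $m+n$ and satisfies $M^{2/3}\le C'_{\delta,A}(m+n)^{2/3}$. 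All size and direction conditions for Assumption \ref{exitPointTailBound} will therefore hold at $y'$ with parameter $\xi$ and inflated $\kappa'=\kappa+1$, provided $s_1$ and $N_1$ are taken suitably large.

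The key identity follows from Lemma \ref{lem:exitPointShift}: for each positive integer $j$, $\{Z^{\xi,\ek_1}_{\zero,y'}=L+j\}=\{Z^{\xi,\ek_1}_{L\ek_1,y'}=j\}$, hence $\{Z^{\xi,\ek_1}_{\zero,y'}\ge L+1\}=\{Z^{\xi,\ek_1}_{L\ek_1,y'}\ge 1\}$. Stationarity \eqref{Gxi-stat} identifies the distribution of $Z^{\xi,\ek_1}_{L\ek_1,y'}$ with that of $Z^{\xi,\ek_1}_{\zero,(m,n+T)}$, and since exit points take values in $\bbZ\setminus\{0\}$,
\[
\P\{Z^{\xi,\ek_1}_{\zero,(m,n+T)}>0\}=\P\{Z^{\xi,\ek_1}_{L\ek_1,y'}\ge 1\}=\P\{Z^{\xi,\ek_1}_{\zero,y'}\ge L+1\}.
\]

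Finally, since $m/n\ge \delta$, one has $L\ge \delta T-1\ge c_\delta s(m+n)^{2/3}$, so choosing $s'=c_{\delta,A}s$ with $c_{\delta,A}$ small enough forces $s'M^{2/3}\le L+1$ and yields the inclusion $\{Z^{\xi,\ek_1}_{\zero,y'}\ge L+1\}\subseteq\{|Z^{\xi,\ek_1}_{\zero,y'}|\ge s'M^{2/3}\}$. Applying Assumption \ref{exitPointTailBound} at $y'$ then produces the desired bound $C_1 s^{-\nu}$ with $C_1=C_0(\delta)\,c_{\delta,A}^{-\nu}$. The only bookkeeping—rather than any conceptual obstacle—is verifying the size conditions $M\ge N_0$, $s'\ge s_0$ and $(m+L)/M\in(\delta,1-\delta)$ at the shifted target, which is straightforward after enlarging $s_1$ and $N_1$; this is the main (modest) hurdle in the argument.
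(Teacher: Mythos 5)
Your proof is correct and follows essentially the same route as the paper: shift the target point to realign the characteristic direction, reduce via Lemma~\ref{lem:exitPointShift} and stationarity to an exit-point bound at $(\tilde m,\tilde n)=(m+L,n+T)$, and invoke Assumption~\ref{exitPointTailBound} there with a rescaled parameter $s'$. One small inaccuracy in your bookkeeping: because $L=\lfloor mT/n\rfloor$ can fall one short of $mT/n$, the shifted point $y'$ satisfies $m+L\ge\delta(n+T)-1$ rather than $m+L\ge\delta(n+T)$, so $y'$ lands only in $S_{\delta'}$ for some $\delta'<\delta$ (the paper uses $\delta/2$) once $N_1$ is large; this is exactly the sort of adjustment you flagged as bookkeeping and it does not affect the validity of the argument.
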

	
	\begin{proof} 
	Fix $\delta$ and $\kappa$ as in the claim.
	Recall the constants $N_0$ and $s_0$ in Assumption \ref{exitPointTailBound}.
	Take $(m,n) \in S_{\delta} \cap \ZZ^2_{\geq N_0}$ and $s \geq\max(2s_0,2^{1/3}N_0^{-2/3})$ such that $n - \fl{ s(m+n)^{2/3} } \geq 1$. Take $\xi\in\ri\sU$ such that $\xi_1 \in (\delta, 1-\delta)$ and $\abs{\xi_1 - \frac{m}{m+n}} \leq \kappa(m+n)^{-1/3}$. 
	Apply shift-invariance, Lemma \ref{lem:exitPointShift}, and  \eqref{poly-tail} to obtain
	\begin{align*}
	\P\bigl\{Z^{\xi, \ek_2}_{(0,0), (m, n - \fl{ s(m+n)^{2/3} } )} < 0\bigr\}
	&= \P\bigl\{Z^{\xi, \ek_2}_{(0, \fl{ s(m+n)^{2/3} }), (m, n )} < 0\bigr\} 
	\\
	&= \P\bigl\{Z^{\xi, \ek_2}_{(0,0), (m,n)} < -\fl{ s(m+n)^{2/3} } \bigr\} 
	\leq \P\bigl\{Z^{\xi, \ek_2}_{(0,0), (m,n)} < -s(m+n)^{2/3}/2 \bigr\}   \\
	&\leq 2^{\fluctexp}C_0s^{-\fluctexp}.
	\end{align*}
	
	For \eqref{Cor4-3-U}, 
	let $\bar N_0=N_0(\delta/2,\kappa+1)$ and 
	\[\bar s_0=\max\bigl(s_0(\delta/2,\kappa+1),2(1+\delta)\delta^{-1}\bar N_0^{-2/3},(1+\delta)^{-1}\delta\bar N_0^{1/3}\bigr).\] 
	Take $(m,n) \in S_{\delta} \cap \ZZ^2_{\ge\bar N_0}$ and $s\ge\bar s_0$. Let $d = \fl{ s(m+n)^{2/3} }$, $\tilde{n} = n + d$ and $\tilde{m} = m + \left\lfloor  \frac{dm}{n} \right\rfloor$. 
	Then
	\[ \frac{\delta}2\le \frac{\delta n+\delta s(m+n)^{2/3}-1-\delta}{n+s(m+n)^{2/3}}\le\frac{\tilde{m}}{\tilde{n}}\le\frac1\delta\le\frac2\delta\,.\]
	Take $\xi\in\ri\sU$ such that $\xi_1\in(\delta, 1-\delta)$ and $\abs{\xi_1 - \frac{m}{m+n}} \leq \kappa(m+n)^{-1/3}$. Then	
    \[\bigl|\xi_1-\frac{\tilde m}{\tilde m+\tilde n}\bigr|\le
	\bigl|\xi_1-\frac{m}{m+n}\bigr|+\frac{md-n\lfloor dm/n\rfloor}{(m+n)^2}\le\kappa(m+n)^{-1/3}+(m+n)^{-1}\le(\kappa+1)(m+n)^{-1/3}.\]
	Furthermore, if we take $s\le A(m+n)^{1/3}$, then we get
	\[\frac{\tilde m+\tilde n}{m+n}\le1+\frac{(\delta^{-1}+1)d}{m+n}\le1+(\delta^{-1}+1)s(m+n)^{-1/3}\le1+(\delta^{-1}+1)A.\]
	Now, similar to the above computation, we have
	\begin{align*}
	\P\bigl\{Z^{\xi, \ek_1}(m,n+\lfloor s(m+n)^{2/3}\rfloor) > 0 \bigr\} 
	&=\P\Bigl\{Z^{\xi, \ek_1}( \tilde{m} , \tilde{n}) >  \bigl\lfloor\frac{d m}{n}\bigr\rfloor \Bigr\} \\
	&\leq \P\bigl\{Z^{\xi, \ek_1}( \tilde{m} , \tilde{n}) >  \tfrac12\delta s(m+n)^{2/3} \bigr\} 		
\\
	&\leq \P\bigl\{Z^{\xi, \ek_1}( \tilde{m} , \tilde{n}) >  \tfrac12\delta\bigl(1+(\delta^{-1}+1)A\bigr)^{-2/3} s(\tilde m+\tilde n)^{2/3} \bigr\} \\
	&\leq (2/\delta)^{\fluctexp}\bigl(1+(\delta^{-1}+1)A\bigr)^{2\fluctexp/3}C_0(\delta/2) s^{-\fluctexp}.\qedhere
	\end{align*}
\end{proof}

	For $(m, n) \in \NN^2$, $\alpha \in (0,1)$, and $s > 0$, define 
	\[
	\mc{C}_{\alpha, s}^{(m,n)} = \flint{(\fl{\alpha m},\fl{\alpha n}) - s(m+n)^{2/3}\ek_2, (\fl{\alpha m }, \fl{\alpha n}) + s(m+n)^{2/3}\ek_2}.
	\]
	$\mc{C}_{\alpha, s}^{(m,n)}$ is the symmetric vertical line segment centered at $ (\fl{ \alpha m }, \fl{ \alpha n })$ with length $2s (m+n)^{2/3}$.
	For $x\in\bbR_+^2\setminus\{\zero\}$, let
	\[\ximn(x) = \frac{x}{\abs{x}_1}\in \sU.\]  
	Let $\pi^{(m,n),\ek_1}$ and $\pi^{(m,n),\ek_2}$  denote, respectively, the rightmost  and the upmost geodesics of $G(m,n)$.

	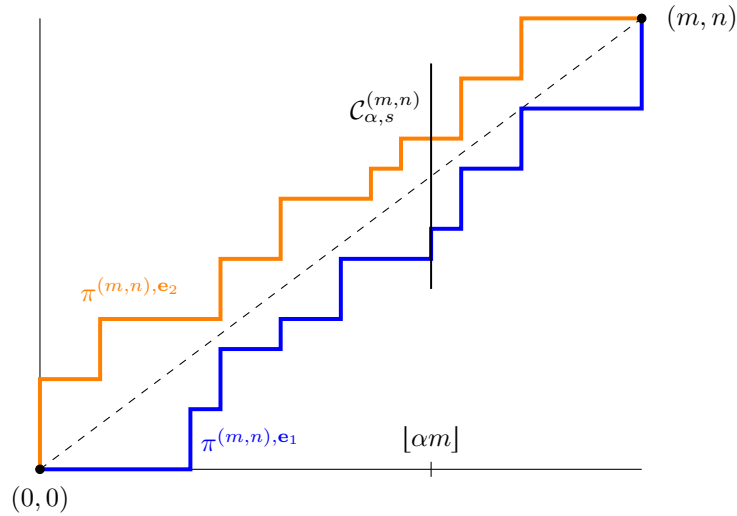
\begin{figure}[ht]
		\begin{center}
			\begin{tikzpicture}[scale = 0.4, baseline=(current bounding box.north)]
				\draw (0,15) -- (0,0) -- (20,0);
				\draw[dashed] (0,0) -- (20,15);
				
				\draw[line width=1.5pt, orange] (0,0) -- (0,3) -- (2,3) -- (2,5) -- (6,5) -- (6,7) -- (8,7) -- (8,9) -- (11,9) -- (11,10) -- (12,10) -- (12,11) -- (14,11) -- (14,13) -- (16,13) -- (16,15) -- (20,15);
				\node[orange] at (3,6) { $\pi^{(m,n), \ek_2}$};
			
				\draw[line width=1.5pt, blue] (0,0) -- (5,0) -- (5,2) -- (6,2) -- (6,4) -- (8,4) -- (8,5) -- (10,5) -- (10,7) -- (13,7) -- (13,8) -- (14,8) -- (14,10) -- (16,10) -- (16,12) -- (20,12) -- (20,15);
				\node[blue] at (7,1) { $\pi^{(m,n), \ek_1}$};
				
				\draw (13,-0.25) -- (13,0.25);
				\node at (13,1) {$\fl{\alpha m}$};
				
				\draw[line width=0.75pt] (13, 6) -- (13, 13.5);
				\node at (11.5,12) { $\mc{C}_{\alpha, s}^{(m,n)}$};
				
				\filldraw (0,0) circle (4pt);
				\node[] at (0,-1) { $(0,0)$};
				
				\filldraw (20,15) circle (4pt);
				\node[] at (22,15) {$(m,n)$};
				
			\end{tikzpicture}
		\end{center}
		\caption{An illustration of the high probability event in Lemma \ref{lem:geodesicsIntersectVertical}.  The upmost and rightmost geodesics from $(0,0)$ to $(m,n)$ will intersect the vertical line segment $\mc{C}_{\alpha, s}^{(m,n)}$. }\label{fig:geodesicsIntersectVertical}
	\end{figure}

	\begin{lemma}\label{lem:geodesicsIntersectVertical} 
	Suppose Assumption \ref{exitPointTailBound} holds.
	For $0 < \delta < \delta_0$ and $0 < \e < \frac{1}{2}$, there exist finite positive constants $C_2(\delta, \delta_0, \fluctexp, \e)$, $N_2(\delta, \delta_0)\ge1$, and $s_2(\delta, \delta_0)$ such that the following holds:  for all $(m,n) \in S_\delta \cap \ZZ_{\geq N_2}^2$, $\alpha \in (\e, 1-\e)$, and $s \in \bigl[s_2, \frac{\e \delta}{3(\delta+1)}(m+n)^{1/3} \bigr] $,
	\begin{align}\label{main-sec3}
	\P\Bigl\{(\pi^{(m,n), \ek_1} \cap \mc{C}_{\alpha, s}^{(m,n)} = \varnothing) \cup (\pi^{(m,n), \ek_2} \cap \mc{C}_{\alpha, s}^{(m,n)} = \varnothing)\Bigr\} \leq C_2 s^{-\fluctexp}. 
	\end{align}
	\end{lemma}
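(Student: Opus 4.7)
The plan is to decompose the failure event in \eqref{main-sec3} using the structure of the rightmost and upmost geodesics and reduce it to the exit-point bound in Corollary \ref{cor:exitPointsOffCharDirection}. Set $m' = \fl{\alpha m}$ and $n^\pm = \fl{\alpha n} \pm \fl{s(m+n)^{2/3}}$. Because the rightmost geodesic $\pi^{(m,n),\ek_1}$ has the smallest entry and exit heights in every visited column, and the upmost geodesic $\pi^{(m,n),\ek_2}$ has the largest, the event on the left-hand side of \eqref{main-sec3} is contained in $E_- \cup E_+$, where $E_-$ is the event that $\pi^{(m,n),\ek_1}$ exits column $m'$ at height less than $n^-$, and $E_+$ is the event that $\pi^{(m,n),\ek_2}$ enters column $m'$ at height greater than $n^+$.

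Next I would choose the direction $\xi = (m', \fl{\alpha n})/(m' + \fl{\alpha n}) \in \ri\sU$. Under the hypotheses $(m,n) \in S_\delta$, $\alpha \in (\e, 1-\e)$, and $N_2$ large, $\xi_1$ is bounded away from $\{0, 1\}$ uniformly in $\delta, \e$, and $|\xi_1 - m/(m+n)| = O((m+n)^{-1})$. The condition $s \le \tfrac{\e\delta}{3(\delta+1)}(m+n)^{1/3}$ in the statement ensures $n^- \ge 1$ and that the shifted endpoints $(m', n^\pm)$ fall in the cone and regime of validity of Corollary \ref{cor:exitPointsOffCharDirection}, with its $s$-parameter rescaled by the bounded factor $(m+n)/(m' + \fl{\alpha n}) \in [1, 2/\e]$.

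The crux is to establish the containments
\begin{align*}
E_- \subseteq \bigl\{Z^{\xi, \ek_2}_{\zero, (m', n^-)} < 0\bigr\} \quad \text{and} \quad E_+ \subseteq \bigl\{Z^{\xi, \ek_1}_{\zero, (m', n^+)} > 0\bigr\},
\end{align*}
from which Corollary \ref{cor:exitPointsOffCharDirection} directly yields $\P(E_\pm) \le C s^{-\fluctexp}$ and hence \eqref{main-sec3}. To derive these containments, I would couple the bulk and stationary LPPs and invoke Lemmas \ref{lem:pushForwardLPP} and \ref{lem:exitPointShift}, decomposing the bulk passage time at an intermediate column-$m'$ point of the relevant bulk geodesic and translating exit points under shifts in order to convert the bulk deviation event into a condition on the stationary exit point at the shifted endpoint.

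The main obstacle will be precisely this comparison. Rigorously converting a bulk-geodesic event into a stationary exit-point event requires planarity (non-crossing) of rightmost and upmost geodesics, monotonicity of exit points with respect to the boundary weights, and a careful coupling between the bulk LPP and the stationary LPP supplied by Theorem \ref{thm:coupledGenericLPP}. This is the abstract analogue, in the general framework of this paper, of the comparison argument carried out for exponential LPP in \cite{Bal-Bus-Sep-20}.
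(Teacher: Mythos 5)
Your decomposition of the failure event into $E_-\cup E_+$ (rightmost geodesic passes below $\mc C$ at column $m'$, or upmost geodesic passes above it) is correct and mirrors the spirit of the paper's argument. The problem is the pair of containments
\[
E_- \subseteq \bigl\{Z^{\xi, \ek_2}_{\zero, (m', n^-)} < 0\bigr\}, \qquad E_+ \subseteq \bigl\{Z^{\xi, \ek_1}_{\zero, (m', n^+)} > 0\bigr\},
\]
which you treat as the crux but do not establish, and which I do not believe are true. The event $E_-$ concerns the \emph{bulk} geodesic from $\zero$ to the fixed far endpoint $(m,n)$; it is determined by the full weight field, including everything to the right of column $m'$. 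The event $\{Z^{\xi,\ek_2}_{\zero,(m',n^-)}<0\}$ concerns a \emph{stationary} LPP to the near endpoint $(m',n^-)$, which sees only weights in $\flint{\zero,(m',n^-)}$. There is no monotone coupling, and no application of Lemma~\ref{lem:pushForwardLPP} or Lemma~\ref{lem:exitPointShift}, that converts one into the other. Worse, the two events are heuristically anticorrelated: $E_-$ (bulk geodesic dips low, i.e.\ drifts right early) is favored by large weights near the horizontal axis, whereas $Z^{\xi,\ek_2}_{\zero,(m',n^-)}<0$ (stationary geodesic to a below-characteristic target nevertheless exits upward) is favored by large weights near the vertical axis. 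So the claimed inclusions do not follow from planarity, boundary-weight monotonicity, or the coupling of Theorem~\ref{thm:coupledGenericLPP}, which you invoke only at the level of a program.

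The paper's actual mechanism is structurally different and avoids this trap. It introduces two stationary models directed at points just below and just above $(m,n)$, with $\xi^\star=\ximn(m,n-s(m+n)^{2/3})$ and $\xi_\star=\ximn(m,n+s(m+n)^{2/3})$, and lets $\pi^\star$ and $\pi_\star$ be the extreme geodesics of $G^{\xi^\star}(m,n)$ and $G^{\xi_\star}(m,n)$ — both going to the \emph{same} endpoint $(m,n)$. On the events $\{Z^{\xi^\star,\ek_2}(m,n)<0\}$ and $\{Z^{\xi_\star,\ek_1}(m,n)>0\}$, the monotone boundary-weight coupling \eqref{Lem4-4-monotonicity} forces all bulk geodesics of $G(m,n)$ to be sandwiched between $\pi^\star$ and $\pi_\star$. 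The transversal location of $\pi^\star$ (and similarly $\pi_\star$) at column $\fl{\alpha m}$ is then controlled by restricting the stationary process to the sub-quadrant based at an interior point $\tilde o$, using the additivity \eqref{Gadd} together with Lemmas~\ref{lem:pushForwardLPP} and~\ref{lem:exitPointShift}, so that Corollary~\ref{cor:exitPointsOffCharDirection} applies to the resulting \emph{stationary} exit points $Z^{\xi^\star,\ek_2}_{\tilde o,(m,n)}$. That is where the exit-point corollary is actually used — on a stationary geodesic to $(m,n)$, not on a stationary model with an intermediate endpoint — and this is the step your proposal replaces with an unproved (and, I believe, false) inclusion. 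Fixing your argument would essentially require importing the sandwiching construction, so the gap is not a small technical one.
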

	
	\begin{proof}  Fix $\delta, \varepsilon$ as in the claim.  
	Take 
	    \begin{align}\label{N2}
	    \begin{split}
	    N_2
	    &=\max\Bigl\{\e^{-1}N_1(\delta/3,\delta_0,1),
	    (3-\delta)\delta^{-1}\e^{-1}\bigl(N_1(\delta/3,\delta_0,1)+1\bigr),3(3-\delta)\delta^{-2}\e^{-1},\\
	    &\qquad\qquad
	    2(1-\delta/3)\e^{-1},(2-2\delta/3)^{3/2}(1+3/\delta)^{1/2}\e^{-1},\frac12\bigl(2\delta^{-1}(3-\delta)\bigr)^{3/2},(1-\e)^{-1},\\
	    &\qquad\qquad
	    \frac65\e^{-1}N_0(\delta/3,1),
	    \e^{-1},\frac12(3\e^{-1})^{3/2}\bigl(1-\frac{\e\delta}{3(\delta+1)}\bigr)^{-3}\Bigr\}.
	    \end{split}
	    \end{align}
	Take 
	    \[s_2=\max\bigl\{2,8\delta^{-2/3}s_1(\delta/3,\delta_0,1),2s_0(\delta/3,1)\bigr\}.\]
	Take $(m,n) \in S_{\delta} \cap \ZZ_{\geq N_2}^2$ and 
	\[s \in \bigl[s_2, \frac{\e \delta}{3(\delta+1)}(m+n)^{1/3} \bigr].\]  
    This ensures that $n-s(m+n)^{2/3}\ge (1-\e/3)n>0$.
	Let $\xi^\star = \ximn(m, n-s(m+n)^{2/3})$ and $\xi_\star = \ximn(m, n+s(m+n)^{2/3})$.  
	Let $\pi^\star$ denote the upmost geodesic of $G^{\xi^\star}(m,n)$ and $\pi_\star$ be the rightmost geodesic of $G^{\xi_\star}(m,n)$.  
	
	Theorem \ref{thm:coupledGenericLPP} gives a coupling of the weights 
	$\{\w_x,I_x^{\xi^\star},J_x^{\xi^\star},I_x^{\xi_\star},J_x^{\xi_\star}:x\in\bbZ^2\}$ that is stationary and such that almost surely, for all $x\in\bbZ^2$,
	\begin{equation}
	\omega_x \leq I_x^{\xi^\star} \leq I_x^{\xi_\star}\quad\text{and}\quad\omega_x \leq J_x^{\xi_\star} \leq J_x^{\xi^\star}.\label{Lem4-4-monotonicity}
	\end{equation}
	
	Take $\alpha \in (\e, 1-\e)$ and define 
	$\tilde{o} = (\fl{ \alpha m }, \fl{ \alpha (n-s(m+n)^{2/3}) })$.
	By Lemma \ref{lem:pushForwardLPP}, the point where $\pi^\star$ crosses the southwest boundary of the rectangle $[\tilde{o}, (m,n)]$ is the same as the exit point of the upmost geodesic of $G_{\tilde{o}, (m,n)}^{\xi^\star}$ from the same boundary. 
	Furthermore, we clearly have
	    	    \[\fl{\alpha n}+2s(m+n)^{2/3}\ge\fl{\alpha(n-s(m+n)^{2/3})}+2s(m+n)^{2/3},\]
    and the fact that $m+n\ge1$ and $s\ge1$ implies
        $(2-\alpha)s(m+n)^{2/3}\ge1+\e\ge1$,
    which implies
	    \[\fl{\alpha n}-2s(m+n)^{2/3}\le\fl{\alpha(n-s(m+n)^{2/3})}.\]
	Therefore, 
	\begin{align*}
	\bigl\{\pi^\star \cap\, \mc{C}_{\alpha, 2s}^{(m,n)} = \varnothing\bigr\} 
	\subset \bigl\{ \pi^\star \cap\, [\tilde{o}, \tilde{o} + 2s(m+n)^{2/3} \ek_2] = \varnothing \bigr\} 
	\subset \bigl\{Z_{\tilde{o}, (m,n)}^{\xi^\star, \ek_2} \notin [-2s(m+n)^{2/3}, -1 ]\bigr\}.
	\end{align*}
	Consequently,
	\begin{align}
	\P(\pi^\star \cap \mc{C}_{\alpha, 2s}^{(m,n)} = \varnothing) &\leq \P(Z_{\tilde{o}, (m,n)}^{\xi^\star, \ek_2} \notin \flint{ -2s(m+n)^{2/3}, -1 }) \notag\\
	&= \P(Z_{\tilde{o}, (m,n)}^{\xi^\star, \ek_2} > 0) + \P(Z_{\tilde{o}, (m,n)}^{\xi^\star, \ek_2} < -2s(m+n)^{2/3}). \label{probGeodInterLine}
	\end{align}
	
	To bound the first of these two probabilities, let 
	    \[\tilde m=m-\fl{\alpha m},\quad\tilde n=\fl{\xi_2^\star\tilde m/\xi_1^\star},\quad\text{and}\quad \tilde s=\frac{n-\fl{\alpha(n-s(n+m)^{2/3})}-\tilde n}{(\tilde m+\tilde n)^{2/3}}\,.\]
	We next check that we can apply Corollary \ref{cor:exitPointsOffCharDirection} with these parameters. 
	
	Note that
	    \[\xi_1^\star=\frac{m}{m+n-s(m+n)^{2/3}}\ge\frac{m}{m+n}\ge\frac{\delta}{\delta+1}
	    \ge\frac\delta3\]
and	    \[\xi_1^\star
	    \le\frac{m}{m+n-\frac{\e\delta}{3(\delta+1)}\cdot(m+n)}
	    =\frac{m}{m+n}\cdot\frac{\delta+1}{\delta+1-\e\delta/3}
	    \le \frac1{1+5\delta/6}
	    \le1-\frac\delta3\,.\]	  
	Next, we use the choice of $N_2$ in \eqref{N2} repeatedly.
	First, we have
        \[\tilde m\ge (1-\alpha)m\ge\e N_2\ge N_1(\delta/3,\delta_0,1)\]
        and
        \[\tilde n\ge\xi_2^\star\tilde m/\xi_1^\star-1\ge\frac{\delta/3}{1-\delta/3}\cdot\e N_2-1\ge N_1(\delta/3,\delta_0,1).\]
    We also have
        \[\frac{\tilde m}{\tilde n}\ge\frac{\tilde m}{\xi_2^\star\tilde m/\xi_1^\star}
        \ge\frac{\delta/3}{1-\delta/3}\ge\frac\delta3\]
    and 
        \[\frac{\tilde m}{\tilde n}\le\frac{\tilde m}{\xi_2^\star\tilde m/\xi_1^\star-1}
        \le\frac1{\frac{\delta}{3-\delta}-\frac1{\e N_2}}\le\frac3\delta\,.\]
    In other words, $(\tilde m,\tilde n)\in S_{\delta/3}$.
    Furthermore, 
		\begin{align*}
		\Bigl|\xi_1^\star-\frac {\tilde m}{\tilde m+\tilde n}\Bigr|
		&=\Bigl|\tilde n-\frac{\tilde m\xi_2^\star}{\xi_1^\star}\Bigr|\,(\tilde m+\tilde n)^{-1}\xi_1^\star
		\le(\tilde m+\tilde n)^{-1}.
		\end{align*}
    Lastly, we derive bounds on $\tilde s$. An upper bound is given by
        \begin{align*}
            \tilde s
            &=\frac{n-\fl{\alpha(n-s(n+m)^{2/3})}-\tilde n}{(\tilde m+\tilde n)^{2/3}}
            \le\frac{n}{\bigr(\tilde m+\xi_2^\star \tilde m/\xi_1^\star-1\bigr)^{2/3}}
            \le\frac{m/\delta}{\bigl(\tilde m/(1-\delta/3)-1\bigr)^{2/3}}\\
            &\le\frac{\e^{-1}(1-\alpha)m/\delta}{\bigl(\tilde m/(1-\delta/3)-1\bigr)^{2/3}}
            \le\bigl(2(1-\delta/3)\bigr)^{2/3}\delta^{-1}\e^{-1}{\tilde m}^{1/3}\le A(\tilde m+\tilde n)^{1/3},
        \end{align*}
    where $A$ is the constant in front of ${\tilde m}^{1/3}$ in the middle of the second line. We similarly have the lower bound
        \begin{align*}
            \tilde s
            &\ge\frac{n-\alpha n+\alpha s(n+m)^{2/3}-\xi_2^\star(m-\alpha m+1)/\xi_1^\star}{\bigl((m-\alpha m+1)/\xi_1^\star\bigr)^{2/3}}
            =\frac{(1-\alpha)(n-\xi_2^\star m/\xi_1^\star)+\alpha s(n+m)^{2/3}-\xi_2^\star/\xi_1^\star}{\bigl((m-\alpha m+1)/\xi_1^\star\bigr)^{2/3}}\\
            &=\frac{s(m+n)^{2/3}-\xi_2^\star/\xi_1^\star}{\bigl((m-\alpha m+1)/\xi_1^\star\bigr)^{2/3}}
            \ge\frac{s(m+n)^{2/3}-(3-\delta)/\delta}{\bigl(3\delta^{-1}((1-\e)m+1)\bigr)^{2/3}}
            \ge\frac{s(m+n)^{2/3}}{2\bigl(6\delta^{-1}(1-\e)m\bigr)^{2/3}}
            \ge \delta^{2/3}s/8\ge s_1(\delta/3,\delta_0,1).
        \end{align*}
    In the last inequality we used $s\ge s_2$ and the choice of $s_2$.
    
    Now, apply Corollary \ref{cor:exitPointsOffCharDirection} to get
	\begin{align*}
	\P(Z_{\tilde{o}, (m,n)}^{\xi^\star, \ek_2} > 0) 
	&=\P(Z^{\xi^\star,\ek_2}(\tilde m,\tilde n+\fl{\tilde s(\tilde m+\tilde n)^{2/3}})>0)\leq C_1(\delta/3,\delta_0,\e,A){\tilde s}^{-\fluctexp}\\
	&\le C_1(\delta/3,\delta_0,\e,A)(\delta^{2/3}/8)^{-\fluctexp} s^{-\fluctexp}\,.
	\end{align*}
	
	
	To bound the second probability in \eqref{probGeodInterLine} start by using Lemma \ref{lem:exitPointShift} to write
	\begin{align*}
	\P\bigl(Z_{\tilde{o}, (m,n)}^{ \xi^\star, \ek_2} < -2s(m+n)^{2/3}\bigr) &\leq \P\bigl(Z_{\tilde{o}, (m,n)}^{ \xi^\star, \ek_2} < -2 \fl{ s(m+n)^{2/3} }\bigr) \\
	&= \P\bigl(Z_{\tilde{o} + \fl{ s(m+n)^{2/3} } \ek_2, (m,n)}^{ \xi^\star, \ek_2} < -\fl{ s(m+n)^{2/3} }\bigr) \\
	&= \P\bigl(Z^{\xi^\star, \ek_2}(m',n') < -\fl{ s(m+n)^{2/3}} \bigr), 
	\end{align*}
where
	    \[m'=m-\fl{\alpha m}\quad\text{and}\quad n'=n-\fl{\alpha(n-s(n+m)^{2/3})}-\fl{s(m+n)^{2/3}}.\]
    Now we check that we can use Assumption \ref{exitPointTailBound}. 
    We have
	    \[m'\ge\e N_2\ge N_0(\delta/3,1)\]
	   and
	    \[n'\ge(1-\alpha)\bigl(n-s(m+n)^{2/3}\bigr)
	    \ge(1-\alpha)(1-\e/3)n\ge5(1-\alpha)n/6\ge5\e N_2/6\ge N_0(\delta/3,1).\]
	 Also,
	    \[\frac{\delta}3\le\frac{\delta}{1+1/3+\e^{-1}/N_2}
	    \le\frac{(1-\alpha)m}{(1-\alpha)n+\frac{\e\delta}{3(\delta+1)}(n+m)+1}\le\frac{m'}{n'}\le\frac{(1-\alpha)m+1}{5(1-\alpha)n/6}\le\frac6{5\delta}+\frac6{5\e N_2}\le\frac3{\delta}\,. \]
	We already checked that $\xi_1^\star\in(\delta/3,1-\delta/3)$, and we have
	    \begin{align*}
	    \Bigl|\xi_1^\star-\frac{m'}{m'+n'}\Bigr|
	    &\le \frac{\bigl| s(m+n)^{2/3}(m-\fl{\alpha m})-m\fl{s(n+m)^{2/3}}+n\fl{\alpha m}-m\fl{\alpha(n-s(n+m)^{2/3})} \bigr|}{(1-\alpha)\bigl(1-\frac{\e\delta}{3(\delta+1)}\bigr)^2(m+n)^2}\\
        &\le\frac{s(m+n)^{2/3}+2m}{(1-\alpha)\bigl(1-\frac{\e\delta}{3(\delta+1)}\bigr)^2(m+n)^2}
        \le\frac{\frac{\e\delta}{3(\delta+1)}+2}{(1-\alpha)\bigl(1-\frac{\e\delta}{3(\delta+1)}\bigr)^2(m+n)}\\
        &\le\frac{3}{\e\bigl(1-\frac{\e\delta}{3(\delta+1)}\bigr)^2(2N_2)^{2/3}}\cdot(m+n)^{-1/3}\le(m'+n')^{-1/3}.
	    \end{align*}
	We can now use \eqref{poly-tail} to write    
	\begin{align*}
	    \P\bigl(Z^{ \xi^\star, \ek_2}(m',n') < -\fl{s(m+n)^{2/3}}\bigr)
	    &\le\P\bigl(Z^{ \xi^\star, \ek_2}(m',n') < -\fl{s}(m'+n')^{2/3}\bigr) \\
	    &\le C_0(\delta/3)\fl{s}^{-\fluctexp}\le 2^{\fluctexp}C_0 s^{-\fluctexp}.
	\end{align*}

	Using the above bounds in \eqref{probGeodInterLine}, there exists a finite constant $C(\delta, \delta_0, \fluctexp, \e) > 0$ such that
	\begin{align}\label{foofoo1}
	\P(\pi^\star \cap \mc{C}_{\alpha, 2s}^{(m,n)} = \varnothing) \leq Cs^{-\fluctexp}.
	\end{align}
	
    Next, write
    \begin{align*}
        \P\bigl\{Z^{\xi^\star, \ek_1}(m,n) > 0\bigr\} 
        &= \P\Bigl\{Z^{\xi^\star, \ek_1}\bigl(m,n-\fl{s(m+n)^{2/3}}+\fl{s(m+n)^{2/3}}\bigr) > 0\Bigr\}\\
        &= \P\Bigl\{Z^{\xi^\star, \ek_1}(m'',n''+\fl{s''(m''+n'')^{2/3}}\bigr) > 0\Bigr\},
    \end{align*}
    where 
        \[m''=m,\quad n''=n-\fl{s(m+n)^{2/3}},\quad\text{and}\quad
        s''=\frac{s(m+n)^{2/3}}{(m''+n'')^{2/3}}\,.\]
	Similarly to the above, we can check that the conditions of Corollary \ref{cor:exitPointsOffCharDirection} are satisfied, with $\delta/3$ in place of $\delta$ and with $\kappa=1$, provided we choose $N_2$ large enough. 
	Therefore, \eqref{Cor4-3-U} gives the upper bound 
	    \begin{align}\label{foofoo2}
        \P\bigl\{Z^{\xi^\star, \ek_1}(m,n) > 0\bigr\} \le Cs^{-\fluctexp},
        \end{align}
where $C$ is a (possibly different larger) finite positive constant depending only on $\delta$, $\delta_0$, $\fluctexp$, and $\e$.
    
	An identical reasoning gives the bounds
	\begin{align}\label{foofoo3}
	\P(\pi_\star \cap \mc{C}_{\alpha, 2s}^{(m,n)} = \varnothing) \leq Cs^{-\fluctexp}
	\quad\text{and}\quad
    \P\bigl\{Z^{\xi_\star, \ek_2}(m,n) < 0\bigr\} 
        \le Cs^{-\fluctexp}.
    \end{align}

Next, we argue that $Z^{\xi^\star, \ek_2}(m,n) < 0$ implies that $\pi^{(m,n), \ek_2}$ never goes strictly above $\pi^\star$. 
To argue by contradiction, suppose
there existed a positive integer $k$ and $x \in \ZZ_{\geq 0}^2$ such that $\pi^\star_k = \pi^{(m,n), \ek_2}_k = x$,  $\pi^\star_{k+1} = x+\ek_1$, and $\pi^{(m,n), \ek_2}_{k+1} = x+\ek_2$.  Since  $Z^{\xi^\star, \ek_2}(m,n) < 0$, the upmost geodesic $\pi^\star$ goes from $\zero$ to $\ek_2$ and therefore $k \geq 1$ and 
$x+\ek_1$ lies in the bulk $\NN^2$. Consequently, 
$\pi^\star_{k+1:m+n}$ is a geodesic for $G_{x+\ek_1,(m,n)}$.  
Since $\pi^{(m,n), \ek_2}_{k:m+n}$ is the upmost geodesic of $G_{x,(m,n)}$, it must be that the passage time of $\pi^{(m,n), \ek_2}_{k+1:m+n}$ is at least as large as the passage time of $\pi^\star_{k+1:m+n}$ and the former path never goes strictly below the latter one. Now, the bounds in \eqref{Lem4-4-monotonicity} say that the edge weights on the boundary $\NN \ek_2$ are at least as large as the bulk weights there. Therefore, the passage time of $\pi^\star_{k+1:m+n}$ (which only uses bulk weights) is no larger than the passage time of $\pi^{(m,n), \ek_2}_{k+1:m+n}$, even when the latter uses boundary weights on $\NN \ek_2$ (which is possible if $x$ is on that boundary). But this means that replacing $\pi^\star_{k+1:m+n}$ by $\pi^{(m,n), \ek_2}_{k+1:m+n}$ in $\pi^\star$ gives a geodesic for $G^{\xi^\star}(m,n)$ that at some point goes strictly above $\pi^\star$.
This contradicts the definition of $\pi^\star$ as the upmost geodesic.  Consequently, $\pi^{(m,n), \ek_2}$ can never go strictly above $\pi^\star$.

Similarly, if $Z^{\xi_\star, \ek_1}(m,n) > 0$, then $\pi^{(m,n), \ek_1}$ never goes strictly right of $\pi_\star$. Consequently, if we have both $Z^{\xi^\star, \ek_2}(m,n) < 0$ and 
$Z^{\xi_\star, \ek_1}(m,n) > 0$, then
all the geodesics of $G(m,n)$ are sandwiched between $\pi^\star$ and $\pi_\star$. If, furthermore, $\pi^\star$ and $\pi_\star$ both intersect $\mc{C}_{\alpha, 2s}^{(m,n)}$, then both $\pi^{(m,n),\ek_k}$, $k\in\{1,2\}$, are forced to intersect it as well. We have thus shown that 
    \begin{align*}
    &\Bigl\{(\pi^{(m,n), \ek_1} \cap \mc{C}_{\alpha, s}^{(m,n)} = \varnothing) \cup (\pi^{(m,n), \ek_2} \cap \mc{C}_{\alpha, s}^{(m,n)} = \varnothing)\Bigr\}\\ 
    &\qquad\subset\Bigl\{Z^{\xi^\star, \ek_1}(m,n)>0\Bigr\}
    \cup\Bigl\{Z^{\xi_\star, \ek_2}(m,n)<0\Bigr\}
    \cup\Bigl\{\pi^\star \cap \mc{C}_{\alpha, 2s}^{(m,n)} = \varnothing\Bigr\}
    \cup\Bigl\{\pi_\star \cap \mc{C}_{\alpha, 2s}^{(m,n)} =\varnothing\Bigr\}.
    \end{align*}
This, together with (\ref{foofoo1}-\ref{foofoo3}) complete the proof of the lemma.
	\end{proof}


\section{Non-existence of bi-infinite geodesics}\label{sec:nonexist}

We begin by proving non-existence of non-trivial axis-directed bi-infinite geodesics, which is essentially an immediate consequence of the uniqueness of axis-directed semi-infinite geodesics.

\begin{lemma} \label{lem:noAxisDirectBiInfinites} With probability one, for each $x \in \ZZ_{\geq 0}^2$ and  $\ell \in \{1, 2\}$, the only semi-infinite geodesic starting at $x$ satisfying $\varliminf_{k \rightarrow \infty} k^{-1} x_k \cdot \ek_{3-\ell} = 0$ is the trivial geodesic $\{x + k\ek_\ell \}_{k=0}^\infty$.  
\end{lemma}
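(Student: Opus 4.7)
The plan is to argue by contradiction. By symmetry it suffices to treat $\ell=1$. Fix $x\in\ZZ_{\ge0}^2$ and suppose $\gamma=(x_0=x,x_1,\dots)$ is a semi-infinite geodesic from $x$ with $\varliminf_{k\to\infty}k^{-1}\,x_k\cdot\ek_2=0$ that differs from the trivial horizontal path. Then $\gamma$ takes at least one $\ek_2$-step. Let $m_1<m_2<\cdots$ enumerate the times of its $\ek_2$-steps (a nonempty, possibly finite list), set $y_j:=x_{m_j}$, and let $L_j:=m_{j+1}-m_j-1$ be the length of the $j$-th horizontal excursion (with $L_N:=\infty$ if $\gamma$ has a last $\ek_2$-step $m_N$).

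The key local observation is the following. For each $j$ the subpath of $\gamma$ from $y_j$ to $y_j+L_j\ek_1+\ek_2$ is the unique up-right path between these vertices whose single $\ek_2$-step occurs at position $k=0$. Enumerating the $L_j+1$ up-right paths by the position $k\in\{0,\dots,L_j\}$ of the $\ek_2$-step and comparing passage times gives
\[
F(k)-F(0)\;=\;S^{(j)}_k\;:=\;\sum_{i=1}^{k}\bigl(\w_{y_j+i\ek_1}-\w_{y_j+(i-1)\ek_1+\ek_2}\bigr),\qquad k=0,1,\dots,L_j.
\]
The geodesic property forces $S^{(j)}_k\le 0$ for every $1\le k\le L_j$, and by \eqref{stand:assumption} the random walk $S^{(j)}$ has IID, mean-zero, positive-variance increments (its support lies on two distinct rows and each site is visited at most once).

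If $\gamma$ has only finitely many $\ek_2$-steps, then $L_N=\infty$, so $S^{(N)}_k\le 0$ for every $k\ge1$; this has probability zero by the recurrence of nondegenerate mean-zero walks. Summing over the countably many possible values of $y_N\in x+\ZZ_{\ge0}^2$ handles this case almost surely. If $\gamma$ has infinitely many $\ek_2$-steps, the $y_j$'s lie on distinct rows, $y_j\cdot\ek_2=x\cdot\ek_2+j-1$, and the walks $S^{(j)}$ and $S^{(j+1)}$ read \emph{disjoint} weights: they share only the row $x\cdot\ek_2+j$, on which the sites they read are separated by the lone vertex $y_{j+1}$ which neither walk touches. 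Hence the family $\{S^{(j)}\}$ is independent. The direction hypothesis is equivalent to $\limsup_j L_j=\infty$, while for deterministic $y$ and $L$ a standard ballot estimate gives $\P\bigl(S^{(y)}_k\le 0\ \forall\,1\le k\le L\bigr)\asymp L^{-1/2}\to 0$. Combining independence with Borel--Cantelli along a subsequence $L_{j_n}\to\infty$ then forces the event to have probability zero.

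The main obstacle is that the skeleton $(y_j,L_j)_{j\ge 1}$ is itself random and determined by $\gamma$, so turning the heuristic independence into a rigorous zero-probability statement requires either a careful union bound over countably many deterministic candidate skeletons (truncating in $j$ and letting the truncation grow) or, more cleanly, a direct polynomial-in-$N$ bound on the probability that \emph{some} horizontal excursion of $\gamma$ inside a finite box has length at least $N$, followed by $N\to\infty$. An alternative, if the stronger tools are invoked, would be to deduce the lemma from uniqueness of axis-directed Busemann geodesics within the Busemann process of Theorem \ref{thm:coupledGenericLPP} extended by limits to the boundary directions, since the Busemann geodesic at $\xi=\ek_1$ is manifestly the trivial horizontal path.
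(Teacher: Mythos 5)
Your excursion decomposition is a genuinely different and more elementary route than the paper's, which instead uses the Busemann process of Theorem~\ref{thm:coupledGenericLPP}: the paper fixes directions $\xi_i\searrow\ek_1$, notes that any $\ek_1$-directed geodesic from $x$ must stay weakly to the right of every upmost Busemann geodesic $x^i$, and then shows by induction (using $B^{\xi_i}(m\ek_1,m\ek_1+\ek_2)\to\infty$) that for each $m$ and $i$ large enough $x^i_{0:m}$ is the horizontal path. Your \emph{finite}-$\ek_2$-step case is correct: the final infinite horizontal run forces a non-degenerate mean-zero walk to stay non-positive forever, which has probability zero for each of the countably many possible starting points.

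However, the \emph{infinite}-$\ek_2$-step case contains a genuine gap that you flag but do not close. The independence of the walks $S^{(j)}$ is valid only for a \emph{fixed} skeleton $(y_j,L_j)_j$; the skeleton is determined by the (random) geodesic, and the natural union bound over truncated deterministic skeletons diverges. Indeed, $\P(S^{(j)}_k\le0,\ 1\le k\le L_j)\asymp L_j^{-1/2}$, while for each $j$ the sum over candidate lengths $\sum_{L_j\ge1}L_j^{-1/2}=\infty$; already at the first excursion, summing over starting points in a box of side $n$ gives $O(n\cdot L^{-1/2})$, which does not tend to zero since $L\le n$ for any excursion in the box. The "polynomial-in-$N$ bound on the longest excursion in a finite box" you suggest therefore does not work as stated, and a "careful union bound" over skeletons would require a substantially stronger estimate than the ballot bound. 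Your closing remark about Busemann geodesics gestures at the paper's actual argument but does not carry it out (and the Busemann process is defined on $\ri\sU$, not at $\ek_1$ itself; one takes a monotone limit as in the paper, which requires the monotonicity and divergence properties of the Busemann increments). As written, this is not a complete proof of the lemma.
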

	\begin{proof}
		The proof of this result is essentially the same as that of \cite[Lemma A.6]{Jan-Ras-Sep-21-}, where there is an additional assumption that the weight distribution is continuous. We include the proof for completeness. It suffices to prove the result for semi-infinite geodesics starting at the origin which are $\ek_1$-directed.  Fix a strictly decreasing sequence of directions $\xi_i \in \ri\sU$ such that $\xi_i \searrow \ek_1$ and $\gpp$ is differentiable at $\xi_i$ for each $i$.  
		By Lemma 4.1(b) in \cite{Geo-Ras-Sep-17-ptrf-2}, each $B^{\xi_i}$ given by 
		Theorem \ref{thm:coupledGenericLPP}		
		produces
		an upmost semi-infinite geodesic $x_{0: \infty}^i$ that starts at the origin and follows the minimal increments of $B^{\xi_i}$, taking an $\ek_2$ increment in case of a tie.  By \cite[Theorem 4.3]{Geo-Ras-Sep-17-ptrf-2}, for each $i$, the limit points of $x^i_n/n$, as $n\to\infty$, lie on the same (possibly degenerate) linear segment of $\gpp$ that contains $\xi_i$.
		
		Due to the uniqueness of the upmost geodesic between any pair of points $x\le y$, 
		any $\ek_1$-directed semi-infinite geodesics starting from the origin must stay weakly to the right of all of the geodesics $x_{0: \infty}^i$.  

		The result now follows if we show that for any $m\in\ZZ_{\ge0}$ and any $i$ large enough,  $x^i_{0:m}=\flint{\zero,m \ek_1}$. We prove this by induction. This claim is trivial for $m=0$. Suppose the claim is true for some $m\in\ZZ_{\ge0}$. Lemma 5.1 in \cite{Geo-Ras-Sep-17-ptrf-2} says that $B^{\xi_i}(m\ek_1,m\ek_1+\ek_2)\to\infty$ as $i\to\infty$.
		This implies that for $i$ large enough $B^{\xi_i}(m\ek_1,m\ek_1+\ek_2)>\w_{m\ek_1}=B^{\xi_i}(m\ek_1,(m+1)\ek_1)$, which implies that $x^i_{m+1}=(m+1)\ek_1$.
		%
%
	\end{proof}
	
Next, we turn to interior-directed bi-infinite geodesics.   
Recall that the passage times $G_{x,y}^{\xi}$ that use the boundary weights 
$\{I_{x+k\ek_1}^\xi,J_{x+k\ek_2}^\xi:k\in\N\}$ on the southwest boundary of $x+\ZZ^2_+$
give a stationary LPP process satisfying \eqref{Gadd}. We will now need to consider the stationary LPP process that corresponds to putting appropriate weights on the northeast boundary. 
To this end, define the reflected weights $\hat\w=(\hat\w_x)_{x\in\ZZ^2}$ with $\hat\w_x=\w_{-x}$. Define the boundary weights 
    \begin{align}\label{IJhat}
    \hat I^\xi_x(\w)=I^\xi_{-x}(\hat\w)\quad\text{and}\quad\hat J^\xi_x(\w)=J^\xi_{-x}(\hat\w).
    \end{align}

    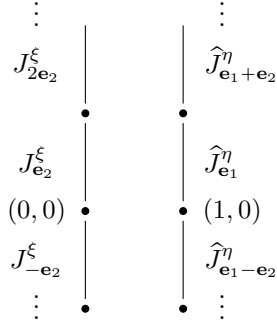
\begin{figure}[ht]
		\begin{center}
			\begin{tikzpicture}[scale = 1.3, baseline=(current bounding box.north)]	
				\foreach \y in {-1, 0, 1}
				{	
					\filldraw (0,\y) circle (1pt);
					\filldraw (1,\y) circle (1pt);
					
					\draw (0,\y+0.1) -- (0,\y+0.9);
					\draw (1,\y+0.1) -- (1,\y+0.9);		
				}
			
				\node[] at (-0.5,0) { $(0,0)$};
				\node[] at (1.5,0) { $(1,0)$};
				
				\node[] at (-0.5,-0.5) {$J^{\xi}_{-\ek_2}$};
				\node[] at (1.6,-0.5) {$\hat{J}^{\eta}_{\ek_1-\ek_2}$};
				
				\node[] at (-0.5,0.5) {$J^{\xi}_{\ek_2}$};
				\node[] at (1.4,0.5) {$\hat{J}^{\eta}_{\ek_1}$};
				
				\node[] at (-0.5,1.5) {$J^{\xi}_{2\ek_2}$};
				\node[] at (1.6,1.5) {$\hat{J}^{\eta}_{\ek_1 + \ek_2}$};
				
				\node[] at (-0.5, 2.1) {$\vdots$};
				\node[] at (1.4, 2.1) {$\vdots$};
				
				\node[] at (-0.5, -0.9) {$\vdots$};
				\node[] at (1.4, -0.9) {$\vdots$};
			\end{tikzpicture}
		\end{center}
		\caption{The edges involved in $S_n^{\xi, \eta}$. }\label{fig:rwEdges}
	\end{figure}

	Given $\xi,\eta\in\ri\sU$ let 
    \begin{align}
	S_n^{\xi,\eta}=\begin{cases}
	\sum_{j=1}^n(J_{j\ek_2}^{\xi}-\widehat{J}_{\ek_1+(j-1)\ek_2}^{\eta}),&n\in\ZZ_{\ge1},\\[4pt]
	0,&n=0,\\[3pt]
	-\sum_{j=n+1}^0(J_{j\ek_2}^{\xi}-\widehat{J}_{\ek_1+(j-1)\ek_2}^{\eta}),&n\in\ZZ_{\le-1}.
	\end{cases}\label{Sdef}
	\end{align}
	
	Given $\xi^\star,\xi_\star,\eta^\star,\eta_\star\in\ri\sU$, we can use Theorem \ref{thm:coupledGenericLPP} to couple the  weights 
	\begin{align}\label{bigcoupling}
	\{\w_x,I^{\xi^\star}_x,J^{\xi^\star}_x,I^{\xi_\star}_x,J^{\xi_\star}_x,
	I^{\eta^\star}_x,J^{\eta^\star}_x,I^{\eta_\star}_x,J^{\eta_\star}_x:
	x\in\ZZ^2\}.
	\end{align}
	This produces a coupling of 
	$\{\w_x,J^{\xi^\star}_x,J^{\xi_\star}_x,\hat J^{\eta^\star}_x,\hat J^{\eta_\star}_x:
	x\in\ZZ^2\}$
	and of 
	$\{\w_x,I^{\xi^\star}_x,I^{\xi_\star}_x,\hat I^{\eta^\star}_x,\hat I^{\eta_\star}_x:
	x\in\ZZ^2\}$.

\begin{assumption}\label{ass:RW}
There exist an $\expnt\in(1/3,2/3)$ and a $\delta_0\in(0,1)$ such that for any $\delta \in (0,\delta_0)$,
there exist positive finite constants $C_3(\delta)$ and $N_3(\delta)$ such that for all $N\ge N_3$, and all $\eta_\star,\eta^\star,\xi_\star,\xi^\star\in\ri\sU$ with $\ek_1$-coordinate in $(\delta,1-\delta)$ and such that
			\begin{align}\label{xistar-etastar}
			 -N^{-\expnt/2}
			\leq \xi_{\star}\cdot \ek_1 - \eta^\star\cdot \ek_1 < 0\quad\text{and}\quad
            -N^{-\expnt/2} 
			\leq \eta_{\star}\cdot \ek_1 - \xi^\star\cdot \ek_1<0,
			\end{align}
			we have
\begin{align}\label{bd:RW}
\P\Bigl\{\sup_{0<k\le N^{2/3}} S_k^{\xi_\star,\eta^\star}\le0\quad\text{and}\sup_{-N^{2/3}+1 \le k<0}S_k^{\xi^\star,\eta_\star}\le0\Bigr\}\le C_3N^{-\expnt}.
\end{align}
\end{assumption}

By Lemma \ref{lm:assRWholds}, this assumption is satisfied for any $\expnt\in(1/3,2/3)$ when $\w_0$ is geometrically distributed. This assumption is verified in the exponential model in \cite[Lemma C.1]{Bal-Bus-Sep-20} with $a_0 = 2/5.$
\normalmarginpar

Theorem \ref{thm:noBiInfinites} now follows from 
Theorem \ref{geometricExitPointTailBound}, Lemma \ref{lm:assRWholds}, and the following, more general result.

\begin{theorem}\label{main-thm}
Suppose Assumptions \ref{exitPointTailBound} and \ref{ass:RW} hold with 
    \begin{align}\label{a-nu}
    \expnt<\frac{2(\fluctexp-1)}{3\fluctexp}\,.
    \end{align}
Then with $\P$-probability one there are no non-trivial bi-infinite geodesics.    
\end{theorem}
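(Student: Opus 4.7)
The plan is to argue by contradiction. If a non-trivial bi-infinite geodesic exists with positive $\P$-probability, then by Lemma \ref{lem:noAxisDirectBiInfinites} neither of its semi-infinite halves is axis-directed, so all subsequential limit points of the normalized increments lie in $\ri\sU$ and are bounded away from the axes. By translation invariance of $\P$ and a countable compactness covering of $\ri\sU$, it suffices to fix $\xi,\eta\in\ri\sU$ bounded away from the axes and rule out a non-trivial bi-infinite geodesic through $\zero$ whose forward asymptotic direction falls in a small window around $\xi$ and whose backward direction falls in a small window around $-\eta$.

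For each large $N$, choose $\xi_\star,\xi^\star,\eta_\star,\eta^\star\in\ri\sU$ sandwiching $\xi$ and $\eta$ in pairs, with $\ek_1$-coordinates separated by $N^{-\expnt/2}$ in the sense of \eqref{xistar-etastar}. Invoke Theorem \ref{thm:coupledGenericLPP}, together with the reflection \eqref{IJhat}, to realize the coupled family \eqref{bigcoupling} on a single probability space. The monotone ordering of boundary weights in the stationary parameter (the ordering already used in \eqref{Lem4-4-monotonicity}) then provides a four-parameter sandwich: the upmost and rightmost geodesics of the four stationary LPPs pinch any bulk geodesic whose direction falls between $\xi_\star$ and $\xi^\star$, and symmetrically for the reflected picture with $\eta_\star$ and $\eta^\star$.

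The central deterministic claim is that, on the event that a non-trivial bi-infinite geodesic through $\zero$ is well-localized at scale $N$ (meaning its forward and backward portions pass through $O(N^{2/3})$-diamonds around $N\xi$ and $-N\eta$), its passage through $\zero$ with the prescribed exit direction forces the walks $S_k^{\xi_\star,\eta^\star}$ for $0<k\le N^{2/3}$ and $S_k^{\xi^\star,\eta_\star}$ for $-N^{2/3}+1\le k<0$ to stay non-positive. Heuristically, the sign of $S_k^{\xi_\star,\eta^\star}$ records whether, inside the sandwich, the forward-going stationary geodesic prefers to exit $\zero$ in the $\ek_2$ direction relative to the backward-reflected stationary geodesic, and existence of the bi-infinite geodesic through $\zero$ forces a consistent preference throughout the relevant range of $k$; the symmetric exit case is handled by swapping the roles of $\xi$ and $\eta$. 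Assumption \ref{ass:RW} then bounds the probability of this joint random walk event by $C_3 N^{-\expnt}$.

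The well-localization event is quantified using Lemma \ref{lem:geodesicsIntersectVertical} (which uses Assumption \ref{exitPointTailBound}) applied to the four stationary models at vertical segments of width $s\,N^{2/3}$, at a cost $O(s^{-\fluctexp})$ for its failure. The four parameters sitting in windows of size $N^{-\expnt/2}$ induce a stationary-geodesic spread of order $N^{1-\expnt/2}$ at scale $N$, so $s$ should be taken of order $N^{1/3-\expnt/2}$ to absorb this spread within the sandwich. Combining, the probability that a non-trivial bi-infinite geodesic through $\zero$ survives to scale $N$ is bounded by $C_3 N^{-\expnt} + C N^{-\fluctexp(1/3-\expnt/2)}$; the hypothesis \eqref{a-nu} is precisely what makes both exponents positive, so letting $N\to\infty$ and invoking translation invariance shows no non-trivial bi-infinite geodesic exists almost surely. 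The main obstacle is the deterministic sandwich step: carefully deducing the joint sign condition on the two random walks from the geometric assumption of a bi-infinite geodesic through $\zero$, and in particular identifying correctly which side of zero each walk must lie on based on the exit direction of the geodesic from $\zero$.
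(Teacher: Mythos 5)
Your proposal correctly identifies the main ingredients (Lemma \ref{lem:noAxisDirectBiInfinites}, the four-parameter stationary sandwich, the random walk reformulation feeding into Assumption \ref{ass:RW}, and the geodesic-localization cost $O(s^{-\fluctexp})$ from Assumption \ref{exitPointTailBound}), and your two-term per-event bound $C_3 N^{-\expnt} + C N^{-\fluctexp(1/3-\expnt/2)}$ is exactly the estimate obtained in the paper's Lemma \ref{lem:boundOnUuv} for a \emph{single} coarse-grained cell. However, there is a genuine gap: you treat this as the bound on the probability that a bi-infinite geodesic through $\zero$ ``survives to scale $N$,'' when in fact it only controls the event that the geodesic's backward crossing of $\partial^{N,\delta}$ lands in one specific interval $\mc{I}_{o,d}$ of width $O(N^{2/3})$. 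To cover all possible crossing locations one must take a union over the $O(N^{1/3})$ intervals tiling the boundary (the set $\mathcal{O}^N$ in the proof of Theorem \ref{thm:noBiInfinitesAwayFromAxes}), which costs an extra factor of $N^{1/3}$ and yields $N^{1/3-\expntt}$ with $\expntt=\min(\expnt,(\tfrac13-\tfrac\expnt2)\fluctexp)$. Your reduction to fixed $\xi,\eta$ ``via countable compactness covering'' does not remove this: a fixed window of size $\e$ around $\xi$ still allows the crossing point to range over an interval of size $\e N$, i.e.\ $\e N^{1/3}$ cells, so the coarse-graining union bound is unavoidable.

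This also explains why your stated role for hypothesis \eqref{a-nu} is wrong. Both exponents in your two-term bound are already positive from the standing assumptions $\expnt\in(1/3,2/3)$ and $\fluctexp>2$; one does not need \eqref{a-nu} for that. What \eqref{a-nu} actually does, together with $\expnt>1/3$, is ensure $\expntt>1/3$, so that the extra $N^{1/3}$ from the union bound over cells is defeated and $N^{1/3-\expntt}\to0$. Indeed $(\tfrac13-\tfrac\expnt2)\fluctexp>\tfrac13$ is algebraically equivalent to $\expnt<\tfrac{2(\fluctexp-1)}{3\fluctexp}$, which is \eqref{a-nu}. So the piece you flagged as ``the main obstacle'' (the deterministic sandwich deducing the joint sign condition on the walks, carried out in the paper via Lemmas \ref{lem:AodInequalities}--\ref{lem:RWs}) is in fact handled by the monotonicity arguments you sketch; the real omission is the coarse-graining union bound and the correct accounting of why \eqref{a-nu} enters.
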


\begin{remark}
By exchanging the roles of the two axes, one sees that the above theorem also holds if Assumption \ref{ass:RW} holds with $I^\bbullet_{j\ek_1}$-increments instead of $J^\bbullet_{j\ek_2}$. We expect that if the assumption holds with one set of increments, then it holds with the other set as well.
\end{remark}

Given $\delta\in(0,1)$ and a positive integer $N$, define the southwest boundary,
\begin{equation}
\partial^{N, \delta} = ( \{-N\} \times \flint{ -N, -\delta N } ) \cup ( \flint{ -N, -\delta N } \times \{-N\}),
\end{equation}
and the northeast boundary,
\begin{equation}\label{nebdry}
\hat{\partial}^{N, \delta} = ( \{N\} \times \flint{ \delta N, N } ) \cup ( \flint{ \delta N, N } \times \{N\}).
\end{equation}

By Lemma \ref{lem:noAxisDirectBiInfinites} a nontrivial bi-infinite geodesic must eventually take an $\ek_1$ step. 
Then by the shift-invariance of $\bbP$, to prove Theorem \ref{main-thm} it suffices to show that almost surely there are no nontrivial bi-infinite geodesics that take the edge $(\zero,\ek_1)$.  Thus, this theorem follows from Lemma \ref{lem:noAxisDirectBiInfinites} and the next result. 

	For $u\le v$ in $\ZZ^2$ define the event
	\begin{align}\label{Uuv}
	U^{u, v}=\left\{\text {at least one geodesic of } G_{u,v} \text { goes through both $\zero$ and $\ek_1$}\right\}. 
	\end{align}

\begin{theorem}  \label{thm:noBiInfinitesAwayFromAxes} Suppose Assumptions \ref{exitPointTailBound} and \ref{ass:RW} hold with \eqref{a-nu} satisfied. Let
    \begin{align}\label{a:def}
    \expntt=\min\Bigl(\expnt,\bigl(\frac13-\frac\expnt2\bigr)\fluctexp\Bigr)\in(1/3,2/3).
    \end{align}
For each $\delta \in (0,\delta_0)$ there exist positive finite constants $N_4(\delta, \delta_0, \fluctexp,\expnt)$ and  $C_4(\delta, \delta_0, \fluctexp,\expnt)$ such that for all $N\ge N_4$
\begin{align*}
	\P\Bigl(\bigcup_{u \in \partial^{N,\delta}, v \in \hat{\partial}^{N,\delta}} U^{u, v}\Bigr) \leq C_4 N^{-(\expntt- 1/3)}.
\end{align*}
\end{theorem}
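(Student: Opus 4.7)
The plan follows the increment-stationary strategy of Bal\'azs--Busani--Sepp\"al\"ainen, abstracted to the present setting: choose four slightly perturbed direction parameters, couple the bulk environment with the associated stationary LPPs via Theorem \ref{thm:coupledGenericLPP}, use the geodesic confinement of Section \ref{sec:geo_fluct}, and reduce the geometric event $U^{u,v}$ to a sign condition on the random walks \eqref{Sdef} that is controlled by Assumption \ref{ass:RW}.

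\textbf{Step 1 (choice of parameters and coupling).} Fix a central direction with $\ek_1$-coordinate safely in $(\delta,1-\delta)$ and, at scale $N$, four auxiliary directions $\xi^\star,\xi_\star,\eta^\star,\eta_\star\in\ri\sU$ whose $\ek_1$-coordinates are ordered and separated by $\Theta(N^{-\expnt/2})$ so that \eqref{xistar-etastar} holds and all lie in $(\delta,1-\delta)$. Use Theorem \ref{thm:coupledGenericLPP} to produce a monotone coupling of $\w$ with the eight boundary processes in \eqref{bigcoupling} together with their reflected versions from \eqref{IJhat}.

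\textbf{Step 2 (confinement).} Apply Lemma \ref{lem:geodesicsIntersectVertical} to each of the four stationary LPPs $G^{\bbullet}_{u_0,v_0}$ with $u_0=(-N,-N)$, $v_0=(N,N)$, $\alpha=1/2$, and $s$ on the scale $N^{1/3-\expnt/2}$ (so that $sN^{2/3}\asymp N^{1-\expnt/2}$). This confines the rightmost and upmost stationary geodesics of each model to a vertical window of height $\asymp N^{1-\expnt/2}$ around the origin at cost $O(s^{-\fluctexp})=O(N^{-\fluctexp(1/3-\expnt/2)})$. By the definition of $\expntt$ in \eqref{a:def} together with \eqref{a-nu}, this cost is $o(N^{-(\expntt-1/3)})$.

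\textbf{Step 3 (reduction to the random-walk event).} On the confinement event of Step 2, the monotone coupling sandwiches the rightmost/upmost bulk geodesic from any $u\in\partial^{N,\delta}$ to any $v\in\hat{\partial}^{N,\delta}$ between stationary geodesics of appropriate $\bbullet$-models within a vertical strip through $\zero$, so a union bound over boundary points $(u,v)$ reduces to the four extremal corner pairs by the monotonicity built into the coupling. On this event, the hypothesis that a bulk geodesic uses the horizontal edge $(\zero,\ek_1)$ translates, via the additivity \eqref{Gadd} applied separately to the SW-stationary process with parameter $\xi_\star$ (respectively $\xi^\star$) and to the NE-reflected stationary process with parameter $\eta^\star$ (respectively $\eta_\star$), into pointwise inequalities $J^{\xi_\star}_{k\ek_2}\le\widehat J^{\eta^\star}_{\ek_1+(k-1)\ek_2}$ for $k=1,\ldots,\lfloor N^{2/3}\rfloor$ and $J^{\xi^\star}_{k\ek_2}\le\widehat J^{\eta_\star}_{\ek_1+(k-1)\ek_2}$ for $k=-\lfloor N^{2/3}\rfloor+1,\ldots,-1$. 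Summing these inequalities gives exactly the event appearing in \eqref{bd:RW}.

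\textbf{Step 4 (conclude).} Apply Assumption \ref{ass:RW} to bound the event of Step 3 by $C_3 N^{-\expnt}$. Combining with the Step 2 cost of $O(N^{-\fluctexp(1/3-\expnt/2)})$ and noting that $\expntt\le\expnt$, the total is $O(N^{-\expntt})=O(N^{-(\expntt-1/3)}\cdot N^{-1/3})$, which yields the claimed bound. (The spurious factor $N^{1/3}$ absorbs the union bound over the $O(N)$ extremal boundary corners after the monotonicity reduction in Step 3; a more careful accounting in Step 3 recovers the stated exponent $\expntt-1/3$.)

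The main obstacle is \textbf{Step 3}, where the global geometric statement that a bulk geodesic crosses the single edge $(\zero,\ek_1)$ has to be converted into a clean pointwise random-walk sign condition. This rests on carefully using the sandwich provided by the monotone coupling together with the additivity of stationary passage times on the two sides of $\zero$: the reflected-stationary process plays the role of a Busemann increment coming from the northeast, and the coordinate-wise comparison of reflected versus non-reflected stationary increments along the vertical axis through $\zero$ is precisely what would force the bulk geodesic to exit vertically rather than horizontally. The separation scale $N^{-\expnt/2}$ in Step 1 is chosen to make the per-step drift of $S_k^{\bbullet,\bbullet}$ small enough for Assumption \ref{ass:RW} to apply, while keeping the auxiliary directions close enough to the central direction that Lemma \ref{lem:geodesicsIntersectVertical} still confines the stationary geodesics near the origin.
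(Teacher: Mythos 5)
Your proposal captures the high-level spirit of the Bal\'azs--Busani--Sepp\"al\"ainen argument, but there are several concrete gaps between what you describe and what the paper actually proves, and at least one is a genuine missing idea rather than a detail.

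\textbf{The near/far split is missing.} The paper does not pass directly from all pairs $(u,v)\in\partial^{N,\delta}\times\hat\partial^{N,\delta}$ to a single random-walk event. It coarse-grains $\partial^{N,\delta}$ into $O(N^{1/3})$ blocks $\mc{I}_{o,d}$ of length $\asymp d_1N^{2/3}$ (with $d_1=1$), and for each block $\mc{I}_{o,d}$ it splits the NE boundary into the piece $\hat{\mc{I}}_{\hat o,d}$ of length $\asymp d_2N^{2/3}$ opposite $o$ (the ``near'' case, Lemma~\ref{lem:boundProbUseEdge}) and the complementary piece $\hat{\mc{F}}_{\hat o,d}$ (the ``far'' case, Lemma~\ref{lem:boundGeodesicsToOuter}). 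Only the near case reduces to the random-walk event of Assumption~\ref{ass:RW}. For $v\in\hat{\mc{F}}_{\hat o,d}$ the straight line from $u$ to $v$ is displaced from the origin by $\gtrsim \delta d_2 N^{2/3}$ at the column $x_1\in\{0,1\}$, so one needs a separate transversal-fluctuation argument (Lemma~\ref{lem:geodesicsIntersectVertical} applied to the auxiliary endpoints $h^i,f^i$, not to $(-N,-N)\to(N,N)$) to rule out $U^{u,v}$. Your proposal never addresses the far case, and the random-walk reduction in your Step~3 simply does not apply when $v$ is far from $-o$: the relevant directions $\eta^\star,\eta_\star$ must be anchored at $\hat o_c$ close to $-o$ for \eqref{xistar-etastar} to hold, and that constraint fails for far $v$.

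\textbf{The confinement step is applied in the wrong place and the accounting is off.} Your Step~2 invokes Lemma~\ref{lem:geodesicsIntersectVertical} once with the fixed corner pair $(-N,-N)\to(N,N)$ and $\alpha=1/2$; but this lemma controls only the bulk geodesic between those two specific points, not geodesics between arbitrary $u$ and $v$, whose directions vary over the whole cone $S_\delta$. The sandwich that lets you compare $S^{u,v}_k$ to stationary walks comes not from geodesic confinement but from the exit-point events $A_{o,d}$ and $B_{\hat o,d}$ built on Lemma~\ref{lem:geodesicsInNGrid}, and these must be set up per block $o\in\mc O^N$. Finally, the parenthetical in your Step~4 asserting that a factor $N^{1/3}$ ``absorbs the union bound over the $O(N)$ extremal boundary corners'' is incoherent: a union bound over $O(N)$ points would cost a factor $N$, which would destroy the bound. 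The extra $N^{1/3}$ in the paper comes from summing over $|\mc O^N|=O(N^{1/3})$ blocks, each of which already covers $\Theta(N^{2/3})$ starting points via the monotonicity Lemma~\ref{lem:RWs}, together with Lemma~\ref{lem:boundOnUuv}'s $N^{-\expntt}$ bound per block. As written, your Steps~2--4 would not produce the exponent $\expntt-1/3$.
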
  
	
The reason behind the relation \eqref{a-nu} is that if $\nu$ is close to 2, then this affects the bound \eqref{poly-tail} and, as a consequence, we do not have good control over the geodesic fluctuations in \eqref{main-sec3}. Then, when using \eqref{bd:RW} in the argument against the existence of bi-infinite geodesics, we need to allow for a larger interval in \eqref{xistar-etastar}, which means  using a smaller $\expnt$.
That said, it should be the case that if the i.i.d.~environment has finite exponential moments, then Assumption \ref{exitPointTailBound} holds for all $\fluctexp>2$ and Assumption \ref{ass:RW} holds for $\expnt\in(1/3,2/3)$.

	The rest of the section builds up towards the proof of the above theorem.
	Define the vertical segment 
	\begin{align}\label{mcI}
	\mc{I} = \{0\} \times \flint{ -N^{2/3}, N^{2/3}  }.
	\end{align}
	For $N \geq 1$, $1 \leq s \leq \frac{\delta}{4}N^{1/3}$, and $o \in  \ZZ_{>0}^2\cup\ZZ_{<0}^2$, define the directions 
	\begin{align}\label{defXimn}
	\begin{split}
	&\ximn(o) = \frac{o}{o_1+o_2}\,, \quad
	\ximn_{\star}(o) = \ximn(o) + (-sN^{-1/3}, sN^{-1/3})\,,\\ 
	&\text{and} \quad
	\ximn^{\star}(o) = \ximn(o) + (sN^{-1/3}, -sN^{-1/3})\,.
	\end{split}
	\end{align}

	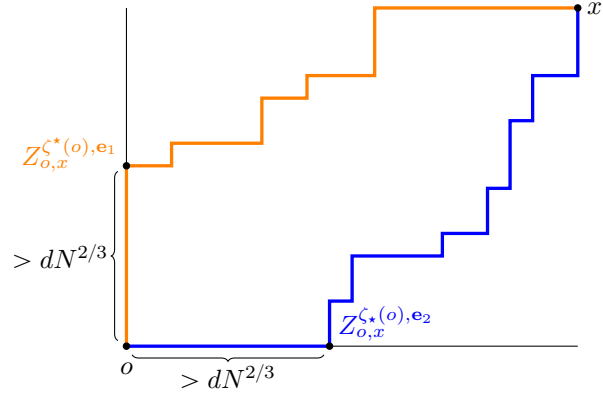
\begin{figure}[ht]
	\begin{center}
	\begin{tikzpicture}[scale = 0.3, baseline=(current bounding box.north)]
		\draw (0,15) -- (0,0) -- (20,0);
		
		\draw[line width=1.25pt, orange] (0,0) -- (0,8) -- (2,8) -- (2,9) -- (6,9) -- (6,11) -- (8,11) -- (8,12) -- (11,12) -- (11,15) -- (20,15);
		\filldraw (0,8) circle (4pt);
		\node[orange] at (-2.5,8.5) { $Z_{o,x}^{\zeta^{\star}(o), \ek_1}$};
		\draw[decorate,decoration={brace,raise=0.75ex}] (0,0.25) -- (0,7.75) 
		node[midway,left=0.75ex]{ $> dN^{2/3}$} ;
		
		\draw[line width=1.25pt, blue] (0,0) -- (9,0) -- (9,2) -- (10,2) -- (10,4) -- (14,4) -- (14,5) -- (16,5) -- (16,7) -- (17,7) -- (17,10) -- (18,10) -- (18,12) -- (20,12) -- (20,15);
		\filldraw (9,0) circle (4pt);
		\node[blue] at (11.5,1) { $Z_{o,x}^{\zeta_{\star}(o), \ek_2}$};
		\draw[decorate,decoration={brace,mirror,raise=0.75ex}] (0.25,0) -- (8.75,0) 
		node[midway,below=0.75ex]{ $> dN^{2/3}$} ;  
		
		\filldraw (0,0) circle (4pt);
		\node[] at (0,-1) { $o$};
		
		\filldraw (20,15) circle (4pt);
		\node[] at (20.75,15) { $x$};
		
	\end{tikzpicture}
	\end{center}
	\caption{An illustration of the high probability event in Lemma \ref{lem:geodesicsInNGrid}.  The upmost geodesic of $G_{o,x}^{\zeta_\star(o)}$ 
	exits at least $dN^{2/3}$ to the right of $o$.  The rightmost geodesic of $G_{o,x}^{\zeta^\star(o)}$ 
	exits at least $dN^{2/3}$ above $o$. }\label{fig:geodesicsInNGrid}
	\end{figure}

	\begin{lemma} \label{lem:geodesicsInNGrid}  
	Suppose Assumption \ref{exitPointTailBound} holds. For any $\delta\in(0,\delta_0)$
	there exist finite positive constants $C_5(\delta, \delta_0, \fluctexp)$, 
	$N_5(\delta, \delta_0)\ge8\delta^{-3}$, and $s_5(\delta, \delta_0)$ such that for all $N \geq N_5$, if 
		\begin{equation}
		1 \leq d \leq \frac{\delta}{64} N^{1/3} \qquad \text{ and } \qquad  \max(s_5,8d)\leq s \leq \frac{\delta}{4} N^{1/3},   \label{dsCondition}
		\end{equation}
	then for all $x \in \mc{I}$, and $o \in \partial^{N,\delta}$, 
	\begin{align}
	\P(Z_{o, x}^{\ximn_\star(o), \ek_2} &\leq
	dN^{2/3}) \leq C_5 s^{-\fluctexp}  \label{gridGeodesicRight} \\
	\P(Z_{o, x}^{\ximn^\star(o), \ek_1} &\geq -dN^{2/3}) \leq C_5 s^{-\fluctexp} \label{gridGeodesicUp}.
	\end{align}
	\end{lemma}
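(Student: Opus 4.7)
My plan is to reduce both \eqref{gridGeodesicRight} and \eqref{gridGeodesicUp} to Corollary~\ref{cor:exitPointsOffCharDirection} via a shift of the base point. The two estimates are symmetric under exchanging the coordinate axes, so I will describe the argument for \eqref{gridGeodesicRight} in detail --- invoking \eqref{Cor4-3-L} --- and note that \eqref{gridGeodesicUp} follows by the identical calculation with \eqref{Cor4-3-U}. The heuristic is that $\ximn_\star(o)$ is shifted in the $\ek_2$ direction by $sN^{-1/3}$ from $\ximn(o)\approx (x-o)/\abs{x-o}_1$; for stationary LPP with such a perturbed boundary direction the exit typically falls on the horizontal boundary at distance of order $sN^{2/3}$ from $o$, so the constraint $s\ge 8d$ in \eqref{dsCondition} should force the probability that the exit falls short of $dN^{2/3}$ to be of order $s^{-\fluctexp}$.

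The first step is to set $w = x-o\in\bbZ_{>0}^2$; for $o\in\partial^{N,\delta}$ and $x\in\mathcal I$, for $N$ large enough one has $w\in S_{\delta/2}$ and $\abs{w}_1\asymp N$ with constants depending only on $\delta$. Translation invariance gives $\P\{Z^{\ximn_\star(o),\ek_2}_{o,x}\le dN^{2/3}\} = \P\{Z^{\ximn_\star(o),\ek_2}(w)\le dN^{2/3}\}$. The second step applies Lemma~\ref{lem:exitPointShift} with shift $\lfloor dN^{2/3}\rfloor\ek_1$, combined with the stationarity \eqref{Gxi-stat} of the boundary process, to reduce this to $\P\{Z^{\ximn_\star(o),\ek_2}(\tilde w)\le 0\}$, where $\tilde w := w - \lfloor dN^{2/3}\rfloor\ek_1$.

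The third step would fit this event into \eqref{Cor4-3-L}. Setting $\xi = \ximn_\star(o)$, I would choose $(m,n)\in\bbZ^2_{>0}$ with $m=\tilde w_1$ and $(m,n)/(m+n)=\xi$ to within $O((m+n)^{-1/3})$; then $n$ is uniquely determined and $\tilde w = (m, n-\lfloor s'(m+n)^{2/3}\rfloor)$ for some $s'>0$. Using $\xi_1 = w_1/\abs{w}_1 - sN^{-1/3}$ and $w_2=\abs{w}_1-w_1$, a direct calculation shows the dominant contribution to $s'(m+n)^{2/3}$ is $sN^{-1/3}\abs{w}_1/\xi_1 = \Theta(sN^{2/3})$, while the shift by $\lfloor dN^{2/3}\rfloor$ produces an $O(dN^{2/3})$ correction; hence $s\ge 8d$ forces $s'\asymp s$ with constants depending only on $\delta$. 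The remaining hypotheses of Corollary~\ref{cor:exitPointsOffCharDirection} --- $\xi_1\in(\delta',1-\delta')$, $(m,n)\in S_{\delta'}$, and $s'\ge s_1$ --- are routinely enforced using \eqref{dsCondition} and by choosing $N_5,s_5$ sufficiently large in $\delta$. Then \eqref{Cor4-3-L} yields $\P\{Z^{\ximn_\star(o),\ek_2}(\tilde w)\le 0\}\le C_1(s')^{-\fluctexp}\le C_5 s^{-\fluctexp}$.

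The main obstacle I anticipate is the algebraic bookkeeping in Step~3: confirming $s'\asymp s$ and the remaining hypotheses of Corollary~\ref{cor:exitPointsOffCharDirection} uniformly over $o\in\partial^{N,\delta}$, whose distance $\abs{w}_1$ to $x$ varies over an interval of width $\Theta(N)$ depending on the position of $o$ on the boundary. The calibration of \eqref{dsCondition} --- in particular $d\le \delta N^{1/3}/64$ and $8d\le s\le \delta N^{1/3}/4$ --- is precisely tuned to keep the $O(dN^{2/3})$ and integer-part corrections subdominant to the leading $\Theta(sN^{2/3})$ direction-driven shift and to keep $\ximn_\star(o)$ well inside $\ri\sU$.
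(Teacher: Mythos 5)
Your proposal follows essentially the same route as the paper: translation by $-o$, a base-point shift by $\lfloor dN^{2/3}\rfloor\ek_1$ via Lemma~\ref{lem:exitPointShift} and \eqref{Gxi-stat}, and then Corollary~\ref{cor:exitPointsOffCharDirection} with a reparameterization $(m,n,s')$ calibrated so that $s'\asymp s$. The one organizational difference is that the paper first observes that the upmost geodesic from $o$ to the top endpoint $\lfloor N^{2/3}\rfloor\ek_2$ of $\mathcal I$ stays above any geodesic from $o$ to $x\in\mathcal I$, hence $\P\{Z^{\xi_\star,\ek_2}_{o,x}\le dN^{2/3}\}\le\P\{Z^{\xi_\star,\ek_2}_{o,\lfloor N^{2/3}\rfloor\ek_2}\le dN^{2/3}\}$; this collapses the entire family of endpoints $x$ to a single computation, whereas you treat each $x$ directly. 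Your version works just as well, but note that in your Step~3 there is a third contribution to $s'(m+n)^{2/3}$ that you do not mention: the term $-x_2$, of size up to $N^{2/3}$, coming from the $\ek_2$-coordinate of $x\in\mathcal I$. This is the same order as the worst case in the paper's reduction, and it is subdominant to the leading $\Theta(sN^{2/3})$ only because $s\ge s_5$ with $s_5$ a $\delta$-dependent constant; be sure to absorb it when you "choose $s_5$ sufficiently large", alongside the $O(dN^{2/3})$ correction you did identify.
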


	\begin{proof}
	    The condition that $N\ge8/\delta^3$ guarantees that $-\delta N+1\le -N^{2/3}$,
	    which implies that 
	    $-\fl{\delta N}\le -\fl{N^{2/3}}$ 
	    and hence $\partial^{N,\delta}$ is entirely below $\mc{I}$. 
		We prove \eqref{gridGeodesicRight} and the second bound follows analogously. 
		Let $o = -(aN, bN)$ where $a \vee b = 1$ and $a \wedge b \in [\delta, 1]$.  Abbreviate $\xi_\star = \ximn_\star(o)$.  
		The upmost geodesic from $o$ to $\fl{ N^{2/3} } \ek_2 $ must stay above any geodesic from $o$ to $x \in \mc{I}$.  This, Lemma \ref{lem:exitPointShift}, and shift-invariance give
		\begin{equation}
		\begin{aligned}
		&\P\Bigl\{Z_{o, x}^{\xi_\star, \ek_2} \le dN^{2/3}\Bigr\} 
		\leq \P\Bigl\{Z_{o, \fl{ N^{2/3}} \ek_2 }^{\xi_\star, \ek_2}  \leq \fl{dN^{2/3}}\Bigr\} 
		= \P\Bigl\{Z_{o + \fl{ dN^{2/3} } \ek_1, \fl{ N^{2/3}} \ek_2 }^{\xi_\star, \ek_2} < 0 \Bigr\} \\
		&\qquad= \P\Bigl\{Z_{o, \fl{ N^{2/3}} \ek_2 - \fl{ dN^{2/3} } \ek_1 }^{\xi_\star, \ek_2} < 0\Bigr\} 
		= \P\Bigl\{Z^{\xi_\star, \ek_2}(aN-\fl{ dN^{2/3}}, bN+\fl{ N^{2/3}}) < 0\Bigr\}.
		\end{aligned} \label{gridGeodesicBoundSetup}
		\end{equation}
		Next, we check that we can apply Corollary \ref{cor:exitPointsOffCharDirection} with
        \[m = aN - \fl{ dN^{2/3} },\quad n=\Big\lfloor\frac{m\xi_\star\cdot \ek_2}{\xi_\star\cdot \ek_1}\Big\rfloor=\Big\lfloor\frac{b+(a+b)sN^{-1/3}}{a-(a+b)sN^{-1/3}}\cdot m\Big\rfloor,\quad\text{and}
        \quad s'=\frac{n-bN-\fl{N^{2/3}}}{(m+n)^{2/3}}\,,\]
        if we take $N$ large enough and $s$ as in \eqref{dsCondition}.
		Here are the details. Take 
		    \[N_5=\max\bigl(8\delta^{-3}+1,64(N_1(\delta/4,\delta_0,1)+1)/(63\delta^2)\bigr)\]
		    and
		    \[s_5=\max\bigl(4,8/(3\delta),2^{11/3}s_1(\delta/4,\delta_0,1)/(3\delta),4/(1+\delta),2^{10/3}s_1(\delta/4,\delta_0,1)/\delta^{2/3}
		    \bigr).\]
		Take $N\ge N_5$ and $d$ and $s$ as in \eqref{dsCondition}.
		Then $m\ge 63\delta N/64\ge N_1$, $n\ge bm/a-1\ge63\delta^2 N/64-1\ge N_1$, and
           \[\frac\delta4
            \le\frac{a-(a+b)\delta/4}{b+(a+b)\delta/4}\le\frac{m}{n}\le\frac{m}{bm/a-1}\le\frac{a}{b-64a/(63\delta N_5)}
            \le\frac2\delta\,.\]
		Also
		    \begin{align*}
		   (n+m)^{1/3} \ge s'
		    &\ge\frac{(a+b)^2s-bd-d(a+b)\delta/4-2}{a^{2/3}\Bigl(1+\frac{b+(a+b)\delta/4}{a-(a+b)\delta/4}\Bigr)^{2/3}}
		    \ge\frac{s/2-2d}{\bigl(1+\frac{2+\delta}{\delta}\bigr)^{2/3}}
            \ge\frac{\delta^{2/3}s}{2^{10/3}}
		    \ge s_1\,.
            \end{align*}
		And
		\begin{align}\label{xistar in cone}
		\frac{\delta}4
		\le\frac1{1+1/\delta}-\frac\delta4
		\le\frac1{1+{o_2/o_1}}-\frac\delta4\le\xi_\star\cdot \ek_1\le\frac1{1+{o_2/o_1}}\le\frac1{1+\delta}
		\le 1-\frac\delta2\,.
		\end{align}
		And lastly, 
		\begin{align*}
		\Bigl|\xi_\star\cdot \ek_1-\frac m{m+n}\Bigr|
		&=\Bigl|n-\frac{m\xi_\star\cdot \ek_2}{\xi_\star\cdot \ek_1}\Bigr|\,(m+n)^{-1}\xi_\star\cdot \ek_1
		\le(m+n)^{-1}.
		\end{align*}

		Thus, \eqref{Cor4-3-L} gives 
		\begin{align*}
		\P\Bigl\{Z_{o, x}^{\xi_\star, \ek_2} \leq dN^{2/3} \Bigr\} \leq \P\Bigl\{ Z^{\xi_\star, \ek_2}(m, n-s'(m+n)^{2/3})<0 \Bigr\} 
		\leq C s^{-\fluctexp}
		\end{align*}
		for some positive finite constant $C(\delta, \delta_0, \fluctexp)$.
	\end{proof}
	
	To control coarse graining on the scale $N^{2/3}$, we use the parameters $d_1$ for the southwest boundary and $d_2$ for the northeast boundary of the square $\flint{-N, N }^2$.  Let $d=(d_1,d_2)$. For $o \in \partial^{N,\delta}$ define
	\[
	\mc{I}_{o,d} = \Bigl\{u \in \partial^{N,\delta} : |u-o|_1 \leq \frac{d_1 N^{2/3}-1}{2}\Bigr\}.
	\]
	Because $\mc{I}_{o,d}$ is a connected portion of the boundary of a square, it contains a unique point $o_c$ such that $o_c \leq u$ coordinate-wise for each point $u \in \mc{I}_{o,d}$.   	
	
	For $s \leq \frac{\delta}{4} N^{1/3}$, define the directions 
	\begin{align}\label{xistars}
	\xi_{\star} = \ximn_{\star}(o_c)\quad\text{and}\quad\xi^\star = \ximn^{\star}(o_c)
	\end{align}
	as in \eqref{defXimn}.  
	Use Theorem \ref{thm:coupledGenericLPP} to couple the weights 
	$\{\w_x,I_x^{\xi^\star},J_x^{\xi^\star},I_x^{\xi_\star},J_x^{\xi_\star}:x\in\bbZ^2\}$ so that \eqref{Lem4-4-monotonicity} holds almost surely and for all $x\in\bbZ^2$.
	%
	%
    %
	Define the event
	\begin{align}
	A_{o, d}=\Bigl\{Z_{o_{c},-\fl{N^{2 / 3}} \ek_2}^{\xi^{\star}, \ek_1}<-d_{1} N^{2/3}\quad\text{and}\quad
	Z_{o_{c},\fl{N^{2 / 3}}\ek_{2}}^{\xi_{\star}, \ek_2}>d_{1} N^{2/3}\Bigr\}.
	\end{align}
	Recall the boundary weights defined in \eqref{G-IJ}.

	\begin{lemma}\label{lem:AodInequalities}	
		Suppose Assumption \ref{exitPointTailBound} holds.
		Then for any $\delta\in(0,\delta_0)$,
	 $N \geq N_5(\delta,\delta_0)$,  $o \in \partial^{N,\delta}$, and $(d_1, s)$ satisfying \eqref{dsCondition},
		\begin{equation}
		\P(A_{o,d}^c) \leq 2C_5(\delta,\delta_0,\fluctexp) s^{-\fluctexp}. \label{boundAodComp}
		\end{equation}
		
		On the event $A_{o,d}$, the following inequalities hold for all $x\in\mc I$ and $u \in \mc{I}_{o,d}$:
		\begin{equation}
		J_{x+\ek_2}^{\xi_\star} \leq J_{x+\ek_2}^{[u]} \leq J_{x+\ek_2}^{\xi^\star}. \label{monotonicityBetweenLPP}
		\end{equation}
	\end{lemma}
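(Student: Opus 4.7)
The probability estimate \eqref{boundAodComp} is immediate from Lemma \ref{lem:geodesicsInNGrid}: the event $A_{o,d}^c$ is the union of two events, each of which fits \eqref{gridGeodesicRight} or \eqref{gridGeodesicUp} directly after the substitutions $o\mapsto o_c\in\partial^{N,\delta}$ and $x\mapsto\pm\fl{N^{2/3}}\ek_2\in\mc{I}$. The hypotheses on $(d_1,s)$ and on $N$ in the present lemma match those of Lemma \ref{lem:geodesicsInNGrid}, so each of the two probabilities is bounded by $C_5 s^{-\fluctexp}$, and a union bound yields \eqref{boundAodComp}.

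For the sandwich \eqref{monotonicityBetweenLPP}, the plan is to first rewrite everything as $\ek_2$-increments of passage times anchored at $o_c$. The Busemann additivity supplied by Theorem \ref{thm:coupledGenericLPP} gives $J^{\xi}_{x+\ek_2} = G^\xi_{o_c,x+\ek_2} - G^\xi_{o_c,x}$ for any $\xi\in\ri\sU$ and any $o_c\le x$, while $J^{[u]}_{x+\ek_2} = G_{u,x+\ek_2} - G_{u,x}$ is the definition from \eqref{G-IJ}. The two inequalities in \eqref{monotonicityBetweenLPP} then become the crossing-type inequalities
\begin{align*}
G^{\xi_\star}_{o_c, x+\ek_2} + G_{u,x} &\le G^{\xi_\star}_{o_c, x} + G_{u, x+\ek_2},\\
G^{\xi^\star}_{o_c, x} + G_{u, x+\ek_2} &\le G^{\xi^\star}_{o_c, x+\ek_2} + G_{u, x}.
\end{align*}

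For the first of these, take $\pi_1$ to be the upmost $\xi_\star$-geodesic of $G^{\xi_\star}_{o_c, x+\ek_2}$ and $\pi_2$ to be a bulk geodesic of $G_{u,x}$. Since $o_c \le u$ and $x \le x+\ek_2$, the standard planar LPP crossing lemma guarantees that $\pi_1$ and $\pi_2$ share a common vertex $v$; swapping the two paths at $v$ produces an up-right path from $o_c$ to $x$ and one from $u$ to $x+\ek_2$, whose total weight dominates $G^{\xi_\star}_{o_c, x+\ek_2} + G_{u,x}$ from above by the right-hand side of the first crossing inequality. The role of the event $A_{o,d}$ is to force the intersection $v$ to lie in the bulk of $o_c+\bbZ_{\ge 0}^2$, so that the swap preserves total weight exactly: its first condition, together with the standard endpoint-monotonicity of exit points (used to propagate the control from the target $\fl{N^{2/3}}\ek_2$ appearing in $A_{o,d}$ to the entire family of targets $\{x,\,x+\ek_2 : x\in\mc{I}\}$), forces the $\xi_\star$-geodesic to traverse the horizontal axis past position $d_1 N^{2/3}$, which by construction exceeds $|u-o_c|_1$. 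The second inequality is proved by the same argument with the roles of $\xi_\star$ and $\xi^\star$ interchanged and the analogous input supplied by the second condition defining $A_{o,d}$. The main technical obstacle is a careful case analysis depending on whether $u$ sits on the horizontal or the vertical arm of $\partial^{N,\delta}$, matching each of these two geometric cases to one of the two conditions in $A_{o,d}$ and to the appropriate direction of the sandwich; any residual boundary contribution arising when the crossing occurs close to the axis-boundaries of $o_c + \bbZ_{\ge 0}^2$ can be absorbed into slack via the pointwise coupling bounds \eqref{Lem4-4-monotonicity}.
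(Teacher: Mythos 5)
Your bound \eqref{boundAodComp} is a direct consequence of Lemma~\ref{lem:geodesicsInNGrid}, as you say; that part matches the paper.

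For the sandwich \eqref{monotonicityBetweenLPP} there is a genuine gap, and it is not a fixable detail: the crossing/swap mechanism you describe can at best deliver one direction of the inequality, and not the other. Setting up the swap with $\pi_1\colon o_c\to x+\ek_2$ a $G^{\xi}_{o_c,\cdot}$-geodesic and $\pi_2\colon u\to x$ a bulk geodesic, and splicing at a shared vertex $v$, yields (if $v$ is in the interior of $o_c+\NN^2$ so the splice preserves weight)
\[
G^{\xi}_{o_c,x}+G_{u,x+\ek_2}\;\ge\;G^{\xi}_{o_c,x+\ek_2}+G_{u,x},
\]
i.e.\ $J^{[u]}_{x+\ek_2}\ge J^{\xi}_{x+\ek_2}$. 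With $\xi=\xi_\star$ this is the lower bound of the sandwich. But ``interchanging the roles of $\xi_\star$ and $\xi^\star$'' in the same construction gives $J^{[u]}_{x+\ek_2}\ge J^{\xi^\star}_{x+\ek_2}$ --- the \emph{reverse} of the upper bound you need. There is no choice of endpoints for a swap that produces $J^{[u]}_{x+\ek_2}\le J^{\xi^\star}_{x+\ek_2}$: the candidate pairing $\pi_1\colon u\to x+\ek_2$, $\pi_2\colon o_c\to x$ has source and target both ordered $u\ge o_c$, $x+\ek_2\ge x$, which is a nested rather than a crossing configuration and does not force a shared vertex, and even when a shared vertex exists (supplied only via the event $A_{o,d}$, typically at $u$ on the boundary), the weight accounting on the boundary goes the wrong way. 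The paper's proof of the upper half of \eqref{monotonicityBetweenLPP} is of an entirely different kind: it introduces the auxiliary process $\tilde G$ whose boundary weights are modified only on the vertical arm, uses $A_{o,d}$ to force the $G^{\xi^\star}_{o_c,\cdot}$ geodesics through $u$ and $u+\ek_2$ (which gives the exact identity $G^{\xi^\star}_{o_c,x+\ek_2}-G^{\xi^\star}_{o_c,x}=\tilde G_{u,x+\ek_2}-\tilde G_{u,x}$), and then invokes the monotonicity lemmas B.1 and B.2 of \cite{Bal-Bus-Sep-20} (boundary-weight monotonicity and source monotonicity of $\ek_2$-increments, respectively) to compare $\tilde G$ with the bulk $G$ from $u$.

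Two smaller but real misreadings: you also invoke ``the standard planar LPP crossing lemma'' from $o_c\le u$ and $x\le x+\ek_2$ alone, but this endpoint configuration does not force two up-right paths to meet --- the intersection in the lower-bound swap is itself a consequence of $A_{o,d}$ (the geodesic being driven far along one arm of the boundary), not something one gets for free and then only ``positions.'' And the attributions are reversed: the first condition in $A_{o,d}$ concerns $Z^{\xi^\star,\ek_1}$ and drives the $\xi^\star$-geodesic far up the vertical arm, while the second concerns $Z^{\xi_\star,\ek_2}$ and drives the $\xi_\star$-geodesic far along the horizontal arm. The paper's case split on whether $u$ lies on the horizontal or vertical arm is needed for \emph{each} direction of the sandwich separately, not matched one-to-one with the two conditions of $A_{o,d}$.
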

	\begin{proof}
		Lemma \ref{lem:geodesicsInNGrid} implies \eqref{boundAodComp}.  
		We prove the second inequality of \eqref{monotonicityBetweenLPP}  and the first inequality follows similarly.  Let $\tilde{G}_{x, y}$ be the LPP process on the quadrant $o_c + \ZZ_{\geq 0}^2$ with weights $\tilde{\omega}_{o_c} = 0$, $\tilde{\omega}_{o_c + j\ek_2} = J_{o_c + j\ek_2}^{\xi^\star}$ for each $j \geq 1$, and $\tilde{\omega}_{o_c + x} = \omega_{o_c + x}$ whenever $x \cdot \ek_1 > 0$.  
		
		First consider the case that $u = o_c + j\ek_2$ for some $j \geq 0$.  On the event $A_{o,d}$, we have that the rightmost geodesic of $G^{\xi^\star}_{o_c,-\fl{N^{2/3}}\ek_2}$
		exits the boundary above $o_c+d_1 N^{2/3} \ek_2$.  Therefore, for any $x \in (-\fl{N^{2/3}}+\ZZ_{\ge0})\ek_2$, every geodesic of $G^{\xi^\star}_{o_c,x}$ must exit the boundary above $o_c+d_1 N^{2/3} \ek_2$, i.e., 
		\[
		Z_{o_c, x}^{\xi^\star, \ek_1} < -d_1 N^{2/3}.
		\]
		Thus, every geodesic of $G_{o_c, x}^{\xi^\star}$ includes $u$ and $u+\ek_2$.  
		Since the weights used by $\tilde{G}$ and $G^{\xi^\star}$ are the same away from the horizontal boundary, and on that boundary the weights used by the former are smaller than the ones used by the latter, we get that 
		\[
		G_{o_c, x+\ek_2}^{\xi^\star} - G_{o_c, x}^{\xi^\star} = \tilde{G}_{u, x+\ek_2} - \tilde{G}_{u, x}.
		\]
		 
		 By \cite[Lemma B.1]{Bal-Bus-Sep-20}, $\tilde{G}_{u, x+\ek_2} - \tilde{G}_{u, x} \geq G_{u, x+\ek_2} - G_{u, x}$.  Putting these together gives $J_{x+\ek_2}^{\xi^\star} \geq J_{x+\ek_2}^{[u]}$.  
		 
		 If instead $u = o_c + i\ek_1$ for some $i\geq 1$, then as before $G_{o_c, x}^{\xi^\star} = \tilde{G}_{o_c, x}$.  Furthermore, since $\tilde{G}_{u,x}$ does not use the vertical weights above $o_c$, then $\tilde{G}_{u,x} = G_{u,x}$.  By \cite[Lemma B.2]{Bal-Bus-Sep-20}, 
		 \begin{align*}
		 G_{o_c +\ek_{1}, x+\ek_{2}} - G_{o_c +\ek_{1}, x} \leq G_{o_c, x+\ek_{2}}-G_{o_c, x}.
		 \end{align*}
		 Inductively, this gives $G_{o_c, x+\ek_{2}}-G_{o_c, x} \geq G_{u, x+\ek_2} - G_{u, x}$ and applying \cite[Lemma B.1]{Bal-Bus-Sep-20} for the first inequality we get
		 \[
		 G^{\xi^\star}_{o_c, x+\ek_2} - G^{\xi^\star}_{o_c, x} = \tilde{G}_{o_c, x+\ek_2} - \tilde{G}_{o_c, x} \geq  G_{o_c, x+\ek_{2}}-G_{o_c, x} \geq G_{u, x+\ek_2} - G_{u, x}.\qedhere
		 \]
	\end{proof}

	Now we do an analogous construction for the stationary process with a northeast boundary. 
	Recall the northeast boundary \eqref{nebdry}. We continue to drop the $\delta$ from the notation. Recall also the weights \eqref{IJhat}. 
	For $x=(x_1,x_2)$ and $y=(y_1,y_2)$ in $\bbZ^2$ 
set $\hat G^\xi_{x, y} = 0$ if  $x\not\le y$,
while if $x\le y$ then let
\begin{align}\label{GNE}
	\hat G^\xi_{y, x} =\max _{1 \leq k \leq y_1-x_1}\Bigl\{ G_{x, y-k\ek_1-\ek_2} +  \sum_{i=1}^{k} \hat I^\xi_{y-i \ek_1} +  \Bigr\} \bigvee \max_{1 \leq \ell \leq y_2-x_2} \Bigl\{ G_{x,y-\ek_1-\ell \ek_2} + \sum_{j=1}^{\ell} \hat J^\xi_{y-j \ek_2}\Bigr\},
\end{align}
with the convention that $\max_{\varnothing}=0$. In particular, $\hat G^\xi_{x, x} = 0$.
Then
    \[\hat G^\xi_{y,x}(\w)=G^\xi_{-x,-y}(\hat\w).\]
The additivity \eqref{Gadd} becomes
    \begin{align}\label{Ghatadd}
    \hat G^\xi_{z,y}+\hat G^\xi_{y,x}=\hat G^\xi_{z,x},
    \end{align}
for $x\le y\le z$ in $\ZZ^2$.
The quantities $\hat\Exit_{y,x}^\xi$ and $\hat Z_{y,x}^{\xi,-\ek_k}$ are defined analogously to $\Exit_{y,x}^\xi$ and $Z_{y,x}^{\xi,\ek_k}$. Precisely, 
	\begin{align*}
	\hat\Exit_{y,x}^\xi 
	&= \Bigl\{ k \in\flint{1,y_1-x_1} : \sum_{i=1}^k \hat{I}_{y-i\ek_1}^\xi + G_{x,y-k\ek_1-\ek_2} = \hat{G}_{y,x}^\xi \Bigr\}\\
	&\qquad\qquad\qquad\qquad\bigcup 
	\Bigl\{ -\ell : \ell \in\flint{1,y_2-x_2} \text{ and } \sum_{j=1}^{\ell} \hat{J}_{y-j\ek_2}^\xi + G_{x,y-\ell \ek_2-\ek_1} = \hat{G}_{y,x}^\xi \Bigr\},
	\end{align*}
$\hat{Z}_{y,x}^{\xi, -\ek_1} = \max\hat\Exit_{y,x}^\xi$, and $\hat{Z}_{y,x}^{\xi, -\ek_2} = \min\hat\Exit_{y,x}^\xi$.

	For $\hat{o} \in \hat{\partial}^{N,\delta}$ let
	\begin{align*}
	\widehat{\mathcal{I}}_{\widehat{o}, d}=\Bigl\{v \in \widehat{\partial}^{N,\delta}:|v-\widehat{o}|_{1} \leq \frac{d_{2} N^{2 / 3}-1}2\Bigr\}
	\end{align*}
	and let $\hat{o}_c$ be the unique point of $\hat{\mc{I}}_{\hat{o},d}$ such that $\hat{o}_c \geq v$ for each point $v \in \hat{\mc{I}}_{\hat{o},d}$.   	
	For $1 \leq s \leq \frac{\delta}{4}N^{1/3}$ define 
	\begin{align}\label{etastars}
    \eta_{\star}=\ximn_{\star}\left(\widehat{o}_{c}\right)\quad\text{and}\quad \eta^{\star}=\ximn^{\star}\left(\widehat{o}_{c}\right)
    \end{align}
    as in \eqref{defXimn}.
	Couple the weights $\{\w_x,I^{\eta^\star}_x,J^{\eta^\star}_x,I^{\eta_\star}_x,J^{\eta_\star}_x\}$
	using Theorem \ref{thm:coupledGenericLPP}. This produces a coupling of
	$\{\w_x,\hat I^{\eta^\star}_x,\hat J^{\eta^\star}_x,\hat I^{\eta_\star}_x,\hat J^{\eta_\star}_x\}$
    such that
	\begin{align*}
	\w_x \leq \hat{I}_x^{\eta^{\star}} \leq \hat{I}_x^{\eta{\star}} \quad\text{and}\quad \w_x \leq \hat{J}_x^{\eta_\star} \leq \hat{J}_x^{\eta^{\star}}\,,
	\end{align*}
	the analogue of \eqref{Lem4-4-monotonicity}, holds almost surely and for all $x\in\ZZ^2$.
	
	Define the increment variables analogously to \eqref{G-IJ}:
	\begin{align*}\label{Ghat-IJ}
	\begin{split}
	&\hat I_x^{[y]}=G_{x, y}-G_{x+\ek_1,y},\quad\text{when $x+\ek_1\le y$, and}\\ 
	&\hat J_x^{[y]}=G_{x,y}-G_{x+\ek_2, y},\quad\text{when $x+\ek_2\le y$}.
	\end{split}
	\end{align*}

	Define the event
	\begin{align}
	B_{\widehat{o}, d}=\left\{\widehat{Z}_{\widehat{o}_{c}, \lfloor N^{2 / 3} \rfloor \ek_{2}+\ek_{1}}^{\eta^{\star}, -\ek_1}<-d_{2} N^{2/3}, \quad \widehat{Z}_{\widehat{o}_{c},-\lfloor N^{2 / 3} \rfloor \ek_2 + \ek_1}^{\eta_{\star}, -\ek_2}>d_{2} N^{2/3}\right\}
	\end{align}
	The next result follows from Lemma \ref{lem:AodInequalities}.
	
	\begin{lemma}\label{lem:BodInequalities}
	Suppose Assumption \ref{exitPointTailBound} holds.
	Then for any $\delta\in(0,\delta_0)$,
	$N \geq N_5(\delta,\delta_0)$, $\hat{o} \in \hat{\partial}^{N,\delta}$, and $(d_2, s)$ satisfying \eqref{dsCondition},
		\begin{equation}
		\P(B_{\hat{o},d}^c) \leq 2C_5(\delta,\delta_0,\fluctexp) s^{-\fluctexp}. \label{boundBodComp}
		\end{equation}
		
		On the event $B_{\hat{o},d}$, the following inequalities hold for all $x \in \mc I$ and $v \in \hat{\mc{I}}_{\hat{o},d}$:
		\begin{equation}
		\hat{J}_{x+\ek_1+\ek_2}^{\eta_\star} \leq \hat{J}_{x+\ek_1+\ek_2}^{[v]} \leq \hat{J}_{x+\ek_1+\ek_2}^{\eta^\star}. \label{monotonicityBetweenRevLPP}
		\end{equation}
	\end{lemma}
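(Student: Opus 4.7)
The strategy is to derive Lemma \ref{lem:BodInequalities} as the northeast-boundary mirror image of Lemma \ref{lem:AodInequalities} via the reflection $\hat\omega_x = \omega_{-x}$. Under this reflection, the definitions \eqref{IJhat} and the identity $\hat G^\xi_{y,x}(\omega) = G^\xi_{-x,-y}(\hat\omega)$ transport the northeast-anchored stationary passage times driven by $\hat I^\xi, \hat J^\xi$ to the southwest-anchored stationary passage times driven by $I^\xi, J^\xi$ on the reflected environment. Since $\hat\omega$ has the same law as $\omega$ and $\ximn(-x) = \ximn(x)$ under the convention \eqref{defXimn}, the point $-\hat o_c$ lies in $\partial^{N,\delta}$ with $\ximn_\star(-\hat o_c) = \eta_\star$ and $\ximn^\star(-\hat o_c) = \eta^\star$. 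This produces a clean dictionary between the northeast-anchored data appearing here and the southwest-anchored data already handled in Lemma \ref{lem:AodInequalities}.

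For the probability bound \eqref{boundBodComp}, I would match the two exit-point inequalities defining $B_{\hat o, d}$ with the two inequalities defining $A_{-\hat o, d}$ on the reflected environment. The identity relating $\hat Z^{\eta, -\ek_k}_{\hat o_c, y}(\omega)$ to $Z^{\eta, \ek_k}_{-y, -\hat o_c}(\hat\omega)$ follows directly from the definitions of these exit points and from the reflection identity for $\hat G$, so \eqref{boundAodComp} applied to $\hat\omega$ delivers $\P(B_{\hat o, d}^c) \le 2 C_5 s^{-\nu}$. Equivalently, one reruns the proof of Lemma \ref{lem:geodesicsInNGrid} directly in the northeast setting: with anchor $\hat o_c$ and endpoints $\pm\fl{N^{2/3}}\ek_2 + \ek_1$, each of the two exit-point conditions carries probability at most $C_5 s^{-\nu}$ by a verbatim adaptation of \eqref{gridGeodesicRight}--\eqref{gridGeodesicUp}, and a union bound yields the claim.

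For the monotonicity inequalities \eqref{monotonicityBetweenRevLPP}, I would transcribe the proof of \eqref{monotonicityBetweenLPP} with the orientation reversed. On the event $B_{\hat o, d}$, the two exit-point controls force every geodesic of $\hat G^{\eta^\star}_{y, \hat o_c}$ with $y = x + \ek_1 + \ek_2$ and $x \in \mc{I}$ to pass through any chosen $v \in \hat{\mc{I}}_{\hat o, d}$. Introducing an auxiliary LPP $\tilde G$ on $\hat o_c - \ZZ_{\ge 0}^2$ whose bulk weights agree with $\omega$ except that $\hat J^{\eta^\star}$ replaces $\omega$ along the boundary column through $v$ reachable from $y$, one obtains $\hat G^{\eta^\star}_{y, \hat o_c} - \hat G^{\eta^\star}_{y+\ek_2, \hat o_c} = \tilde G_{y, v} - \tilde G_{y+\ek_2, v}$. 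The reflected versions of \cite[Lemmas B.1--B.2]{Bal-Bus-Sep-20}, which themselves follow at once from the reflection identity for $G$, then give $\tilde G_{y, v} - \tilde G_{y+\ek_2, v} \ge G_{y, v} - G_{y+\ek_2, v} = \hat J^{[v]}_y$, proving the upper bound $\hat J^{[v]}_{x+\ek_1+\ek_2} \le \hat J^{\eta^\star}_{x+\ek_1+\ek_2}$. The lower bound follows symmetrically, with $\eta^\star$ replaced by $\eta_\star$ and the monotonicity $\omega_x \le \hat J^{\eta_\star}_x \le \hat J^{\eta^\star}_x$ used in the opposite direction. The main obstacle is purely bookkeeping: matching the $+\ek_1 + \ek_2$ subscript shift on the northeast side to the $+\ek_2$ shift in \eqref{monotonicityBetweenLPP}, which comes from the extra $+\ek_1$ offset built into the northeast boundary indexing in \eqref{GNE}; no new probabilistic input beyond Assumption \ref{exitPointTailBound} is required.
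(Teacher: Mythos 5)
Your proposal is correct and matches the paper's approach: the paper merely asserts that Lemma~\ref{lem:BodInequalities} ``follows from Lemma~\ref{lem:AodInequalities},'' and the reflection $\hat\omega_x=\omega_{-x}$ together with the identities $\hat I^\xi_x(\omega)=I^\xi_{-x}(\hat\omega)$, $\hat J^\xi_x(\omega)=J^\xi_{-x}(\hat\omega)$, $\hat G^\xi_{y,x}(\omega)=G^\xi_{-y,-x}(\hat\omega)$, the observation $\ximn(-\hat o_c)=\ximn(\hat o_c)$, and $-\hat\partial^{N,\delta}=\partial^{N,\delta}$ is exactly the dictionary that transports the $A$-event argument to the $B$-event argument. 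Your bookkeeping note on the $+\ek_1$ offset built into \eqref{GNE} is the one genuine point of care, and is handled correctly; the only blemish is the reversed index order $\hat G^{\eta^\star}_{y,\hat o_c}$ (the NE corner $\hat o_c$ should appear first, as in \eqref{GNE}), which does not affect the argument.
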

	
	Let $o \in \partial^{N,\delta}$, $\hat{o} \in \hat{\partial}^{N,\delta}$ and consider the LPP process from points $u \in \mc{I}_{o,d}$ to the interval $\mc{I}$ and the reverse LPP process from points $v \in \hat{\mc{I}}_{\hat{o}, d}$ to the shifted interval $\ek_1 + \mc{I}$.  Recall
	\eqref{xistars} and \eqref{etastars}.
	Use Theorem \ref{thm:coupledGenericLPP} again to couple the weights in \eqref{bigcoupling} and thus produce a coupling of the weights \[\{\w_x,J^{\xi^\star}_x,J^{\xi_\star}_x,\hat J^{\eta^\star}_x,\hat J^{\eta_\star}_x:
	x\in\ZZ^2\}.\]
	Recall the random walks $S^{\xi_\star,\eta^\star}$ and $S^{\xi^\star,\eta_\star}$, as defined in \eqref{Sdef}. Define also
	\begin{align*}
	S_n^{u,v}=\begin{cases}
	\sum_{j=1}^n(J_{j\ek_2}^{[u]}-\widehat{J}_{\ek_1+(j-1)\ek_2}^{[v]}),&n\ge1,\\[4pt]
	0,&n=0,\\[3pt]
	-\sum_{j=n+1}^0(J_{j\ek_2}^{[u]}-\widehat{J}_{\ek_1+(j-1)\ek_2}^{[v]}),&n\le-1.
	\end{cases}
	\end{align*}
	
The following is immediate from  \eqref{monotonicityBetweenLPP} and \eqref{monotonicityBetweenRevLPP}. 

	\begin{lemma} \label{lem:RWs} 
		On the event $A_{o,d} \cap B_{\hat{o}, d}$, for all $u \in \mc{I}_{o,d}$ and $v \in \hat{\mc{I}}_{\hat{o}, d}$,
		\begin{align}
		\begin{aligned}
		&S^{\xi_\star,\eta^\star}_{n} \leq S_{n}^{u, v} \leq S^{\xi^\star,\eta_\star}_{n} \text{ for } n \in \flint{ 0, N^{2 / 3} } \quad\text{and}\\
		&S^{\xi^\star,\eta_\star}_{n}  \leq S_{n}^{u, v} \leq S^{\xi_\star,\eta^\star}_{n} \text{ for } n \in  \flint{-N^{2 / 3}+1,0 }.
		\end{aligned}\label{RWMonotonicity}
		\end{align}
	\end{lemma}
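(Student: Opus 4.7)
The plan is to apply the monotonicity inequalities \eqref{monotonicityBetweenLPP} and \eqref{monotonicityBetweenRevLPP} termwise to each summand in the definition \eqref{Sdef} of $S_n^{u,v}$ and then sum, being careful about the sign flip in the definition for $n < 0$.

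First I would fix $u \in \mc{I}_{o,d}$ and $v \in \hat{\mc{I}}_{\hat{o},d}$, and work on the event $A_{o,d} \cap B_{\hat{o},d}$. For any integer $j$ such that $(j-1)\ek_2 \in \mc{I}$, applying \eqref{monotonicityBetweenLPP} with $x=(j-1)\ek_2$ gives
\[
J^{\xi_\star}_{j\ek_2} \le J^{[u]}_{j\ek_2} \le J^{\xi^\star}_{j\ek_2},
\]
and for any $j$ such that $(j-2)\ek_2 \in \mc{I}$, applying \eqref{monotonicityBetweenRevLPP} with $x=(j-2)\ek_2$ gives
\[
\hat{J}^{\eta_\star}_{\ek_1+(j-1)\ek_2} \le \hat{J}^{[v]}_{\ek_1+(j-1)\ek_2} \le \hat{J}^{\eta^\star}_{\ek_1+(j-1)\ek_2}.
\]
The definition $\mc{I} = \{0\} \times \flint{-N^{2/3},N^{2/3}}$ ensures both prerequisites hold for every $j$ in the range $\flint{-N^{2/3}+1, N^{2/3}}$ that appears in any of the random walks under consideration.

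Subtracting the second double inequality from the first (note the direction reversal for the subtracted term) yields the per-step bound
\[
J^{\xi_\star}_{j\ek_2} - \hat{J}^{\eta^\star}_{\ek_1+(j-1)\ek_2}
\;\le\;
J^{[u]}_{j\ek_2} - \hat{J}^{[v]}_{\ek_1+(j-1)\ek_2}
\;\le\;
J^{\xi^\star}_{j\ek_2} - \hat{J}^{\eta_\star}_{\ek_1+(j-1)\ek_2}.
\]
For $n \in \flint{1,N^{2/3}}$, summing this over $j=1,\dotsc,n$ and consulting \eqref{Sdef} gives $S^{\xi_\star,\eta^\star}_n \le S_n^{u,v} \le S^{\xi^\star,\eta_\star}_n$. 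The case $n=0$ is trivial since all three quantities vanish.

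For $n \in \flint{-N^{2/3}+1,-1}$, $S_n^{u,v}$ is defined as the negative of the sum of the same increments ranging over $j = n+1,\dotsc,0$. Summing the per-step bound over these indices and multiplying by $-1$ flips the inequalities, producing $S^{\xi^\star,\eta_\star}_n \le S_n^{u,v} \le S^{\xi_\star,\eta^\star}_n$ as claimed. No step here is obstructed; the argument is entirely bookkeeping, and the only care required is to match indices in \eqref{Sdef} with the ranges for which the monotonicity inequalities of Lemmas \ref{lem:AodInequalities} and \ref{lem:BodInequalities} apply.
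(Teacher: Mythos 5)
Your proof is correct and is precisely what the paper intends; the paper itself records this lemma as ``immediate from \eqref{monotonicityBetweenLPP} and \eqref{monotonicityBetweenRevLPP},'' and your termwise application of those bounds followed by summation and a sign flip for negative $n$ is exactly the intended argument. One small imprecision in a side remark: you state that both prerequisites ($(j-1)\ek_2 \in \mc{I}$ and $(j-2)\ek_2 \in \mc{I}$) hold for every $j \in \flint{-N^{2/3}+1, N^{2/3}}$, but at the lower endpoint $j=-\fl{N^{2/3}}+1$ the second prerequisite $(j-2)\ek_2 \in \mc{I}$ fails. This is harmless because, as you can check from \eqref{Sdef}, the smallest $j$ actually appearing in any of the sums $S_n^{u,v}$ with $n\in\flint{-N^{2/3}+1,0}$ is $n+1 \ge -\fl{N^{2/3}}+2$, for which both prerequisites do hold; the argument as structured goes through without change.
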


	Recall the event $U^{u,v}$ defined in \eqref{Uuv}.

	\begin{lemma}\label{lem:boundProbUseEdge}
		Suppose Assumptions \ref{exitPointTailBound} and \ref{ass:RW} hold with \eqref{a-nu} satisfied. 
		For any $\delta\in(0,\delta_0)$
	    there exist finite positive constants $C_6(\delta, \delta_0, \fluctexp,\expnt)$ and  $N_6(\delta, \delta_0)\ge8\delta^{-3}$ such that for all $N \geq N_6$ and $o \in \partial^{N,\delta}$, if $\hat{o}=-o \in \hat{\partial}^{N,\delta}$, 
	    $d_1=1$, 
	    $d_2=N^{\frac13-\frac{\expnt}2}/18$, and 
	    $s=8d_2$, then
		\begin{equation}
		\P\Bigl( \bigcup_{u \in \mc{I}_{o,d}, v \in \hat{\mc{I}}_{\hat{o},d}} U_{u,v} \Bigr) \leq C_6 N^{-\expntt},
		\end{equation}
		where $\expntt$ is defined in \eqref{a:def}.
\end{lemma}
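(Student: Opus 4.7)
The plan is to reduce the event $U^{u,v}$ to a nonpositivity condition on the random walk $S^{u,v}$, then invoke the sandwich in Lemma \ref{lem:RWs} together with Assumption \ref{ass:RW} and the boundary-event estimates of Lemmas \ref{lem:AodInequalities}--\ref{lem:BodInequalities}.

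First I would observe that for $u \le \zero$ and $v \ge \ek_1$, every up-right path from $u$ to $v$ uses exactly one horizontal edge of the form $(0,n)\to(1,n)$, so
\[
G_{u,v} \;=\; \max_{n} \bigl(G_{u,\, n\ek_2} + G_{\ek_1 + n\ek_2,\, v}\bigr),
\]
and by Lemma \ref{lem:pushForwardLPP} the event $U^{u,v}$ coincides with the event that $n=0$ realizes this maximum. A direct computation unwinding the definitions in \eqref{G-IJ}, \eqref{Ghat-IJ} and \eqref{Sdef} shows that for every valid $n\ne 0$,
\[
G_{u,\,n\ek_2} + G_{\ek_1 + n\ek_2,\, v} \;-\; G_{u,\zero} \;-\; G_{\ek_1,v} \;=\; S^{u,v}_n,
\]
so $U^{u,v} \subseteq \{S^{u,v}_n \le 0 \text{ for all } n \in \flint{-N^{2/3}+1,\, N^{2/3}}\setminus\{0\}\}$. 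On $A_{o,d}\cap B_{\hat o,d}$, Lemma \ref{lem:RWs} gives $S^{u,v}_n \ge S^{\xi_\star,\eta^\star}_n$ on $n>0$ and $S^{u,v}_n \ge S^{\xi^\star,\eta_\star}_n$ on $n<0$, uniformly in $u \in \mc I_{o,d}$ and $v\in\hat{\mc I}_{\hat o,d}$. Hence
\[
\Bigl(\bigcup_{u,v} U^{u,v}\Bigr)\cap A_{o,d}\cap B_{\hat o,d}
\;\subseteq\;
\Bigl\{\sup_{0<n\le N^{2/3}} S^{\xi_\star,\eta^\star}_n \le 0\Bigr\}
\cap
\Bigl\{\sup_{-N^{2/3}+1\le n<0} S^{\xi^\star,\eta_\star}_n \le 0\Bigr\}.
\]

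To apply Assumption \ref{ass:RW}, I would verify its hypotheses on the four directions. Because $\hat o=-o$ forces $\ximn(o_c) = \ximn(\hat o_c)$, the choice $s=8d_2$ with $d_2 = N^{1/3-\expnt/2}/18$ produces
\[
\xi_\star\cdot\ek_1 - \eta^\star\cdot\ek_1 \;=\; -2sN^{-1/3} \;=\; -\tfrac{8}{9}\,N^{-\expnt/2},
\]
and the analogous identity for $\eta_\star\cdot\ek_1 - \xi^\star\cdot\ek_1$; both differences lie in $[-N^{-\expnt/2},0)$, exactly matching \eqref{xistar-etastar}. Since $o\in\partial^{N,\delta}$, the unperturbed directions $\ximn(o_c)$ have $\ek_1$-coordinate in $[\delta/(1+\delta),\,1/(1+\delta)]$, and the $O(N^{-\expnt/2})$ perturbations leave them inside $(\delta',1-\delta')$ for a slightly smaller $\delta'=\delta'(\delta)$ once $N$ is large. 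Thus Assumption \ref{ass:RW} yields a bound of $C N^{-\expnt}$ on the above random-walk event.

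Union-bounding and invoking Lemmas \ref{lem:AodInequalities} and \ref{lem:BodInequalities} with $s = 8d_2 \asymp N^{1/3-\expnt/2}$ (one checks \eqref{dsCondition} for both $(d_1,s)$ and $(d_2,s)$ once $N$ is large, using $\expnt<2/3$) gives
\[
\P(A_{o,d}^c) + \P(B_{\hat o,d}^c) \;\le\; 4C_5\, s^{-\fluctexp} \;\le\; C\, N^{-(1/3-\expnt/2)\fluctexp}.
\]
Combining these two estimates produces
\[
\P\Bigl(\bigcup_{u,v} U^{u,v}\Bigr) \;\le\; C\bigl(N^{-\expnt} + N^{-(1/3-\expnt/2)\fluctexp}\bigr) \;\le\; C\, N^{-\expntt}
\]
by the definition \eqref{a:def} of $\expntt$; note that \eqref{a-nu} is exactly what guarantees $\expntt>1/3$ and hence ultimately the summability needed in Theorem \ref{thm:noBiInfinitesAwayFromAxes}.

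The main obstacle is the first step: identifying $U^{u,v}$ with a one-sided condition on the random walk $S^{u,v}$ over the entire relevant range of $n$, with consistent signs across $n>0$ and $n<0$. Once this identification is in place, the monotone coupling packaged in Lemmas \ref{lem:AodInequalities}--\ref{lem:BodInequalities} and \ref{lem:RWs} transfers the question from the random (and $u,v$-dependent) walks $S^{u,v}$ to the two stationary walks $S^{\xi_\star,\eta^\star}$ and $S^{\xi^\star,\eta_\star}$, after which the proof is just a matter of choosing $d_2$ and $s$ so that \eqref{xistar-etastar} holds with the right constant, and then balancing the two exponents $\expnt$ and $(1/3-\expnt/2)\fluctexp$ against the definition of $\expntt$.
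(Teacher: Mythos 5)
Your overall structure matches the paper's: reduce $U^{u,v}$ to a nonpositivity condition on $S^{u,v}$, sandwich with the stationary walks via Lemma \ref{lem:RWs} on $A_{o,d}\cap B_{\hat o,d}$, apply Assumption \ref{ass:RW}, and bound the complements via Lemmas \ref{lem:AodInequalities}--\ref{lem:BodInequalities}. The random-walk identification, the inclusion, and the balancing of the exponents $\expnt$ and $(1/3-\expnt/2)\fluctexp$ against $\expntt$ are all correct.

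However, there is a genuine error in the verification of \eqref{xistar-etastar}. You claim that ``$\hat o = -o$ forces $\ximn(o_c)=\ximn(\hat o_c)$,'' and hence $\xi_\star\cdot\ek_1 - \eta^\star\cdot\ek_1 = -2sN^{-1/3}$ exactly. This is false: while $\ximn(o)=\ximn(\hat o)$ is true, the points $o_c$ and $\hat o_c$ are \emph{corner} points of the intervals $\mc I_{o,d}$ and $\hat{\mc I}_{\hat o,d}$, which have radii of order $d_1N^{2/3}/2$ and $d_2N^{2/3}/2$ respectively; since $d_1=1\ne d_2$, one has $o_c\ne -\hat o_c$ in general, and $\ximn(o_c)\ne\ximn(\hat o_c)$. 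The correct bound (as in the paper) is $\abs{\ximn(o_c)-\ximn(\hat o_c)}_1\le 2d_2N^{-1/3}$, obtained from $\abs{o_c+\hat o_c}_1\le\tfrac12(d_1+d_2)N^{2/3}\le d_2N^{2/3}$. Consequently the difference $\xi_\star\cdot\ek_1-\eta^\star\cdot\ek_1$ is \emph{not} a single value $-2sN^{-1/3}$; it lives in the interval $[-2(s+d_2)N^{-1/3},\,2(d_2-s)N^{-1/3}]$, and the calibration $s=8d_2$, $d_2=N^{1/3-\expnt/2}/18$ is precisely what makes $2(s+d_2)N^{-1/3}=N^{-\expnt/2}$ and $2(d_2-s)N^{-1/3}<0$. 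Your false equality happens to yield a value $-\tfrac{8}{9}N^{-\expnt/2}$ inside the required range, so the final conclusion would still hold, but the step as written is wrong and obscures why $d_2$ and $s$ are chosen with those particular constants. Fix this by replacing the claimed equality with the triangle-inequality estimate on $\abs{\ximn(o_c)-\ximn(\hat o_c)}_1$ before invoking Assumption \ref{ass:RW}.
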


	\begin{proof}
		Let $o \in \partial^{N,\delta}$, $\hat o=-o$, $u \in \mc{I}_{o,d}$, and $v \in \mc{\hat{I}}_{\hat{o}, d}$.  
		The walk $S^{u,v}$ determines where the geodesics of $G_{u,v}$ leave the vertical axis,  since
		\begin{align*}
			G_{u, v} &=\max_{u_{2} \leq n \leq v_{2}}\left\{G_{u,(0, n)}+\widehat{G}_{v,(1, n)}\right\} \\
			&=\max_{u_{2} \leq n \leq v_{2}}\left\{\left[G_{u,(0, n)}-G_{u,(0,0)}\right] + G_{u,(0,0)} +\widehat{G}_{v, (1,0)}-\left[\widehat{G}_{v,(1,0)}-\widehat{G}_{v,(1, n)}\right]\right\} \\
			&=\max_{u_{2} \leq n \leq v_{2}}\left\{G_{u,(0,0)}+\widehat{G}_{v,(1,0)}+S_{n}^{u, v}\right\}.
		\end{align*}
		Therefore, a geodesic of $G_{u,v}$ takes the edge $(j\ek_2,\ek_1+j\ek_2)$ if and only if $j\in\flint{u_2,v_2}$ is such that $S^{u,v}_j=\max_{u_2 \leq n \leq v_2} S_n^{u,v}$.  Consequently,
		\begin{align*}
			U^{u,v} &\subset \Bigl\{\sup_{0 < k \leq N^{2/3}}  S_k^{u, v} \leq 0  \Bigr\} \cap \Bigl\{\sup_{-N^{2/3}+1 \leq k < 0}  S_k^{u, v} \leq 0 \Bigr\}.
		\end{align*}
	This and \eqref{RWMonotonicity} imply that on the event $A_{o, d} \cap B_{\hat{o}, d}$ 
		\begin{align*}
			\bigcup_{u \in \mc{I}_{o,d}, v \in \hat{\mc{I}}_{\hat{o},d}} U^{u,v} 
			&\subset \Bigl\{ \sup_{0<k\leq N^{2/3}} S^{\xi_\star,\eta^\star}_k \leq 0 \Bigr\} \cap \Bigl\{ \sup_{-N^{2/3}+1 \leq k < 0} S^{\xi^\star,\eta_\star}_k \leq 0 \Bigr\},
		\end{align*}
		where $\xi^\star,\xi_\star,\eta^\star,\eta_\star$ were defined in \eqref{xistars} and \eqref{etastars}. 
		As a result, we have
		\begin{align}
		\P\Bigl( \bigcup_{u \in \mc{I}_{o,d}, v \in \hat{\mc{I}}_{\hat{o},d}} U^{u,v} \Bigr) 
		\leq  \P\Bigl( \Bigl\{ \sup_{0<k\leq N^{2/3}} S^{\xi_\star,\eta^\star}_k \leq 0 \Bigr\} \cap \Bigl\{ \sup_{-N^{2/3}+1 \leq k < 0} S^{\xi^\star,\eta_\star}_k \leq 0 \Bigr\} \Bigr) + \P(A_{o,d}^c \cup B_{\hat{o},d}^c).\label{boundInTermsOfRWs}
		\end{align}
		
		Take $N\ge N_3(\delta)\vee N_5(\delta,\delta_0)$ and such that $s\ge s_5(\delta,\delta_0)$, $d\le \delta N^{1/3}/64$, and hence $s\le\delta N^{1/3}/4$.  
		Since $o = -\hat{o}$ 
		\[
		|o_c + \hat{o}_c|_1 \leq \abs{o_c-o}_1+\abs{\hat o_c-\hat o}_1\le
		(d_1 N^{2/3} + d_2 N^{2/3})/2 \leq d_2 N^{2/3}
		\]
        and thus
		\[
		\abs{\zeta(o_c) - \zeta(\hat o_c)}_1
		\le \Bigl|\frac{o_c}{\abs{o_c}_1}+\frac{\hat o_c}{\abs{\hat o_c}_1}\Bigr|_1
		\le 
		\frac{\abs{o_c+\hat o_c}_1}{\abs{o_c}_1}+\frac{\bigl| \abs{o_c}_1-\abs{\hat o_c}_1\bigr|}{\abs{o_c}_1}
		\le
		\frac{2\abs{o_c+\hat o_c}_1}{\abs{o_c}_1}
		\le2d_2 N^{-1/3}.
		\]
		Therefore
			\[-N^{-\expnt/2} =
			-2d_2N^{-1/3}-2sN^{-1/3}\leq \xi_{\star}\cdot \ek_1 - \eta^\star\cdot \ek_1 \le 2 d_2N^{-1/3} -2sN^{-1/3}< 0\]
		and similarly
			\[-N^{-\expnt/2}
			\leq \eta_{\star}\cdot \ek_1 - \xi^\star\cdot \ek_1<0.\]
		Furthermore, the inequalities in \eqref{xistar in cone} verify that the $\ek_1$-coordinates of  $\xi^\star,\xi_\star,\eta^\star,\eta_\star$ are all in $(\delta/4,1-\delta/4)$. 
		We can now apply \eqref{bd:RW}, 
		\eqref{boundAodComp}, and \eqref{boundBodComp}, which together with \eqref{boundInTermsOfRWs} 
		give
		\[
		\P\Bigl( \bigcup_{u \in \mc{I}_{o,d}, v \in \hat{\mc{I}}_{\hat{o},d}} U^{u,v} \Bigr) \leq C_3(\delta) N^{-\expnt} + 4C_5(\delta,\delta_0,
		\fluctexp)s^{-\fluctexp} \leq C_6N^{-\expntt}.\qedhere\]
	\end{proof}

	Just as above, for $o \in \partial^{N,\delta}$, let $\hat{o} = -o$ and set
	\begin{align*}
		\hat{\mathcal{F}}_{\hat{o}, d}=\Bigl\{v \in \widehat{\partial}^{N,\delta}:|\widehat{o}-v|_{1}>\frac{d_{2} N^{2/3}-1}2\Bigr\}.
	\end{align*}
	
	\begin{lemma}\label{lem:boundGeodesicsToOuter}  
	Suppose Assumption \ref{exitPointTailBound} holds. For any 
	$a\in(0,2/3)$ and $\delta\in(0,\delta_0)$, 
	there exist positive finite constants $C_7(\delta, \delta_0, \fluctexp)$ and $N_7(\delta, \delta_0,a)\ge8\delta^{-3}$ such that for any $N \geq N_7$ and $o \in \partial^{N,\delta}$,
	if $d_1=1$ and $d_2= N^{\frac13-\frac{a}2}/18$, then
	\begin{align}
		\P\Bigl(\bigcup_{u \in \mathcal{I}_{o, d}, v \in \widehat{\mathcal{F}}_{\widehat{o}, d}} U^{u, v}\Bigr) \leq C_7 N^{-(\frac13-\frac{a}2)\nu}. \label{lemma57event}
	\end{align}
	\end{lemma}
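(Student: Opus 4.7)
The plan is to partition $\widehat{\mathcal{F}}_{\hat{o},d}$ into $O(N^{a/2})$ sub-intervals $\widehat{\mathcal{I}}_{\hat{o}_k,d}$ at growing distance from $\hat{o}$ (with $|\hat{o}_k-\hat{o}|_1 \asymp k d_2 N^{2/3}$ for $k=1,\dots,K$), and for each sub-interval replay the stationary sandwich of Lemmas \ref{lem:AodInequalities}-\ref{lem:RWs} with a new NE characteristic direction $\zeta(\hat{o}_c^{(k)})$. The $\Theta(kd_2 N^{-1/3})$ angular separation between the SW direction $\xi_\star$ and the NE direction $\eta^{\star,(k)}$ takes us outside the regime of Assumption \ref{ass:RW}, but it is precisely the separation that Lemma \ref{lem:geodesicsIntersectVertical} needs in order to produce a $(kd_2)^{-\fluctexp}$ bound per sub-interval. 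Summing over $k\ge 1$ gives $\sum_{k}(kd_2)^{-\fluctexp}\le C'd_2^{-\fluctexp}=O(N^{-(1/3-a/2)\fluctexp})$, the target, using $\fluctexp>2$. At most two monotone runs of indices are needed to wrap around the corner of $\widehat{\partial}^{N,\delta}$.

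For each block $k\ge 1$, use Theorem \ref{thm:coupledGenericLPP} to couple $\w$ with the boundary weights at $\xi_\star,\xi^\star$ (centered at $\zeta(o_c)$, as in \eqref{xistars}) and at $\eta_\star^{(k)}=\zeta_\star(\hat{o}_c^{(k)}),\ \eta^{\star,(k)}=\zeta^\star(\hat{o}_c^{(k)})$, with common perturbation parameter $s_k=k d_2/16$. By \eqref{boundAodComp}-\eqref{boundBodComp}, $\P(A_{o,d}^c\cup B_{\hat{o}_k,d}^c)\le 4C_5 s_k^{-\fluctexp}=O((kd_2)^{-\fluctexp})$, and on $A_{o,d}\cap B_{\hat{o}_k,d}$ the sandwich of Lemma \ref{lem:RWs} controls the bulk random walks $S^{u,v}$ for all $(u,v)\in\mathcal{I}_{o,d}\times\widehat{\mathcal{I}}_{\hat{o}_k,d}$ in terms of the two stationary walks $S^{\xi_\star,\eta^{\star,(k)}}$ and $S^{\xi^\star,\eta_\star^{(k)}}$. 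Shifting $o_c$ to the origin and writing $(m,n)=\hat{o}_c^{(k)}-o_c$, the characteristic direction $\zeta(m,n)$ is rotated from $\zeta(-o_c)=\zeta(o_c)$ by $\Theta(kd_2 N^{-1/3})$, so the straight-line characteristic from $o_c$ crosses the column $\{x_1=0\}$ at a height $\Theta(kd_2 N^{2/3})$ away from $\zero$. Apply Lemma \ref{lem:geodesicsIntersectVertical} with $\alpha=-o_{c,1}/(\hat{o}_{c,1}^{(k)}-o_{c,1})\in(\e,1-\e)$ and scale $s'=kd_2/4$: both the rightmost and upmost bulk geodesics from $o_c$ to $\hat{o}_c^{(k)}$ meet $\{x_1=0\}$ in a vertical sub-interval disjoint from $\mathcal{I}$, except on an event of probability $\lesssim(kd_2)^{-\fluctexp}$. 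Combining with the sandwich, no bulk geodesic from any $u\in\mathcal{I}_{o,d}$ to any $v\in\widehat{\mathcal{I}}_{\hat{o}_k,d}$ can use the edge $(\zero,\ek_1)$, yielding
\[
\P\Bigl(\bigcup_{u\in\mathcal{I}_{o,d},\;v\in\widehat{\mathcal{I}}_{\hat{o}_k,d}} U^{u,v}\Bigr)\le C(kd_2)^{-\fluctexp}.
\]

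The main obstacle is the final transfer step: the sandwich of Lemma \ref{lem:RWs} is phrased for the random walks $S^{u,v}$, whereas Lemma \ref{lem:geodesicsIntersectVertical} produces a purely geometric statement about bulk geodesics. To connect the two one must reformulate the event "the maximum of $S^{\xi_\star,\eta^{\star,(k)}}_n$ over $0<n\le N^{2/3}$ is $\le 0$" as the geometric event "the bulk stationary geodesic from $o_c$ to $\hat{o}_c^{(k)}$ crosses $\{x_1=0\}$ at the row of $\zero$", and check that the monotonicity already contained in Lemmas \ref{lem:AodInequalities}-\ref{lem:BodInequalities} lets the geometric miss of $\mathcal{I}$ by the two extremal sandwich geodesics upgrade to a uniform statement over all $(u,v)$ in the block. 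This is exactly the role played by Assumption \ref{ass:RW} in Lemma \ref{lem:boundProbUseEdge}; here the direction separation replaces it. A secondary and more routine obstacle is the bookkeeping at the corner of $\widehat{\partial}^{N,\delta}$, where $\hat{o}_k$ changes segment and the partition must be broken into two monotone runs.
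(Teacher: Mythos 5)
Your proposal takes a genuinely different route from the paper, and it contains a gap that you yourself flag but do not close.

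The paper's proof is a single-shot geometric argument with no random-walk sandwich and no block decomposition. It identifies the two boundary points $h^1,h^2$ of $\mathcal{I}_{o,d}$ and the two boundary points $f^1,f^2$ of $\widehat{\mathcal{F}}_{\widehat o,d}$ (adjacent to $\widehat{\mathcal{I}}_{\widehat o,d}$), splits $\widehat{\mathcal{F}}_{\widehat o,d}$ into at most two monotone arcs $\widehat{\mathcal{F}}^1$ and $\widehat{\mathcal{F}}^2$, and shows by an ordering-of-geodesics argument that if a geodesic from some $u\in\mathcal{I}_{o,d}$ to some $v\in\widehat{\mathcal{F}}^i$ uses $(\zero,\ek_1)$, then the extremal \emph{bulk} geodesic $\pi^{h^i,f^i,\ek_i}$ is pinched to cross $\RR\ek_2$ on the wrong side of $\zero$, hence deviates by at least $\tfrac1{16}\delta d_2 N^{2/3}$ from the straight line from $h^i$ to $f^i$ (the elementary estimates \eqref{verticalBound1}--\eqref{verticalBound2}). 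A single application of Lemma \ref{lem:geodesicsIntersectVertical} with $s\asymp d_2$ then yields $C_2 s^{-\nu}\asymp N^{-(\frac13-\frac a2)\nu}$. The union over all of $\widehat{\mathcal{F}}_{\widehat o,d}$ is absorbed at no cost into two extremal geodesics.

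Your approach instead decomposes $\widehat{\mathcal{F}}_{\widehat o,d}$ into $O(N^{a/2})$ blocks and tries to re-run the stationary sandwich of Lemmas \ref{lem:AodInequalities}--\ref{lem:RWs} per block. The gap you flag is real and not cosmetic. The sandwich of Lemma \ref{lem:RWs} reduces $U^{u,v}$ to the event that the \emph{stationary} walks $S^{\xi_\star,\eta^{\star,(k)}}$ and $S^{\xi^\star,\eta_\star^{(k)}}$ stay non-positive. When the directions are close, Assumption \ref{ass:RW} furnishes exactly this bound. When they are far apart (your block regime), you need an estimate on $\P\{\sup S^{\xi_\star,\eta^{\star,(k)}}\le 0\}$ for a walk with substantial drift, and Lemma \ref{lem:geodesicsIntersectVertical} is not that estimate: it controls where a \emph{bulk} point-to-point geodesic crosses a vertical line, not the running extremum of a difference of stationary boundary-increment processes. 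Turning one into the other is precisely the missing step, and saying ``the direction separation replaces'' Assumption \ref{ass:RW} is an assertion, not a derivation. Moreover, the block decomposition and the stationary coupling are unnecessary overhead once one notices that a bulk geodesic sandwich between $\pi^{h^1,f^1,\ek_1}$ and $\pi^{h^2,f^2,\ek_2}$ already caters uniformly to all $(u,v)$ in one step, which is what the paper does.

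In short: the high-level intuition (far-off-characteristic targets force large transversal fluctuations, cost $d_2^{-\nu}$) is right, but the machinery you reach for is both heavier than needed and not actually connected to the conclusion. To repair it you would need either a direct random-walk estimate (a positive-drift version of Lemma \ref{lem:rwBound} giving decay polynomial in $\mu\sqrt n$), or better, abandon the random-walk detour and argue as the paper does via the two extremal bulk geodesics and Lemma \ref{lem:geodesicsIntersectVertical}.
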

	
	\begin{proof} Define the boundaries
	\begin{align*}
		\begin{aligned}
			&\partial \widehat{\mathcal{F}}_{\widehat{o}, d}=\left\{v \in \widehat{\mathcal{F}}_{\widehat{o}, d}: \exists u \in \widehat{\mathcal{I}}_{\widehat{o}, d} \text { such that } \abs{v-u}_1=1\right\}\quad\text{and}\\
			&\partial \mathcal{I}_{o, d}=\left\{v \in \mathcal{I}_{o, d}: \exists u \in \partial^{N,\delta} \backslash \mathcal{I}_{o, d} \text { such that }\abs{v-u}_1=1\right\}.
		\end{aligned}
	\end{align*}
	Their cardinalities are either 1 or 2, since it may happen that  $\hat{\mc{I}}_{\hat{o},d}$ contains an endpoint such as $(N, \fl{ \delta N })$.  Additionally, $1 \leq |\partial \hat{\mc{F}}_{\hat{o},d}| \leq |\partial \mc{I}_{o,d} | \leq 2$ because $d_1 < d_2$, so $\hat{\mc{I}}_{\hat{o},d}$ would include an endpoint of the boundary whenever $\mc{I}_{o,d}$ does.  
	Label the points in $\partial \mc{I}_{o,d}$ as $h^1$ and $h^2$ and label those of $\partial \widehat{\mathcal{F}}_{\widehat{o}, d}$ as $f^1$ and $f^2$ so that
	\begin{align*}
		\begin{aligned}
			&h_{1}^{1} \geq o_{1} \geq h_{1}^{2}, \quad h_{2}^{1} \leq o_{2} \leq h_{2}^{2},\quad
			f_{1}^{1} \leq \widehat{o}_{1} \leq f_{1}^{2}, 
			\quad\text{and}\quad 
			f_{2}^{1} \geq \widehat{o}_{2} \geq f_{2}^{2}.
		\end{aligned}
	\end{align*}
Traveling clockwise around the boundary of the square $\flint{-N,N }^2$ starting at $(0,N)$, the points that exist come in this order:  $f^1, \hat{o}, f^2, h^1, o, h^2$.  

We will show that if some geodesic from 
$u \in \mathcal{I}_{o, d}$ to $v \in \widehat{\mathcal{F}}_{\widehat{o}, d}$
 uses the edge $(\zero,\ek_1)$ then, for some $i \in \{1,2\}$, $\pi^{u,v,\ek_i}$, the $\ek_i$-most geodesic of $G_{h^i,f^i}$, deviates by at least $\delta d_2 N^{2/3}/16$ from the straight line segment from $h^i$ to $f^i$. To this end, define 	
	\begin{align*}
		\mathcal{P}_{m}^{u, v, \ek_i}=\pi^{u, v, \ek_i} \cap\left\{x \in \mathbb{Z}^{2}: x_{1}=m\right\}.
	\end{align*}
	This is the intersection of the $\ek_i$-most geodesic of $G_{u,v}$ 
	with the vertical line $x_1 = m$.  For $t>0$ let
	\begin{align}
		D_{m, t}^{u, v}= \bigcup_{i=1}^2 \Bigl\{\inf _{p=\left(p_{1}, p_{2}\right) \in \mathcal{P}_{m}^{u, v, \ek_i}}\Bigl|u_{2}+\frac{v_{2}-u_{2}}{v_{1}-u_{1}}\left(m-u_{1}\right)-p_{2}\Bigr|>t\Bigr\} \label{Ddef}
	\end{align}
	be the event that at the vertical line $x_1 = m$, some geodesic from $u$ to $v$ deviates from the straight line segment from $u$ to $v$ by more than $t$.

	For $u \in \mathcal{I}_{o, d}$ and $v \in \widehat{\mathcal{F}}_{\widehat{o}, d}$, let $e^u=u-o$ and $e^v = v-\hat{o}$. 
   Then
   $\hat{\mc{F}}_{\hat{o},d}$ is the union of two disjoint pieces
   \[
	\hat{\mathcal{F}}_{\hat{o}, d}^{1}=\left\{v \in \widehat{\mathcal{F}}_{\hat{o}, d}: e_{1}^{v} \leq 0 \leq e_{2}^{v}\right\} \quad \text { and } \quad \hat{\mathcal{F}}_{\hat{o}, d}^{2}=\left\{v \in \hat{\mathcal{F}}_{\hat{o}, d}: e_{2}^{v} \leq 0 \leq e_{1}^{v}\right\},
	\]
	separated by $\hat{\mc{I}}_{\hat{o},d}$, one of which can be empty.  $\widehat{\mathcal{F}}_{\widehat{o}, d}^{1}$ is to the left and above $\hat{\mc{I}}_{\hat{o},d}$, and if it is not empty, then it is separated from $\hat{\mc{I}}_{\hat{o},d}$ by the point $f^1$. $\widehat{\mathcal{F}}_{\widehat{o}, d}^{2}$ is to the right and below $\hat{\mc{I}}_{\hat{o},d}$, and if it is not empty, then it is separated from $\hat{\mc{I}}_{\hat{o},d}$ by the point $f^2$. 

    Take 
    $N\ge\max\bigl(\sqrt8,(1+\delta)/\delta^2,N_2(\delta,\delta_0)/(2\delta)\bigr)$ and large enough so that \[\frac{2d_1}{\delta}\le\frac{\delta d_2}{16},\quad 
    d_2\ge16s_2(\delta,\delta_0),\quad\text{and}\quad
    \frac{d_2}{16}\le\frac\delta{4(1+\delta)}\cdot\frac{\delta (4N)^{1/3}}{3(1+\delta)}\,.\]
    
	If $u \in \mc{I}_{o,d}$ and $v \in \hat{\mc{F}}^1_{\hat{o},d}$, then 
		\begin{align*}
		\left|e^{u}\right|_{1} \leq \frac{d_{1} N^{2/3}-1}2\,, \quad\left|e^{v}\right|_{1}>\frac{d_{2} N^{2/3}-1}2\ge\frac{d_{2} N^{2/3}}4\,,
		\quad\text{and}\quad e_1^v\le 0\le e_2^v.
	\end{align*}
   Using this, together with  $v_{i}-u_{i}=\widehat{o}_{i}+e_{i}^{v}-\left(o_{i}+e_{i}^{u}\right)=-2 o_{i}+e_{i}^{v}-e_{i}^{u}$, $-N\le o_i\le-\delta N$, and $\delta\le(v_2-u_2)/(v_1-u_1)\le1/\delta$, we get
	\begin{align}
			u_{2}+\frac{v_{2}-u_{2}}{v_{1}-u_{1}}\left(-u_{1}\right) & = \frac{o_{2} e_{1}^{v}-o_{1} e_{2}^{v}}{v_{1}-u_{1}}-\frac{o_{2} e_{1}^{u}-o_{1} e_{2}^{u}}{v_{1}-u_{1}}+e_{2}^{u}+\frac{v_{2}-u_{2}}{v_{1}-u_{1}}\left(-e_{1}^{u}\right)\notag \\
			& \geq \frac{\delta N\left|e^{v}\right|_{1}}{2 N}-\Bigl(\frac{N}{2 N \delta}+1+\delta^{-1}\Bigr)\left|e^{u}\right|_{1} \notag\\
			& \geq \frac{1}{8} \delta d_{2} N^{2/3}-2 \delta^{-1} d_{1} N^{2/3} \geq \frac{1}{16} \delta d_{2} N^{2/3}.
		    \label{verticalBound1}
	\end{align}


	Similarly, for $u \in \mc{I}_{o,d}$ and $v \in \hat{\mc{F}}_{\hat{o},d}^2$, we have
	\begin{align}
			u_{2}+\frac{v_{2}-u_{2}}{v_{1}-u_{1}}\left(1-u_{1}\right) 
			&=\frac{o_{2} e_{1}^{v}-o_{1} e_{2}^{v}}{v_{1}-u_{1}}-\frac{o_{2} e_{1}^{u}-o_{1} e_{2}^{u}}{v_{1}-u_{1}}+e_{2}^{u}+\frac{v_{2}-u_{2}}{v_{1}-u_{1}}\left(1-e_{1}^{u}\right)\notag\\
			&\leq-\frac{\delta N\left|e^{v}\right|_{1}}{2 N}+\left(\frac{N}{2 N \delta}+1+\delta^{-1}\right)\left|e^{u}\right|_{1}+\delta^{-1} \notag\\
			& \leq-\frac{1}{8} \delta d_{2} N^{2/3}+2 \delta^{-1} d_{1} N^{2/3} 
			\leq-\frac{1}{16} \delta d_{2} N^{2/3}.
		    \label{verticalBound2}
	\end{align}

%

	Now suppose that for some $u \in \mc{I}_{o,d}$ and $v \in \hat{\mc{F}}_{\hat{o},d}$ some geodesic of $G_{u,v}$ goes through the edge $(\zero,\ek_1)$.  We have these two cases:
	
	(i)  If $v \in \hat{\mc{F}}_{\hat{o},d}^1$, then the rightmost geodesic $\pi^{h^1, f^1, \ek_1}$ stays to the right of all the geodesics from $u$ to $v$. 
	Consequently, this geodesic crosses the axis $\RR\ek_2$ at or below $\zero$.
	Then \eqref{verticalBound1} with $u = h^1$ and $v = f^1$ shows that 
	$\pi^{h^1, f^1, \ek_1}$ avoids the vertical interval of radius $\frac{1}{16} \delta d_2 N^{2/3}$, centered around the point on the line segment from $h^1$ to $f^1$ with $\ek_1$-coordinate $x_1=0$.  
	
	(ii) If $v \in \hat{\mc{F}}_{\hat{o},d}^2$, then the upmost geodesic $\pi^{h^2, f^2, \ek_2}$ stays above all the geodesics from $u$ to $v$ and therefore crosses $\RR\ek_2$ at or above $\zero$.  Then \eqref{verticalBound2} with $u = h^2$ and $v = f^2$ shows that 
	$\pi^{h^2, f^2, \ek_2}$  avoids the vertical interval of radius $\frac{1}{16} \delta d_2 N^{2/3}$, centered around the point on the line segment from $h^2$ to $f^2$ with $\ek_1$-coordinate $x_1=0$.   
	
	We can now apply Lemma \ref{lem:geodesicsIntersectVertical} with $\e=\frac\delta{4(1+\delta)}$ because we took $N$ large enough so that $f^i-h^i\in S_\delta\cap\ZZ_{\ge N_2(\delta,\delta_0)}$, 
	\[s=\frac{d_2N^{2/3}}{16|f^i-h^i|_1^{2/3}}\in
	\Bigl[s_2(\delta,\delta_0), \frac{\e \delta}{3(\delta+1)}\abs{f_i-h_i}_1^{1/3} \Bigr]\]
	and 
	\[\alpha=\begin{cases}
	\frac{-h^1_1}{f^1_1-h^1_1}\in\bigl[\frac\delta{1+\delta},\frac1{1+\delta}\bigr]\subset(\e,1-\e)&\text{in case (i),}\\[6pt]
	\frac{1-h^2_1}{f^2_1-h^2_1}\in\bigl[\frac\delta{1+\delta},\frac1{1+\delta}+\frac1{2\delta N}\bigr]\subset(\e,1-\e)&\text{in case (ii).}
	\end{cases}\]

	Combining these results, we conclude that
	\begin{align*}
		\P\Bigl(\bigcup_{u \in \mathcal{I}_{o, d}, v \in \widehat{\mathcal{F}}_{\widehat{o}, d}} U^{u, v} \Bigr) &\leq \P\left(D_{0, \delta d_{2} N^{2 / 3}/16}^{h^{1}, f^{1}} \cup D_{1, \delta d_{2} N^{2 / 3}/16}^{h^{2}, f^{2}}\right) \\
		&\leq 
		2C_2(\delta,\delta_0,\fluctexp,\e)s^{-\fluctexp}
		\le C_2\bigl(\frac{d_2}{16\cdot2^{2/3}}\bigr)^{-\fluctexp}
		=C_7 N^{-(\frac13-\frac{a}2)\nu}.
	\end{align*}
	The lemma is proved.
	\end{proof}
	
Using a union bound and Lemmas \ref{lem:boundProbUseEdge} and \ref{lem:boundGeodesicsToOuter} we get the following.

\begin{lemma} \label{lem:boundOnUuv} 	
Suppose Assumptions \ref{exitPointTailBound} and \ref{ass:RW} hold with \eqref{a-nu} satisfied. 
For any $\delta\in(0,\delta_0)$ there exist positive finite constants $C_8(\delta, \delta_0, \fluctexp,\expnt)$ and  $N_8(\delta, \delta_0,\expnt)\ge8\delta^{-3}$ such that for all $N \geq N_8$ and $o \in \partial^{N,\delta}$, if 
$d_1=1$ and $d_2=N^{\frac13-\frac{\expnt}2}/18$, then
\begin{align*}
	\P \Bigl(\bigcup_{u \in \mathcal{I}_{o, d}, v \in \widehat{\partial}^{N,\delta}} U^{u, v}\Bigr) \leq C_8 N^{-\expntt}.
\end{align*}
where $\expntt$ is given in \eqref{a:def}.
\end{lemma}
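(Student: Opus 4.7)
The approach is a straightforward union bound that combines the two preceding lemmas. The key observation is that with $\hat{o} := -o$, which lies in $\widehat{\partial}^{N,\delta}$ by symmetry, the definitions of $\hat{\mc{I}}_{\hat{o},d}$ and $\hat{\mc{F}}_{\hat{o},d}$ give the decomposition $\widehat{\partial}^{N,\delta} = \hat{\mc{I}}_{\hat{o},d} \cup \hat{\mc{F}}_{\hat{o},d}$ of the northeast boundary into a piece near $\hat{o}$ and its complement far from $\hat{o}$. A union bound then splits the probability in question into two pieces, over $v \in \hat{\mc{I}}_{\hat{o},d}$ and over $v \in \hat{\mc{F}}_{\hat{o},d}$, each of which has already been controlled.

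For the near piece, with the stated choices $d_1=1$, $d_2 = N^{\frac13-\frac{\expnt}2}/18$, and $s = 8d_2$, Lemma \ref{lem:boundProbUseEdge} directly yields the bound $C_6 N^{-\expntt}$ (for $N \geq N_6$). The same value of $d_2$ is exactly what one gets by taking $a = \expnt$ in Lemma \ref{lem:boundGeodesicsToOuter}, which then bounds the far-piece probability by $C_7 N^{-(\frac13-\frac{\expnt}2)\nu}$ (for $N \geq N_7$). No new estimates are needed at this stage.

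To combine the two bounds, recall that $\expntt = \min\bigl(\expnt,\,(\tfrac13-\expnt/2)\nu\bigr)$ by \eqref{a:def}; in particular $(\tfrac13-\expnt/2)\nu \geq \expntt$, so both pieces are bounded by a constant multiple of $N^{-\expntt}$. Setting $N_8 = \max(N_6, N_7)$ and $C_8 = C_6 + C_7$ then produces the claimed estimate. The role of \eqref{a:def} is precisely to equalize the two exponents in exactly this sense: all the substantive work lives in the two preceding lemmas, so I do not anticipate any real obstacle in this final assembly — the proof is essentially a two-line consequence of Lemmas \ref{lem:boundProbUseEdge} and \ref{lem:boundGeodesicsToOuter}.
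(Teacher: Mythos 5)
Your proof is correct and matches the paper's approach exactly: the paper states only that the lemma follows ``using a union bound and Lemmas \ref{lem:boundProbUseEdge} and \ref{lem:boundGeodesicsToOuter},'' and your write-up simply fills in the (straightforward) details of that assembly, including the observation that $\hat{o}=-o$ symmetrically lies in $\hat{\partial}^{N,\delta}$ and that $\hat{\partial}^{N,\delta}=\hat{\mc I}_{\hat o,d}\cup\hat{\mc F}_{\hat o,d}$ with the specific choice $a=\expnt$ equalizing the exponents via \eqref{a:def}.
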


We are now ready to prove Theorem \ref{thm:noBiInfinitesAwayFromAxes}. 

\begin{proof}[Proof of Theorem \ref{thm:noBiInfinitesAwayFromAxes}] 
Take $d_1 = 1$ and $d_2=N^{\frac13-\frac{\expnt}2}/18$.  Let
\begin{align*}
	\mathcal{O}^{N}=\partial^{N,\delta} \cap\left(\left\{\left(-N+i d_{1}(\lfloor N^{2/3}\rfloor-1),- N\right)\right\}_{i \in \mathbb{Z} \geq 0} \bigcup\left\{\left(-N,-N+j d_{1}(\lfloor N^{2/3}\rfloor-1) \right)\right\}_{j \in \mathbb{Z}_{\geq 0}}\right).
\end{align*}
Then we can decompose
\begin{align*}
	\bigcup_{u \in \partial^{N,\delta}, v \in \widehat{\partial}^{N,\delta}} U^{u, v} \subset \bigcup_{o \in \mathcal{O}^{N}} \bigcup_{u \in \mc{I}_{o,d}, v \in \hat{\partial}^{N,\delta} } U^{u,v}.
\end{align*}

Since  $|\mc{O}^N| \leq C d_1^{-1} N^{1-2/3} = C N^{1/3}$, for some positive finite constant $C$, a union bound and Lemma \ref{lem:boundOnUuv} give
\begin{align*}
	\P\Bigl(\bigcup_{u \in \partial^{N,\delta}, v \in \hat{\partial}^{N,\delta}} U^{u, v}\Bigr) \leq \sum_{o \in \mathcal{O}^{N}} \P\Bigl(\bigcup_{u \in \mathcal{I}_{o, d}, v \in \hat{\partial}^{N,\delta}} U^{u, v}\Bigr) \leq C_4 N^{-(\expntt- 1/3)}.
\end{align*}
The theorem is proved.
\end{proof}

	\appendix
	\section{Stationary boundary}
    \label{app:Buse}
	\subsection{General weight distribution}\label{Busemann general}
The next theorem provides the boundary weights $I_x^\xi$ and $J_x^\xi$ that are used throughout our proofs. It follows directly from Theorem 4.7 of \cite{Jan-Ras-20-aop}.
Note that when the weights are geometric, random variables, Theorem \ref{thm:coupledGeomLPP} below gives an alternate construction of these boundary weights, with some additional independence properties. The purpose of the theorem in this section is to give a construction that works for a general weight distribution. If the reader is only interested in the geometric weights setting, then Theorem \ref{thm:coupledGenericLPP} can be bypassed and Theorem \ref{thm:coupledGeomLPP} can be used instead.

Recall the shape function $\gamma$ defined in \eqref{shape}. The subadditivity \eqref{Gsubadd} and the limit \eqref{shapeThm} imply that $\gamma$ is a convex positively homogeneous function on $\RR_{\ge0}^2$. As such, we can define the right-gradient $\gpp(\xi+)$ via the limits
\[\ek_1\cdot\nabla\gpp(\xi+)=\lim_{\e\searrow0}\frac{\gpp(\xi+\e\ek_1)-\gpp(\xi)}{\e}
\quad\text{and}\quad
\ek_2\cdot\nabla\gpp(\xi+)=\lim_{\e\searrow0}\frac{\gpp(\xi)-\gpp(\xi-\e\ek_2)}{\e}\,.\]

Let $\sU_0$ be a countable dense subset of $\ri\sU$. Let 
$\sH_0=\bigl\{-\nabla\gpp(\xi+):\xi\in\sU_0\bigr\}$.
Let $\hat\Omega=\Omega\times\RR^{\ZZ^2\times\{1,2\}\times\sH_0}$
and equip it with the product topology and the Borel $\sigma$-algebra $\hat\sG$. 
Let $\hat T=(\hat T_x)_{x\in\ZZ^2}$ be the natural group of shifts on $\hat\Omega$. For $A\subset\ZZ^2$ let $A^{\le}=\{x\in\ZZ^2:\exists y\in A\text{ with }x\le y\}$
and $A^>=\ZZ^2\setminus A^{\le}$.

%
%

\begin{theorem}\label{thm:coupledGenericLPP}
Assume \eqref{stand:assumption}.
There exist a $\hat T$-invariant probability measure $\hat\P$ on $(\hat\Omega,\hat\sG)$ and random variables $(x,\xi,\hat\w)\in\ZZ^2\times\ri\sU\times\hat\Omega\mapsto(I^\xi_x,J^\xi_x)\in\RR^2$ such that the following properties hold.
\begin{enumerate} [label=\rm(\alph{*}), ref=\rm\alph{*}] \itemsep=2pt 
%
\item\label{BusTh.a} $\P$ is the restriction of $\hat\P$ onto $\Omega$. 

\item\label{BusTh.b} For any $A\subset\ZZ^2$, the process
$\{\w_x,I^\xi_x,J^\xi_x:x\in A,\xi\in\ri\sU\}$ is 
independent of $\{\w_x:x\in A^>\}$.

\item\label{BusTh.c} For each $\xi\in\ri\sU$ and $x\in\ZZ^2$, $I_x^\xi$ and $J_x^\xi$ are integrable and $\hat\E[(I^\xi_x,J^\xi_x)]=\nabla\gpp(\xi+)$. 

\item\label{BusTh.d} There exists an event $\hat\Omega_0$ such that $\hat\P(\hat\Omega_0)=1$ and the following all hold for $\hat\w\in\hat\Omega_0$:

\begin{enumerate} [label=\rm(\ref{BusTh.d}.\arabic{*}), ref=\rm\ref{BusTh.d}.\arabic{*}] \itemsep=2pt 
\item\label{BusTh.d1} For each $x,y\in\ZZ^2$ and $\xi\in\ri\sU$, $I_x^\xi(\hat T_y\hat\w)=I_{x+y}^\xi(\hat\w)$ and $J_x^\xi(\hat T_y\hat\w)=J_{x+y}^\xi(\hat\w)$.  

\item\label{BusTh.d2} For each $x\in\ZZ^2$ and $\xi,\zeta\in\ri\sU$ with $\xi_1\le\zeta_1$ we have $\w_x= I_x^\xi\wedge J_x^\xi$,
	\[\w_x \leq I_x^{\zeta} \leq I_x^{\xi},\quad\text{and}\quad\w_x \leq J_x^{\xi} \leq J_x^{\zeta}.\] 

\item\label{BusTh.d3} For $\xi\in\ri\sU$, $x=(x_1,x_2)\in\ZZ^2$, and $k\in\NN$ set $G^\xi_{x,x}=0$,
\begin{align}\label{Gxi1}
G^{\xi}_{x,x+k\ek_1}=\sum_{i=1}^k I^\xi_{x+i\ek_1}
\quad\text{and}\quad
G^{\xi}_{x,x+k\ek_2}=\sum_{i=1}^k J^\xi_{x+i\ek_2}.
\end{align}
For $y\in x+\N^2$ let
\begin{align}\label{Gxi2}
	G^{^\xi}_{x, y} =\max _{1 \leq k \leq y_1-x_1}\Bigl\{ \sum_{i=1}^{k} I^\xi_{x+i \ek_1} + G_{x+k \ek_1 + \ek_2, y} \Bigr\} \bigvee \max_{1 \leq \ell \leq y_2-x_2} \Bigl\{ \sum_{j=1}^{\ell} J^\xi_{x+j \ek_2}+G_{x+\ek_1+\ell \ek_2, y}\Bigr\}.
\end{align}
Then for all $x\le y\le z$ in $\ZZ^2$ and $\xi\in\ri\sU$ we have
    \begin{align}\label{add}
    G_{x,y}^\xi+G_{y,z}^\xi=G_{x,z}^\xi.
    \end{align}
    In particular, for any $\xi\in\ri\sU$ and $x\in\ZZ^2$
    \begin{align}\label{cocIJ}
    I^\xi_{x+\ek_1}+J^\xi_{x+\ek_1+\ek_2}=J^\xi_{x+\ek_2}+I^\xi_{x+\ek_1+\ek_2}.
    \end{align}
\end{enumerate}

\item\label{BusTh.d'} 
	For each $u\ge v$ in $\ZZ^2_{\ge0}$ 
	\begin{align*}
	\{ (G_{u, v+x}^\xi - G_{u, v}^\xi : x \in \ZZ_{\geq 0}^2,\xi\in\ri\sU \}  \eqd \{ (G_{u, u+x}^\xi : x \in \ZZ_{\geq 0}^2,\xi\in\ri\sU \} .
	\end{align*}
%
\end{enumerate}
\end{theorem}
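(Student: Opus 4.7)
The strategy is to import the Busemann process constructed in Theorem 4.7 of \cite{Jan-Ras-20-aop} and reparameterize it in the notation used here. That reference produces, on an enlarged probability space $(\hat\Omega,\hat\sG,\hat\P)$, a family of random fields $\{B^h(x,y):x,y\in\ZZ^2\}$ indexed by $h\in\sH_0$, each of which is a stationary cocycle that recovers the bulk LPP weights through the dynamic programming identity $\w_x = B^h(x-\ek_1,x)\wedge B^h(x-\ek_2,x)$. The $h$-parameter should be thought of as a right-gradient of $-\gpp$.

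Given the Busemann process, I would define, for each $\xi\in\sU_0$,
\[
I^\xi_x = B^{-\nabla\gpp(\xi+)}(x-\ek_1,x)\quad\text{and}\quad J^\xi_x = B^{-\nabla\gpp(\xi+)}(x-\ek_2,x),
\]
and then extend to all $\xi\in\ri\sU$ by right limits along $\sU_0$, using that the family $\{B^h:h\in\sH_0\}$ is monotone in $h$ as provided by \cite{Jan-Ras-20-aop}. Property \eqref{BusTh.a} is immediate, since $\P$ is the $\Omega$-marginal of $\hat\P$. Property \eqref{BusTh.b} is the south/west independence statement already built into the construction of \cite{Jan-Ras-20-aop}, which expresses Busemann increments on any set $A$ as measurable functions of the weights in $A \cup A^{>}$ alone, hence independent of $\{\w_x : x \in A^{>}\}$ once one recalls that the original weights inside the region are the infima of their two incident Busemann increments. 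Property \eqref{BusTh.c} follows from the fact that the mean of an $\ek_i$-increment of $B^h$ equals the $i$-th coordinate of $h$, which by our sign convention is $\ek_i\cdot\nabla\gpp(\xi+)$.

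For property \eqref{BusTh.d} I would verify each subitem as a pathwise statement. Shift covariance \eqref{BusTh.d1} is by $\hat T$-invariance of the Busemann construction. The identity $\w_x = I^\xi_x\wedge J^\xi_x$ in \eqref{BusTh.d2} is the one-step Bellman recursion for the cocycle, and the two inequalities $\w_x\le I^\xi_x$, $\w_x\le J^\xi_x$ are immediate from it; the ordering across $\xi,\zeta$ is the monotonicity of $B^h$ in $h$. The additivity \eqref{add} is equivalent to saying that the stationary LPP $G^\xi$ built from $(I^\xi,J^\xi)$ via \eqref{Gxi1}--\eqref{Gxi2} coincides with the Busemann value $B^{-\nabla\gpp(\xi+)}(x,y)$, which is the standard reconstruction of LPP from a cocycle that agrees with $\w$ at its endpoints; the one-plaquette case reduces to \eqref{cocIJ}. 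Finally, property \eqref{BusTh.d'} follows from $\hat T$-invariance of $\hat\P$ together with the observation that $G^\xi_{u,v+x}-G^\xi_{u,v}$ depends only on weights in $v+\ZZ_{\ge 0}^2$ and on the boundary increments along $v+\NN\ek_1$ and $v+\NN\ek_2$, whose joint law is shift invariant.

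The main obstacle is not conceptual but notational: one must carefully match the parameterization of \cite{Jan-Ras-20-aop} (indexed by gradients of the shape function) with the parameterization by directions $\xi\in\ri\sU$ used throughout this paper, and one must check that right-continuity in $\xi$ and the south/west independence statement are preserved when extending the definition from the countable dense set $\sU_0$ to all of $\ri\sU$ by right limits. Once this dictionary is fixed, each of \eqref{BusTh.a}--\eqref{BusTh.d'} is a direct transcription of the corresponding item of Theorem 4.7 in \cite{Jan-Ras-20-aop}.
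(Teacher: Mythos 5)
Your overall plan — import the Busemann process of Theorem 4.7 of \cite{Jan-Ras-20-aop} and reparameterize — matches the paper's approach, but your definition of $(I^\xi_x,J^\xi_x)$ omits the reflection of the lattice and weights, and this is not a notational quibble: it breaks the recovery and measurability properties you need.

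The Busemann cocycle $B=B^{\infty,h(\xi)+}$ of \cite{Jan-Ras-20-aop} is oriented toward the northeast; its recovery identity reads $\w_x=B(x,x+\ek_1)\wedge B(x,x+\ek_2)$, and $B(x,y)$ is measurable with respect to the weights weakly to the northeast of $x$. You propose $I^\xi_x=B(x-\ek_1,x)$ and $J^\xi_x=B(x-\ek_2,x)$, so $I^\xi_x\wedge J^\xi_x=B(x-\ek_1,x)\wedge B(x-\ek_2,x)$. These two edges do not share a common source vertex, so this minimum is \emph{not} $\w_x$ (nor $\w_z$ for any $z$); the one-step Bellman relation you invoke in \eqref{BusTh.d2} simply does not apply to this pair. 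Moreover $B(x-\ek_1,x)$ depends on weights lying to the northeast of $x-\ek_1$, so with your definition the process $\{I^\xi_x,J^\xi_x:x\in A\}$ would typically depend on $\{\w_z:z\in A^>\}$, contradicting \eqref{BusTh.b}. The paper instead sets $\overline\w_z=\hat\w_{-z}$ and defines $I^\xi_x(\hat\w)=B^{\infty,h(\xi)+}(-x,-x+\ek_1,\overline\w)$, $J^\xi_x(\hat\w)=B^{\infty,h(\xi)+}(-x,-x+\ek_2,\overline\w)$. Under this reflection, the two edges $(-x,-x+\ek_1)$ and $(-x,-x+\ek_2)$ do share the source $-x$, so the Busemann recovery at $-x$ in $\overline\w$ gives exactly $\overline\w_{-x}=\w_x=I^\xi_x\wedge J^\xi_x$, and the northeast measurability of $B(\cdot,\cdot,\overline\w)$ turns into the required southwest measurability for $(I^\xi_x,J^\xi_x)$. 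It also makes the cocycle property of $B$ identify $G^\xi_{x,y}(\w)$ with $B^{\infty,h(\xi)+}(-y,-x,\overline\w)$, from which \eqref{add} and \eqref{cocIJ} follow. Without the reflection there is no way to salvage \eqref{BusTh.d2} or \eqref{BusTh.b} by re-indexing alone.

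A minor point: the extension from $\sU_0$ to $\ri\sU$ by right limits is already carried out in \cite{Jan-Ras-20-aop}, so you do not need to redo it, but it is harmless to mention. The real missing ingredient is the reflection.
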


\begin{proof}
Taking $\beta=\infty$ in Theorem 4.7 of \cite{Jan-Ras-20-aop} we get 
a process 
$B^{\infty,h(\xi)+}(x,y,\hat\w)$, $x,y\in\ZZ^2$, $\hat\w\in\hat\Omega$, and $\xi\in\ri\sU$.
For $\hat\w\in\hat\Omega$ let $\overline\w\in\hat\Omega$ 
be such that $\overline\w_x=\hat\w_{-x}$, for all $x\in\ZZ^2$. Set $I_x^\xi(\hat\w)=B^{\infty,h(\xi)+}(-x,-x+\ek_1,\overline\w)$ and $J_x^\xi(\hat\w)=B^{\infty,h(\xi)+}(-x,-x+\ek_2,\overline\w)$.

Properties (\ref{BusTh.a}-\ref{BusTh.c}) follow from \cite[Theorem 4.7(a-c)]{Jan-Ras-20-aop}. \eqref{BusTh.d1} comes from \cite[(4.4)]{Jan-Ras-20-aop} and \eqref{BusTh.d2} comes from \cite[(4.7-4.8)]{Jan-Ras-20-aop}.

It is immediate from the cocycle property (4.4) and the recovery property (4.7) in \cite[Theorem 4.7]{Jan-Ras-20-aop} that for any $x\le y$ in $\ZZ^2$, we have $\hat\P$-almost surely, for any $\xi\in\ri\sU$, $G^\xi_{x,y}(\w)=B^{\infty,h(\xi)+}(-y,-x,\overline\w)$. Then the additivity  \eqref{add} (and \eqref{cocIJ}) is exactly the cocycle property \cite[(4.4)]{Jan-Ras-20-aop}. 

Next, note that \eqref{add} implies  $G^\xi_{u,v+x}-G^\xi_{u,v}=G^\xi_{v,v+x}$. Then property \eqref{BusTh.d'} follows from the $\hat T$-invariance of $\hat\P$ and the shift-covariance property in part \eqref{BusTh.d1}. 
\end{proof}

	\subsection{Geometric weights}\label{Busemann geometric}
	When the weights are geometric the process in Theorem \ref{thm:coupledGenericLPP} has some independence features and explicit one-dimensional marginals. Recall the bijection \eqref{p->xi}. 
%
%
%
%
%
%
%
Let $\overline\Omega=\Omega\times\RR^{\ZZ^2\times\{1,2\}}\times\RR^{\ZZ^2\times\{1,2\}}$
and equip it with the product topology and the Borel $\sigma$-algebra $\overline\sG$. Let $(\w(\overline\omega),I_x^1(\overline\omega),J_x^1(\overline\omega),I_x^2(\overline\omega),J_x^2(\overline\omega))$, $x\in\ZZ^2$, denote the coordinate projections of an element $\overline\omega\in\overline\Omega$.
Let $\overline T=(\overline T_x)_{x\in\ZZ^2}$ be the natural group of shifts on $\overline\Omega$. 

		\begin{theorem}\label{thm:coupledGeomLPP}  Fix $0 < r < 1$ and let the bulk weights $\{\w_x:x\in\ZZ^2\}$ be i.i.d.\ $\Geom(r)$ random variables. Then \eqref{stand:assumption} is satisfied and for 
		each $r < q_1 < q_2 < 1$ 
		there exist a $\overline T$-invariant probability measure $\overline\P_{q_1,q_2}$ on $(\overline\Omega,\overline\sG)$ such that the following properties hold.
		
\begin{enumerate} [label=\rm(\alph{*}), ref=\rm\alph{*}] \itemsep=2pt 
	\item\label{coupledGeomLPP0} The properties in Theorem \ref{thm:coupledGenericLPP}{\rm(}\ref{BusTh.a}-\ref{BusTh.d'}{\rm)} all hold:
\begin{enumerate} [label=\rm(\ref{coupledGeomLPP0}.\roman{*}), ref=\rm\ref{coupledGeomLPP0}.\roman{*}] \itemsep=2pt 
\item\label{coupledGeomLPP0.i} $\P$ is the restriction of $\overline\P_{q_1,q_2}$ onto $\Omega$. 

\item\label{coupledGeomLPP0.ii} For any $A\subset\ZZ^2$, the process
$\{\w_x,I^1_x,J^1_x,I^2_x,J^2_x:x\in A\}$ is 
independent of $\{\w_x:x\in A^>\}$.

\item\label{coupledGeomLPP0.iii} For each $\ell\in\{1,2\}$ and $x\in\ZZ^2$, $I_x^\ell$ and $J_x^\ell$ are integrable and 
\begin{align}\label{Geomean}
\hat\E[(I^\ell_x,J^\ell_x)]=\nabla\gpp(\xip(q_\ell))=\Bigl(\frac{q_\ell}{1-q_\ell},\frac{r}{q_\ell-r}\Bigr).
\end{align}

\item\label{coupledGeomLPP0.iv} There exists an event $\overline\Omega_0$ such that $\overline\P_{q_1,q_2}(\overline\Omega_0)=1$ and the following all hold for $\overline\w\in\overline\Omega_0$:

\begin{enumerate} [label=\rm(\ref{coupledGeomLPP0.iv}.\arabic{*}), ref=\rm\ref{coupledGeomLPP0.iv}.\arabic{*}] \itemsep=2pt 
\item\label{coupledGeomLPP0.iv.1} For each $x,y\in\ZZ^2$ and $\ell\in\{1,2\}$, $I_x^\ell(\overline T_y\overline\w)=I_{x+y}^\ell(\overline\w)$ and $J_x^\ell(\overline T_y\overline\w)=J_{x+y}^\ell(\overline\w)$.  

\item\label{coupledGeomLPP0.iv.2} For each $x\in\ZZ^2$ and $\ell\in\{1,2\}$ we have $\w_x= I_x^\ell\wedge J_x^\ell$,
	\[\w_x \leq I_x^2 \leq I_x^1,\quad\text{and}\quad\w_x \leq J_x^1 \leq J_x^2.\] 

\item\label{coupledGeomLPP0.iv.3} For $\ell\in\{1,2\}$, if we define $G^\ell_{x,y}$ as in {\rm(}\ref{Gxi1}-\ref{Gxi2}{\rm)}, with $\xi$ replaced by $\ell$, then 
 for all $x\le y\le z$ in $\ZZ^2$ we have
    \begin{align}\label{add2}
    G_{x,y}^\ell+G_{y,z}^\ell=G_{x,z}^\ell.
    \end{align}
    In particular, for any $x\in\ZZ^2$
    \begin{align}\label{cocIJ2}
    I^\ell_{x+\ek_1}+J^\ell_{x+\ek_1+\ek_2}=J^\ell_{x+\ek_2}+I^\ell_{x+\ek_1+\ek_2}.
    \end{align}
\end{enumerate}

\item\label{coupledGeomLPP0.v} 
	For each $u\ge v$ in $\ZZ^2_{\ge0}$ 
	\begin{align*}
	\bigl\{ (G_{u, v+x}^\ell - G_{u, v}^\ell : x \in \ZZ_{\geq 0}^2,\ell\in\{1,2\} \bigr\}  \eqd \bigl\{ (G_{u, u+x}^\ell : x \in \ZZ_{\geq 0}^2,\ell\in\{1,2\} \bigr\}.
	\end{align*}
\end{enumerate}
\end{enumerate}

\noindent In addition, we have the following independence properties.

\begin{enumerate} [resume,label=\rm(\alph{*}), ref=\rm\alph{*}] \itemsep=2pt 
	\item\label{coupledGeomLPPb} The vertical increments $\{J_{u + j\ek_2}^1 : j \leq 0 \}$ and $\{J_{u + j\ek_2}^2 : j \geq 1 \}$ are mutually independent. Similarly, the horizontal increments $\{I_{u + i\ek_1}^2 : i \leq 0 \}$ and $\{I_{u + i\ek_1}^1 : i \geq 1 \}$ are mutually independent.
	
	\item\label{coupledGeomLPPc} For each $\ell \in \{1, 2\}$, the increment variables $\{I_{u + i\ek_1}^\ell, J_{u + j\ek_2}^\ell : i, j \geq 1 \}$ are mutually independent.  Also the increment variables $\{I_{u - i\ek_1}^\ell, J_{u - j\ek_2}^\ell : i\ge0, j \ge0\}$ are mutually independent.
	
	\item\label{coupledGeomLPPd} For each $i \geq 1$, $j \geq 1$, and $\ell \in \{1, 2\}$, the increments have marginal distributions: $I_{u + i\ek_1}^\ell \sim \Geom(q_\ell)$ and $J_{u + j\ek_2}^\ell \sim \Geom(r/q_\ell)$.  
	
	\end{enumerate}
	\end{theorem}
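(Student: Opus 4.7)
The plan is to invoke Theorem \ref{thm:coupledGenericLPP} to obtain all of \eqref{coupledGeomLPP0} and then upgrade the conclusions using the solvable structure of geometric LPP. Since the i.i.d.\ $\Geom(r)$ environment satisfies \eqref{stand:assumption}, Theorem \ref{thm:coupledGenericLPP} directly produces a jointly distributed family $\{I^\xi_x,J^\xi_x : x\in\ZZ^2,\,\xi\in\ri\sU\}$ having all of its stated properties. Set $I^\ell_x := I^{\xip(q_\ell)}_x$ and $J^\ell_x := J^{\xip(q_\ell)}_x$. Then \eqref{coupledGeomLPP0} is immediate, with the mean identity in \eqref{Geomean} obtained by directly differentiating the explicit shape function \eqref{eq:shape} at $\xi = \xip(q_\ell)$ and simplifying using \eqref{p->xi}.

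To establish the marginals \eqref{coupledGeomLPPd} and the within-parameter independence \eqref{coupledGeomLPPc}, the plan is to construct an auxiliary stationary process directly and identify its law with the Busemann process via a uniqueness argument. Fix $\ell\in\{1,2\}$ and $u\in\ZZ^2$, and consider the LPP process $\tilde G^\ell$ on the quadrant $u+\ZZ_{\ge0}^2$ with i.i.d.\ boundaries $\tilde I_{u+i\ek_1}\sim\Geom(q_\ell)$ and $\tilde J_{u+j\ek_2}\sim\Geom(r/q_\ell)$, independent of a $\Geom(r)$ bulk. The classical Burke property for geometric LPP, proved by a one-step ``swap'' computation at an elementary $1\times 1$ cell and induction on antidiagonals, shows that the horizontal and vertical increments of $\tilde G^\ell$ across any down-right staircase in this quadrant are mutually independent with marginals $\Geom(q_\ell)$ and $\Geom(r/q_\ell)$, respectively. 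Since both $(\tilde I,\tilde J)$ and $(I^\ell,J^\ell)$ are shift-covariant cocycles compatible with the same bulk $\w$ (satisfying \eqref{cocIJ2} and the recovery $\w_x=I^\ell_x\wedge J^\ell_x$) and have the same integrable mean vector $\nabla\gpp(\xip(q_\ell)+)$, the uniqueness built into the construction in Theorem 4.7 of \cite{Jan-Ras-20-aop} forces the two laws to agree on the quadrant. This yields \eqref{coupledGeomLPPd} and the positive-index part of \eqref{coupledGeomLPPc}; the negative-index part follows by applying the same reasoning to the reflected environment, as in \eqref{IJhat}.

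Property \eqref{coupledGeomLPPb}, the cross-parameter independence across the horizontal line through $u$, is the most delicate point, since it couples the two distinct parameters $q_1$ and $q_2$. The plan is to represent the joint law of $(I^1,J^1,I^2,J^2)$ via a coupled two-class stationary LPP, constructed from a common bulk environment and two monotonically coupled boundary sequences compatible with the ordering in \eqref{coupledGeomLPP0.iv.2}. In such a coupled quadrant construction, the upward increments $\{J^2_{u+j\ek_2}:j\ge1\}$ can be expressed as a function only of bulk weights strictly above the horizontal line through $u$ together with an independent $\Geom(r/q_2)$ boundary sequence placed above $u$, while the downward increments $\{J^1_{u+j\ek_2}:j\le 0\}$ are a function of disjoint bulk weights below this line plus an independent $\Geom(r/q_1)$ boundary sequence placed below $u$. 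The independence claimed in \eqref{coupledGeomLPPb} then follows from the product structure of the bulk environment together with the independence of the two boundary sequences. The main obstacle is establishing that this explicit two-parameter quadrant construction agrees in joint distribution with the process from Theorem \ref{thm:coupledGenericLPP}; this requires a joint uniqueness statement strengthening the one used above, which can be carried out either via an explicit queueing or multi-class last-passage representation available specifically for solvable models, or by directly characterizing the joint stationary distribution on the quadrant boundary through its mean vectors and monotone coupling structure.
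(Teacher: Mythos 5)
Your approach takes a genuinely different route from the paper, but it has an unfilled gap that the paper deliberately designed its proof to avoid.

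The paper does \emph{not} derive Theorem~\ref{thm:coupledGeomLPP} from Theorem~\ref{thm:coupledGenericLPP}. Instead it builds a brand-new stationary process $(I^1,J^1,I^2,J^2)$ from scratch: starting from two independent i.i.d.\ geometric sequences $\mb Y^1,\mb Y^2$ on a vertical line, it applies the queueing departure operator to define $(\mb J^1,\mb J^2)=(\mb Y^1, D(\mb Y^2,\mb Y^1))$ and then propagates into the bulk via \eqref{bulkLx}. All independence statements (\ref{coupledGeomLPPb}--\ref{coupledGeomLPPd}) are then read off directly from the queueing lemmas (Lemma~\ref{lem:geomQueueing} parts \ref{geomQueue2}, \ref{geomQueue5}), and the abstract properties of Theorem~\ref{thm:coupledGenericLPP}\eqref{BusTh.a}--\eqref{BusTh.d'} are re-verified for this new process from the construction. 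Crucially, the paper never asserts, and never needs, that this process coincides in law with the Busemann process of Theorem~\ref{thm:coupledGenericLPP}: the remark after the theorem statement points out that the downstream arguments only ever use the abstract properties, which both processes satisfy, and that proving the two laws coincide would require Theorem~5.6 of \cite{Fan-Sep-20} — ``but we do not need this fact.''

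Your proposal, by contrast, defines $I^\ell,J^\ell$ as the Busemann process evaluated at $\xip(q_\ell)$, and then tries to deduce \eqref{coupledGeomLPPb}--\eqref{coupledGeomLPPd} by matching this Busemann process against an explicitly constructed stationary process. The one-parameter matching (for a single $\ell$) already requires care, but the decisive issue is \eqref{coupledGeomLPPb}: you correctly identify that you need a \emph{joint} uniqueness statement pinning down the distribution of the pair $(I^{\xip(q_1)},J^{\xip(q_1)},I^{\xip(q_2)},J^{\xip(q_2)})$, and you explicitly say this is ``the main obstacle.'' The cocycle/recovery/mean-vector characterization underlying the construction in \cite{Jan-Ras-20-aop} determines the marginal law of the Busemann function at each fixed direction, but it does not by itself determine the joint law across two directions — that additional rigidity is precisely the content of \cite[Theorem 5.6]{Fan-Sep-20}, and you offer only a sketch (``an explicit queueing or multi-class representation,'' or ``directly characterizing the joint stationary distribution'') without proving it. In short: you have offloaded the hard part onto a uniqueness claim you do not establish, whereas the paper routes around it by building a fresh process in which the cross-parameter independence is true by construction.
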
  

\begin{remark}
The edge weights needed to define the stationary boundary models we used in Sections \ref{sec:geo_fluct} and \ref{sec:nonexist} came from
the process produced by Theorem \ref{thm:coupledGenericLPP}. 
Theorem \ref{thm:coupledGeomLPP} can be used just the same to produce these edge weights, since by  Theorem \ref{thm:coupledGeomLPP}\eqref{coupledGeomLPP0}, 
the process $\{\omega_x,I^1_x,J^1_x,I^2_x,J^2_x:x\in\ZZ^2\}$, under  $\overline\P_{q_1,q_2}$,
satisfies all the properties of the process
$\{\omega_x,I^{\xip(q_1)}_x,J^{\xip(q_1)}_x,I^{\xip(q_2)}_x,J^{\xip(q_2)}_x:x\in\ZZ^2\}$, under $\hat\P$. 
The advantage of using the process from Theorem \ref{thm:coupledGeomLPP} is that in the case of geometric weights, one has the additional independence properties in Theorem \ref{thm:coupledGeomLPP}{\rm(}\ref{coupledGeomLPPb}-\ref{coupledGeomLPPd}{\rm)}.
These properties are used to verify that Assumptions \ref{exitPointTailBound} and \ref{ass:RW} hold when the weights are geometric random variables. 
In the rest of this appendix (specifically, in Corollary \ref{lem:RWIndependent}, Theorem \ref{geometricExitPointTailBound}, and Lemma \ref{lm:assRWholds} below), although we continue using the notation from Theorem \ref{thm:coupledGenericLPP}, we mean to use the process from 
Theorem \ref{thm:coupledGeomLPP}. We also remark that the proof of Theorem 5.6 in \cite{Fan-Sep-20} implies that the two processes actually have the same distribution, but we do not need this fact.
\end{remark}
	
%
%
%
%
%
%

	\begin{cor} \label{lem:RWIndependent} 
	Fix $0 < r < 1$ and let the bulk weights $\{\w_x:x\in\ZZ^2\}$ be i.i.d.\ $\Geom(r)$ random variables. Let $\xi_\star, \xi^\star, \eta_\star, \eta^\star \in \ri \sU$ be such that $\xi_\star \cdot \ek_1 < \xi^\star \cdot \ek_1$ and $\eta_\star \cdot \ek_1 > \eta^\star \cdot \ek_1$. The processes $\bigl\{S_{m}^{\xi^\star,\eta_\star}: m \in \flint{-N^{2 / 3},-1 }\bigr\}$ and $\bigl\{S_{n}^{\xi_\star,\eta^\star}: n \in \flint{ 1, N^{2 / 3} } \bigr\}$, as defined in \eqref{Sdef},  are independent.
    \end{cor}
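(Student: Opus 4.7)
The plan is to split each of the two walks into a $J$-part supported on the vertical line through $\zero$ and a $\hat J$-part supported on the vertical line through $\ek_1$, and then assemble independence of the two walks from three separate independence statements.

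First I will argue that the full collection of $J$-variables appearing in either walk is independent of the full collection of $\hat J$-variables appearing in either walk. Each $J^\bullet_{j\ek_2}$ lies on the line $\{x_1=0\}$, so Theorem \ref{thm:coupledGenericLPP}\eqref{BusTh.b} applied with $A=\{x\in\ZZ^2:x_1\le 0\}$ shows that the family $\{\w_x,I^\xi_x,J^\xi_x:x_1\le 0,\,\xi\in\ri\sU\}$ is independent of $\{\w_y:y_1\ge 1\}$. Each $\hat J^\eta_{\ek_1+(j-1)\ek_2}$ lies on $\{x_1=1\}$, and the identity $\hat J^\eta_x(\w)=J^\eta_{-x}(\hat\w)$ combined with the same theorem applied to the reflected configuration shows it is measurable with respect to $\sigma(\w_y:y_1\ge 1)$.

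Next I will treat the two $J$-collections and the two $\hat J$-collections on their own. For the $J$-part, the hypothesis $\xi_\star\cdot\ek_1<\xi^\star\cdot\ek_1$ makes $\xi^\star$ the index-$1$ direction (larger $\ek_1$-coordinate, smaller parameter $q$) and $\xi_\star$ the index-$2$ direction in the two-parameter coupling of Theorem \ref{thm:coupledGeomLPP}, so property \eqref{coupledGeomLPPb} at base point $\zero$ gives $\{J^{\xi^\star}_{j\ek_2}:j\le 0\}\perp\{J^{\xi_\star}_{j\ek_2}:j\ge 1\}$. For the $\hat J$-part I apply the same property to the reflected weights $\hat\w$: under $\hat J^\eta_x(\w)=J^\eta_{-x}(\hat\w)$ the relevant variables correspond to $J^{\eta_\star}(\hat\w)$ and $J^{\eta^\star}(\hat\w)$ along the vertical axis through $-\ek_1$, and the hypothesis $\eta_\star\cdot\ek_1>\eta^\star\cdot\ek_1$ is exactly what matches the index/side assignment of \eqref{coupledGeomLPPb} on $\hat\w$, yielding $\{\hat J^{\eta_\star}_{\ek_1+(j-1)\ek_2}:j\le 0\}\perp\{\hat J^{\eta^\star}_{\ek_1+(j-1)\ek_2}:j\ge 1\}$.

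Finally I combine the three independences via the elementary fact that whenever $(X_1,X_2)\perp(Y_1,Y_2)$, $X_1\perp X_2$, and $Y_1\perp Y_2$, one has $(X_1,Y_1)\perp(X_2,Y_2)$; here $X_1,X_2$ are the negative- and positive-side $J$-collections and $Y_1,Y_2$ are the corresponding $\hat J$-collections, which is precisely the independence of the two walks. The step that will demand the most care is the $\hat J$-part: one must track how the reflection $x\mapsto -x$ exchanges the positive and negative halves of the vertical axis, and verify that the asymmetric form of the hypotheses (the roles of ``star'' and ``substar'' being swapped between $\xi$ and $\eta$) is tailored precisely so that the reflected version of \eqref{coupledGeomLPPb} lands in the right configuration.
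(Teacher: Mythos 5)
You take the same three-step route as the paper: separate the $J$-family from the $\hat J$-family, split each into its positive and nonpositive halves via Theorem~\ref{thm:coupledGeomLPP}\eqref{coupledGeomLPPb} applied to $\w$ and to the reflected $\hat\w$, and then assemble. The gap is in the first step. You assert that each $\hat J^\eta_{\ek_1+(j-1)\ek_2}$ is ``measurable with respect to $\sigma(\w_y:y_1\ge 1)$,'' and implicitly (since the assembly needs it) that the $J^\xi_{j\ek_2}$ are measurable with respect to $\sigma(\w_y:y_1\le 0)$. Theorem~\ref{thm:coupledGenericLPP}\eqref{BusTh.b} does not give this: it asserts independence of $\{\w_x,I^\xi_x,J^\xi_x:x\in A,\,\xi\}$ from $\{\w_y:y\in A^>\}$, not measurability of the boundary weights with respect to $\sigma(\w_y:y\in A^\le)$. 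The boundary processes live on the enlarged space $\hat\Omega$ (respectively $\overline\Omega$) and genuinely carry randomness beyond $\w$. Indeed, in the queueing construction proving Theorem~\ref{thm:coupledGeomLPP}, the $J^\ell_{u+j\ek_2}$ on the base axis $u+\ZZ\ek_2$ are obtained by processing the auxiliary sequences $\mb Y^1,\mb Y^2$, which are chosen independently of $\w$; far from being $\sigma(\w)$-measurable, those boundary weights are actually independent of the entire weight field on that axis.

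The paper closes this gap by going back to that explicit construction rather than to the abstract independence statement. Build $J^{\xi_\star},J^{\xi^\star}$ on $\{x_1=0\}$ from one auxiliary collection, and build $\hat J^{\eta_\star},\hat J^{\eta^\star}$ on $\{x_1=1\}$ by running the same construction on $\hat\w$ based at $-\ek_1$ with a second auxiliary collection, choosing the two auxiliary collections mutually independent of each other and of $\w$. Since $\{\w_y:y_1\le 0\}$ and $\{\w_y:y_1\ge 1\}$ are also independent, the joint law of the $J$-block and the $\hat J$-block is a product measure by construction, which is the independence your step one needs. With this replacement, the remaining two applications of \eqref{coupledGeomLPPb} on $\w$ and on $\hat\w$, and the combination of the three independences, go through as you describe.
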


	\begin{proof}
	Examining the construction in the proof of the previous theorem one sees that the processes $\{J_x^{\xi_\star}, J_x^{\xi^\star} \}_{x \in \mc I}$ and $\{\widehat{J}_{x+\ek_1}^{\eta_\star}, \widehat{J}_{x+\ek_1}^{\eta^\star}\}_{x \in \mc I}$ can be constructed simultaneously. Then the independence of $\{\w_x:x\in\ZZ_{\ge1}\times\ZZ\}$ and 
	$\{\w_x:x\in\ZZ_{\le0}\times\ZZ\}$ implies that the joint distribution of the two process (that are now defined on a larger, product space) is in fact a product measure and the two processes are independent.

    Next, Theorem \ref{thm:coupledGeomLPP} says that $\{ J_{j\ek_2}^{\xi^\star} \}_{j \leq 0}$ is independent of $\{J_{j\ek_2}^{\xi_\star} \}_{j \geq 1}$ and that   
    $\{ \widehat{J}_{\ek_1+(j-1)\ek_2}^{\eta_\star} \}_{j \leq 0}$ is independent of $\{\widehat{J}_{\ek_1+(j-1)\ek_2}^{\eta^\star} \}_{j \geq 1}$.  
    The claim follows from these independence properties.    
     \end{proof}

The proof of Theorem \ref{thm:coupledGeomLPP} follows closely that of \cite[Theorem 3.1]{Bal-Bus-Sep-20}. It is based on a few results from queuing theory. The queuing-theoretic interpretation is not important for this paper; however, it gives some intuition behind the algebra that follows.
To this end, consider a \textit{queue} or \textit{service station} with a single \textit{server} and unbounded room for
customers waiting to be served. 
Index the bi-infinite sequence of customers by $j$.
The server serves one customer at a time. Once the service
of customer $j$ is complete, they leave the queue
and customer $j+1$ enters service if they were already waiting
in the queue. If the queue is empty after the departure of customer $j$, then the server
remains idle until customer $j + 1$ arrives. 
Let $\mb{s} = (s_j)_{j \in \ZZ}$ denote the service process, i.e.\ $s_j$ is the time it takes to service customer $j$. 
Let $\mb{a} = (a_j)_{j \in \ZZ}$ be the inter-arrival process, i.e.\ $a_j$ is the time elapsed between the arrivals of customers $j-1$ and $j$. Assume that
\begin{align}\label{a-cond}
\lim_{n \rightarrow -\infty} \sum_{i=n}^0 (s_i - a_{i+1}) = - \infty.
\end{align}
Let $\mb{G} = (G_j)_{j \in \ZZ}$ be a sequence of customer arrival times such that $a_j=G_j-G_{j-1}$.  
Define the sequence $\tilde{\mb{G}}=(\tilde G_j)_{j\in\ZZ}$ 
by 
\begin{align}\label{D-def}
\tilde{G}_j = \sup_{k \leq j} \Bigl\{ G_k + \sum_{i=k}^j s_i  \Bigr\}.
\end{align}
Condition \eqref{a-cond} guaranties that the supremum is achieved and that $\tilde G_j$ is a finite real number.
The recurrence relation
\begin{align}\label{Dep-ind}
\tilde G_j=(\tilde G_{j-1}+s_j)\vee(G_j+s_j)
\end{align}
provides a natural interpretation of $\tilde G_j$ as the time customer $j$ leaves the service station. 

It is noteworthy that \eqref{D-def} is not the only solution to \eqref{Dep-ind}. For example, the sequence that is identically equal to $\infty$ is another solution. 
However, adapting the proof of \cite[Lemma 4.3]{Jan-Ras-20-jsp} to the current setting shows that under the assumptions that $(s_j)$ is i.i.d., $(a_j)$ is ergodic and independent of $(s_j)$, and the mean of $a_j$ is strictly larger than the mean of $s_j$, \eqref{D-def} is the unique stationary almost surely finite solution to \eqref{Dep-ind}.

Define the inter-departure process $\mb{d} = (d_j)_{j \in \ZZ} = D(\mb{a}, \mb{s})$ by $d_j = \tilde{G}_j - \tilde{G}_{j-1}$.  Define the sojourn process $\mb{t} = (t_j)_{j \in \ZZ} = S(\mb{a}, \mb{s})$ by $t_j = \tilde{G}_j - G_j$. Define the dual service times $\check{\mb{s}} = (\check{s}_j)_{j \in \ZZ} = R(\mb{a}, \mb{s})$ by $\check{s}_j = a_j \wedge t_{j-1}$. These definitions do not depend on the particular sequence $\mb G$ which was selected. 

Note that 
    \begin{align}\label{cocyclequeue}
    t_j+a_j=\tilde G_j-G_{j-1}=t_{j-1}+d_j.
    \end{align}
Also, \eqref{Dep-ind} implies
\begin{align}\label{s<d}
s_j\le d_j\quad\text{for all }j\in\ZZ.
\end{align}
Subtracting $G_j$ from both sides in 
\eqref{D-def} and expanding $G_k-G_j=-\sum_{i=k+1}^j a_i$ shows that 
\begin{align}\label{t1>t2}
\text{the sojourn times $t_j$ are non-increasing functions of the inter-arrival times $a_j$.}
\end{align}

The following is Lemma A.1 from \cite{Bal-Bus-Sep-20}. 

\begin{lemma} \label{lem:queueingIdentity}  The following holds for any $\mb{a}, \mb{b}, \mb{s}$ for which all the involved departure times are defined:
\[
D(D(\mb{b}, \mb{a}), \mb{s}) = D(D(\mb{b}, R(\mb{a}, \mb{s})), D(\mb{a}, \mb{s})).
\]
\end{lemma}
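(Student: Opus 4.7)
The plan is to rewrite both sides of the identity as two-queue tandem departure times and reduce the claim to a single algebraic identity between the pair $(R, D)$.

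First I would iterate the supremum formula \eqref{D-def}. Fixing a realization of arrival times $G_k = G_0 + \sum_{i \le k} b_i$, writing $\check{\mb s} = R(\mb a, \mb s)$ and $\mb d = D(\mb a, \mb s)$, and letting $\tilde G^{(2)}_j$ and $\tilde H^{(2)}_j$ denote the second-queue departure times on the LHS and RHS respectively, one obtains
\[
\tilde G^{(2)}_j = \sup_{m \le k \le j} \Bigl\{G_m + \sum_{i=m}^k a_i + \sum_{i=k}^j s_i\Bigr\},
\]
\[
\tilde H^{(2)}_j = \sup_{m \le k \le j} \Bigl\{G_m + \sum_{i=m}^k \check{s}_i + \sum_{i=k}^j d_i\Bigr\}.
\]
Since the inter-departure sequences on both sides of the claimed identity are the first differences of these processes, it suffices to prove $\tilde G^{(2)}_j = \tilde H^{(2)}_j$ for every $j \in \bbZ$.

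Second, I would establish the pointwise conservation law $\check{s}_j + d_j = a_j + s_j$ by a case analysis using \eqref{Dep-ind}: if $a_j > t^{(0)}_{j-1}$ (the server is idle just before customer $j$ in the auxiliary $(\mb a, \mb s)$-queue), then $\check{s}_j = t^{(0)}_{j-1}$ and $d_j = a_j + s_j - t^{(0)}_{j-1}$; otherwise $\check{s}_j = a_j$ and $d_j = s_j$. I would then prove $\tilde G^{(2)}_j = \tilde H^{(2)}_j$ by induction on $j$, with the tail condition \eqref{a-cond} supplying the initialization at $-\infty$. The inductive step combines the Lindley-type recursion
\[
\tilde G^{(2)}_j = \max\bigl(\tilde G^{(1)}_j,\, \tilde G^{(2)}_{j-1}\bigr) + s_j, \qquad \tilde H^{(2)}_j = \max\bigl(\tilde H^{(1)}_j,\, \tilde H^{(2)}_{j-1}\bigr) + d_j,
\]
with the first-queue comparison $\tilde H^{(1)}_j \le \tilde G^{(1)}_j$ (from $\check{s}_i \le a_i$) and a telescoping formula for $\tilde G^{(1)}_j - \tilde H^{(1)}_j$ expressed through the cumulative idle times of the auxiliary queue.

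The main obstacle is that the maximizers of the two suprema generically do not agree at the same index $k$, and the pointwise conservation law alone is not enough to close the induction directly. One must carefully track when the auxiliary $(\mb a, \mb s)$-queue is busy versus idle, since this determines exactly where $\check{s}_i < a_i$ and $d_i > s_i$. This queueing-theoretic bookkeeping, combined with \eqref{s<d} and the monotonicity \eqref{t1>t2} of sojourn times in the arrivals, produces the right cancellations to match the two maxima and is the algebraic heart of the duality underlying the lemma.
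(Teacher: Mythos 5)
The paper does not prove this lemma; it cites it directly as Lemma A.1 of \cite{Bal-Bus-Sep-20}, so there is no internal proof to compare against. Evaluating your proposal on its own terms: the reduction to the iterated-supremum formula is set up correctly, and your conservation law $\check{s}_j + d_j = a_j + s_j$ is right (from $d_j = s_j + (a_j - t_{j-1})^+$ and $\check{s}_j = a_j \wedge t_{j-1}$, the two cases $a_j \le t_{j-1}$ and $a_j > t_{j-1}$ both give the identity). So the framing is sensible.

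The problem is that you have not actually given a proof; you have given a plan and then explicitly flagged the decisive step as unfinished. Writing $e_j := (a_j - t_{j-1})^+ = d_j - s_j = a_j - \check{s}_j$, the inductive step you outline reduces (assuming $\tilde G^{(2)}_{j-1} = \tilde H^{(2)}_{j-1} =: C$) to the requirement
\[
\bigl(\tilde G^{(1)}_j - C\bigr)^+ - \bigl(\tilde H^{(1)}_j - C\bigr)^+ = e_j,
\]
and this is not a consequence of the conservation law alone. It forces a precise relationship between three separately evolving quantities: the first-queue gap $\Delta_j = \tilde G^{(1)}_j - \tilde H^{(1)}_j$, the amount by which the second queue is backed up relative to the first-queue departure, and the auxiliary idle time $e_j$. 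An induction that tracks only $\tilde G^{(2)}_j = \tilde H^{(2)}_j$ cannot close; you would at minimum need to carry a joint invariant for $(\Delta_j, \tilde G^{(2)}_j - \tilde H^{(1)}_j)$ and prove a case-by-case identity governed by whether $e_j > 0$ and whether the second queue is idle. None of that is spelled out, and your appeal to \eqref{t1>t2} is not the right tool here — that monotonicity compares two different arrival streams, whereas what is needed is a structural identity within a single realization. Likewise the "initialization at $-\infty$" is asserted but not argued: \eqref{a-cond} guarantees the sups are finite, but showing $\tilde G^{(2)}_j - \tilde H^{(2)}_j \to 0$ as $j \to -\infty$ is an extra step. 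As written, the heart of the lemma — which you yourself call "the algebraic heart of the duality" — is left as a gap.
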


For horizontal edge weight $I$, vertical edge weight $J$, and vertex weight $\omega$, define  
\[
	I^{\prime} =\omega+(I-J)^{+},\quad 
	J^{\prime} =\omega+(I-J)^{-},\quad\text{and}\quad
	\omega^{\prime} =I \wedge J.
\]
The next lemma can be proved for example using Laplace transforms. It is essentially a consequence of the memoryless property of the Geometric distribution.

\begin{lemma} \label{lem:geometricInduction}  
Fix $0 < r < 1$ and $r \leq q \leq 1$. Let $\omega \sim \Geom(r)$, $I \sim \Geom(q)$, and $J \sim \Geom\left(\frac{r}{q}\right)$ be independent.  Then the following hold.
\begin{enumerate} [label=\rm(\alph{*}), ref=\rm\alph{*}] \itemsep=2pt 
\item $I-J$ and $I \wedge J$ are independent.  
\item  The distribution of $(I-J)^+$ is the same as that of the product of a $\Ber\bigl(\frac{q-r}{q(1-r)}\bigr)$ and an independent $\Geom(q)$ random variables.  
\item  The triple $(I^\prime, J^\prime, \omega^\prime)$ has the same distribution as $(I, J, \omega)$. 
\end{enumerate}
\end{lemma}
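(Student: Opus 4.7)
My plan proceeds by direct computation of joint mass functions, with (a) as the workhorse from which (b) and (c) fall out. The unifying observation is that $(I \wedge J, I - J)$ has a product-form joint distribution, best seen by writing the joint pmf case-by-case.

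First I would prove (a). Splitting into the cases $k \geq 0$ (where the event $\{I \wedge J = m, I - J = k\}$ forces $J = m$ and $I = m + k$) and $k \leq 0$ (forcing $I = m$ and $J = m - k$), independence and the geometric pmfs give in both cases $P(I \wedge J = m, I - J = k) = r^m (1-q)(1-r/q)\, g(k)$, where $g(k) = q^k$ for $k \geq 0$ and $g(k) = (r/q)^{-k}$ for $k \leq 0$. The $r^m$ factor is proportional to the $\Geom(r)$ mass, so this simultaneously verifies $I \wedge J \sim \Geom(r)$, the independence of $I \wedge J$ and $I - J$, and the marginal law of $I - J$.

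Part (b) is then a specialization: for $k \geq 1$ the marginal of $I - J$ simplifies to $\frac{(q-r)(1-q)}{1-r}\,q^{k-1}$, and a direct calculation of the pmf of $\xi \eta$ for independent $\xi \sim \Ber(\frac{q-r}{q(1-r)})$ and $\eta \sim \Geom(q)$ reproduces this for $k \geq 1$ and matches the residual atom at $k = 0$.

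For (c) the key algebraic identity is $(I \wedge J) + (I - J)^+ = I$ and $(I \wedge J) + (I - J)^- = J$, checked by the two cases $I \geq J$ and $I < J$. Thus the deterministic map $\Phi \colon (s, k) \mapsto (s + k^+, s + k^-)$ sends $(I \wedge J, I - J)$ to $(I, J)$, and since $(I', J') = \Phi(\omega, I - J)$, it suffices to show $(\omega, I - J) \eqd (I \wedge J, I - J)$. This is immediate from $\omega \eqd I \wedge J$ (both are $\Geom(r)$), the independence of $\omega$ from $(I, J)$ and hence from $I - J$, and the independence established in (a). Finally, $\omega' = I \wedge J$ is independent of $(I', J') = \Phi(\omega, I - J)$ because $\omega$, $I \wedge J$, and $I - J$ are jointly independent, yielding the full identity in distribution $(I', J', \omega') \eqd (I, J, \omega)$.

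The only nontrivial obstacle is careful bookkeeping in (a) and (b) among the three parameters $r$, $q$, and $r/q$ together with the case split on the sign of $I - J$. A quicker alternative, matching the authors' hint about Laplace transforms, is via probability generating functions: $E[z^{(I-J)^+}] = \frac{(1-q)(1-rz)}{(1-r)(1-qz)}$ combines with $E[z^\omega] = \frac{1-r}{1-rz}$ to collapse to the $\Geom(q)$ pgf $\frac{1-q}{1-qz}$ for $I'$, but the bijective argument above seems more transparent for the joint statement.
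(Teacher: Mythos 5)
Your proof is correct and complete. Note that the paper does not actually supply a proof of this lemma; it only remarks that it ``can be proved for example using Laplace transforms'' and is ``essentially a consequence of the memoryless property,'' so there is no paper argument to compare against line by line. Your direct computation of the joint pmf of $(I\wedge J, I-J)$ is a genuinely different (and arguably more transparent) route than the hinted transform approach: the factorization $\P(I\wedge J=m,\, I-J=k) = r^m(1-q)(1-r/q)\,g(k)$ delivers (a), the marginal $\Geom(r)$ law of $I\wedge J$, and (b) simultaneously, whereas a pgf argument most naturally proves only the marginal identity $I'\sim\Geom(q)$ and leaves the joint independence in (c) to a separate argument. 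Your observation that $\Phi(s,k)=(s+k^+,\,s+k^-)$ is a deterministic bijection carrying $(I\wedge J, I-J)$ to $(I,J)$, combined with the joint independence of $\omega$, $I\wedge J$, and $I-J$, is exactly the clean way to close part (c); the bookkeeping all checks out (in particular the boundary case $k=0$ is counted once, and $\P((I-J)^+=0)=\tfrac{1-q}{1-r}$ matches $\P(\xi\eta=0)$). The only cosmetic remark is that the statement's endpoints $q=r$ and $q=1$ are degenerate for the $\Geom$ parametrization used here; for the purposes of Theorem \ref{thm:coupledGeomLPP} the lemma is applied with $q\in(r,1)$ strictly, where your argument applies without caveat.
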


Take  $0<\sigma<\alpha_1 < \alpha_2$ in $(0,1)$.
Let $\mb{b}^i$ be an i.i.d.\ sequence of $\Geom(\alpha_i)$ random variables for $i \in \{1, 2\}$ and let $\mb{s}$ be an i.i.d.\ sequence of $\Geom(\sigma)$ random variables, which are all mutually independent.   Define the arrival sequences $(\mb{a}^1, \mb{a}^2) = (\mb{b}^1, D(\mb{b}^2, \mb{b}^1))$.  Define $\mb{d}^k = D(\mb{a}^k, \mb{s})$, $\mb{t}^k = S(\mb{a}^k, \mb{s})$, and $\check{\mb{s}}^k = R(\mb{a}^k, \mb{s})$ for $k \in \{1, 2\}$.  

\begin{lemma}\label{lem:geomQueueing}
The following statements are true.
\begin{enumerate} [label=\rm(\alph{*}), ref=\rm\alph{*}] \itemsep=2pt 
		\item\label{geomQueue1} Marginally, $\mb{a}^2$ is a sequence of i.i.d.\ $\Geom(\alpha_2)$ random variables.  
		\item\label{geomQueue2} For each $k \in \{1, 2\}$ and $m \in \ZZ$, the random variables $\{d_j^k\}_{j \leq m}$, $t_m^k$, and $\{\check{s}_j^k\}_{j \leq m}$ are mutually independent.  Their marginal distributions are $d_j^k \sim \Geom(\alpha_k)$, $t_m^k \sim \Geom\bigl(\frac{\sigma}{\alpha_k}\bigr)$, and $\check{s}_j^k \sim \Geom(\sigma)$.
		\item\label{geomQueue3} For each $k \in \{1, 2\}$, the sequences $\mb{d}^k$ and $\check{\mb{s}}^k$ are mutually independent.  Their marginal distributions are $d_j^k \sim \Geom(\alpha_k)$ and  $\check{s}_j^k \sim \Geom(\sigma)$.  
		\item\label{geomQueue4} $(\mb{d}^1, \mb{d}^2) \eqd (\mb{a}^1, \mb{a}^2)$.
		\item\label{geomQueue5} For each $m \in \ZZ$, the random variables $\{a_i^2\}_{i \leq m}$ and $\{a_j^1\}_{j \geq m+1}$ are mutually independent.
	\end{enumerate}
\end{lemma}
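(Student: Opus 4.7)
Parts (a)--(e) will be proven in order, with the main workhorse being Lemma \ref{lem:geometricInduction}, a discrete-time analogue of Burke's theorem for geometric queues. For (a), view $\mb a^2 = D(\mb b^2, \mb b^1)$ as the departure sequence from a queue with i.i.d.\ $\Geom(\alpha_2)$ inter-arrivals and i.i.d.\ $\Geom(\alpha_1)$ services (stable since $\alpha_1 < \alpha_2$), and let $(t^{(0)}_m)$ denote its sojourn times, satisfying the recursion $t^{(0)}_m = (t^{(0)}_{m-1} - b^2_m)^+ + b^1_m$. Setting $(\omega, I, J) = (b^1_m, b^2_m, t^{(0)}_{m-1})$ and $(r, q) = (\alpha_1, \alpha_2)$ in Lemma \ref{lem:geometricInduction}(c), one sees that $\Geom(\alpha_1/\alpha_2)$ is invariant under the transition kernel of this ergodic chain and is therefore its stationary law. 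Working with the stationary version, I will induct on $m$: assuming that $t^{(0)}_{m-1}$ is independent of $\{a^2_j\}_{j \leq m-1}$ and that both are independent of $(b^1_m, b^2_m)$, Lemma \ref{lem:geometricInduction}(c) identifies $(a^2_m, t^{(0)}_m, \check b^2_m)$ with $(I', J', \omega')$, which in turn is distributed as three independent Geometrics; in particular $a^2_m \sim \Geom(\alpha_2)$ is independent of the past, closing the induction.

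\textbf{Parts (b), (c), (e).} For (b), the same induction applies to the queue with inter-arrivals $\mb a^k$ and services $\mb s$: here $\mb a^k$ is i.i.d.\ $\Geom(\alpha_k)$ (by hypothesis for $k = 1$ and by (a) for $k = 2$) and is a function of $(\mb b^1, \mb b^2)$, hence independent of $\mb s$. Setting $(r, q) = (\sigma, \alpha_k)$ in Lemma \ref{lem:geometricInduction}(c), the induction yields the mutual independence of $t^k_m$, $\{d^k_j\}_{j \leq m}$, and $\{\check s^k_j\}_{j \leq m}$ with the claimed marginals. Part (c) then follows from (b) by letting $m \to \infty$, since mutual independence on every finite past prefix is equivalent to mutual independence of the full sequences. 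Part (e) is immediate from the construction: $\{a^2_i\}_{i \leq m}$ is a measurable function of $\{b^1_i, b^2_i\}_{i \leq m}$ by definition of $D$, while $\{a^1_j\}_{j \geq m+1} = \{b^1_j\}_{j \geq m+1}$ is independent of this past by the i.i.d.\ structure of $\mb b^1, \mb b^2$.

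\textbf{Part (d).} Applying Lemma \ref{lem:queueingIdentity} with $(\mb a, \mb b, \mb s)$ replaced by $(\mb b^1, \mb b^2, \mb s)$ gives $\mb d^2 = D(D(\mb b^2, \check{\mb s}^1), \mb d^1)$, where $\check{\mb s}^1 = R(\mb b^1, \mb s)$ and $\mb d^1 = D(\mb b^1, \mb s)$. By (c) with $k = 1$, the pair $(\mb d^1, \check{\mb s}^1)$ consists of two mutually independent i.i.d.\ sequences with marginals $\Geom(\alpha_1)$ and $\Geom(\sigma)$; both are functions of $(\mb b^1, \mb s)$, hence independent of $\mb b^2$. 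Therefore $(\mb d^1, \mb b^2, \check{\mb s}^1) \eqd (\mb b^1, \mb b^2, \mb s)$ as triples of mutually independent i.i.d.\ sequences, and applying the deterministic map $(x, y, z) \mapsto (x, D(D(y, z), x))$ yields $(\mb d^1, \mb d^2) \eqd (\mb b^1, D(D(\mb b^2, \mb s), \mb b^1))$. The argument for (a), applied to the queue with inter-arrivals $\mb b^2$ and services $\mb s$, shows that $D(\mb b^2, \mb s)$ is i.i.d.\ $\Geom(\alpha_2)$, and since it is a function of $(\mb b^2, \mb s)$ it is independent of $\mb b^1$; hence $(\mb b^1, D(\mb b^2, \mb s)) \eqd (\mb b^1, \mb b^2)$, and applying $(x, y) \mapsto (x, D(y, x))$ produces $(\mb d^1, \mb d^2) \eqd (\mb b^1, D(\mb b^2, \mb b^1)) = (\mb a^1, \mb a^2)$.

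\textbf{Main obstacle.} The most delicate step is the Burke-style induction used for (a) and (b), where one must simultaneously propagate independence of the past and recover the stationary distribution of the sojourn chain at each step. The algebraic content is precisely Lemma \ref{lem:geometricInduction}(c), but the induction has to be set up so that the three inputs to that lemma are genuinely independent at each step. Once (a) and (b) are established, (c) and (e) are essentially bookkeeping, and (d) reduces to a routine application of the queueing identity in Lemma \ref{lem:queueingIdentity}.
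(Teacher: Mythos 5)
Your proof is correct and matches the approach the paper intends: the paper does not write out a proof but refers to Lemma B.2 of Fan--Sepp\"al\"ainen and Lemma A.2 of Bal\'azs--Busani--Sepp\"al\"ainen, instructing the reader to substitute the geometric one-step identity (Lemma \ref{lem:geometricInduction}) for the exponential one, which is exactly the Burke-type induction you carry out. Your treatment of (d) via Lemma \ref{lem:queueingIdentity} together with part (c) applied at $k=1$, and your observation that (e) is immediate from the causal structure of $D$ and $\mb a^1=\mb b^1$, are the standard arguments those references use and are sound as written.
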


The proof of the first three claims follows from Lemma B.2 of \cite{Fan-Sep-20} by replacing the exponential version of the induction with the geometric version in Lemma \ref{lem:geometricInduction}.  The proof of the last two claims follows from Lemma A.2 of \cite{Bal-Bus-Sep-20} with the same replacements. 
Note that \eqref{s<d} and \eqref{t1>t2} imply
 \begin{align}\label{mono-at}
a_j^1\le a_j^2\quad\text{and}\quad
t_j^1\ge t_j^2\quad\text{for all }j\in\ZZ.
 \end{align}

\begin{proof}[Proof of Theorem \ref{thm:coupledGeomLPP}]
 Fix $u\in\ZZ^2$. We start by constructing a joint LPP process $(L_x^{1}, L_x^{2})_{x \in u + \ZZ_{\geq 0} \times \ZZ}$.  In the bulk, we have the i.i.d.\ $\Geom(r)$ weights $\{\omega_x : x_1 > u_1\}$.  For $\ell \in \{1, 2\}$, let $\mb{Y}^{\ell} = \{Y_j^{\ell}\}_{j \in \ZZ}$ be a sequence of i.i.d.\ $\Geom(r/q_\ell)$ random variables such that $\{\mb{Y}^{1},\mb{Y}^{2},\w\}$ are mutually independent.
 Note that $q_1<q_2$ implies that \eqref{a-cond} holds almost surely with $\mb{s}=\mb{Y}^{1}$ and $\mb{a}=\mb{Y}^{2}$. For $\ell \in \{1, 2\}$, define $\mb{J}^{\ell} = \{J_{u + j\ek_2}^{\ell}\}_{j \in \ZZ}$ by $(\mb{J}^{1}, \mb{J}^{2}) = (\mb{Y}^{1}, D(\mb{Y}^{2}, \mb{Y}^{1}))$.  By Lemma \ref{lem:geomQueueing}\eqref{geomQueue1}, marginally $\{J_{u+j\ek_2}^{\ell}\}_{j \in \ZZ}$ are i.i.d.\ $\Geom(r/q_\ell)$.  

For $\ell \in \{1, 2\}$, define the LPP values on this vertical axis by
\begin{align}
	L_{u}^{\ell}=0\quad\text{and}\quad L_{u+j \ek_{2}}^{\ell}-L_{u+(j-1) \ek_{2}}^{\ell}=J_{u+j \ek_{2}}^{\ell} \quad \text { for } j \in \mathbb{Z}.
\end{align}
Note that this means $L_{u+j\ek_2}^\ell$ is negative for $j<0$.  Now, we define the LPP values for $x \in u + \ZZ_{>0}\times \ZZ$:
\begin{align}
	L_{x}^{\ell}=\sup _{j: j \leq x_{2}-u_{2}}\left\{L_{u+j \ek_{2}}^{\ell}+G_{u+\ek_{1}+j \ek_{2}, x}\right\}, \quad I_{x}^{\ell}=L_{x}^{\ell}-L_{x-\ek_{1}}^{\ell}, \quad \text { and } \quad J_{x}^{\ell}=L_{x}^{\ell}-L_{x-\ek_{2}}^{\ell}.  \label{bulkLx}
\end{align}
The supremum is achieved at a finite $j$ because the boundary variables $J^{\ell}$ stochastically dominate the bulk weights $\omega$, as we show next.
Note that one has 
\begin{align}\label{recIJ}
I_x^\ell\wedge J_x^\ell=\w_x\quad\text{for all $x\in u+\ZZ_{>0}\times\ZZ$ and $\ell\in\{1,2\}$.}
\end{align}

For $k\ge0$ and $\ell  \in \{1, 2\}$ let $\mb{J}^{\ell, k} = \{J_j^{\ell, k}\}_{j \in \ZZ} = \{J_{u + k\ek_1 + j\ek_2}^{\ell}\}_{j \in \ZZ}$ and $\mb{s}^k = \{s_j^k\}_{j \in \ZZ} = \{\omega_{u + k\ek_1 + j\ek_2} \}_{j \in \ZZ}$.  Then $\mb{J}^{\ell, 0}$ is the original boundary sequence on the vertical axis.  In the notation of Lemma \ref{lem:geomQueueing}, with $\sigma = r$, $\alpha_1 = q_1$, and $\alpha_2 = q_2$, setting $\mb{b}^\ell=\mb{Y}^{\ell}$ gives $(\mb{a}^1, \mb{a}^2)=(\mb{J}^{q_1}, \mb{J}^{q_2})$.
Then, for any $\ell\in\{1,2\}$, \eqref{a-cond} is satisfied, $G_j^{\ell}=L_{u+j\ek_2}^\ell$, $j\in\ZZ$, is a sequence of arrival times, and $\tilde{G}_j^\ell=L^\ell_{u+\ek_1+j\ek_2}$, $j\in\ZZ$, is the corresponding sequence of departure times. Consequently, $\mb{J}^{\ell,1}=D(\mb{J}^{\ell,0},\mb{s}^1)$. Lemma \ref{lem:geomQueueing}\eqref{geomQueue4} then implies $(\mb{J}^{1,1},\mb{J}^{2,1})\eqd(\mb{J}^{1},\mb{J}^{2})$. Repeating this inductively gives that $\mb{J}^{\ell, k+1} = D(\mb{J}^{\ell, k}, \mb{s}^{k+1})$ and $(\mb{J}^{1, k}, \mb{J}^{2, k}) \overset{d}{=} (\mb{J}^{1}, \mb{J}^{2})$ for all $k \geq 0$.  This and the first inequality in
\eqref{mono-at} imply 
\begin{align}\label{monoJ}
J^1_x\le J^2_x\quad\text{ for all $x\in u+\ZZ_+\times\ZZ$}.
\end{align}

Furthermore, Lemma \ref{lem:geomQueueing}\eqref{geomQueue5} implies that for any $x \in u + \ZZ_{\geq 0} \times \ZZ$, 
\begin{align}
	\left\{J_{x+j \ek_{2}}^{2}: j \leq 0\right\} \quad \text { and } \quad\left\{J_{x+j \ek_{2}}^{1}: j \geq 1\right\} \quad \text { are mutually independent. } \label{vertLineInd}
\end{align}

The definition \eqref{bulkLx} satisfies a semi-group property:  For each $k \geq 0$, the values $L_x^{\ell}$ for $x$ such that $x_1 > u_1 + k + 1$ satisfy
\begin{align}
	L_{x}^{\ell}=\sup _{j: j \leq x_{2}-u_{2}}\left\{L_{u+k \ek_{1}+j \ek_{2}}^{\ell}+G_{u+(k+1) \ek_{1}+j \ek_{2}, x}\right\}.
\end{align}
%
This and the distributional equality $(\mb{J}^{1, k}, \mb{J}^{2, k}) \overset{d}{=} (\mb{J}^{1}, \mb{J}^{2})$
imply that for any $z \in \ZZ_{\geq 0} \times \ZZ$,
\begin{align}
	\left\{I_{z+x+\ek_{1}}^{1}, I_{z+x+\ek_{1}}^{2}, J_{z+x}^{1}, J_{z+x}^{2}: x \in u+\mathbb{Z}_{\geq 0} \times \mathbb{Z}\right\} \stackrel{d}{=}\left\{I_{x+\ek_{1}}^{1}, I_{x+\ek_{1}}^{2}, J_{x}^{1}, J_{x}^{2}: x \in u+\mathbb{Z}_{\geq 0} \times \mathbb{Z}\right\}. \label{transInv} 
\end{align}
The index on the $I$ increments requires $x+\ek_1$ because the increments are not defined on the boundary, where $x_1 = u_1$.  

Next, we claim that for $\ell \in \{1, 2\}$, and for any $u\in\ZZ^2$,
\begin{align}
	\begin{aligned}
	&\left\{I_{u+i \ek_{1}}^{\ell}, J_{u+j \ek_{2}}^{\ell}: i, j \in \mathbb{Z}_{>0}\right\} \text { are mutually independent with marginal distributions }  \\ 
	& I_{u+i \ek_{1}}^{\ell} \sim \Geom(q_\ell) \text { and } J_{u+j \ek_{2}}^{\ell} \sim \Geom(r/q_\ell). 
	\end{aligned}\label{forwardLind}
\end{align}

 We have already shown that $\mb{J}^{\ell}$ are i.i.d.\ $\Geom(r/q_\ell)$ random variables.  Also notice that $\{I_{u+i \ek_{1}}^{\ell} : i \geq 1\}$ are a function of only $\{J_{u+j \ek_{2}}^{\ell}, \omega_{u+i\ek_1+j\ek_2}: i \geq 1, j \leq 0\}$ which are independent of $\{J_{u+j\ek_2}^\ell : j \geq 1\}$.  What remains to prove is that the horizontal increments are i.i.d.\ and to determine their maginal distribution. For this, we prove the following claim inductively in $n \geq 1$:
\begin{align}
	\begin{aligned}&\left\{I_{u+i \ek_{1}}^{\ell}, J_{u+n \ek_{1}+j \ek_{2}}^{\ell}: 1 \leq i \leq n, j \leq 0\right\} \text { are mutually independent with } \\ &\text { marginal distributions } I_{u+i \ek_{1}}^{\ell} \sim \Geom(q_\ell) \quad \text { and } \quad J_{u+n \ek_{1}+j \ek_{2}}^{\ell} \sim \Geom(r/q_\ell).\end{aligned} \label{reverseLind}
\end{align}
This and the fact that $I_{u+(n+1)\ek_1}^\ell$ is a function of $\{J^\ell_{u+n\ek_1+j\ek_2},\omega_{u+(n+1,j)}:j\le0\}$ imply the mutual independence of the horizontal increments.

We now prove \eqref{forwardLind}. For the base case $n=1$, consider inter-arrival times $\{a_j = J_{u+j\ek_2}^\ell : j \leq 0\}$ and service times $\{s_j = \omega_{u + \ek_1 + j\ek_2} : j \leq 0\}$.  The inter-departure times are $\{d_j = J_{u+\ek_1 + j\ek_2}^\ell : j \leq 0\}$.  The sojourn time is $t_0 = I_{u+\ek_1}^\ell$.  Lemma \ref{lem:geomQueueing}\eqref{geomQueue2} then gives the above claim for $n=1$.  

For the inductive step, assume the claim holds for a fixed $n\ge1$.  Then use inter-arrival times $\{a_j = J_{u+n\ek_1 + j\ek_2}^\ell : j \leq 0\}$ and service times $\{s_j = \omega_{u+(n+1)\ek_1 + j\ek_2} : j \leq 0\}$ which are independent of $\{I_{u+i\ek_1}^\ell : 1 \leq i \leq n \}$ by the inductive hypothesis.  Then compute the corresponding inter-departure times $\{d_j = J_{u+(n+1)\ek_1 + j\ek_2} : j \leq 0\}$ and the sojourn time $t_n = I_{u+(n+1)\ek_1}^\ell$.  Lemma \ref{lem:geomQueueing}\eqref{geomQueue2} again gives the validity of the claim for $n+1$, completing the proof of the claim \eqref{reverseLind}.  

Combining \eqref{cocyclequeue} with observation that $I^\ell_x$ are sojourn times gives
    \begin{align}\label{cocycleIJ}
    I^\ell_{x+\ek_1}+J^\ell_{x+\ek_1+\ek_2}=J^\ell_{x+\ek_2}+I^\ell_{x+\ek_1+\ek_2}\quad\text{ for all $x\in u+\ZZ_{\ge0}\times\ZZ$ and $\ell\in\{1,2\}$.}
    \end{align}
And with the second inequality in \eqref{mono-at} we get 
\begin{align}\label{monoI}
I^1_x\ge I^2_x
\quad\text{for all }x\in\ZZ_{>0}\times\ZZ.
\end{align}

Lastly, observe that $\{L_x^\ell : x \in u + \ZZ_{\geq 0}^2\}$ are last passage times with boundary weights $\{I^\ell_{u+i\ek_1},J^\ell_{u+j\ek_2}:i,j\in\ZZ_{>0}\}$ and bulk weights $\w_x$, $x\in u+\N^2$.  Indeed, if we denote by $G^\ell_{u,x}$ the passage time from $u$ to $x\in u+\N^2$ with these boundary and bulk weights, then as in \eqref{GSW}
\begin{align*} 
&G_{u, x}^{\ell} = \max _{1 \leq k \leq x_{1}-u_{1}}\Bigl\{\sum_{i=1}^{k} I_{u+i \ek_{1}}^{\ell}+G_{u+k \ek_{1}+\ek_{2}, x}\Bigr\} \bigvee \max_{1 \leq m \leq x_{2}-u_{2}}\Bigl\{\sum_{j=1}^{\ell} J_{u+j \ek_{2}}^{\ell}+G_{u+\ek_1+m\ek_2, x}\Bigr\} \\ 
	&= \max _{1 \leq k \leq x_{1}-u_{1}}\left\{L_{u+k \ek_{1}}^{\ell}+G_{u+k \ek_{1}+\ek_{2}, x}\right\} \bigvee \max _{1 \leq m \leq x_{2}-u_{2}}\left\{L_{u+m \ek_{2}}^{\ell}+G_{u+\ek_{1}+m \ek_{2}, x}\right\} \\
	&= \sup _{j \leq 0}\Bigl\{L_{u+j \ek_{2}}^{\ell}+\max _{1 \leq k \leq x_{1}-u_{1}}\left[G_{u+\ek_{1}+j \ek_{2}, u+k \ek_{1}}+G_{u+k \ek_{1}+\ek_{2}, x}\right]\Bigr\}  \bigvee \max_{1 \leq m \leq x_{2}-u_{2}}\left\{L_{u+m \ek_{2}}^{\ell}+G_{u+\ek_{1}+\ell \ek_{2}, x}\right\} \\
	&=\sup _{j: j \leq x_{2}-u_{2}}\left\{L_{u+j \ek_{2}}^{\ell}+G_{u+\ek_{1}+j \ek_{2}, x}\right\}=L_{x}^{\ell}. 
\end{align*}

By \eqref{recIJ}, the weights $\w_x$ can be recovered from the edge weights $I^\ell_x$ and $J_x^\ell$. Then, due to \eqref{transInv} we can extend the process $\bigl\{\w_x,I_{x+\ek_{1}}^{1}, I_{x+\ek_{1}}^{2}, J_{x}^{1}, J_{x}^{2}: x \in u+\mathbb{Z}_{\geq 0} \times \mathbb{Z}\bigr\}$ to a stationary process on the whole lattice. This produces a $\overline T$-invariant probability measure $\overline\P_{q_1,q_2}$
on $(\overline\Omega,\overline\sG)$ whose marginal on $\Omega$ is exactly $\P$. 
We now verify that all the claims in the theorem hold for this choice of measure.

Property \eqref{coupledGeomLPP0.i} holds by construction and the independence property \eqref{coupledGeomLPP0.ii} follows from  the definition \eqref{bulkLx}.
Recall that an $\overline\w\in\overline\Omega$ has coordinate projections
$\bigl\{\w_x,I_x^{1}, I_x^{2}, J_{x}^{1}, J_{x}^{2}: x \in \ZZ^2\bigr\}$. 
Thus, the shift-covariance in \eqref{coupledGeomLPP0.iv.1}  holds trivially.
The recovery and monotonicity properties in \eqref{coupledGeomLPP0.iv.2} follow from \eqref{recIJ}, \eqref{monoJ}, and \eqref{monoI}.  
The additivity property \eqref{cocIJ2} is given in \eqref{cocycleIJ} and \eqref{add2} follows from that. Then, as it was the case for Theorem \ref{thm:coupledGenericLPP}\eqref{BusTh.d'}, property \eqref{coupledGeomLPP0.v} 
follows from \eqref{add2} and the shift-invariance of $\overline\P_{q_1,q_2}$.

Observe that  $(I^\ell_x,J^\ell_x)$ has mean $\bigl(q_\ell/(1-q_\ell),r/(q_\ell-r)\bigr)$.
A direct computation using the explicit formulas \eqref{p->xi} and \eqref{eq:shape} shows that this is equal to $\nabla\gpp(\xip(q_\ell))$.
This completes the proof of part \eqref{coupledGeomLPP0} of the theorem.
Part \eqref{coupledGeomLPPb} follows from \eqref{vertLineInd} and parts \eqref{coupledGeomLPPc} and \eqref{coupledGeomLPPd} from \eqref{forwardLind} and \eqref{reverseLind}.  
%
%
\end{proof}

\section{Verifying Assumption \ref{exitPointTailBound} for the geometric LPP}\label{sec:pfExit}
This appendix is dedicated to the proof of an exponential tail bound for the location of exit points. It can be read independently of the rest of the paper. We assume throughout the section that $\w_0\sim\Geom(r)$ for a given $r\in(0,1)$. 

For $\delta\in(0,1)$ recall the definition of the cone 
    \[S_\delta = \{x \in \RR_{>0}^2 : x\cdot \ek_1 \geq \delta x \cdot \ek_2 \text{ and } x \cdot \ek_2 \geq \delta x\cdot \ek_1  \}.\] 

\begin{theorem}\label{geometricExitPointTailBound}  
Assume $\w_0\sim\Geom(r)$ for some $r\in(0,1)$.
For any $\delta \in (0,r)$ and $\kappa \geq 0$ there exist positive finite constants $c_0 = c_0(\delta, r)$, $N_0 = N_0(\delta, r,\kappa)$, and $s_0 = s_0(\delta,r,\kappa)$ such that 
\[
\P\bigl\{\abs{Z^{\xi, \ek_1}(m,n)}\vee\abs{Z^{\xi, \ek_2}(m,n)} \geq s (m + n)^{2/3}\bigr\} 
\leq \exp\{-c_0 s^3\}
\]
for all $(m, n) \in S_\delta \cap \ZZ_{\geq N_0}^2$, $s \geq s_0,$ and $\xi\in\ri\sU$ such that $\xi_1 \in (\delta, 1-\delta)$ and $\abs{\xi_1 - \frac{m}{m+n}} \leq \kappa(m+n)^{-1/3}$.
\end{theorem}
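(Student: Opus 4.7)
The plan is to follow the strategy of \cite{Emr-Jan-Sep-21-}, adapted from the exponential to the geometric setting, leveraging the product-independence structure of the boundary increments in Theorem \ref{thm:coupledGeomLPP}\eqref{coupledGeomLPPc}-\eqref{coupledGeomLPPd} together with the explicit moment generating function of geometric random variables. By the obvious symmetry along the two axes (swap roles of $\ek_1$ and $\ek_2$, and of the parameter $p$ with $r/p$) it is enough to bound the upper tail $\P\{Z^{\xi,\ek_1}(m,n)\geq sN^{2/3}\}$, where $N=m+n$ and $p=\pxi(\xi)\in(r,1)$ as in \eqref{xi->p}; the lower tail for $Z^{\xi,\ek_2}$ is treated analogously, and the case $s\le s_0$ is trivial by adjusting constants.

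The first step is to convert the exit-point event into a deviation event for a difference of passage times. Write $k=\fl{sN^{2/3}}$ and use that on $\{Z^{p,\ek_1}(m,n)\ge k\}$ the definition \eqref{GSW} yields
\begin{align*}
\sum_{i=1}^{k}I^p_{i\ek_1}+G_{(k,1),(m,n)}\ge G^{p}_{\zero,(m,n)}\ge\sum_{j=1}^{n}J^p_{j\ek_2}+G_{(1,n),(m,n)}.
\end{align*}
After centering each sum by its mean (using $\E[I^p_{i\ek_1}]=p/(1-p)$, $\E[J^p_{j\ek_2}]=r/(p-r)$ from \eqref{Geomean}), the event becomes a statement of the form $A_k-B_n\ge\Delta(s,k,n)$, where $\Delta$ is a deterministic gap controlled from below by the strict concavity of $M^p(m,n)$ at its minimizer $p=\pxi(m/(m+n))$. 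A Taylor expansion of \eqref{eq:statshape} around this characteristic $p$ (using the hypothesis $|\xi_1-m/(m+n)|\le\kappa N^{-1/3}$ to control the linear error) shows that $\Delta\gtrsim s^2 N^{1/3}$ with constants depending only on $\delta,r$.

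The second, core step is a Chernoff bound powered by a change of boundary parameter. Introduce $q\in(p,1)$ with $q-p=c(\delta,r)\,sN^{-1/3}$, chosen so that the characteristic direction of $q$ is at distance $O(sN^{2/3})$ from $(m,n)$ in the $\ek_1$ direction. By Theorem \ref{thm:coupledGeomLPP}\eqref{coupledGeomLPPd}, the Radon--Nikodym derivative of $\{I^q_{i\ek_1}\}_{i=1}^k$ against $\{I^p_{i\ek_1}\}_{i=1}^k$ (all other coordinates fixed) is the explicit tilt
\begin{align*}
L_k=\Big(\frac{1-q}{1-p}\Big)^{k}\Big(\frac{q}{p}\Big)^{\sum_{i=1}^k I^p_{i\ek_1}},
\end{align*}
and similarly for the vertical boundary with parameter $r/p\to r/q$ on the first $n$ increments. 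Applying Markov together with $L_k\cdot L_k^{-1}\equiv 1$ and using the explicit geometric MGFs $\E[e^{\theta\mathrm{Geom}(p)}]=(1-p)/(1-pe^\theta)$, the displacement event is bounded by $\exp(-c s^2 N^{1/3}\cdot\theta+n\psi(p,q,\theta)+k\psi'(p,q,\theta))$. Optimizing $\theta$ of order $sN^{-1/3}$ and using that $\psi,\psi'$ are smooth at $\theta=0$ with $\partial_\theta\psi|_{0}$ matching the linear term produces an exponent of order $s^3$ with a coefficient uniform over $\xi_1\in(\delta,1-\delta)$ and $(m,n)\in S_\delta$.

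The main obstacle is not any single step but the bookkeeping: one must verify (i) that the shifted parameter $q$ remains in a compact subset of $(r,1)$ bounded away from both endpoints, so that the log-MGFs and their derivatives are uniformly controlled; (ii) that the quadratic lower bound on the gap $\Delta$ survives the $\kappa N^{-1/3}$ off-characteristic perturbation allowed by the hypothesis $|\xi_1-m/(m+n)|\le\kappa N^{-1/3}$ (this forces the restriction $s\ge s_0(\delta,r,\kappa)$); and (iii) that the valid range for $s$ is all $s\ge s_0$, not merely $s\le s_1N^{1/3}$, which in turn requires a separate and easier bound based on the trivial inclusion $\{Z^{p,\ek_1}\ge k\}\subseteq\{k\le m\}$ when $sN^{2/3}$ gets close to $m$. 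All the uniform constants ultimately depend only on $\delta$ and $r$ by the cone condition $(m,n)\in S_\delta$ and $\xi_1\in(\delta,1-\delta)$, which together with $\delta<r$ keeps $p$ in a compact subset of $(r,1)$.
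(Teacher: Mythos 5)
Your proposal points in the right general direction and correctly names the key tools (explicit geometric marginals from Theorem \ref{thm:coupledGeomLPP}\eqref{coupledGeomLPPc}--\eqref{coupledGeomLPPd}, a Radon--Nikodym change of boundary parameter, cubic exponential decay), but there is a genuine gap in the way you set up the Chernoff step.

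After your first step you have, on the exit event, the inequality
\[
\sum_{i=1}^{k}I^p_{i\ek_1}+G_{(k,1),(m,n)}\;\ge\;\sum_{j=1}^{n}J^p_{j\ek_2}+G_{(1,n),(m,n)},
\]
and you then claim that after centering by means this ``becomes a statement of the form $A_k-B_n\ge\Delta(s,k,n)$'' with $\Delta$ controlled deterministically via the curvature of $M^p$. That loses the bulk passage times $G_{(k,1),(m,n)}$ and $G_{(1,n),(m,n)}$, which are random and have order-$N^{1/3}$ fluctuations---exactly the scale you are trying to beat. A naive Chernoff tilt of just the boundary sums with a free parameter $\theta$ leaves those $G$ terms inside the expectation, and you have no moment generating function for the bulk last-passage time. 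This is the step the paper has to work hard to replace.

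The paper's fix (following \cite{Emr-Jan-Sep-20-,Emr-Jan-Sep-21-}) has two ingredients you do not describe. First, it proves Proposition \ref{prop:logMGF}: for the two-parameter stationary passage time $G^{p,q}$ and the specific tilt $\log(q/p)$, the log moment generating function has the closed form $m\log\frac{1-p}{1-q}+n\log\frac{1-r/q}{1-r/p}$, obtained precisely by the Radon--Nikodym change of boundary parameter you allude to. Second, it starts from the deterministic identity that on $\{Z^{\lambda q,q,\ek_1}>k\}$ one has $G^{\lambda q}(k,0)+G^{\lambda q}_{(k,0)}(m,n\mid 1,0)=G^{\lambda q,q}(m,n)$, inserts $\exp\{\tfrac12\log\lambda\,(\cdot)\}\equiv 1$ of that expression inside the probability, and applies Cauchy--Schwarz to split off the random exit indicator. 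That produces exactly three factors: a geometric MGF of the $I$ boundary sum of length $k$, an MGF of the restricted passage time that is bounded via Lemma \ref{lem:restrictedLogMGFInequality} and Proposition \ref{prop:logMGF}, and the negative tilt of $G^{\lambda q,q}$ evaluated exactly by Proposition \ref{prop:logMGF}. The cubic exponent then drops out from the Taylor expansion in $\lambda-1\sim sN^{-1/3}$ in Lemma \ref{lem:logMGFExpansion}. Without the exact $G^{p,q}$ formula and the Cauchy--Schwarz decomposition, your bound $\exp(-cs^2N^{1/3}\theta+n\psi+k\psi')$ is not available, because the $G$ terms have nowhere to go. To repair the argument you would need to add these two steps; the rest of your bookkeeping plan (keeping $q$ compactly inside $(r,1)$, absorbing the $\kappa N^{-1/3}$ off-characteristic error, splitting off the range $s\gtrsim N^{1/3}$) matches what the paper does.
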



For $p,q\in(r,1)$ consider random variables $\{I^p_{i\ek_1},I^q_{i\ek_1},J^p_{i \ek_2},J^q_{i \ek_2}:i,i\in\NN\}$ 
that are mutually independent and independent of the weights $\w$ and such that 
the $I^p$ variables are $\Geom(p)$, the $I^q$ variables are $\Geom(q)$, 
the $J^p$ variables are $\Geom(r/p)$, and the $J^q$ variables are $\Geom(r/q)$. Note that this parametrization in terms of $p$ and $q$ does not agree with the parametrization of the $I$ and $J$ random variables elsewhere in the paper. This abuse of notation is to simplify the formulas in this section; we will also abuse notation and continue to write $\P$ and $\E$ for the probability and expectation on the larger probability space on which this collection of random variables is defined.

We will write 
    \begin{align}\label{Gpq}
    G_x^{p,q}=G_x^{\rm{SW}}(\w,I^p,J^q),\quad
    G_x^p=G_x^{\rm{SW}}(\w,I^p,J^p),\quad\text{and}\quad
    G_x^q=G_x^{\rm{SW}}(\w,I^q,J^q).
    \end{align}
The quantities $\Exit_x^{p,q}$, $Z_x^{p,q,\ek_k}$, $\Exit_x^p$, $Z_x^{p,\ek_k}$,
$\Exit_x^q$, and $Z_x^{q,\ek_k}$ are defined similarly. 

From \eqref{IJw-indep} (which follows from Theorem \ref{thm:coupledGeomLPP}\eqref{coupledGeomLPPc}) we see that $\{G_x^p:x\in\bbZ^2_{\ge0}\}$ has the same distribution as $\{G_x^{\xip(p)}:x\in\bbZ^2_{\ge0}\}$. The same, of course, holds when $p$ 
is replaced by $q$.

One of our major uses of the exact solvability of the model comes through an exact formula for a particular log moment generating function of the increment stationary passage time.

\begin{propo}\label{prop:logMGF}  Let $m, n \in \ZZ_{\geq 0}^2$, and $p, q \in (r,1)$.  Then
\[
\log \E\Bigl[\exp\Bigl\{ \log \bigl(\frac{q}{p}\bigr) G^{p, q}(m, n)  \Bigr\} \Bigr] = m \log\bigl( \frac{1-p}{1-q} \bigr) + n \log \Bigl( \frac{1-\frac{r}{q}}{1-\frac{r}{p}} \Bigr).
\]
\end{propo}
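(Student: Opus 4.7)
The plan is to reduce the computation to the homogeneous stationary model $G^p$ by a Radon--Nikodym change of the vertical boundary measure from $\Geom(r/q)$ to $\Geom(r/p)$, and then to exploit the additivity and shift-invariance of $G^p$ furnished by Theorem \ref{thm:coupledGeomLPP}. The first observation is that $G^{p,q}(m,n)$ depends on $J^q$ only through the $n$ variables $J^q_{\ek_2},\dots,J^q_{n\ek_2}$, which are i.i.d.\ $\Geom(r/q)$. The pointwise density ratio against $\Geom(r/p)$ is
\[
\frac{(r/q)^{k}(1-r/q)}{(r/p)^{k}(1-r/p)}=(p/q)^{k}\cdot\frac{1-r/q}{1-r/p},\qquad k\in\ZZ_{\ge 0},
\]
and is a well-defined mutual Radon--Nikodym derivative because $r<p,q<1$. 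Substituting each $J^q_{j\ek_2}$ by $J^p_{j\ek_2}$ turns $G^{p,q}(m,n)$ into $G^p(m,n)$ by the definition \eqref{Gpq}, so specializing the tilt to $\theta=\log(q/p)$ gives
\begin{equation*}
\E\bigl[e^{\theta G^{p,q}(m,n)}\bigr]=\Bigl(\frac{1-r/q}{1-r/p}\Bigr)^{n}\,\E\Bigl[(q/p)^{\,G^{p}(m,n)\,-\,\sum_{j=1}^{n}J^{p}_{j\ek_2}}\Bigr].
\end{equation*}

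For the inner expectation, the identity $\sum_{j=1}^{n}J^{p}_{j\ek_2}=G^{p}(\zero,n\ek_2)$ from \eqref{Gxi1}, combined with the additivity property \eqref{add2}, yields
\[
G^{p}(m,n)-\sum_{j=1}^{n}J^{p}_{j\ek_2}=G^{p}(n\ek_2,(m,n))=\sum_{i=1}^{m}I^{p}_{n\ek_2+i\ek_1},
\]
where the last step uses that the only up-right path from $n\ek_2$ to $(m,n)$ is the horizontal segment, so the SW formula collapses to the sum of the horizontal edge weights at height $n$. By Theorem \ref{thm:coupledGeomLPP}\eqref{coupledGeomLPPc} applied at the shifted base point $u=n\ek_2$, the variables $\{I^{p}_{n\ek_2+i\ek_1}:i\ge 1\}$ are i.i.d.\ $\Geom(p)$. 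The elementary MGF identity $\E[(q/p)^{\Geom(p)}]=(1-p)/(1-q)$ is finite because $q<1$, whence
\[
\E\Bigl[(q/p)^{\sum_{i=1}^{m}I^{p}_{n\ek_2+i\ek_1}}\Bigr]=\Bigl(\frac{1-p}{1-q}\Bigr)^{m}.
\]

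Combining the two displays and taking logarithms produces the claimed identity. The only finiteness subtleties are that the standing assumption $r<p,q<1$ makes all the relevant ratios $q/p\cdot p=q$, $p/q\cdot r/p=r/q$, and $(q/p)\cdot p=q$ strictly less than one, which keeps every geometric MGF and the Radon--Nikodym derivative integrable. The main conceptual point is that the special tilt $\theta=\log(q/p)$ is precisely the one for which the RN derivative cancels the $J^{p}$-sum against the vertical part of $G^{p}(m,n)$, leaving only the factorizable horizontal increments at height $n$; this is where the solvable structure of the geometric model plays its essential role.
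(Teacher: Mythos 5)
Your proof is correct and is the exact mirror image of the paper's argument. The paper extracts the $I^p$ boundary sum $G^{p,q}(m,0)=\sum_i I^p_{(i,0)}$ and uses it as a Radon--Nikodym factor to tilt the horizontal boundary from $\Geom(p)$ to $\Geom(q)$, producing $G^q$; additivity and stationarity then reduce the remaining factor to $n$ i.i.d.\ $J^q\sim\Geom(r/q)$ variables. You instead tilt the vertical boundary from $\Geom(r/q)$ to $\Geom(r/p)$, producing $G^p$, and additivity/stationarity reduce the remaining factor to $m$ i.i.d.\ horizontal increments at height $n$ distributed $\Geom(p)$. These are dual implementations of the same idea, and both hinge on the identical cancellation at the special tilt $\theta=\log(q/p)$. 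One cosmetic point worth tightening: the identity $G^p(m,n)-\sum_{j=1}^n J^p_{j\ek_2}=\sum_{i=1}^m I^p_{n\ek_2+i\ek_1}$ is an equality of coupled-process quantities (Theorem \ref{thm:coupledGeomLPP}), whereas $G^p$ in \eqref{Gpq} is the explicit boundary model; the two agree in distribution (as the paper records just before the proposition) and it is that distributional equality which you should invoke before applying \eqref{add2} and \eqref{coupledGeomLPPc}--\eqref{coupledGeomLPPd}. The paper makes this replacement explicit ($G^q \eqd G^{\xip(q)}$), and your proof should too.
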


\begin{proof}  
Start by writing
\begin{align}
&\log \E\Bigl[\exp\Bigl\{ \log \bigl(\frac{q}{p}\bigr) G^{p, q}(m, n)  \Bigr\} \Bigr] 
= \log \E\Bigl[ \prod_{i=1}^m (q/p)^{I^{p}_{(i, 0)}} \,e^{\log(q/p) (G^{p, q}(m, n) - G^{p, q}(m, 0))}\Bigr]\notag\\
&\qquad= m \log \bigl(\frac{1-p}{1-q}\bigr)
+\log \E\Bigl[ \prod_{i=1}^m \bigl(\frac{1-q}{1-p}\cdot(q/p)^{I^{p}_{(i, 0)}}\bigr) e^{\log(q/p) (G^{p, q}(m, n) - G^{p, q}(m, 0))}\Bigr].\label{foo0001}
\end{align}
Next, note that $\E[\frac{1-q}{1-p}(q/p)^{I^p_{i\ek_1}}]=1$ and for any $n\in\bbZ_{\ge0}$
    \[\E\Bigl[\frac{1-q}{1-p}(q/p)^{I^p_{i\ek_1}}\one\{I^p_{i\ek_1}=n\}\Bigr]=q^n(1-q).\] 
This means that the product inside the expectation on the right-hand side in \eqref{foo0001} is a Radon-Nikodym derivative and using it to change the measure $\P$ switches the distribution of the boundary $I^p$ weights to have the same distribution as the $I^q$ weights. Consequently, \eqref{foo0001} is equal to 
\begin{align*}
&m \log \bigl(\frac{1-p}{1-q}\bigr)
+\log \E\Bigl[  e^{\log(q/p) (G^q(m, n) - G^q(m, 0))}\Bigr]\\
&\qquad=m \log \bigl(\frac{1-p}{1-q}\bigr)
+\log \E\Bigl[  e^{\log(q/p) (G^{\xip(q)}(m, n) - G^{\xip(q)}(m, 0))}\Bigr]\\
&\qquad=m \log \bigl(\frac{1-p}{1-q}\bigr)
+\log \E\Bigl[  e^{\log(q/p) G^{\xip(q)}(0, n)}\Bigr].
\end{align*}
For the last equality we used the additivity \eqref{Gadd} and the shift-invariance \eqref{Gxi-stat}. Now, simply compute
\begin{align*}
m \log \bigl(\frac{1-p}{1-q}\bigr)
+\log \E\Bigl[  e^{\log(q/p) G^{\xip(q)}(0, n)}\Bigr]
&= m \log \bigl(\frac{1-p}{1-q}\bigr) + \log \E\Bigl[ e^{\log(q/p) J_{(0,1)}^q}\Bigr]^n \\
&= m \log \bigl(\frac{1-p}{1-q}\bigr) + n \log \Bigl(\frac{1-\frac{r}{q}}{1-\frac{r}{p}}\Bigr). 
\qedhere
\end{align*}
\end{proof}

We prove Theorem \ref{geometricExitPointTailBound} after a series of calculus lemmas.
The following lemma is immediate from the definitions. Recall (\ref{xi->p}-\ref{eq:statshape}).

\begin{lemma}\label{lem:StationaryShapeProperties}  Fix $a, b > 0$.  The function $p \mapsto M^p(a, b)$ is
continuous and strictly convex on $(r, 1)$, 
decreasing on $(r, \pxi(a, b)]$ with range $[\gpp(a, b), \infty)$, and
increasing on $[\pxi(a, b), 1)$ with range $[\gpp(a, b), \infty)$.
\end{lemma}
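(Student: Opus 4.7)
The proof is a direct calculus exercise on the explicit formula $M^p(a,b) = \frac{pa}{1-p} + \frac{rb}{p-r}$ on $(r,1)$. First I would differentiate:
\[
\partial_p M^p(a,b) = \frac{a}{(1-p)^2} - \frac{rb}{(p-r)^2}, \qquad \partial_p^2 M^p(a,b) = \frac{2a}{(1-p)^3} + \frac{2rb}{(p-r)^3}.
\]
For $a,b>0$ and $p\in(r,1)$ the second derivative is manifestly positive, so $p\mapsto M^p(a,b)$ is strictly convex (and hence continuous). The first derivative vanishes precisely when $\sqrt{a}\,(p-r) = \sqrt{rb}\,(1-p)$, which has the unique solution $p_* = \frac{r\sqrt{a}+\sqrt{rb}}{\sqrt{a}+\sqrt{rb}}$ in $(r,1)$.

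Next I would show $p_*=\pxi(a,b)$ by multiplying numerator and denominator by $\sqrt{a}+\sqrt{rb}$:
\[
p_* = \frac{(r\sqrt{a}+\sqrt{rb})(\sqrt{a}+\sqrt{rb})}{(\sqrt{a}+\sqrt{rb})^2} = \frac{r(a+b) + (r+1)\sqrt{rab}}{a + rb + 2\sqrt{rab}},
\]
matching the definition of $\pxi(a,b)$ in \eqref{xi->p}. Observing that the first term of $M^p$ blows up as $p\nearrow 1$ and the second term blows up as $p\searrow r$, strict convexity with a unique critical point at $p_*$ forces strict monotonicity: decreasing on $(r,p_*]$ and increasing on $[p_*,1)$.

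Finally, I would verify that the minimum value equals $\gpp(a,b)$. At $p=p_*$ one has $1-p_* = \frac{(1-r)\sqrt{a}}{\sqrt{a}+\sqrt{rb}}$ and $p_*-r = \frac{(1-r)\sqrt{rb}}{\sqrt{a}+\sqrt{rb}}$, so
\[
\frac{p_* a}{1-p_*} + \frac{rb}{p_*-r} = \frac{\sqrt{a}(r\sqrt{a}+\sqrt{rb}) + \sqrt{rb}(\sqrt{a}+\sqrt{rb})}{1-r} = \frac{r(a+b)+2\sqrt{rab}}{1-r},
\]
which is exactly $\gpp(a,b)$ from \eqref{eq:shape}. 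Combined with the endpoint blow-up, the intermediate value theorem then identifies both half-ranges as $[\gpp(a,b),\infty)$, completing the proof. There is no real obstacle here; everything reduces to the explicit algebra made available by \eqref{xi->p} and \eqref{eq:shape}.
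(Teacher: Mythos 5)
Your proof is correct, and it supplies exactly the direct calculus verification that the paper omits when it says the lemma is ``immediate from the definitions.'' All the algebraic identities (the critical point matching $\pxi(a,b)$, the minimum value matching $\gpp(a,b)$) check out, so there is nothing to add or correct.
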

%

Consequently, for each $\lambda \in (1, 1/r)$, there exists a unique pair $\pxi_-^\lambda(a,b)\in(r,\pxi(a,b))$ and $\pxi_+^\lambda(a,b)\in(\pxi(a,b),1)$ such that $\pxi_+^\lambda(a,b)=\lambda\pxi_-^\lambda(a,b)$ and $M^{\pxi_-^\lambda}(a,b) = M^{\pxi_+^\lambda}(a, b)$.   
Precisely, using a little bit of calculus, we get that if $a \neq rb$, then
\begin{align}\label{pxi-}
\pxi_-^\lambda(a,b) = \frac{r(\lambda + 1)(a-b) + \sqrt{r^2(\lambda + 1)^2 (a-b)^2 - 4r\lambda (ra-b)(a-rb)}}{2 \lambda (a-rb)} 
\end{align}
and if $a = rb$, then 
\[\pxi_-^\lambda(a,b) = \frac{r+1}{\lambda + 1}.\] 
%
This extends continuously to $\lambda=1$ and $\lambda=1/r$ with $\pxi_{\pm}^1(a, b) = \pxi(a, b)$, $\pxi_-^{1/r}(a,b) = r$, and $\pxi_+^{1/r}(a, b) = 1$. 


For $\xi\in \RR^2_{>0}$ and $p, q \in (r, 1)$, define 
\[L^{p, q}(\xi) = L^{p,q}(\xi_1,\xi_2)=\xi_1 \log \bigl(\frac{1-p}{1-q}\bigr) + \xi_2 \log \Bigl(\frac{1-\frac{r}{q}}{1-\frac{r}{p}}\Bigr).\]    
%
%
%
Then for $\xi\in\RR_{>0}$ and $q \in [r, 1)$ set
\begin{align}\label{Lq}
\mc{L}^{\lambda, q}(\xi) = \inf_{q<s<1/\lambda} L^{s, \lambda s}(\xi)
\end{align}
when $\lambda\in[1,1/q)$
and $\mc{L}^{\lambda,q}(\xi)=\infty$ when $\lambda\ge1/q$.
%
%
In the special case where $q = r$ we abbreviate 
$\mc{L}^{\lambda}(\xi) = \mc{L}^{\lambda, r}(\xi) $.

\begin{lemma}\label{lem:restrictedLogMGFProperty}  
Let $\xi\in\RR_{>0}$, $q \in [r, 1)$, and $\lambda \in (1, 1/q)$.
Then the infimum in \eqref{Lq} is uniquely achieved 
at $s = \max \{ q, \pxi_-^\lambda(\xi) \}$. 
\end{lemma}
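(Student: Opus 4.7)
The plan is a direct calculus analysis of the function $f(s) := L^{s,\lambda s}(\xi)$ on the interval $(r,1/\lambda)$. The case $\lambda=1$ is trivial since $L^{s,s}(\xi)=0$ identically (and $\pxi_-^1(\xi)=\pxi(\xi)$), so assume $\lambda \in (1,1/q)$.

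First I would compute $f'(s)$. Using $\partial_s\log(1-s)=-1/(1-s)$, $\partial_s\log(1-\lambda s)=-\lambda/(1-\lambda s)$, and $\partial_s\log(1-r/(as))=r/[s(as-r)]$, differentiation of
\[
L^{s,\lambda s}(\xi)=\xi_1\log\frac{1-s}{1-\lambda s}+\xi_2\log\frac{1-r/(\lambda s)}{1-r/s}
\]
telescopes to
\[
f'(s)=(\lambda-1)\left[\frac{\xi_1}{(1-s)(1-\lambda s)}-\frac{r\xi_2}{(\lambda s-r)(s-r)}\right]=:(\lambda-1)h(s).
\]

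Next I would show $h$ is strictly increasing on $(r,1/\lambda)$. A one-line calculation gives
\[
\frac{d}{ds}\,\frac{1}{(1-s)(1-\lambda s)}=\frac{(1+\lambda)-2\lambda s}{\bigl[(1-s)(1-\lambda s)\bigr]^{2}},
\]
whose numerator exceeds $(1+\lambda)-2=\lambda-1>0$ on $(r,1/\lambda)$; and
\[
\frac{d}{ds}\,\frac{1}{(\lambda s-r)(s-r)}=-\,\frac{2\lambda s-r(1+\lambda)}{\bigl[(\lambda s-r)(s-r)\bigr]^{2}},
\]
whose numerator exceeds $2\lambda r-r(1+\lambda)=r(\lambda-1)>0$ for $s>r$. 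Hence $h=\xi_1 g_1-r\xi_2 g_2$ with $g_1$ strictly increasing and $g_2$ strictly decreasing, so $h$ is strictly increasing on $(r,1/\lambda)$. Moreover $h(s)\to-\infty$ as $s\to r^{+}$ (the second term blows up) and $h(s)\to+\infty$ as $s\to (1/\lambda)^{-}$ (the first term blows up), so $h$ has a unique zero $s^{\star}\in(r,1/\lambda)$ at which $f'$ changes sign from negative to positive. Consequently $s^{\star}$ is the unique minimizer of $f$ on $(r,1/\lambda)$.

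It remains to identify $s^{\star}$ with $\pxi_-^{\lambda}(\xi)$. Clearing denominators in $h(s^{\star})=0$ gives the quadratic
\[
\lambda(\xi_1-r\xi_2)\,s^{2}-r(\lambda+1)(\xi_1-\xi_2)\,s+r(r\xi_1-\xi_2)=0,
\]
which matches exactly the discriminant and coefficients appearing in the explicit formula \eqref{pxi-}; thus $\pxi_-^{\lambda}(\xi)$ is a root. Since $\pxi_+^{\lambda}(\xi)=\lambda\pxi_-^{\lambda}(\xi)\in(\pxi(\xi),1)$ by definition, we obtain $\pxi_-^{\lambda}(\xi)\in(r,1/\lambda)$, so $s^{\star}=\pxi_-^{\lambda}(\xi)$. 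To finish, restrict the infimum to $(q,1/\lambda)$: if $q<\pxi_-^{\lambda}(\xi)$ the interior minimizer survives, while if $q\ge\pxi_-^{\lambda}(\xi)$ then $f$ is strictly increasing on $(q,1/\lambda)$ and the infimum is attained (in the limit) at $s=q$. In either case the minimizing location is $s=\max\{q,\pxi_-^{\lambda}(\xi)\}$.

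There is no genuine obstacle: the statement is essentially a calculus exercise. The one place that deserves a little care is the monotonicity of $h$ in the second step, where it is tempting to invoke strict convexity of $f$ as a black box; absent an a priori convexity argument the term-by-term sign analysis above is the cleanest route.
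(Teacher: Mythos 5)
Your proof is correct and follows the same overall plan as the paper's: show the derivative $\frac{\partial}{\partial s}L^{s,\lambda s}(\xi)$ is strictly increasing on $(r,1/\lambda)$ with a unique zero, identify that zero with $\pxi_-^\lambda(\xi)$, then restrict to $(q,1/\lambda)$. Where you differ is in the execution of the middle two steps. The paper rewrites the derivative as $\frac1s\bigl(M^{\lambda s}(\xi)-M^{s}(\xi)\bigr)$, so that strict monotonicity follows from the strict convexity of $p\mapsto M^p(\xi)$ (Lemma \ref{lem:StationaryShapeProperties}) and the zero at $s=\pxi_-^\lambda(\xi)$ is immediate from the defining relation $M^{\pxi_-^\lambda}(\xi)=M^{\lambda\pxi_-^\lambda}(\xi)$. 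You instead expand the derivative explicitly as $(\lambda-1)h(s)$, prove $h$ is increasing by a term-by-term sign analysis, and then identify the zero by matching the quadratic obtained from $h(s^\star)=0$ with the closed-form \eqref{pxi-}. Both routes are sound, and your self-contained monotonicity argument is a reasonable trade for not invoking the convexity lemma. Note, however, that your $(\lambda-1)h(s)$ is exactly the paper's $\frac1s\bigl(M^{\lambda s}(\xi)-M^{s}(\xi)\bigr)$, so $h(s^\star)=0$ reduces directly to $M^{\lambda s^\star}(\xi)=M^{s^\star}(\xi)$, which is the defining property of $\pxi_-^\lambda(\xi)$; this observation bypasses the explicit quadratic matching and treats the degenerate case $\xi_1=r\xi_2$ uniformly, which your closed-form comparison otherwise requires handling separately.
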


\begin{proof}
A direct computation gives  
\begin{align*}
\frac{\partial}{\partial s}  L^{s, \lambda s}(\xi) 
&= \frac{1}{s} \left(M^{\lambda s}(\xi) - M^{s}(\xi)\right)\quad\text{for }s\in(r,1/\lambda).
\end{align*}
By Lemma \ref{lem:StationaryShapeProperties}, 
this is a continuous strictly increasing function of $s$
with range $\RR$. 
It is equal to zero at $s = \pxi_-^\lambda(\xi)$. 
If $\pxi_-^\lambda(\xi)\in(q,1/\lambda)$, then the unique infinimum in \eqref{Lq} is attained at $\pxi_-^\lambda(\xi)$.  If $\pxi_-^\lambda(\xi)\in(r,q]$, then $L^{s,\lambda s}(\xi)$ strictly increases on $(q,1/\lambda)$ and is thus minimized at $s=q$.   
\end{proof}

For $k,\ell,m,n\in\ZZ$ with $m>k$ and $n\ge \ell$ 
let $G_{(k,\ell)}^q(m,n|1,0)$ be the last-passage time for paths which start at $(k,\ell)$, immediately take an $\ek_1$ step, and then go to $(m,n)$, while collecting the weights $\{I^q_{k+j\ek_1,\ell}:j\in\NN\}$ on the south boundary. Precisely,
    \[G^q_{(k,\ell)}(m,n|1,0)=\max_{1 \leq j \leq m-k}\Bigl\{ \sum_{i=1}^{j} I^q_{k+i \ek_1,\ell} + G_{(k+j) \ek_1 + (\ell + 1)\ek_2, m\ek_1+n\ek_2} \Bigr\}.\]
When $(k,\ell)=\zero$ we omit it from the index.

\begin{lemma}\label{lem:restrictedLogMGFInequality} 
Let $m, n \in \NN$, 
$q \in [r, 1)$, and $\lambda\ge1$.  Then
\[
\log \E\bigl[e^{\log(\lambda) G^q (m, n|1,0)}\bigr] \leq \mc{L}^{\lambda, q}(m,n).
\]
\end{lemma}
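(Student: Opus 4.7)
The plan is to dominate the restricted passage time $G^q(m,n|1,0)$ by a full stationary passage time whose moment generating function is given exactly by Proposition \ref{prop:logMGF}. If $\lambda \ge 1/q$ the right-hand side $\mc{L}^{\lambda,q}(m,n)$ is $+\infty$ and there is nothing to show, so I will assume $\lambda \in [1, 1/q)$ and pick an arbitrary $s \in (q, 1/\lambda)$.

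First I would construct a coupling of $\{I^q_{i\ek_1}\}_{i \ge 1}$ and $\{I^s_{i\ek_1}\}_{i \ge 1}$ on a common probability space that also carries the bulk weights $\w$ and an independent vertical boundary $\{J^{\lambda s}_{j\ek_2}\}_{j \ge 1}$, in such a way that $I^q_{i\ek_1} \le I^s_{i\ek_1}$ almost surely for every $i$; this is the standard monotone (quantile) coupling of geometric distributions, which exists because $q \le s$ gives $\Geom(q) \preceq_{\mathrm{st}} \Geom(s)$. Under this coupling, the path-by-path monotonicity of LPP in its boundary weights gives
\[
G^q(m,n|1,0) \;\le\; G^s(m,n|1,0) \;\le\; G^{s,\lambda s}(m,n),
\]
where the second inequality follows from the decomposition of $G^{s,\lambda s}(m,n)$ in \eqref{GSW} as the maximum of the south-exit contribution (which equals $G^s(m,n|1,0)$) and the west-exit contribution.

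Since $\log \lambda \ge 0$, exponentiating preserves this inequality, so taking expectations and applying Proposition \ref{prop:logMGF} with $p = s$ and $q' = \lambda s$ (so that $\log(q'/p) = \log \lambda$) yields
\[
\E\bigl[e^{\log(\lambda) G^q(m,n|1,0)}\bigr] \;\le\; \E\bigl[e^{\log(\lambda) G^{s,\lambda s}(m,n)}\bigr] \;=\; e^{L^{s,\lambda s}(m,n)}.
\]
Taking logarithms and infimizing over $s \in (q, 1/\lambda)$ produces $\mc{L}^{\lambda,q}(m,n)$ by definition \eqref{Lq}; the degenerate cases $\lambda = 1$ and $s \searrow q$ pose no difficulty by continuity of $s \mapsto L^{s,\lambda s}(m,n)$. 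There is no serious obstacle: the only substantive step is the pathwise domination $G^q(m,n|1,0) \le G^{s,\lambda s}(m,n)$ arising from the monotone coupling, after which Proposition \ref{prop:logMGF} does all the work.
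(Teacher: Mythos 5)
Your proof is correct and follows essentially the same route as the paper's: dominate $G^q(m,n|1,0)$ by a full stationary two-parameter passage time $G^{s,\lambda s}(m,n)$ using the stochastic monotonicity of geometric boundary weights, then apply Proposition \ref{prop:logMGF}. The one organizational difference is that you derive the bound $\log\E[e^{\log(\lambda) G^q(m,n|1,0)}] \le L^{s,\lambda s}(m,n)$ uniformly over $s \in (q,1/\lambda)$ and then infimize, whereas the paper first bounds at $s=q$, separately uses monotonicity to push the parameter up to $\pxi_-^\lambda(m,n)$ when $q<\pxi_-^\lambda(m,n)$, and then invokes Lemma \ref{lem:restrictedLogMGFProperty} to identify the result as $\mc{L}^{\lambda,q}(m,n)$; your version is marginally cleaner since it bypasses the explicit characterization of the minimizer.
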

 
\begin{proof}  The case $\lambda\ge1/q$ is trivial because the right-hand side is infinite.
When $\lambda=1$ we have $L^{s, s}(m, n) = 0$ for all $s\in(r,1)$ and the claim is again trivial. Therefore, assume $\lambda \in (1, 1/q)$.

Using
    \[G^{q,\lambda q}(m,n)\ge I^q_{\ek_1}+G^q(m,n|1,0)\ge G^q(m,n|1,0)\]
and Proposition \ref{prop:logMGF} we see that
\begin{align}
\log \E\bigl[ e^{\log(\lambda) G^q(m, n|1,0)} \bigr] 
\leq \log \E\bigl[ e^{\log(\lambda) G^{q, \lambda q}(m, n)} \bigr]
= L^{q, \lambda q}(m,n) \label{Lem4:3:eq:1}.
\end{align}

Geometric random variables are stochastically increasing in the parameter. Therefore, 
if $q \leq \pxi_-^\lambda(m,n)$, 
\begin{align*}
\log \E\bigl[ e^{\log(\lambda) G^q(m, n|1,0)} \bigr] 
\leq \log \E\bigl[ e^{\log(\lambda) G^{\pxi_-^\lambda}(m, n)|1,0)} \bigr]
\leq L^{\pxi_-^\lambda, \lambda\pxi_-^\lambda}(m,n)
\end{align*}
where the last inequality follows from applying \eqref{Lem4:3:eq:1} with $\pxi_-^\lambda(m,n)$ in place of $q$.

We have thus shown that 
\begin{align*}
\log \E\bigl[ e^{\log(\lambda) G^q(m, n|1,0)} \bigr] 
\le L^{\max\{q, \pxi_-^\lambda\}, \lambda\max\{q, \pxi_-^\lambda\}}(m,n)
=\mc{L}^{\lambda, q}(m,n),
\end{align*}
where the  equality holds by Lemma  \ref{lem:restrictedLogMGFProperty}.  \end{proof}


\begin{lemma}\label{lem:logMGFExpansion} 
For all $a, b >0$, $\e\in(0,\min\bigl(r,1-s,(1-r)/2\bigr)$, $s\in(r,1)$, and 
$\lambda \in \bigl[\max((r+\e)/s,1), (1-\e)/s\bigr]$
\begin{align*}
& \Bigl|\, L^{s, \lambda s}(a,b) - (\lambda - 1)\Bigl( \frac{as}{1-s} + \frac{br}{s-r} \Bigr) - \frac{1}{2} (\lambda -1)^2 \Bigl(\frac{a s^2}{(1-s)^2} - \frac{b r(2s-r)}{(s-r)^2}\Bigr) \Bigr| \leq 2\e^{-3}(a+b)(\lambda-1)^3.
\end{align*}
\end{lemma}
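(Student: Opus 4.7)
The plan is to recognize the quantity inside the absolute value as the second-order Taylor remainder of $h(\lambda) := L^{s,\lambda s}(a,b)$ at $\lambda = 1$, and to control it via a uniform upper bound on the third derivative of $h$ over $[1,\lambda]$.

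First I would verify by direct differentiation that $h(1) = 0$, $h'(1) = \frac{as}{1-s} + \frac{br}{s-r}$, and $h''(1) = \frac{as^2}{(1-s)^2} - \frac{br(2s-r)}{(s-r)^2}$, matching the subtracted linear and quadratic terms. Taylor's theorem with Lagrange remainder then produces the identity
\[
h(\lambda) - h(1) - h'(1)(\lambda-1) - \tfrac12 h''(1)(\lambda-1)^2 \;=\; \tfrac16 h'''(\mu)(\lambda-1)^3
\]
for some $\mu \in (1,\lambda)$. A routine computation gives
\[
h'''(\mu) \;=\; \frac{2as^3}{(1-\mu s)^3} \;+\; \frac{2br\bigl(3\mu^2 s^2 - 3\mu s r + r^2\bigr)}{\mu^3(\mu s - r)^3},
\]
and both summands are positive, so it suffices to upper-bound each.

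The hypothesis $\lambda s \le 1-\e$ forces $1-\mu s \ge \e$ throughout $[1,\lambda]$, and combined with $s<1$ the first summand is at most $2a/\e^3$. For the second summand, the hypothesis $\lambda \ge (r+\e)/s$ together with the monotonicity of $\mu \mapsto \mu s$ produces $\mu s - r \ge \e$ (using $s \ge r+\e$ in the sub-case where the interval starts at $\mu=1$); using $\mu s < 1$ and $r < 1$ to bound the numerator $3\mu^2 s^2 - 3\mu s r + r^2 \le 3+1 = 4$, and $\mu^{-3}, r \le 1$, the second summand is at most $8b/\e^3$. Summing yields $|h'''(\mu)| \le 8(a+b)/\e^3$, so the remainder is bounded by $\tfrac{4}{3}(a+b)\e^{-3}(\lambda-1)^3$, comfortably within the claimed $2\e^{-3}(a+b)(\lambda-1)^3$.

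The main obstacle is establishing the lower bound $\mu s - r \ge \e$ over the full interval $[1,\lambda]$: since this quantity is increasing in $\mu$ and equals $s-r$ at $\mu = 1$, the clean argument above works precisely when $s \ge r + \e$. In the complementary sub-case $s < r+\e$, where the lower endpoint of the $\lambda$-interval is $(r+\e)/s > 1$ rather than $1$, the pointwise bound at $\mu = 1$ degrades. One would then resort to the integral form of the remainder, $\tfrac12\int_1^\lambda (\lambda-t)^2 h'''(t)\,dt$, and the substitution $u = ts - r$ together with the explicit antiderivative $\int (M-u)^2 u^{-3}\,du = \tfrac{M^2}{2u^2} - \tfrac{2M}{u} + \log u$ (with $M = \lambda s - r \ge \e$) to handle the near-singular behavior of the integrand at the left endpoint, extracting the $(\lambda-1)^3$ factor directly from the integration rather than through the pointwise Lagrange form.
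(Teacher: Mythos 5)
Your core argument---Taylor at $\lambda=1$ with the Lagrange remainder, a pointwise bound on $h'''$, and the constant $8\e^{-3}(a+b)$---reproduces the paper's proof, and your computations of $h'(1)$, $h''(1)$, the formula for $h'''$, and the positivity of both summands are all correct. You were in fact more careful than the paper: the paper's proof bounds the third derivative using ``$\lambda s-r\ge\e$,'' but what the Lagrange form actually requires is $\mu s-r\ge\e$ for the intermediate point $\mu\in(1,\lambda)$, and that only follows from the hypotheses when $s\ge r+\e$.

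Your instinct to flag the sub-case $s<r+\e$ is exactly right, but the proposed workaround via the integral remainder is a dead end: in that regime the lemma as stated is simply false, so no estimate of the remainder can save it. Concretely, take $r=\tfrac12$, $s=0.51$, $\e=0.1$ (so $s-r=0.01<\e$ while $\e<\min(r,1-s,(1-r)/2)$ holds), $a=0$, $b=1$, and $\lambda=(r+\e)/s\approx1.1765$. Then $L^{s,\lambda s}(0,1)=\log(8.5)\approx2.14$, $(\lambda-1)\tfrac{br}{s-r}\approx8.8$, and $\tfrac12(\lambda-1)^2\bigl(-\tfrac{br(2s-r)}{(s-r)^2}\bigr)\approx-40.5$, giving a left-hand side of about $33.8$; but $2\e^{-3}(a+b)(\lambda-1)^3\approx11.0$. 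Letting $s\downarrow r$ with $\lambda s=r+\e$ held fixed makes the left side blow up like $(s-r)^{-2}$ (driven by the $h''(1)$ term) while the right side stays bounded, so the failure is not a matter of constants. (A minor aside: your antiderivative has flipped signs; it should be $\int(M-u)^2u^{-3}\,du=-\tfrac{M^2}{2u^2}+\tfrac{2M}{u}+\log u$.)

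This defect is harmless for the paper: the only two invocations, in the proof of Theorem~\ref{geometricExitPointTailBound}, take $s=q$ or $s=\pxi_-^\lambda$, and the displays just before ensure $q-r\ge\epsilon$ and $\pxi_-^\lambda-r\ge\epsilon$, with $\epsilon$ playing the role of the lemma's $\e$. So $s\ge r+\e$ holds in every application, which is precisely the case your clean argument covers. The honest fix is to add $s\ge r+\e$ to the lemma's hypotheses (at which point $\max((r+\e)/s,1)=1$ anyway); with that added, your proof is complete.
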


\begin{proof}  
Fix $a$, $b$, $\e$, and $s$ as in the claim and perform a Taylor expansion of $\lambda\mapsto L^{s, \lambda s}(a,b)$, defined on $(r/s,1/s)$, at $\lambda=1$. 
For the error term write
\[
\Bigl|\,\frac{\partial^3}{\partial \lambda^3} L^{s, \lambda s}(a,b) \Bigr|
= \Bigl|\,\frac{2 b r (3 \lambda^2 s^2 - 3 \lambda r s + r^2)}{\lambda^3 (\lambda s - r)^3} + \frac{2 a s^3}{(1 - \lambda s)^3}\Bigr|
\]
and use $\lambda\ge1$, $\lambda s-r\ge\e$, $\lambda s\le 1$, $r\le 1$, $s\le1$, and $1-\lambda s\ge\e$
to bound the above by $8\e^{-3}(a+b)$.  
\end{proof}

\begin{lemma}\label{lem:parameterExpansion}  Fix  $\delta\in(0,r)$. 
Let $C_0 = C_0(\delta, r)$ be given by
\[C_0 = \max\Bigl\{r+1,
 \frac{(\delta^{-1}+1)[(1+r)^2\delta+2r^2+2]}{8(1-r)^2\delta}+\frac{r(r+1)(\delta^{-1}+1)}{4(1-r)\sqrt{r\delta}}+\frac{r\delta^{-1}+1}{2(1-r)\sqrt{r\delta}} \Bigr\}.\]
Then for any 
$\lambda \in (1, 1/r)$ and $a,b>0$ such that $(a,b)\in S_\delta$
\[
\pxi_-^\lambda(a,b) - \pxi(a,b) \geq - C_0 (\lambda -1).
\]
\end{lemma}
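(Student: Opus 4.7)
The plan is to bound $|\tfrac{d}{d\lambda}\pxi_-^\lambda|$ uniformly on $[1,1/r]$ and conclude by the mean value theorem. The case $a = rb$ is immediate from the explicit formula: $\pxi_-^\lambda = (r+1)/(\lambda+1)$ and $\pxi = (r+1)/2$ give
\[
\pxi_-^\lambda - \pxi = -\frac{(r+1)(\lambda-1)}{2(\lambda+1)} \ge -(r+1)(\lambda-1),
\]
which suffices since $C_0 \ge r+1$.

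For $a \neq rb$, the first simplification is to rewrite the discriminant of the defining quadratic using the identity $(ra-b)(a-rb) = r(a-b)^2 - (1-r)^2 ab$, yielding
\[
D(\lambda) = r^2(a-b)^2(\lambda-1)^2 + 4\lambda r(1-r)^2 ab,
\]
with the useful bound $\sqrt{D(\lambda)} \ge 2(1-r)\sqrt{\lambda rab}$. Now $\pxi_-^\lambda$ is the root of $F(p,\lambda) := \lambda(a-rb)p^2 - (\lambda+1)r(a-b)p + r(ra-b)$ selected by the $+\sqrt{D(\lambda)}$ branch of the quadratic formula, so at this root $\partial_p F = +\sqrt{D(\lambda)}$ regardless of the sign of $a-rb$. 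Implicit differentiation then gives the compact expression
\[
\frac{d\pxi_-^\lambda}{d\lambda} \;=\; \frac{\pxi_-^\lambda\, r(a-b)(\lambda-1)}{2\lambda\sqrt{D(\lambda)}} \;-\; \frac{\pxi_-^\lambda}{2\lambda},
\]
in which no $1/(a-rb)$ factor appears; differentiating the explicit formula for $\pxi_-^\lambda$ directly would produce one, but implicit differentiation exposes the algebraic cancellation.

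The right-hand side is then bounded using $\pxi_-^\lambda \le 1$, $\lambda \ge 1$, $r(\lambda-1) \le 1-r$ (since $\lambda \le 1/r$), and the lower bound on $\sqrt{D(\lambda)}$ to get
\[
\Big|\frac{d\pxi_-^\lambda}{d\lambda}\Big| \;\le\; \frac{1}{2} + \frac{|a-b|}{4\sqrt{rab}}.
\]
The ratio $|a-b|/\sqrt{ab}$ is scale-invariant; parameterizing $t = a/(a+b)$, it equals $|1-2t|/\sqrt{t(1-t)}$, and the constraint $(a,b)\in S_\delta$ forces $t \in [\delta/(1+\delta), 1/(1+\delta)]$, over which the ratio is maximized at the endpoints at $(1-\delta)/\sqrt\delta$. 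Thus $|a-b|/\sqrt{rab} \le (1-\delta)/\sqrt{r\delta}$, the derivative is bounded in absolute value by $\tfrac12 + (1-\delta)/(4\sqrt{r\delta})$ throughout $[1,1/r]$, and the mean value theorem gives
\[
\pxi_-^\lambda - \pxi \;\ge\; -\Big[\frac12 + \frac{1-\delta}{4\sqrt{r\delta}}\Big](\lambda-1),
\]
which is dominated by the $C_0$ stated in the lemma. The main subtlety is the implicit-differentiation step: it is what eliminates the would-be $1/(a-rb)$ singularity, after which the remaining estimate is routine calculus.
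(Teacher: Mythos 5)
Your proof is correct and takes a genuinely different route from the paper. The paper differentiates the explicit formula \eqref{pxi-} for $\pxi_-^\lambda$ directly in $\lambda$, which produces a $1/(a-rb)$ factor; to tame the resulting potential blowup it introduces an auxiliary function $g_\lambda(t)$, applies the Mean Value Theorem to it using $g_\lambda(r)=0$, and carries through a fairly intricate chain of bounds, including a separate minimization to lower-bound the discriminant. Your argument replaces this with two cleaner observations: first, the identity $(ra-b)(a-rb)=r(a-b)^2-(1-r)^2ab$ recasts the discriminant as $D(\lambda)=r^2(a-b)^2(\lambda-1)^2+4\lambda r(1-r)^2ab$, from which $\sqrt{D}\ge 2(1-r)\sqrt{\lambda rab}$ is immediate; second, implicit differentiation of the defining quadratic $F(p,\lambda)=\lambda(a-rb)p^2-(\lambda+1)r(a-b)p+r(ra-b)$ gives $\partial_p F=\sqrt{D}$ at the $+$ branch, so $d\pxi_-^\lambda/d\lambda=-\partial_\lambda F/\sqrt{D}$ carries no $1/(a-rb)$ factor at all, and the cancellation you exploit in simplifying $\partial_\lambda F$ is exactly right. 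I verified your final derivative bound $\tfrac12+\tfrac{|a-b|}{4\sqrt{rab}}$, the reduction $|a-b|/\sqrt{ab}\le(1-\delta)/\sqrt\delta$ on $S_\delta$, and that the resulting constant $\tfrac12+\tfrac{1-\delta}{4\sqrt{r\delta}}$ is indeed $\le C_0$ (the first summand of $C_0$'s second entry already exceeds $\tfrac12$ and the third summand exceeds $\tfrac{1}{\sqrt{r\delta}}$). The upshot is a shorter proof with a sharper constant; the only small thing you glossed over is that the implicit function theorem applies because $\partial_pF=\sqrt D>0$, which your own computation shows, so $\pxi_-^\lambda$ is indeed a $C^1$ function of $\lambda$ on $[1,1/r)$.
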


\begin{proof} Fix positive $a$ and $b$ with $(a,b)\in S_\delta$. Let 
\[
f(\lambda) = r(a-b) \frac{1}{\lambda} +
\frac1\lambda\sqrt{r^2(\lambda + 1)^2 (a-b)^2 - 4r\lambda (ra-b)(a-rb)}\,.
\]  
Then
\begin{align*}
    f'(\lambda) 
&= \frac{-r(a-b)}{\lambda^2} + \frac{-2r^2(\lambda+1)(a-b)^2+4r\lambda(ra-b)(a-rb)}{2 \lambda^2 \sqrt{r^2(\lambda + 1)^2 (a-b)^2 - 4r\lambda (ra-b)(a-rb)}} \\
&= \frac{r(1-t) \left[ 2\sqrt{r^2(\lambda + 1)^2 (t-1)^2 - 4r\lambda (rt-1)(t-r)} -2r(\lambda+1)(1-t) \right] + 4 r \lambda(rt-1)(t-r)}{ 2  \lambda^2 \sqrt{r^2(\lambda + 1)^2 (t-1)^2 - 4r\lambda (rt-1)(t-r)}} \cdot b \\
\end{align*}
where $t = a/b$. Let 
\[g_\lambda(t) = 2\sqrt{r^2(\lambda + 1)^2 (t-1)^2 - 4r\lambda (rt-1)(t-r)} -2r(\lambda+1)(1-t).\] 
Then
\begin{align*}
    g_\lambda'(t) &= 2r \Bigl( \frac{r(\lambda+1)^2(t-1) -2r\lambda(t-r)-2\lambda(rt-1)}{\sqrt{  r^2 (\lambda+1)^2  (t-1)^2 - 4 r \lambda (rt-1)(t-r)  }} + \lambda + 1   \Bigr).
\end{align*}
The quadratic equation in $t$ inside the radical is minimized at 
\begin{align*}
    t = \frac{ (\lambda+1)^2 r - 2\lambda (r^2+ 1)}{(\lambda- 1)^2 r } = 1 - \frac{2\lambda (1-r)^2}{r(\lambda-1)^2} \leq 1 - \frac{2 r^{-1} (1-r)^2}{r(r^{-1}-1)^2} = -1
\end{align*}
Since this value for $t$ is negative, the quadratic is smallest at $t = \delta$.  With $t=\delta$, the quadratic as a function of $\lambda$ is minimized at 
\[
\lambda = \frac{2(r\delta-1)(\delta-r)}{r(\delta-1)^2}-1=1-\frac{2(1-r)^2\delta}{r(1-\delta)^2}\,.\]
Since this is strictly below $1$, the minimum over the interval $[1,1/r)$ is achieved at $\lambda=1$. The resulting minimum is thus
    \begin{align}\label{radicalLB}
    4r^2   (1-\delta)^2 - 4 r  (r\delta-1)(\delta-r)=4\delta r(1-r)^2 >0.
    \end{align}
This yields
\begin{align*}
    \abs{g_\lambda'(t)} 
    &\le
    2r \Bigl( \frac{r(r^{-1}+1)^2\delta^{-1} +2r+2r^{-1}}{\sqrt{  4\delta r(1-r)^2 }} + r^{-1} + 1   \Bigr)\\
    &=
    \frac{(1+r)^2\delta^{-1} +2r^2+2}{(1-r)\sqrt{\delta r}} + 2(1 + r) 
    = C(\delta,r) = C
\end{align*}
 for all $t\in[\delta,1/\delta]$ and $\lambda\in(1,1/r)$.

Since $g_\lambda(r) = 0$ the Mean Value Theorem implies that
$g_\lambda(t)=g_\lambda'(s)(t-r)$ for some $s$ between $t$ and $r$. In particular,
since $\delta<r$, $s\in[\delta,1/\delta]$, and $\abs{g_\lambda(r)}\le C\abs{t-r}$.
Returning to $f'(\lambda)$ we get
\begin{align*}
\abs{f'(\lambda)} &= \Bigl|\,\frac{r(1-t) g_\lambda(t) + 4 r \lambda(rt-1)(t-r)}{ 2  \lambda^2 \sqrt{r^2(\lambda + 1)^2 (t-1)^2 - 4r\lambda (rt-1)(t-r)}} \cdot b\, \Bigr|\\
&\leq \frac{Cr(\delta^{-1}+1)  + 4 (r\delta^{-1}+1)}{ 2  \lambda^2 \sqrt{r^2(\lambda + 1)^2 (t-1)^2 - 4r\lambda (rt-1)(t-r)}} \cdot \abs{t-r}b \\
&\leq \frac{Cr(\delta^{-1}+1)  + 4 (r\delta^{-1}+1)}{ 4(1-r)\sqrt{\delta r}} \cdot \abs{a-rb}\le 2C_0\abs{a-rb},
\end{align*}
where in the second-to-last inequality we used $\lambda\ge1$ and the lower bound \eqref{radicalLB} on the expression under the radical.

Now, if $a \neq rb$, then
\begin{align*}
\pxi_-^\lambda(a,b) - \pxi(a,b) 
&= \frac{(a+rb+2\sqrt{rab}) (r(a-b) +  f(\lambda)) - 2(a-rb) (r(a+b) + (r+1)\sqrt{rab}) }{2(a-rb)(a+rb+2\sqrt{rab})}\\
&= \frac{f(\lambda)-f(1)}{2(a-rb)}\,.
\end{align*}
By the Mean Value Theorem, $f(\lambda) = f(1) + f'(c) (\lambda - 1)$ for some $c \in (1, \lambda)$. 
In particular, $c\in(1,1/r)$.
Therefore, $\abs{f(\lambda)-f(1)} \le 2C_0\abs{a-rb}(\lambda - 1)$ and
\[
\pxi_-^\lambda(a,b) - \pxi(a,b) \geq -C_0(\lambda - 1).
\]

If, on the other hand, $a = rb$, then 
\begin{align*}
\pxi_-^\lambda(a,b) - \pxi(a,b) 
&= \frac{r+1}{\lambda + 1} - \frac{r(a+b) + (r+1)\sqrt{rab}}{a + rb + 2\sqrt{rab}} \\
&= -\frac{(r+1)(\lambda - 1)}{2(\lambda + 1)} 
\geq -(r+1)(\lambda - 1)\ge-C_0(\lambda-1)
\end{align*}
and the claim holds again.    	
\end{proof}

\begin{lemma}\label{lem:parameterDirectionEquivalence}  Let $0<\delta<1$.   
Let
    \[C_1=C_1(\delta,r)=\frac{(1+\delta)^2r(1-r)}{2\delta^2 \sqrt{r} (1+\sqrt{r})^2}\quad\text{and}\quad
    C_2=C_2(r)=\frac{2(r+1)^2}{r(1-r)}\,.\]
Then for all $\xi,\zeta\in S_\delta$ we have
		\begin{align}\label{dpxi}
			\abs{\pxi(\xi) - \pxi(\zeta)} \leq C_1\Bigl|\,\frac{\xi_1}{\abs{\xi}_1}-\frac{\zeta_1}{\abs{\zeta}_1}\Bigr|.
		\end{align}
And for all $\xi\in\RR^2_{>0}$ and $q\in(r,1)$ we have
        \begin{align}\label{dxip}
			\Bigl|\,\xip(q)\cdot \ek_1 - \frac{\xi_1}{\abs{\xi}_1}\,\Bigr| \leq C_2 \abs{q-\pxi(\xi)}.
		\end{align}
	\end{lemma}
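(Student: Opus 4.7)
The plan is to reduce both inequalities to mean value theorem bounds on one-variable functions. The key observation is that $\pxi$ defined in \eqref{xi->p} is positively homogeneous of degree zero and hence depends on $\xi$ only through the ratio $t(\xi) := \xi_1/\abs{\xi}_1$. Set $\phi(t) := \pxi(t, 1-t)$ for $t \in (0,1)$; this is a smooth bijection $(0,1) \to (r,1)$ whose inverse is $\psi(p) := \xip(p)\cdot\ek_1 = r(1-p)^2/D(p)$, where $D(p) := (r+1)p^2 - 4rp + r(r+1)$, because $\pxi \circ \xip = \mathrm{id}$ on $(r,1)$ and $\xip$ takes values in $\ri\sU$. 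In this notation \eqref{dxip} reads $|\psi(q) - \psi(\pxi(\xi))| \leq C_2 |q - \pxi(\xi)|$ (using $\psi(\pxi(\xi)) = \xi_1/\abs{\xi}_1$), while \eqref{dpxi} reads $|\phi(t(\xi)) - \phi(t(\zeta))| \leq C_1|t(\xi) - t(\zeta)|$. For $\xi, \zeta \in S_\delta$ the ratios lie in $I_\delta := [\delta/(1+\delta),\, 1/(1+\delta)]$, so both estimates reduce by MVT to uniform bounds on $|\psi'|$ over $(r,1)$ and $|\phi'|$ over $I_\delta$, respectively.

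Direct differentiation using the identity $2D(p) + (1-p)D'(p) = 2(1-r)(p-r)$ (a short algebraic check) yields
\[\psi'(p) = -\frac{2r(1-r)(1-p)(p-r)}{D(p)^2}.\]
For \eqref{dxip}, the elementary bounds $(1-p)(p-r) \leq (1-r)^2/4$ and $D(p) \geq r(1-r)^2/(r+1)$ (the latter from completing the square in $D$, with minimum at $p^\star = 2r/(r+1)$) combine to give $|\psi'(p)| \leq (r+1)^2/(2r(1-r)) \leq C_2$ uniformly on $(r,1)$, and MVT concludes. For \eqref{dpxi}, the inverse function theorem together with the two identities $D(\phi(t)) = r(1-\phi(t))^2/t = (\phi(t)-r)^2/(1-t)$, read off from $\xip(\phi(t)) = (t,1-t)$ and \eqref{p->xi}, produces the compact expression
\[|\phi'(t)| = \frac{D(\phi(t))}{2\sqrt{r}\,(1-r)\sqrt{t(1-t)}}\,.\]

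On $I_\delta$ we have $\sqrt{t(1-t)} \geq \sqrt{\delta}/(1+\delta)$, so it suffices to bound $D(\phi(t))$ on $I_\delta$. Factoring the numerator of $\pxi(\delta,1)$ as $\sqrt{r}(\sqrt{r}+\sqrt{\delta})(1+\sqrt{r\delta})$ gives the closed form $\phi(\delta/(1+\delta)) = \sqrt{r}(1+\sqrt{r\delta})/(\sqrt{r}+\sqrt{\delta})$, whence $D(\phi(\delta/(1+\delta))) = r(1-r)^2(1+\delta)/(\sqrt{r}+\sqrt{\delta})^2$; a parallel computation at $t = 1/(1+\delta)$, together with the observation $(\sqrt{r}+\sqrt{\delta})^2 \leq (1+\sqrt{r\delta})^2$ and a brief monotonicity check using the alternate formula $|\phi'(t)| = (1-\phi)(\phi-r)/(2(1-r)t(1-t))$, shows that the supremum of $|\phi'|$ over $I_\delta$ is attained at $t = \delta/(1+\delta)$ and equals $\sqrt{r}(1-r)(1+\delta)^2/(2\sqrt{\delta}(\sqrt{r}+\sqrt{\delta})^2)$. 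The elementary inequality $\delta^{3/4}(1+\sqrt{r}) \leq \sqrt{\delta} + \sqrt{r}$ for $\delta \in (0,1)$ (which follows from $\delta^{3/4} \leq \sqrt{\delta}$ and $\sqrt{r}\,\delta^{3/4} \leq \sqrt{r}$) then gives $|\phi'(t)| \leq C_1$ as stated.

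The analysis is pure one-variable calculus, and no conceptual obstacle arises; the only non-trivial step is bookkeeping the constants through the factorization and endpoint computation to recover the exact form of $C_1$. A cruder bound $D(p) \leq (1-r)^2$ valid on all of $(r,1)$ would yield $|\phi'(t)| \leq (1-r)(1+\delta)/(2\sqrt{r\delta})$ by the same method, which is enough for the remaining arguments in the paper at the cost of a larger constant.
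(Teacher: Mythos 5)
Your proposal is correct and follows essentially the same strategy as the paper: reduce both inequalities, via the degree-zero homogeneity of $\pxi$, to mean value theorem bounds on the one-variable functions $t\mapsto\pxi(t,1-t)$ and $q\mapsto\xip(q)\cdot\ek_1$ over the relevant intervals. Where the paper simply computes $\frac{d}{dt}\pxi(t,1-t)=-\frac{r(1-r)}{2\sqrt{rt(1-t)}(\sqrt{t}+\sqrt{r(1-t)})^2}$ and $\frac{d}{dq}\xip(q)\cdot\ek_1$ directly and then bounds the denominators from below by plugging in the cone constraint, you take a mild detour through the inverse function theorem, track the exact location of the maximum of $|\phi'|$ on $I_\delta$, and obtain a slightly tighter numerical constant before verifying it is dominated by $C_1$; your monotonicity claim (that $|\phi'|$ peaks at $t=\delta/(1+\delta)$) does hold, by the reflection argument and the fact that $\sqrt{t}+\sqrt{r(1-t)}$ increases on $(0,1/(1+r))\supset(0,1/2]$, though it is more than is needed—a direct lower bound on the denominator, as in the paper, suffices. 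The two approaches are equivalent in substance; yours is longer but also yields slightly sharper intermediate constants.
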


\begin{proof}
	Note that $(t,1-t)\in S_\delta$ if and only if $t\in(\frac{\delta}{1+\delta},\frac1{1+\delta})$. For such $t$, 
	\[
	  \frac{d}{dt}\pxi(t, 1-t) = -\frac{r(1-r)}{2\sqrt{rt(1-t)} \left(\sqrt{t}+\sqrt{r(1-t)}\right)^2} \geq 
	  -\frac{(1+\delta)^2r(1-r)}{2\delta^2 \sqrt{r} (1+\sqrt{r})^2}= -C_1(\delta,r). 
	\]
	\eqref{dpxi} follows from this bound and the fact that 
	$\pxi(\xi)=\pxi(c\xi)$ for any $\xi\in\RR^2_{>0}$ and $c>0$.
%

	For the second claim, differentiate $\xip(q)\cdot \ek_1$ to get 
	\[
	 -\frac{2r(1-r)(1-q)(q-r)}{\bigl((q^2(r+1) - 4qr + r(r+1)\bigr)^2} 
	 \geq -\frac{2r(1-r)^3 }{\bigl((q^2(r+1) - 4qr + r(r+1)\bigr)^2}
	\ge-\frac{2(r+1)^2}{r(1-r)}=-C_2(r).
	\]
    \eqref{dxip} follows from this bound and the fact that $\xip(\pxi(\xi_1/\abs{\xi}_1))=\xi_1/\abs{\xi}_1$ for all $\xi\in\RR^2_{>0}$.
\end{proof}

The following estimates are immediate from \eqref{xi->p} and \eqref{eq:shape}. 

\begin{lemma}\label{lem:shapeFunctionBounds}  For $x\in \RR_{\ge0}^2$, 
\[
\frac{r}{1-r}|x|_1 \leq \gpp(x) \leq \frac{r+\sqrt{r}}{1-r} |x|_1.
\]
\end{lemma}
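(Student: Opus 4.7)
The plan is to read off both inequalities directly from the explicit formula \eqref{eq:shape} for the shape function in the geometric case, namely
\[
\gpp(x_1,x_2) = \frac{r}{1-r}(x_1+x_2) + \frac{2\sqrt{r}}{1-r}\sqrt{x_1 x_2}\,,
\]
which is valid for $x \in \RR_{\ge 0}^2$.

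For the lower bound, I would observe that $\sqrt{x_1 x_2} \ge 0$ on $\RR_{\ge 0}^2$, so dropping the second term in the formula immediately yields
\[
\gpp(x) \ge \frac{r}{1-r}(x_1+x_2) = \frac{r}{1-r}|x|_1.
\]

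For the upper bound, I would apply the AM-GM inequality in the form $2\sqrt{x_1 x_2} \le x_1 + x_2$, which gives
\[
\gpp(x) \le \frac{r}{1-r}(x_1+x_2) + \frac{\sqrt{r}}{1-r}(x_1+x_2) = \frac{r+\sqrt{r}}{1-r}|x|_1.
\]

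There is no real obstacle here; the lemma is essentially a one-line consequence of the explicit expression \eqref{eq:shape} together with nonnegativity and AM-GM. Both estimates are tight at the axis directions (lower bound) and at the diagonal (upper bound), respectively, which is consistent with concavity of $\gpp$ and its values $\gpp(\ek_i) = \frac{r}{1-r}$ and $\gpp(\ek_1+\ek_2) = \frac{2r+2\sqrt{r}}{1-r}$.
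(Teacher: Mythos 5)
Your proof is correct and follows essentially the same route as the paper, which simply reads both bounds off the explicit formula \eqref{eq:shape} (dropping the nonnegative cross term for the lower bound and using $2\sqrt{x_1x_2}\le x_1+x_2$ for the upper bound). Nothing further is needed.
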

\begin{lemma}\label{lem:parameterBounds}  
For any $\delta\in(0,1)$ and $x\in S_\delta$ we have 
    \[r+\frac{(1-r)\delta\sqrt{r\delta}}{(1+\sqrt{r})^2}\le\pxi(x)\le1-\frac{(1-\sqrt{r})\delta}{1+\sqrt{r}}\,.\]
\end{lemma}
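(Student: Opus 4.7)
The plan is to exploit that $\pxi$ is homogeneous of degree $0$, so $\pxi(x)$ depends only on the direction $x/|x|_1 \in \ri\sU$, and then to reduce to a one-dimensional monotonicity argument on the simplex.

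First, I would parametrize $x \in S_\delta$ by $t = x_1/(x_1+x_2)$, which lies in $[\delta/(1+\delta),\,1/(1+\delta)]$. The proof of Lemma \ref{lem:parameterDirectionEquivalence} already establishes that $t\mapsto \pxi(t,1-t)$ is strictly decreasing on $(0,1)$. Consequently the maximum of $\pxi$ over $S_\delta$ is attained at $x\propto(\delta,1)$, and the minimum at $x\propto(1,\delta)$.

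Next, I would compute these two boundary values explicitly and simplify. For the upper bound, observing that $\delta + r + 2\sqrt{r\delta} = (\sqrt\delta+\sqrt r)^2$ and factoring the numerator as $(\sqrt\delta+\sqrt r)(r\sqrt\delta+\sqrt r)$, one gets
\[
\pxi(\delta,1) \;=\; \frac{r\sqrt\delta+\sqrt r}{\sqrt\delta+\sqrt r},\qquad 1-\pxi(\delta,1) \;=\; \frac{(1-r)\sqrt\delta}{\sqrt\delta+\sqrt r}.
\]
Using $1-r = (1-\sqrt r)(1+\sqrt r)$, the inequality $1-\pxi(\delta,1) \ge (1-\sqrt r)\delta/(1+\sqrt r)$ reduces to $(1+\sqrt r)^2 \ge \delta+\sqrt{r\delta}$, which is immediate from $\delta<1$.

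For the lower bound, writing $u=\sqrt{r\delta}$ so that the denominator becomes $(1+u)^2$ and the numerator factors as $(1+u)(u(1-r)+r(1+u))$ after a short algebraic manipulation, one obtains
\[
\pxi(1,\delta) \;=\; r + \frac{(1-r)\sqrt{r\delta}}{1+\sqrt{r\delta}}.
\]
Comparing this to the claimed lower bound $r + (1-r)\delta\sqrt{r\delta}/(1+\sqrt r)^2$ reduces to the inequality $(1+\sqrt r)^2 \ge \delta(1+\sqrt{r\delta})$, which is again trivial for $\delta\in(0,1)$.

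There is no real obstacle here — the only slightly delicate point is verifying the monotonicity of $t \mapsto \pxi(t,1-t)$, but this is already contained in the proof of Lemma \ref{lem:parameterDirectionEquivalence}. Everything else is elementary algebra, and the only content of the lemma is packaging the explicit endpoint values into the simpler-looking bounds stated.
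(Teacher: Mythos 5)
Your proof is correct. Each algebraic step checks out: $\pxi(\delta,1)=(r\sqrt\delta+\sqrt r)/(\sqrt\delta+\sqrt r)$, $\pxi(1,\delta)=(\sqrt{r\delta}+r)/(1+\sqrt{r\delta})$, and the two elementary inequalities $(1+\sqrt r)^2\geq\delta+\sqrt{r\delta}$ and $(1+\sqrt r)^2\geq\delta(1+\sqrt{r\delta})$ indeed hold for $\delta,r\in(0,1)$, and deliver the stated bounds. Your route differs from the paper's in a mild but real way. The paper parametrizes by $t=x_2/x_1\in[\delta,1/\delta]$, rewrites $\pxi(1,t)=r+\frac{(1-r)(rt+\sqrt{rt})}{(1+\sqrt{rt})^2}$, and then bounds the numerator and denominator of the fraction \emph{separately} over the interval, producing a crude but sufficient lower bound without invoking any monotonicity; for the upper bound it similarly rewrites $1-\pxi(1,t)=\frac{(1-r)(1+\sqrt{rt})}{(1+\sqrt{rt})^2}$ and bounds componentwise. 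You instead appeal to the monotonicity of $t\mapsto\pxi(t,1-t)$, established inside the proof of Lemma~\ref{lem:parameterDirectionEquivalence}, to pin the extrema exactly at the two corners of $S_\delta$, and then verify that the exact corner values satisfy the stated bounds. What your version buys is that it identifies the true extremal values and makes visible how much slack the stated bounds leave; what the paper's version buys is that it is self-contained and never needs to reach into the proof of another lemma. Both are perfectly sound; the small thing to be aware of is that the monotonicity fact you rely on is a byproduct of the proof of Lemma~\ref{lem:parameterDirectionEquivalence} rather than its statement, so if this lemma were read in isolation the citation would be slightly opaque.
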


\begin{proof}
Let $t=x_2/x_1$. Then 
    \[\pxi(x)
    =\frac{r(1+t)+(r+1)\sqrt{rt}}{1+rt+2\sqrt{rt}}
    =r+\frac{(1-r)(rt+\sqrt{rt})}{1+rt+2\sqrt{rt}}
\ge r+\frac{(1-r)(r\delta+\sqrt{r\delta})}{(1+\sqrt{r/\delta})^2}
\ge r+\frac{(1-r)\delta\sqrt{r\delta}}{(1+\sqrt{r})^2}\,.\]
Similarly,
\[\pxi(x)=1-\frac{(1-r)(1+\sqrt{rt})}{(1+\sqrt{rt})^2}\le1-\frac{(1-r)(1+\sqrt{r\delta})\delta}{(\sqrt\delta+\sqrt{r})^2}\le1-\frac{(1-r)\delta}{(1+\sqrt{r})^2}\,.\qedhere\]
\end{proof}

We are now ready to prove Theorem \ref{geometricExitPointTailBound}.

\begin{proof}[Proof of Theorem \ref{geometricExitPointTailBound}]  
Fix $\delta\in(0,r)$, $\kappa>0$, and
$\e\in(0,\frac\delta{2(1+\delta^{-1})})$. 
Take integers $(m,n)\in S_\delta$ and $k\le\e(m+n)$ and write
\begin{align}
&\pxi(m-k,n) - \pxi(m,n) =  \frac{r(m+n-k) + (r+1)\sqrt{rn(m-k)}}{m-k + rn + 2\sqrt{rn(m-k)}} - \frac{r(m+n) + (r+1) \sqrt{rmn}}{m + rn + 2 \sqrt{rmn}}\notag \\
&\qquad= \frac{(1-r)\left(rnk + \sqrt{rn} (rn-m) (\sqrt{m}-\sqrt{m-k})  + k \sqrt{rmn}\right) }{(m - k + rn + 2\sqrt{rn(m-k)})( m + rn + 2\sqrt{rmn})} \notag\\
&\qquad= \frac{rnk + \sqrt{rn} (rn-m) (\sqrt{m}-\sqrt{m-k})  + k \sqrt{rmn}  }{(\gpp(m-k,n) + m-k)(m+rn+2\sqrt{rmn})}\,.\label{extra}
\end{align}
If $m-rn>0$, dropping the $\sqrt{m-k}$ from the denominator and using $rn>0,$ we have
 \[(m-rn)(\sqrt{m}-\sqrt{m-k})=\frac{k(m-rn)}{\sqrt{m}+\sqrt{m-k}}\le k\sqrt m\,.\]
The same inequality holds trivially if $m-rn\le0$.
In the next computation use the above inequality to bound the numerator of 
\eqref{extra}, then bound the denominator using the upper bound from Lemma \ref{lem:shapeFunctionBounds} and the facts that $2\sqrt{mn}\le m+n$ and $(m,n)\in S_\delta$:
\begin{align}\label{fifi1}
\pxi(m-k,n) - \pxi(m,n) 
&\geq \frac{(1-\sqrt r)rnk}{2(1+\sqrt r)(m+n)^2}\ge  \frac{(1-\sqrt r)rk}{2(1+\sqrt r)(\delta^{-1}+1)(m+n)}
=\frac{a_0(\delta,r)k}{m+n}\,.
\end{align}

Next, take
$\xi\in\ri\sU$ with $\xi_1\in(\delta,1-\delta)$ and such that $\abs{\xi_1-\frac{m}{m+n}}\le\kappa(m+n)^{-1/3}$. 
Abbreviate $q = \pxi(\xi)$.
Note that $\xi\in S_\delta$ and therefore 
Lemma \ref{lem:parameterDirectionEquivalence} implies that
    \begin{align}\label{fifi2}
    \abs{q - \pxi(m,n)} \leq C_1\kappa(m+n)^{-1/3}.
    \end{align}

Let 
$s_0=s_0(\delta,r,\kappa)=\max(1,16C_1\kappa/a_0)$. 
Let
    \[\epsilon=\epsilon(\delta,r)=\min\Bigl(\frac{r}2,\frac{(1-r)\delta\sqrt{r\delta}}{(1+\sqrt{r})^2}\,,\,
\frac{(1-\sqrt{r})^2\delta^2}{2(1+\sqrt{r})^2}\,\Bigr).\]
Take $\eta$ so that 
\begin{align}\label{eta}
\begin{split}
0<\eta<\min\Bigl\{-\frac{\log r}\e\,,\,
&\frac1{2\e}\log\Bigl(1+\frac{(1-\sqrt{r})\delta}{1+\sqrt{r}}\Bigr)\,,\,
1\,,\,\frac{a_0}{32(1+C_0(\delta/2,r))}\,,\\
&\qquad\qquad\e^{-1}\log\Bigl(1+\frac\epsilon{\sqrt2}\Bigr)\,,\,\frac{\e\delta^2 a_0^2}{100\bigl(4\epsilon^2+4a_0/5+16\epsilon)}\Bigr\}\,. 
\end{split}
\end{align}
Let $N_0=(s_0/\e)^3$ and take $(m,n)\in S_\delta\cap\bbZ^2_{\ge N_0}$. 
Take $s\in[s_0,\e(m+n)^{1/3}]$ and set 
\begin{align}\label{lambda}
\lambda = \exp\{ \eta s (m+n)^{-1/3} \}\le e^{\eta\e}\in(1,1/r).
\end{align}
Then by Lemma \ref{lem:parameterBounds} and the choice of $\eta$ 
    \begin{align}\label{lam-q}
    r+\epsilon\le r+\frac{(1-r)\delta\sqrt{r\delta}}{(1+\sqrt{r})^2}\le q\le\lambda q\le\lambda^2 q\le e^{2\eta\e}\Bigl(1-\frac{(1-\sqrt{r})\delta}{1+\sqrt{r}}\Bigr)
    \le 
    1-\frac{(1-\sqrt{r})^2\delta^2}{4(1+\sqrt{r})^2}\le 1-\epsilon.
    \end{align}

Since $\eta s(m+n)^{-1/3}\le \eta\e\le1$  and $e^x - 1 \leq 2x$ for $ x \in [0,1]$,
\begin{align}\label{fifi3}
\lambda q-q\le\lambda-1\le\frac{2\eta s}{(m+n)^{1/3}}\,.
\end{align}

The choices of $\e$ and $k$ and that $(m,n)\in S_\delta$ imply that 
$(m-k,n)\in S_{\delta/2}$.
Then Lemma \ref{lem:parameterExpansion} implies
\begin{align}\label{fifi4}
\pxi_-^\lambda(m-k,n) - \pxi(m-k,n) \geq -C_0(\delta/2,r) (\lambda - 1)\ge-\frac{2C_0\eta s}{(m+n)^{1/3}}\,.
\end{align}

Take $k=\lceil s(m+n)^{2/3}\rceil-1$ and 
abbreviate $\pxi_\pm=\pxi_\pm^\lambda(m-k,n)$ and $\pxi=\pxi(m-k,n)$.
Note that 
\begin{align}\label{k>s_0/2}
s_0/2\le 2^{2/3}s_0-1 \le s(m+n)^{2/3}-1\le k\le s(m+n)^{2/3}\le\e(m+n).
\end{align}
Putting this, (\ref{fifi1}-\ref{eta}), 
and (\ref{fifi3}-\ref{k>s_0/2}) 
together we get
\begin{align}
    \pxi-q
    \ge\pxi_-^\lambda-q
    \ge\pxi_-^\lambda-\lambda q
    &\ge\frac{(a_0-2\eta-2C_0\eta)s-C_1\kappa}{(m+n)^{1/3}}-\frac{a_0}{m+n}\notag\\
    &\ge\frac{7a_0 s}{8(m+n)^{1/3}}-\frac{a_0s}{2^{2/3}(m+n)^{1/3}}\notag\\
    &\ge\frac{a_0s}{5(m+n)^{1/3}}>0.\label{p-q}
\end{align}
Thus, by Lemma \ref{lem:restrictedLogMGFProperty}, $\mc{L}^{\lambda, \lambda q}(m-k, n) = L^{\pxi_-^\lambda, \pxi_+^\lambda}(m-k,n)$. 
%
Also, Lemma \ref{lem:parameterBounds} and the choice of $\eta$ in \eqref{eta} imply
    \begin{align*}
    r+\epsilon\le r+\frac{(1-r)\delta\sqrt{r\delta}}{(1+\sqrt{r})^2}\le  q\le \pxi_-^\lambda\le\lambda\pxi_-^\lambda
    \le\lambda\pxi\le e^{\eta\e}\Bigl(1-\frac{(1-\sqrt{r})\delta}{2(1+\sqrt{r})}\Bigr)
    \le 
    1-\frac{(1-\sqrt{r})^2\delta^2}{4(1+\sqrt{r})^2}\le 1-\epsilon.
    \end{align*}
In particular, $\pxi-r\ge q-r\ge\e$ and $1-\pxi\ge1-\lambda\pxi\ge\e$.
Using this, $(m-k,n)\in S_{\delta/2}$, $(m,n)\in S_\delta$, and the identity $a(\pxi(a,b)-r)^2 - rb(1-\pxi(a,b))^2 = 0$, we get
\begin{align}
&(m-k)(\pxi_-^\lambda-r)(q-r) - nr(1-\pxi_-^\lambda)(1-q)\notag\\
&\qquad\leq (m-k)(\pxi - r)(q-r) - nr (1-\pxi)(1-q) \notag \\
&\qquad= (m-k)(\pxi-r)^2 + (m-k)(\pxi-r)(q-\pxi) - nr(1-\pxi)^2 - nr(1-\pxi)(\pxi-q)\notag \\
&\qquad= -(\pxi - q)((m-k)(\pxi-r) + nr(1-\pxi))\notag \\
&\qquad\leq -\e\delta ^2(\pxi-q)(m+n)/2.\label{fofo3}
\end{align}

%


We have now collected all the necessary pieces to be able to bound the 
probability of interest. The first line below uses the stochastic monotonicty of geometric random variables in their inverse mean parameter and the monotonicity of the exit points in the boundary weights.  The second line uses that, on the event in the indicator function the value inside the exponent is 0.  The third line drops the indicator function and uses the Cauchy-Schwartz inequality.  The fourth line uses independence and shift-invariance.  Write
\begin{align*}
&\P\{Z^{q, \ek_1}(m,n) > k\}^2 \leq \P\{Z^{\lambda q, q, \ek_1}(m,n) > k\}^2 \\
&= \E\Bigl[\one\{Z^{\lambda q, q, \ek_1}(m,n) > k\} \exp\Bigl\{ \frac{\log(\lambda)}{2} \bigl(G^{\lambda q}(k,0) + G_{(k,0)}^{\lambda q}(m,n|1,0) - G^{\lambda q, q}(m,n)\bigr) \Bigr\}\Bigr]^2 \\
&\leq \E\bigl[ \exp\bigl\{ \log(\lambda) \bigl(G^{\lambda q} (k,0) + G_{(k,0)}^{\lambda q}(m,n|1,0) \bigr) \bigr\} \bigr] \E\bigl[ \exp\bigl\{ -\log(\lambda) G^{\lambda q, q}(m,n) \bigr\} \bigr] \\
&= \E\bigl[ \exp\bigl\{ \log(\lambda) G^{\lambda q} (k,0) \bigr\}\bigr] \E\bigl[ \exp\bigl\{ \log(\lambda) G^{\lambda q}(m-k,n|1,0)  \bigr\} \bigr] \E\bigl[ \exp\bigl\{ -\log(\lambda) G^{\lambda q, q}(m,n) \bigr\} \bigr].
\end{align*}
Bound the third expectation on the last line using Proposition \ref{prop:logMGF}, the second expectation using Lemma \ref{lem:restrictedLogMGFInequality}, and compute the first expectation explicitly using the moment generating function of the Geometric distribution, to get:
\begin{align*}
2 \log \P\{Z^{q, \ek_1}(m,n) > k\}
&\leq k \log\Bigl( \frac{1-\lambda q}{1-\lambda^2 q}\Bigr) + \mc{L}^{\lambda, \lambda q}(m-k,n) + L^{\lambda q, q}(m,n)\\ 
&= k \log\Bigl( \frac{1-\lambda q}{1-\lambda^2 q}\Bigr) + L^{\pxi_-^\lambda, \pxi_+^\lambda}(m-k,n) - L^{q, \lambda q}(m,n)\\ 
%
&= k \log\Bigl( \frac{1-\lambda q}{1-\lambda^2 q}\Bigr) + L^{\pxi_-^\lambda, \pxi_+^\lambda}(m-k,n) - k \log\Bigl(\frac{1-q}{1-\lambda q}\Bigr) \\
&= -k \log \Bigl( 1 - \frac{(\lambda - 1)^2q}{(1-\lambda q)^2}\Bigr) + L^{\pxi_-^\lambda, \pxi_+^\lambda}(m-k,n) - L^{q, \lambda q}(m-k,n).
\end{align*}
Next, use \eqref{lambda}, \eqref{lam-q},  and
\eqref{eta} to deduce
    \[\frac{(\lambda-1)^2q}{(1-\lambda q)^2}\le\epsilon^{-2}(e^{\eta\e}-1)^2\le1/2.\]
Use this, the fact that $-\log(1-t) \leq 2t$ for $t \in \left[0, \frac{1}{2}\right]$, \eqref{lam-q}, \eqref{k>s_0/2}, and \eqref{fifi3} to continue with the bound
\begin{align}
2 \log \P(Z^{q, \ek_1}(m,n) > k) 
&\leq 2 k \frac{(\lambda-1)^2 q}{(1- \lambda q)^2} + L^{\pxi_-^\lambda, \pxi_+^\lambda}(m-k,n) - L^{q, \lambda q}(m-k,n) \nonumber\\
&\leq 2 \epsilon^{-2}k (\lambda-1)^2 + L^{\pxi_-^\lambda, \pxi_+^\lambda}(m-k,n) - L^{q, \lambda q}(m-k,n) \nonumber\\
&\le 4\epsilon^{-2}\eta^2 s^3 + L^{\pxi_-^\lambda,\pxi^+_\lambda}(m-k,n) - L^{q, \lambda q}(m-k,n). \label{logProbEta2}
\end{align}
Using Lemma \ref{lem:logMGFExpansion}, \eqref{fofo3}, \eqref{p-q}, and \eqref{fifi3}  we get
\begin{align*}
&L^{\pxi_-,\pxi_+}(m-k,n) - L^{q, \lambda q}(m-k,n) \nonumber\\
&\qquad\leq (\lambda -1)\Bigl(\frac{(m-k) \pxi_-}{1-\pxi_-} - \frac{(m-k)q}{1-q} + \frac{nr}{\pxi_- - r} - \frac{nr}{q-r} \Bigr) \nonumber\\
&\qquad\qquad + \frac{1}{2} (\lambda-1)^2 \Bigl( \frac{(m-k)\pxi_-^2}{(1-\pxi_-)^2} - \frac{(m-k) q^2}{(1-q)^2} - \frac{nr(2\pxi_--r)}{(\pxi_--r)^2} + \frac{nr(2q-r)}{(q-r)^2}  \Bigr) \nonumber\\
&\qquad\qquad + 2\epsilon^{-3} (m+n) (\lambda-1)^3 \nonumber\\
&\qquad= (\lambda-1)(\pxi_--q) \frac{(m-k)(\pxi_--r)(q-r) - nr(1-\pxi_-)(1-q)} {(1-\pxi_-)(1-q)(\pxi_--r)(q-r)} \\ 
&\qquad\qquad + \frac{1}{2}(\lambda-1)^2 (\pxi_--q)\Bigl( \frac{(m-k)(\pxi_-+q-2\pxi_- q)}{(1-\pxi_-)^2(1-q)^2} + \frac{nr(-\pxi_- r + 2 \pxi_- q - rq)}{(\pxi_--r)^2 (q-r)^2} \Bigr) \nonumber\\
&\qquad\qquad + 2\epsilon^{-3} (\lambda-1)^3(m+n) \nonumber\\ 
&\qquad\leq -\frac{\e\delta ^2}{2\epsilon^4}(\lambda - 1)(\pxi_- -q)(\pxi-q)(m+n)+ \epsilon^{-4} (\lambda - 1)^2 (\pxi_- -q)(m+n)+2\epsilon^{-3} (\lambda-1)^3(m+n)\\
&\qquad\le-\Bigl(\frac{\e\delta ^2a_0^2\eta}{50\epsilon^4}-\frac{4\eta^2a_0}{5\epsilon^4}- \frac{16\eta^3}{\epsilon^3}\Bigr)s^3
\le-\Bigl(\frac{\e\delta ^2a_0^2\eta}{50\epsilon^4}-\frac{4\eta^2a_0}{5\epsilon^4}- \frac{16\eta^2}{\epsilon^3}\Bigr)s^3.
\end{align*}

Setting $c_1=c_1(\delta,r)=\epsilon^{-4}\e\delta ^2a_0^2\eta/200$ and 
using \eqref{logProbEta2} and the choice of $\eta$ in \eqref{eta}
we get 
\begin{align*}
\P\{Z^{q, \ek_1}(m,n) \geq s (m+n)^{2/3}\} \leq e^{-c_1 s^3}
\end{align*}
for $s \in [s_0, \e (m+n)^{1/3}]$.
When $s \geq (m+n)^{1/3}$, the above probability is $0$ and the bound holds trivially.  When $s \in [\e (m+n)^{1/3}, (m+n)^{1/3}]$ we have
    \[\P\{Z^{q, \ek_1}(m,n) \geq s (m+n)^{2/3}\} \leq
    \P\{Z^{q, \ek_1}(m,n) \geq \e(m+n)\} \leq
    e^{-c_1\e(m+n)^{1/3}}\le e^{-c_1\e s^3}.\]
The claim of the theorem is thus proved for the case of $Z^{q,\ek_1}$.
The equivalent bound for vertical exit points follows by symmetry. 
\end{proof}

    \section{Verifying Assumption \ref{ass:RW} for the geometric LPP}\label{sec:RW}
We first prove a bound on the probability that an i.i.d.~random walk with non-positive drift remains non-positive for its first $n$ steps, given some control over the step's higher moments. 


	\begin{lemma}\label{lem:rwBound}
	Let $\{X_i, i \in \bbN\}$ be i.i.d.\ random variables.  
	Suppose $\mu=E[X_1] \le0$ and $\sqrt{\text{Var}(X_1)}\ge\e$ and $E[|X_1- \mu|^p]\le D$
	for some $p\ge3$ and $\e,D\in (0,\infty)$.
	Call $S_k = \sum_{i=1}^k X_i$. There exists a finite $C=C(p,D,\e)>0$ such that for all 
	$n \in \bbN$,
	\begin{align}
		&P(S_1 \leq 0, S_1 \leq 0, \ldots, S_n \leq 0) \leq C \bigl(n^{-\frac{p-2}{2(p+1)}}\vee\abs{\mu}\bigr)
		\quad\text{and}\label{eq:rwbd1} \\
		&P(S_1 \geq 0, S_2 \geq 0, \ldots, S_n \geq 0) \leq \frac{C}{\sqrt{n}}. \label{eq:rwbd2}
	\end{align}
	\end{lemma}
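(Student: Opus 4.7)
Both estimates will follow from bounds on barrier-crossing probabilities for the centered walk $\tilde S_k = S_k - k\mu$. Write $\tilde X_i = X_i - \mu$, so $\tilde S_k$ is a centered i.i.d.~walk with $\mathrm{Var}(\tilde X_i) \ge \epsilon^2$ and $E|\tilde X_i|^p \le D$. Bound \eqref{eq:rwbd2} is straightforward: since $\mu \le 0$, the inclusion $\{S_k \ge 0\} \subseteq \{\tilde S_k \ge 0\}$ yields
\[
P(S_k \ge 0,\ 1 \le k \le n) \le P(\tilde S_k \ge 0,\ 1 \le k \le n) \le C/\sqrt n,
\]
where the final step is the classical Sparre Andersen estimate for centered i.i.d.~walks with finite variance; the quantitative constant can be extracted from the Spitzer--Baxter identity combined with a Berry--Esseen bound under our $p$-th moment hypothesis.

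Bound \eqref{eq:rwbd1} is subtler because the negative drift cooperates with the event $\{S_k \le 0\ \forall k\}$, forcing the $|\mu|$ term in the maximum. The plan is to rewrite the event as $\{\tilde S_k \le k|\mu|\ \forall k \le n\}$ and to split at a threshold $L>0$:
\[
P(\tilde S_k \le k|\mu|\ \forall k \le n) \le P(|\tilde S_n| > L) + P(\tilde S_k \le k|\mu|\ \forall k \le n,\ |\tilde S_n| \le L).
\]
Rosenthal's inequality combined with Markov gives $P(|\tilde S_n|>L) \le B_p D n^{p/2}/L^p$. For the second probability, a quantitative ballot-type estimate for the centered walk staying below a nearly-horizontal barrier would produce a bound of the form $CL^\beta/n^\gamma$ plus a linear-in-$|\mu|$ contribution accounting for the slope of the barrier. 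Optimizing $L$ to balance the Rosenthal tail against the ballot contribution gives the rate $n^{-\alpha}$ with $\alpha=(p-2)/(2(p+1))$ coming from the noise, while the barrier-slope correction contributes the $C|\mu|$ term in the max.

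The main obstacle is the quantitative ballot estimate, which under only a $p$-th moment hypothesis is not standard. Two viable strategies are: (i) a Sakhanenko-type strong embedding of $\tilde S_k$ into a Brownian motion $\sigma W_k$, under which the ballot problem is solved exactly via the reflection principle and the coupling error is absorbed into a slightly worse Rosenthal-type tail; or (ii) a direct cyclic-exchangeability/time-reversal argument in the spirit of Sparre Andersen, with ties handled by an infinitesimal continuous perturbation independent of everything else. The argument of Bal\'azs--Busani--Sepp\"al\"ainen (Lemma~C.3 in \cite{Bal-Bus-Sep-20}), which is the template for the exponential-weights analogue of our lemma, follows essentially the second route; the adaptation to arbitrary $p$-th moment hypotheses requires careful tracking of how the exponent $\alpha=(p-2)/(2(p+1))$ emerges from the interplay between the ballot bound and the Rosenthal tail. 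Once this quantitative ballot bound is in hand, the remainder of the proof is bookkeeping.
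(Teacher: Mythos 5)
Your treatment of \eqref{eq:rwbd2} coincides with the paper's: bound the probability by that of the centered walk $S_k-k\mu$ staying nonnegative and quote a quantitative Sparre--Andersen/Spitzer estimate. For \eqref{eq:rwbd1} you have correctly located the difficulty --- the negative drift helps the walk stay nonpositive, so a naive centering step cannot finish --- but the ``quantitative ballot estimate for the centered walk staying below a nearly-horizontal barrier'' on which your second term relies is never stated or proved, and you yourself flag it as ``not standard'' and as ``the main obstacle.'' As written, the proposal has a genuine gap precisely at that point.

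The paper sidesteps the finite-time ballot problem entirely, and the mechanism is worth noting because it produces the exponent $(p-2)/(2(p+1))$ without the $L$-optimization you describe. Set $t_n=n^{-(p-2)/(2(p+1))}$, $\nu_n=(\mu+t_n)^+$, and $\overline{S}_{k,n}=S_k-k\nu_n$; the key identity $\nu_n-\mu=t_n\vee|\mu|$ means the compensated walk has strictly negative drift of exactly the magnitude appearing on the right of \eqref{eq:rwbd1}. Since $\nu_n\ge 0$, the event in \eqref{eq:rwbd1} is contained in $\{\overline{S}_{k,n}\le 0\text{ for all }k\le n\}$, which is bounded by $P(\tau_n=\infty)+P(\exists\, k\ge n:\overline{S}_{k,n}>0)$ with $\tau_n=\inf\{k:\overline{S}_{k,n}>0\}$. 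The infinite-time probability is evaluated exactly via the Sparre--Andersen (Spitzer--Baxter) identity $P(\tau_n=\infty)=\exp\bigl(-\sum_{k\ge1}k^{-1}P(\overline{S}_{k,n}>0)\bigr)$, with each summand estimated by Berry--Esseen; summing gives a bound of order $t_n\vee|\mu|$. The escape probability is a union bound over $k\ge n$ plus a $p$-th moment bound (Burkholder--Davis--Gundy plus Markov) on each term, and the choice of $t_n$ is precisely what makes this remainder comparable to the first piece. This is close in spirit to your route~(ii), but it never needs a finite-horizon ballot bound with a sloping barrier: the drift compensation removes the slope, and the staying-nonpositive probability is computed over an infinite horizon where an exact identity is available.
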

	\begin{proof}
Since $\mu\le0$ the probability in \eqref{eq:rwbd2} is bounded above by
the probability of $\{S_1\ge\mu,S_2\ge2\mu,\dotsc,S_n\ge n\mu\}$.
The bound \eqref{eq:rwbd2} then follows from Theorem 5.1.7 in \cite{Law-Lim-10}.

Let $n \in \bbN$ be sufficiently large that $t_n=n^{-\frac{p-2}{2(p+1)}} < \epsilon$ and let $\nu_n=(\mu+t_n)^+$. Note that $\nu_n-\mu=t_n\vee\abs{\mu}>0$. 
Let $\overline S_{k,n}=S_k-k\nu_n$.  Then 
    \[P(S_1\leq 0, S_2 \leq 0, \ldots, S_n \leq 0) \le 
    P(\overline S_{1,n}\leq 0, \overline S_{2,n} \leq 0,\ldots, \overline S_{n,n} \leq 0). \]
For $k\in \bbN$ define $p_{k,n} = P(\overline S_{1,n} \leq 0, \overline S_{2,n} \leq 0, \ldots, \overline S_{k-1,n}\leq 0, \overline S_{k,n} >0)$ and $\tau_n = \inf\{k : \overline S_{k,n} > 0\}$. For $s \in [0,1],$ set
	\begin{align*}
	    p_n(s) &=\sum_{k=1}^\infty s^k p_{k,n} 
	\end{align*}
	and observe that $P(\tau_n=\infty) = 1-p_n(1)$.
	By the Sparre-Andersen Theorem, Theorem XII.7.1 in \cite{Fel-71}, for $s\in [0,1),$
	\begin{align}
	\log \frac{1}{1-p_n(s)} &= \sum_{k=1}^\infty \frac{s^k}{k} P(\overline S_{k,n}>0).\label{eq:feller}
	\end{align}
    Denote by $\Phi(x)$ and $\phi(x)$ the cumulative distribution function and probability density function of a standard Normal random variable. Recall that $p \geq 3$. By the Berry-Esseen theorem \cite[Theorem 3.4.9]{Dur-10}, for all $x \in \bbR$ and $k\in \bbN$,
	\begin{align*}
    \Bigl|P\Bigl\{\frac{\overline S_{k,n}-\mu k+\nu_n k}{\e\sqrt{k}} \leq x\Big\} - \Phi(x) \Bigr| \leq \frac{3D^{3/p}}{\e^3\sqrt{k}}
	\end{align*}
		Set $m_n = \frac{\e\sqrt{2\pi}}{2(\nu_n-\mu)}=\frac{\e\sqrt{2\pi}}{2(t_n\vee\abs{\mu})} > 0$.
		Since $\phi$ is decreasing on $[0,\infty)$ with $\phi(0) = 1/\sqrt{2\pi}$,
\begin{align*}
    1-\Phi\Bigl((\nu_n-\mu)\frac{\sqrt{k}}{\e}\Bigr) &= \frac{1}{2} - \int_0^{(\nu_n-\mu) \frac{\sqrt{k}}{\e}} \phi(x)dx \geq \frac{1}{2} - \frac{(\nu_n-\mu) \sqrt{k}}{\e\sqrt{2\pi}}  = \frac{1}{2} - \frac{\sqrt{k}}{2m_n}\,.
\end{align*}
Then we have
	\begin{align*}
	        \sum_{k=1}^\infty \frac{1}{k} P(\overline S_{k,n}>0) &\geq \sum_{1\le k<m_n^2} \frac{1}{k} P(\overline S_{k,n}>0) \geq \sum_{1\le k<m_n^2} \frac{1}{k} \Bigl(1-\Phi\Bigl(-(\mu-\nu_n)\frac{\sqrt{k}}{\e}\Bigr) -  
	        \frac{3D^{3/p}}{\e^3\sqrt{k}}\Bigr) \\
	        &\geq \sum_{1\le k<m_n^2} \frac{1}{2k} - \frac{1}{2m_n } \sum_{1\le k<m_n^2}\frac{1}{\sqrt{k}} - \frac{3D^{3/p}}{\e^3} \sum_{k=1}^\infty \frac{1}{k^{3/2}} \\
	        & \geq \log m_n - 1 - \frac{9D^{3/p}}{\e^3},
	\end{align*}
	where the empty sum is, as usual, equal to $0$.
	When $m_n>1$, each of the three bounds in the last line comes from integral comparison. The bounds are trivial when $0<m_n<1$.
	%
	%
	It then follows from \eqref{eq:feller} that
	\begin{align*}
	    P(\tau_n = \infty) &\leq \frac{2e^{\frac{9D^{3/p}}{\e^3}+1}}{\e\sqrt{2\pi}}(t_n\vee\abs{\mu}).
	\end{align*}
 	On the other hand, by the Markov and Burkholder-Davis-Gundy \cite[Theorem 4.2.12]{Bic-02} inequalities followed by integral comparison,
	\begin{align*}
	    P(\exists k \geq n : \overline S_{k,n} > 0) &\leq \sum_{k=n}^\infty P(\overline S_{k,n} > 0) \leq \sum_{k=n}^{\infty} P(|S_k-k\mu|^p > |k(\mu-\nu_n)|^p) \\
	    &\leq (2/e)^{p/2}p^p D \sum_{k=n}^\infty \frac{1}{k^{p/2} |\mu-\nu_n|^p} \leq \frac{2 (2/e)^{p/2}p^p D}{p-2} n^{1-p/2} (t_n\vee\abs{\mu})^{-p}.	
	   \end{align*}
    Combining these results, we have
    \begin{align*}
        P\left(S_1 \leq 0, \dots, S_n \leq 0\right) &\leq P(\tau_n = \infty) + P(\exists k \geq n \text{ such that } \overline S_{k,n} > 0) \\
        &\leq \frac{2e^{\frac{9D^{3/p}}{\e^3}+1}}{\e \sqrt{2\pi}}(t_n\vee\abs{\mu})+\frac{2 (2/e)^{p/2}p^p D}{p-2} n^{1-p/2} (t_n\vee\abs{\mu})^{-p}.
    \end{align*}
    Note that $n^{1-p/2}t_n^{-p}=t_n$ and, if $t_n\le\abs{\mu}$, then we have 
    $n^{1-p/2}\abs{\mu}^{-p}=t_n^{1+p}\abs{\mu}^{-p}\le\abs{\mu}$; in this case, we have $n^{1-p/2}(t_n \vee |\mu|)^{-p} = |\mu|= t_n \vee |\mu|$. On the other hand, if $t_n \geq |\mu|$, then $t_n = n^{1-p/2}t_n^{-p} \leq n^{1-p/2}|\mu|^{-p}$ and, consequently, $n^{1-p/2}(t_n \vee |\mu|)^{-p}= (n^{1-p/2}t_n^{-p}) \wedge (n^{1-p/2}|\mu|^{-p}) = t_n= t_n \vee |\mu|$. Bound \eqref{eq:rwbd1} follows.
        	\end{proof}

\begin{lemma}\label{lm:assRWholds}
If $\omega_0 \sim \Geom(r)$, then Assumption \ref{ass:RW} holds for any  $\expnt\in(1/3,2/3)$.
\end{lemma}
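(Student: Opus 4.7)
The strategy is to factorize the probability in \eqref{bd:RW} using the independence from Corollary \ref{lem:RWIndependent}, then apply the random walk estimate \eqref{eq:rwbd2} of Lemma \ref{lem:rwBound} to each factor. Concretely, Corollary \ref{lem:RWIndependent} gives that the processes $\{S^{\xi_\star,\eta^\star}_n : 1\le n\le N^{2/3}\}$ and $\{S^{\xi^\star,\eta_\star}_m : -N^{2/3}< m<0\}$ are independent, so the probability in \eqref{bd:RW} equals the product
\[\P\Bigl\{\sup_{0<k\le N^{2/3}} S^{\xi_\star,\eta^\star}_k\le 0\Bigr\}\cdot \P\Bigl\{\sup_{-N^{2/3}<k<0} S^{\xi^\star,\eta_\star}_k\le 0\Bigr\}.\]
It therefore suffices to bound each factor by $C(\delta,r)\,N^{-1/3}$; the product will then be at most $C_3(\delta,r)\,N^{-2/3}\le C_3(\delta,r)\,N^{-a_0}$ for any $a_0\in(1/3,2/3)$.

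Focus on the first factor. The increments $X_j = J^{\xi_\star}_{j\ek_2} - \hat J^{\eta^\star}_{\ek_1+(j-1)\ek_2}$, $j\ge 1$, are i.i.d.: the family $\{J^{\xi_\star}_{j\ek_2}\}_{j\ge 1}$ is i.i.d.\ $\Geom(r/\pxi(\xi_\star))$ by Theorem \ref{thm:coupledGeomLPP}(\ref{coupledGeomLPPc})--(\ref{coupledGeomLPPd}); the family $\{\hat J^{\eta^\star}_{\ek_1+(j-1)\ek_2}\}_{j\ge 1}$ is i.i.d.\ $\Geom(r/\pxi(\eta^\star))$ by the same theorem applied in the reflected environment via \eqref{IJhat}; and the two families are mutually independent by the half-plane decomposition used in the proof of Corollary \ref{lem:RWIndependent}. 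The drift
\[E[X_j] = \frac{r}{\pxi(\xi_\star)-r} - \frac{r}{\pxi(\eta^\star)-r}\]
is strictly positive because the inequality $\xi_\star\cdot \ek_1<\eta^\star\cdot \ek_1$ from \eqref{xistar-etastar} gives $\pxi(\xi_\star)<\pxi(\eta^\star)$ by monotonicity of \eqref{xi->p}. Lemma \ref{lem:parameterBounds} together with the hypothesis that $\xi_\star\cdot \ek_1$ and $\eta^\star\cdot \ek_1$ lie in $(\delta,1-\delta)$ confines $\pxi(\xi_\star),\pxi(\eta^\star)$ to a compact subinterval of $(r,1)$ depending only on $\delta$ and $r$, so the two Geom parameters lie in a compact subinterval of $(0,1)$. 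This yields uniform lower bounds on $\Var(X_j)$ and uniform finite upper bounds on $E[|X_j-E[X_j]|^p]$ for every $p\ge 3$, uniformly over the admissible directions.

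The negated walk $\{-\sum_{i=1}^n X_i\}_{n\ge 0}$ therefore has mean $\le 0$ and satisfies the hypotheses of Lemma \ref{lem:rwBound} with uniform constants, so \eqref{eq:rwbd2} applied with $n=N^{2/3}$ gives
\[\P\Bigl\{\sup_{0<k\le N^{2/3}} S^{\xi_\star,\eta^\star}_k\le 0\Bigr\}=\P\Bigl\{\inf_{0<n\le N^{2/3}}\sum_{i=1}^n(-X_i)\ge 0\Bigr\}\le C(\delta,r)\,N^{-1/3}.\]
The second factor is handled identically after re-indexing by $m=-k>0$: the walk $T_m=S^{\xi^\star,\eta_\star}_{-m}$ is a partial sum $T_m=\sum_{k=1}^m Y_k$ with i.i.d.\ increments $Y_k=\hat J^{\eta_\star}_{\ek_1-k\ek_2}-J^{\xi^\star}_{(1-k)\ek_2}$ whose drift is strictly positive by the symmetric hypothesis $\eta_\star\cdot \ek_1<\xi^\star\cdot \ek_1$ from \eqref{xistar-etastar}. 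Multiplying the two $CN^{-1/3}$ bounds completes the verification.

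The only subtlety is that the argument relies fully on the i.i.d.\ structure of the increments, which is precisely a solvability feature of the geometric model. For a general weight distribution the increments are expected to be mixing rather than independent, as noted in comment (b) of the introduction, and a corresponding mixing-walk analogue of Lemma \ref{lem:rwBound} under suitable moment hypotheses would be needed.
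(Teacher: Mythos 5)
Your proof has a fatal sign error in the drift, which propagates into using the wrong half of Lemma~\ref{lem:rwBound}. You claim that $\xi_\star\cdot\ek_1 < \eta^\star\cdot\ek_1$ forces $\pxi(\xi_\star) < \pxi(\eta^\star)$ ``by monotonicity of \eqref{xi->p}.'' But $\pxi$ is \emph{decreasing} in the $\ek_1$-coordinate: $\pxi(0,1)=1$, $\pxi(1,0)=r$, and the derivative computed in the proof of Lemma~\ref{lem:parameterDirectionEquivalence} is strictly negative. Hence $\pxi(\xi_\star) > \pxi(\eta^\star)$, and the drift
\[
\mu = \E\bigl[J^{\xi_\star}_{\ek_2}\bigr] - \E\bigl[\hat J^{\eta^\star}_{\ek_1}\bigr] = \frac{r}{\pxi(\xi_\star)-r} - \frac{r}{\pxi(\eta^\star)-r}
\]
is \emph{strictly negative}, not positive. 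The negated walk $\sum_i(-X_i)$ therefore has strictly positive mean, so the hypothesis $\mu\le 0$ of Lemma~\ref{lem:rwBound} fails and \eqref{eq:rwbd2} does not apply to it. You cannot simply flip the inequality and apply \eqref{eq:rwbd2} to the original walk either, since the event $\{S_k\le 0\ \forall k\}$ is not $\{S_k\ge 0\ \forall k\}$.

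There is also a structural issue that should have been a red flag: your argument never invokes the quantitative closeness $|\xi_\star\cdot\ek_1-\eta^\star\cdot\ek_1|\le N^{-a_0/2}$ from \eqref{xistar-etastar}, and it delivers a rate $N^{-2/3}$ that is uniformly stronger than the required $N^{-a_0}$ for every $a_0\in(1/3,2/3)$. That cannot be right: for a walk with drift bounded away from zero (which is what you would have if $\xi_\star$ and $\eta^\star$ were not forced to be close), the probability of staying non-positive does not decay to zero at all. The essential point of the assumption is that the drift is \emph{small but negative}, of order $N^{-a_0/2}$, and the right tool is \eqref{eq:rwbd1}, which gives $\P(S_1\le 0,\dots,S_n\le 0)\le C\bigl(n^{-(p-2)/(2(p+1))}\vee|\mu|\bigr)$. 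One bounds $|\mu|$ by $C_1 r^{-1}N^{-a_0/2}$ via Lemma~\ref{lem:parameterDirectionEquivalence}, chooses $p$ large enough that $(N^{2/3})^{-(p-2)/(2(p+1))}\le N^{-a_0/2}$, obtains $N^{-a_0/2}$ for each factor, and multiplies to get $N^{-a_0}$. Your factorization via Corollary~\ref{lem:RWIndependent} and the i.i.d.\ structure of the increments are fine; everything downstream of the drift computation needs to be redone.
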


\begin{proof}
        Fix $\expnt\in(1/3,2/3)$.
		The steps of the random walk $S_k^{\xi_\star,\eta^\star}$ for $k\in\flint{0, N^{2/3} }$ are i.i.d.\ differences of independent geometric random variables with parameters $r/\pxi(\xi_\star)$ and $r/\pxi(\eta^\star)$. 
		Under the conditions of Assumption \ref{ass:RW} on $\xi_\star$ and $\eta^\star$, 
		Lemma \ref{lem:parameterDirectionEquivalence} implies that 
		    \[-C_1r^{-1} N^{-\expnt/2}\le\mu=\E[S_1^{\xi_\star,\eta^\star}]=\frac{\pxi(\xi_\star)}{r}-\frac{\pxi(\eta^\star)}{r}\le0.\]
		
        
        Since $\pxi(\xi_\star)$ and $\pxi(\eta^\star)$ are both above $r$, the variance of 
        $S_1^{\xi_\star,\eta^\star}$ is bounded below by $\e=2r/(1-r)^2$.
        Since $\xi_\star\cdot\ek_1$ and $\eta^\star\cdot\ek_1$ are assumed to be in $(\delta,1-\delta)$, $\pxi(\xi_\star)$ and $\pxi(\eta^\star)$ are bounded away from $r$, uniformly in $N$, and thus for any $p\ge1$ there exists a finite constant 
        $D=D(\delta,p)$ such that
        $\E[\abs{S_1^{\xi_\star,\eta^\star}-\mu}^p]\le D(\delta,p)$ for all $N\in\bbN$.
        Take $p\ge3$ large enough so that $\frac{p-2}{3(p+1)}>a_0/2$. 
		The conditions of Lemma \ref{lem:rwBound} are satisfied. If we take 
		$n = \fl{N^{2/3}}$, then \eqref{eq:rwbd1} gives, for $N$ large enough,
		\[
		\P\big\{S_1^{\xi_\star,\eta^\star} \leq 0, S_2^{\xi_\star,\eta^\star}\leq 0, \ldots, S^{\xi_\star,\eta^\star}_{\fl{N^{2/3}}} \leq 0\big\} \leq C \bigl((C_1 r^{-1}N^{-\expnt/2})\vee (\fl{N^{2/3}})^{-\frac{p-2}{2(p+1)}}\bigr)\le CC_1r^{-1}N^{-\expnt/2}.
		\]
		Repeating this same argument for $S_k^{\xi^\star,\eta_\star}$, $k\in\flint{-N^{2/3},0}$, yields
		\[
		\P(S_{-1}^{\xi^\star,\eta_\star} \leq 0, S_{-2}^{\xi^\star,\eta_\star} \leq 0, \ldots, S_{-N^{2/3}}^{\xi^\star,\eta_\star} \leq 0) \leq CC_1r^{-1} N^{-\expnt/2}.
		\]
		Bound \eqref{bd:RW} follows from the independence proved in Corollary \ref{lem:RWIndependent}. 
		The lemma is proved.
\end{proof}

\bibliography{firasbib2010}
\end{document}